\numberwithin{equation}{section}
\theoremstyle{plain}
\newtheorem{Thm}{Theorem}[section]
\newtheorem{Prop}[Thm]{Proposition}
\newtheorem{Lemma}[Thm]{Lemma}
\newtheorem{Cor}[Thm]{Corollary}
\theoremstyle{definition}
\newtheorem{Example}[Thm]{Example}
\newtheorem{Defi}[Thm]{Definition}
\newtheorem{Remark}[Thm]{Remark}
\newcommand\malcev{\mathop{\raise1pt\hbox{\footnotesize$\bigcirc$\kern-8pt\raise1pt
      \hbox{\tiny$m$}\kern1pt}}}
\def\pv#1{\ensuremath{{\mathsf{#1}}}}
\def\Om#1#2{\ensuremath{\overline\Omega_{#1}{\pv{#2}}}}
\def\om#1#2{\ensuremath{\Omega_{#1}{\pv{#2}}}}
\def\ZZ{\ensuremath{\mathbb{Z}}}
\def\te#1{\mathsf t_{#1}}
\def\be#1{\mathsf i_{#1}}
\def\op{\lbrack\hskip-1.6pt\lbrack\relax}
\def\cl{\rbrack\hskip-1.6pt\rbrack\relax}
\newcommand\J{\mathcal{J}}
\newcommand\R{\mathcal{R}}
\renewcommand\L{\mathcal{L}}
\newcommand\K{\mathcal{K}}
\def\Lo#1{\ensuremath{\mathscr L\pv {#1}}}
\begin{document}

\title[Bases for pseudovarieties closed under bideterministic product]{Bases for
  pseudovarieties closed under bideterministic product}


\thanks{This work was partially supported by the Centre for Mathematics of the University of Coimbra -- UID/MAT/00324/2019, funded by the Portuguese Government through FCT/MEC and co-funded by the European Regional Development Fund through the Partnership Agreement PT2020.}

\author[A.~Costa]{Alfredo Costa}

\address{CMUC, Department of Mathematics, University of Coimbra,
  Apartado 3008, EC Santa Cruz,
  3001-501 Coimbra, Portugal.}
\email{amgc@mat.uc.pt}

\author[A.~Escada]{Ana Escada}

\address{CMUC, Department of Mathematics, University of Coimbra,
  Apartado 3008, EC Santa Cruz,
  3001-501 Coimbra, Portugal.}
\email{apce@mat.uc.pt}


\subjclass[2010]{20M07, 20M05, 20M35, 18B40}

\keywords{Profinite semigroups; monoids; pseudovarieties; bideterministic product; basis of pseudoidentities; local pseudovarieties}

\begin{abstract}  
  We show that if $\pv V$
  is a semigroup pseudovariety
  containing the finite semilattices
  and contained in $\pv {DS}$,
  then it
  has a basis
  of pseudoidentities
  between finite products of regular
  pseudowords if, and only if,
  the corresponding variety of languages
  is closed under bideterministic product.
  The key to this equivalence
  is a weak generalization of the existence and uniqueness of
  $\pv J$-reduced factorizations.
  This equational approach is used
  to address the locality of some pseudovarieties.
  In particular, it is shown that
  $\pv {DH}\cap\pv {ECom}$ is local,
  for any group pseudovariety~$\pv H$.
\end{abstract}

\maketitle


\section{Introduction}

Reiterman's theorem~\cite{Reiterman:1982}
affirms that
the pseudovarieties of semigroups are precisely
the classes of finite semigroups defined by a basis of pseudoidentities
between pseudowords.
In this paper we refine this
by showing that
the basis may be chosen to consist solely
of pseudoidentities between finite products
of regular pseudowords, whenever
$\pv V$ is a pseudovariety in the interval $[\pv{Sl},\pv {DS}]$
that is closed under bideterministic product;
motivated by this result, we call a finite product of regular pseudowords
a \emph{multiregular} pseudoword.
Conversely, we give  a proof
that every  pseudovariety of semigroups
that has a basis
of pseudoidentities
between multiregular pseudowords
is closed under bideterministic product; one may say that
this converse is already hidden in the paper~\cite{Pin&Therien:1993}, where
pseudovarieties closed under bideterministic product were first introduced,
but note that neither in~\cite{Pin&Therien:1993} nor
in the
sequels~\cite{Branco:1994,Jiang:2000,Branco:2006,Branco:2007} the profinite approach is explicitly present.
In view of these results, one
may argue that closure under bideterministic product is a relatively mild condition to impose upon a pseudovariety.
Another reason for the interest
in the bideterministic closure is that it is a natural companion
of the closure under left deterministic product
(which algebraically translates to the equality between
$\pv V$ and the Mal'cev product $\pv K\malcev\pv V$),
the closure
under right deterministic product
(that translates to $\pv V=\pv D\malcev\pv V$),
and of the closure under unambiguous product
(translated to $\pv V=\Lo I\malcev\pv D$).


Theorem~\ref{t:cut-nice-factorizations},
our main result, is the key to our refinement of Reiterman's theorem
and other results. It is
a sort of weak generalization of
the theorem on the uniqueness of
$\pv J$-reduced factorizations~\cite{Almeida&Azevedo:1993}, recalled
as Theorem~\ref{t:uniqueness-J-factorizations}.
It is also a theorem in the spirit of the solutions of
the ``pseudoword problem'' (of knowing when a pseudoidentity
is satisfied by a pseudovariety)
obtained in~\cite{Trotter&Weil:1997} for pseudovarieties closed under left, right or unambiguous product.

Some inspiration was taken
from the fact, shown
in~\cite{Almeida&ACosta:2017},
that the free profinite semigroups over $\pv V$
are equidivisible when $\pv V$
is closed under unambiguous product.
Here, we show that a weak form of equidivisibility
still stands when we only know that
$\pv V$ is closed under bideterministic
product (Theorem~\ref{t:bidet-equidivisibility}).
This is crucial for the proof of our main result.
This form of weak divisibility
is based on the notion of~\emph{good factorization},
which defines the Pin-Th\'erien
expansion, first introduced in~\cite{Pin&Therien:1993}.

The property of a pseudovariety of semigroups being local
is relevant but often difficult to prove.
In~\cite{Costa&Escada:2013} it is shown that
if $\pv V$ is a local monoidal pseudovariety
of semigroups containing $\pv {Sl}$,
then $\pv {K}\malcev\pv {V}$
and $\pv {D}\malcev\pv V$ are also local monoidal pseudovarieties
of semigroups.
Consider now the operator
$\pv V\mapsto\overline{\pv V}$
that associates to each pseudovariety
of semigroups $\pv V$
the least pseudovariety of semigroups
$\overline{\pv V}$
containing $\pv V$
that is closed under bideterministic product.
The methods used in~\cite{Costa&Escada:2013} do not carry on to this operator (see the discussion in
Section~\ref{sec:interpl-with-semid}).
But, restricting our attention
to the class $\pv {RS}$ of finite
semigroups whose set of regular elements
is a subsemigroup,
then, with our key result (Theorem~\ref{t:cut-nice-factorizations}) we do prove that
if $\pv V$ is a local monoidal pseudovariety of semigroups contained in
the interval $[\pv{Sl},\pv{DS}\cap\pv {RS}]$,
then $\overline{\pv V}$ is also a local monoidal pseudovariety of semigroups.
This implies, for example, that $\pv {DH}\cap \pv{ECom}$
is local, for every pseudovariety $\pv H$ of groups
(this family of peudovarieties has received
some attention \cite{Almeida&Weil:1994,Almeida&Weil:1994c,Auinger:2002}).

The paper is organized as follows.
After the introduction
and a section of preliminaries,
we recall in Section~\ref{sec:pin-ther-expans} the Pin-Th\'erien expansion of
a monoid, also giving
its semigroup counterpart.
The latter is because we want to work with semigroup pseudovarieties
that are non-monoidal,
such as those of the form
$\pv V\ast\pv D$, seen in Section~\ref{sec:interpl-with-semid}.
Sections~\ref{sec:good-fact-pseud} to~\ref{sec:break-fact-assum} constitute
the paper's core,
where several aspects of the notion
of good factorization of a pseudoword are explored, culminating in
the main results.
Finally,
Sections~\ref{sec:fact-glob} to~\ref{sec:interpl-with-semid} are motivated by the investigation
on the locality of pseudovarieties.


\section{Preliminaries}
\label{sec:preliminaries}


For more details on (profinite) semigroups the reader is referred
to the introductory text~\cite{Almeida:2003cshort}
and the books~\cite{Almeida:1994a,Rhodes&Steinberg:2009qt}.
The definitions and results related with monoids are similar.


For a semigroup $S$,
  the monoid  $S^I=S\uplus \{1\}$
  is obtained from $S$ by adjoining
  to $S$ a neutral element $1$ not in $S$.
  Every semigroup homomorphism $\varphi \colon S\to T$ admits an extension
  to a monoid homomorphism $\varphi^{I}\colon S^I\to T^I$ such that $\varphi ^I(1)=1$. The object $S^I$ may be different
  from the frequently used~$S^1$: the latter equals
  $S$ if $S$ is a monoid, and is $S^I$ if $S$ is not a~monoid.


  We use the standard notations for Green's equivalence relations $\R$, $\L$
  and~$\J$ and its associated quasi-orders
  $\leq_\R$, $\leq_\L$ and $\leq_\J$
  on a semigroup $S$: for $s,t\in S$,
  $s\leq _\R t$ if $s\in tS^I$, $s\leq _\L t$ if $s\in S^It$, an $s\leq _\J t$ if $s\in S^ItS^I$, and for $\K\in \{\R, \L,\J\}$,
  we have $s\mathrel{\K }t$ when $s\leq _\K t$ and $t\leq _\K s$. The elements
  $s\in S$ such that  $s\in sSs$ are said to be \emph{regular}.
  In general, the set of regular elements of a semigroup is not a subsemigroup. If $S$ is a compact semigroup (i.e., a semigroup endowed with a compact topology for which the semigroup operation is continuous),
  then, for each $\K\in \{\R, \L,\J\}$,
  a $\K$-class $K$ of $S$
  contains a regular element if and only if
  all its elements   are regular,
  in which case we say that $K$ is \emph{regular}. Moreover, 
  one also views $S^I$ as a compact semigroup
  by adding $I$ has an isolated point.

  \subsection{Pseudovarieties} \label{subsec-preamble-pseudovarieties}

  A \emph{pseudovariety of semigroups} is a class of finite semigroups closed under taking subsemigroups, homomorphic images and finitary direct products. The pseudovariety of all finite semigroups is denoted by~$\pv S$.
 We list some other pseudovarieties which have a role in this paper: $\pv {Sl}$, the pseudovariety of all finite semillatices;  $\pv  J$, the pseudovariety of all finite semigroups whose regular $\mathcal J$-classes are trivial; $\pv D$, the pseudovariety of all finite semigroups whose idempotents are right zeros; and  $\pv K$, the dual of $\pv D$.
 For any pseudovariety $\pv V$, one denotes
 by $\Lo{V}$ the pseudovariety of all finite semigroups $S$ such that $eSe\in \pv V$, for all idempotents $e\in S$,
 and $\pv{DV}$  denotes the pseudovariety of all finite semigroups  $S$ whose regular $\mathcal J$-classes are semigroups that belong to~$\pv V$.
 As we have mentioned in the introduction,
 the pseudovariety $\pv {DS}$ will play a special role in this paper:
 quite frequently, in the study of pseudovarieties,
 one has to consider the cases $\pv V\subseteq \pv {DS}$
 and $\pv V\nsubseteq \pv {DS}$
 separately.

 Let $\pv W$ and $\pv V $ be pseudovarieties of semigroups.
 The \emph{Mal'cev product} $\pv W\malcev\pv V$ is the pseudovariety of semigroups  generated by finite semigroups  $S$ for which there is a semigroup homomorphism $\varphi\colon S\to T$, for some $T\in \pv V$, such that $\varphi ^{-1}(e)\in \pv W$, for each idempotent $e \in T$.
The \emph{semidirect product} $\pv V\ast \pv W$ is the pseudovariety generated by all semidirect products of the form $S\ast T$ with $S\in \pv V$ and $T\in \pv W$. 

\subsection{Free pro-$\pv V$ semigroups}\label{subsec-preamble-pro_V-semigroups}


In what follows, finite semigroups are viewed as compact semigroups,
endowed with the discrete topology.
A compact semigroup $S$ is said to be \emph{$A$-generated}
if there is a map $\iota\colon A\to S$ such that
  $\iota (A)$ generates a dense subsemigroup of $S$.
It is said to be~\emph{residually in~$\pv V$}, where $\pv V$
  is a pseudovariety, if
  for every two distinct elements~$s_1$ and~$s_2$ of~$S$, there is some continuous homomorphism $\varphi\colon S\to T$ into a semigroup $T\in \pv V$ such that $\varphi (s_1)\neq \varphi (s_2)$.
  By a \emph{pro-$\pv V$ semigroup} we mean a compact semigroup residually in $\pv V$.

In this paper all alphabets are finite.
  For each alphabet~$A$, there is a unique (up to isomorphism)
$A$-generated free pro-$\pv V$ semigroup,  denoted~$\Om AV$, endowed with a mapping $\iota _{\pv V}\colon A\to \Om AV$,
which satisfies the following universal property:
for any map $\varphi\colon A \to S$ into a pro-$\pv V$ semigroup~$S$ there is a unique continuous homomorphism
$\widehat{\varphi}^{\pv V}\colon \Om AV\to S$ such that
$\widehat{\varphi}^{\pv V}\circ \iota  _{\pv V}=\varphi $.
The finiteness of $A$ guarantees that $\Om AV$ is metrizable.
The elements of $\Om AV$ are called \emph{pseudowords} (with respect to $\pv V$).
In particular, the map
  \mbox{$\iota _{\pv V}\colon A\to \Om A V$} induces a unique continuous homomorphism
  $p_{\pv V}\colon \Om AS\to \Om AV$,
  which is called the \emph{natural projection of $\Om AS$ onto $\Om AV$},
  and satisfies $p_{\pv V}\circ \iota _{\pv S}=\iota_{\pv V}$.
  We write $[u]_{\pv V}$ instead of  $p_{\pv V}(u)$.
  In case $\pv V=\pv S$, we
use the notation $\widehat\varphi$ for $\widehat\varphi^{\pv S}$,
and we speak of \emph{profinite semigroups} instead of pro-$\pv S$
semigroups. 
  
  Let $\om AV$ be the subsemigroup of $\Om AV$ generated by
  $\iota  _{\pv V}(A)$.
  If $\pv V$ is not the trivial pseudovariety,
  then $\iota_{\pv V}$ is injective, and so
  $A$ may actually be seen as a subset of $\Om AV$. Moreover, if $\pv V$ contains the pseudovariety $\pv N$ of finite nilpotent semigroups,
  then $\om AV$ is naturally
  isomorphic to the free semigroup~$A^+$, endowed with the discrete topology, and the elements of $\om AV$ are isolated in $\Om AV$. From hereon, we
  identify $\om AV$ with $A^+$ when $\pv V$ contains~$\pv N$.




  Let $\pv V$ be a pseudovariety
  that contains $\pv{Sl}$ and let
  $c\colon \Om A V \to \Om A{Sl}$  be the
  \emph{content mapping}, the unique continuous
  homomorphism from $\Om AV$ onto $\Om A{Sl}$
  such that $c\circ\iota_{\pv V}=\iota_{\pv{Sl}}$.
  If $u$ is a word on $A$, then
  $c(u)$ is the set of letters occurring in $u$.
  In general, $c(u)$ is the \emph{content of $u$}, for every $u\in  \Om A V$.

  A \emph{pseudoidentity} $(u=v)$ in variables of the alphabet $A$
  is a formal equality between elements $u$ and $v$ of $\Om A S$. For a profinite semigroup $S$, we  write $S\models u=v$ when  $S$ satisfies the pseudoidentity $u=v$ (that is, for every map
  $\varphi\colon A\to S$, we have $\widehat{\varphi} (u)=\widehat{\varphi}(v)$), and write $\pv V\models u=v$ when all semigroups of $\pv V$
  satisfy $u=v$.
  More generally, we write $\pv V\models \Sigma$
  when $\Sigma$ is a set of pseudoidentities satisfied by all semigroups of~$\pv V$.
  The class of finite semigroups satisfying all elements
  of $\Sigma$ is denoted~$\op\Sigma\cl$.
  Reiterman's Theorem~\cite{Reiterman:1982}
  states that the pseudovarieties of semigroups
  are precisely the classes of the form $\op \Sigma \cl$.
  As a relevant example, we have $\pv{DS}=\op ((xy)^\omega (yx)^\omega (xy)^\omega )^\omega =(xy)^\omega \cl $,
  where, if $s$ is an element
  of a profinite semigroup, $s^\omega$ is the idempotent $s^\omega=\lim s^{n!}$
  (more generally, we use the notation $s^{\omega+k}=\lim s^{n!+k}$).
  One says that $\Sigma$ is a \emph{basis} for $\pv V$
  when $\pv V=\op\Sigma\cl$.
  

  A language $L\subseteq A^+$ is \emph{$\pv V$-recognizable} if there is a homomorphism $\varphi$ from $A^+$ into a semigroup
  $S$ of $\pv V$ such that $L=\varphi ^{-1}\varphi L$.
  The following proposition establishes a link between $\pv V$-recognizable languages and the topology of $\Om AV$, when $\pv V$ contains $\pv N$.
  The restriction $\pv V\supseteq \pv N$ may be dropped, but the
  statement becomes less direct, and it suffices for us that
  $\pv V\supseteq \pv N$.

  \begin{Thm}[{cf.~\cite[Theorem 3.6.1]{Almeida:1994a}}]\label{t:Vrecognizable-clopensets}
    Let $\pv V$ be a semigroup pseudovariety containing $\pv N$.
  A language $L\subseteq A^+$
  is $\pv V$-recognizable if and only if its closure
  $\overline{L}$ in $\Om AV$ is open.
\end{Thm}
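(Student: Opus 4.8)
I would prove the two implications separately, using throughout that $\Om AV$ is compact and metrizable and that, since $\pv V\supseteq\pv N$, the subsemigroup $\om AV=A^+$ is dense in $\Om AV$ and consists of isolated points. Two elementary facts will be used repeatedly: preimages of subsets of a finite discrete semigroup under continuous homomorphisms are clopen; and an isolated point of $\Om AV$ that lies in $\overline L$ already lies in $L$, so that $\overline L\cap A^+=L$.

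\textbf{($\Rightarrow$)} Suppose $L=\varphi^{-1}\varphi(L)$ for a homomorphism $\varphi\colon A^+\to S$ with $S\in\pv V$ finite. Restricting $\varphi$ to $A$ and using the universal property of $\Om AV$ (the finite discrete semigroup $S$ is pro-$\pv V$), I obtain a continuous homomorphism $\widehat\varphi^{\pv V}\colon\Om AV\to S$ that restricts to $\varphi$ on $A^+$. I then claim $\overline L=(\widehat\varphi^{\pv V})^{-1}(\varphi(L))$. The inclusion from left to right holds because the right-hand side is closed and contains $L$. For the reverse inclusion, given $w$ with $\widehat\varphi^{\pv V}(w)\in\varphi(L)$, choose by density and metrizability a sequence $w_n\in A^+$ with $w_n\to w$; then $\varphi(w_n)=\widehat\varphi^{\pv V}(w_n)\to\widehat\varphi^{\pv V}(w)$, and since $S$ is discrete we have $\varphi(w_n)=\widehat\varphi^{\pv V}(w)\in\varphi(L)$ for large $n$, whence $w_n\in\varphi^{-1}\varphi(L)=L$ and $w\in\overline L$. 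As $\varphi(L)$ is clopen in $S$, its preimage under $\widehat\varphi^{\pv V}$ is clopen in $\Om AV$, so $\overline L$ is open.

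\textbf{($\Leftarrow$)} Suppose $\overline L$ is open; being also closed it is clopen and hence compact, and so is its complement $K=\Om AV\setminus\overline L$. Since $\Om AV$ is residually in $\pv V$, continuous homomorphisms into finite members of $\pv V$ separate points. For each pair $(x,y)\in\overline L\times K$ pick such a homomorphism separating $x$ and $y$; the sets on which a given homomorphism separates the two coordinates form an open cover of the compact space $\overline L\times K$, so finitely many of them suffice, and taking the product of the corresponding homomorphisms yields a continuous homomorphism $\Phi\colon\Om AV\to S$ with $S\in\pv V$ finite and $\Phi(x)\neq\Phi(y)$ for all $x\in\overline L$, $y\in K$; equivalently $\Phi^{-1}(\Phi(\overline L))=\overline L$. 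Restricting to $A^+$, set $\varphi=\Phi|_{A^+}\colon A^+\to S$. Then $\varphi^{-1}(\Phi(\overline L))=\overline L\cap A^+=L$, using that $\pv V\supseteq\pv N$ makes the points of $A^+$ isolated. Hence $L=\varphi^{-1}(P)$ with $P=\Phi(\overline L)$, and since $\varphi(L)\subseteq P$ this gives $\varphi^{-1}\varphi(L)\subseteq\varphi^{-1}(P)=L\subseteq\varphi^{-1}\varphi(L)$, so $L=\varphi^{-1}\varphi(L)$ and $L$ is $\pv V$-recognizable.

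\textbf{Main obstacle.} Neither direction is deep; the points requiring a little care are exactly those where the hypothesis $\pv V\supseteq\pv N$ is used, namely the identification $\overline L\cap A^+=L$ via isolatedness and the matching of $\varphi$ with its continuous extension via density, together with the standard compactness argument producing the single separating homomorphism $\Phi$. The one trap to avoid there is applying compactness to the (compact) product $\overline L\times K$ rather than to the non-compact open complement of $\overline L$.
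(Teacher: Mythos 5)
Your proof is correct. The paper does not actually prove this theorem but only cites it as \cite[Theorem~3.6.1]{Almeida:1994a}; your argument is essentially the standard one found in that reference — extending the recognizing homomorphism continuously to $\Om AV$ for the forward direction, and using residual finiteness plus compactness of $\overline{L}\times(\Om AV\setminus\overline{L})$ to produce a single separating continuous homomorphism for the converse, with $\pv V\supseteq\pv N$ used exactly where you say (to identify $A^+$ with the isolated points of $\om AV$ so that $\overline{L}\cap A^+=L$).
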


Note that, in Theorem~\ref{t:Vrecognizable-clopensets},
one has $\overline{L}\cap A^+=L$, because the elements of $A^+$
are isolated in $\Om AV$. Hence, Theorem~\ref{t:Vrecognizable-clopensets}
states that the $\pv V$-recognizable languages of $A^+$
are precisely the traces in $A^+$ of the clopen subsets
of $\Om AV$. We shall use Theorem~\ref{t:Vrecognizable-clopensets}
abundantly, without reference. 

Theorem~\ref{t:Vrecognizable-clopensets} is relevant in
the framework of Eilenberg's Theorem~\cite{Eilenberg:1976}
on the correspondence between
a semigroup pseudovariety $\pv V$ and the variety $\mathcal V$ of $+$-languages
that are $\pv V$-recognizable (recall that a $+$-language is a subset of a free semigroup, while a $*$-language is a subset of a free monoid).
For the sake of conciseness,
we  write $L\in\mathcal V$ whenever
$L$ is a $\pv V$-recognizable language of $A^+$,
instead of the more precise $L\in A^+\mathcal V$.

We shall frequently switch from the viewpoint of $+$-languages
and semigroup pseudovarieties to that of $*$-languages
and monoid pseudovarieties.

\subsection{Marked products}
\label{sec:bidet-prod}


Our departing point is the following definition,
whose first three items are nowadays classical~\cite{Pin:1986;bk}.

\begin{Defi}\label{defi:special-marked-products}
  Let $L$ and $K$ be languages of $A^*$, and let $a\in A$.
  The language $LaK$, viewed as product of the languages $L$, $\{a\}$
  and $K$ -- usually referred to as a \emph{marked product} of $L$ and $K$ --
  is said to be:
  \begin{enumerate}
  \item an~\emph{unambiguous product} when
    every element $u$ of $LaK$ has a unique factorization
    $u=u_1a u_2$ such that $u_1\in L$ and $u_2\in K$;
  \item a~\emph{left deterministic product} when every word of $LaK$
    has a unique prefix in $La$;
  \item a~\emph{right deterministic product}
    when every word of $LaK$ has a unique suffix in $aK$;
  \item a~\emph{bideterministic product}
    if the marked product $LaK$ is simultaneously right and left deterministic.
  \end{enumerate}
\end{Defi}

We say that a language $L$ of $A^*$ is a \emph{prefix code} (resp.~\emph{suffix code}) if $\forall u\in L$, $uA^+\cap L=\emptyset$ (resp.~$\forall u\in L$, 
$A^+u\cap L=\emptyset$).

Note that $LaK$ is left deterministic if and only if $La$ is a prefix code. Dually, it is
right deterministic if and only if $aK$ is a suffix code.

Next, we introduce a varietal companion of Definition~\ref{defi:special-marked-products}.


\begin{Defi}\label{defi:V-mon-closed}
  Let $\pv V$ be a pseudovariety
of monoids, and let $\mathcal V$ be the correspondent variety
of $\pv V$-recognizable $*$-languages.
Then $\pv V$ is said to be:
\begin{enumerate}
\item \emph{closed under unanbiguous product}
  if $LaK\in\mathcal V$ whenever $L,K\in \mathcal V$, $a$ is a letter and $LaK$ is an unambiguous product;
\item \emph{closed under
  left deterministic product} if $LaK\in \mathcal V$
  whenever~\mbox{$L,K\in \mathcal V$}, $a$ is a letter and $La$ is a prefix code;
\item \emph{closed under right deterministic product} if $LaK\in \mathcal V$
  whenever~\mbox{$L,K\in \mathcal V$}, $a$ is a letter and $aK$ is a suffix code;
\item \emph{closed under bideterministic product} if~\mbox{$LaK\in \mathcal V$}
  whenever $L,K\in \mathcal V$, $a$~is a letter,  $La$ is a prefix code and $aK$ is a suffix code.
\end{enumerate}
\end{Defi}

It is well known that we have
the following characterization of the first three types
of pseudovarieties mentioned in Definition~\ref{defi:V-mon-closed}.

\begin{Thm}[\cite{Pin:1980b,Pin:1986;bk,Pin&Straubing&Therien:1988,Straubing:1979a},
   see also survey~\cite{Pin:1997}]\label{t:characterization-of-left-right-deterministic-product.}
  Let $\pv V$ be a pseudovariety of monoids.
  Then:
  \begin{enumerate}
  \item $\pv V$ is closed under unambiguous product
  if and only if $\pv V=\Lo I\malcev\pv V$;
  \item $\pv V$ is closed under left deterministic product
    if and only if\/ $\pv V=\pv K\malcev\pv V$;
  \item $\pv V$ is closed under right deterministic product
  if and only if\/~\mbox{$\pv V=\pv D\malcev\pv V$}.
  \end{enumerate}
\end{Thm}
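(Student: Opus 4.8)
The plan is to prove the three equivalences in parallel, writing $\pv W$ for the relevant left operand of the Mal'cev product ($\pv W=\Lo I$ in~(1), $\pv W=\pv K$ in~(2), $\pv W=\pv D$ in~(3)). Since the trivial semigroup lies in each of $\Lo I$, $\pv K$ and $\pv D$, the inclusion $\pv V\subseteq\pv W\malcev\pv V$ is automatic, so in each case the substance is the equivalence of ``$\mathcal V$ is closed under the prescribed kind of marked product'' with ``$\pv W\malcev\pv V\subseteq\pv V$''. Moreover,~(3) is obtained from~(2) by the left--right duality induced by word reversal, which interchanges prefix codes with suffix codes, left deterministic products with right deterministic ones, and $\pv K$ with $\pv D$; so it is enough to prove~(1) and~(2). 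Throughout one uses without comment that $\mathcal V$, being a variety of languages, is closed under Boolean operations, quotients and inverse homomorphisms.

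For the implication ``$\pv W\malcev\pv V\subseteq\pv V$ $\Rightarrow$ closure'', I would give an explicit recognizer construction. In case~(2), let $L,K\in\mathcal V$ with $La$ a prefix code. The prefix-code hypothesis guarantees that every word $w\in LaK$ has a \emph{unique} position at which the prefix read so far lies in $L$ and the current letter is $a$; hence one can build a deterministic automaton $\mathcal A$ for $LaK$ that runs the minimal automaton of $L$ (transition monoid $\mathrm{Synt}(L)\in\pv V$) until that position is reached, and thereafter records the image, under the syntactic morphism of $K$, of the suffix read since. Its transition monoid $T$ carries a surjective morphism onto the transition monoid $N$ of the coarser automaton that forgets the exact switch position; one has $N\in\pv V$ (it divides $\mathrm{Synt}(L)\times\mathrm{Synt}(K)$), and the crucial point is that each idempotent fibre of this morphism lies in $\pv K$ — which is precisely where the prefix-code hypothesis is spent, the $L$-component being ``frozen'' after the switch so that such an idempotent acts as a left zero on its fibre. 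Therefore $T\in\pv K\malcev\pv V=\pv V$ and $LaK\in\mathcal V$. Case~(1) is the two-sided analogue: the unique factorization $u=u_1au_2$ of a word of $LaK$ is read by a \emph{bimachine}; its two one-sided component computations lie in $\pv V$, while their joint local interaction lies in $\Lo I$, giving $T\in\Lo I\malcev\pv V=\pv V$.

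For the converse, ``closure $\Rightarrow\pv W\malcev\pv V\subseteq\pv V$'', I would invoke the wreath-product description of the Mal'cev product: a monoid in $\pv K\malcev\pv V$ (resp.~in $\Lo I\malcev\pv V$) divides a wreath product $R\wr N$ with $N\in\pv V$ and $R\in\pv K$ (resp.~$R\in\Lo I$). Given a homomorphism $\psi\colon A^*\to S$ into such a monoid, Straubing's wreath product principle writes each language $\psi^{-1}(s)$ as a Boolean combination of languages of the form $(\varphi\psi)^{-1}(P)\cap M$, where $\varphi\colon S\to N$ is the projection, $P\subseteq N$, and $M$ is read by the $R$-component along the computation driven by $N$. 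Since the $\pv K$-recognizable languages are exactly those that, beyond a bounded length, are determined by a prefix of bounded length, each such $M$ is obtained from $\pv V$-recognizable languages by finitely many left deterministic marked products and Boolean operations; as $\mathcal V$ is closed under these, $\psi^{-1}(s)\in\mathcal V$. Hence every language recognized by $S$ lies in $\mathcal V$, whence $S\in\pv V$. Case~(1) follows the same scheme with $\pv K$ replaced by $\Lo I$, whose recognizable languages are governed, under the analogous analysis, by unambiguous rather than one-sided marked products.

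The step I expect to be the main obstacle, in both directions, is matching the combinatorial determinism hypothesis exactly with the algebraic condition on the Mal'cev factor. In the first direction one must verify that the idempotent fibres genuinely land in $\pv K$ (resp.~$\pv D$, $\Lo I$) and not in some larger pseudovariety; this forces a careful choice of the states of $\mathcal A$ so that the ``frozen'' part behaves as required. In the converse direction the delicate step is the translation between $\pv K$- (resp.~$\Lo I$-) recognizability and iterated left deterministic (resp.~unambiguous) marked products, together with the bookkeeping needed to keep every auxiliary language inside $\mathcal V$ through the wreath-product decomposition. Case~(1) additionally carries the burden of being genuinely two-sided, which is where most of the technical effort would be concentrated.
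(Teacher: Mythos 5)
The paper does not actually prove this theorem: it is stated with citations to \cite{Pin:1980b,Pin:1986;bk,Pin&Straubing&Therien:1988,Straubing:1979a} and the survey \cite{Pin:1997}, so there is no internal proof to compare your attempt against. Your sketch does follow the general outline of those classical arguments (automaton construction plus relational-morphism analysis in one direction, a wreath-product-style decomposition of languages in the other), so it is at least aimed at the right target.

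That said, two steps of your proposal quietly absorb the real work. First, in the converse direction you assert that every monoid in $\pv K\malcev\pv V$ (resp.\ $\Lo I\malcev\pv V$) divides a wreath product $R\wr N$ with $R\in\pv K$ (resp.\ $R\in\Lo I$) and $N\in\pv V$. This is exactly the inclusion $\pv W\malcev\pv V\subseteq\pv W\ast\pv V$, which is not a free consequence of the definition of Mal'cev product; it is itself a nontrivial theorem that in the references is obtained via the Derived Category Theorem and the structure of locally trivial categories (and for $\Lo I$ this is precisely the technical heart of \cite{Pin&Straubing&Therien:1988}). Invoking it as a black box hides most of the content of the theorem. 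Second, in the forward direction, the claim that the idempotent fibres of your morphism $T\to N$ land in $\pv K$ (resp.\ $\pv D$, $\Lo I$) is the crux: your automaton $\mathcal A$ computes a pair $(\varphi_L(\text{prefix}),\varphi_K(\text{suffix-after-switch}))$ together with a flag for whether the switch has happened, and one has to check carefully that the chosen state set makes the frozen component genuinely act as a left (resp.\ right, two-sided) zero on each idempotent class, rather than merely eventually stabilizing. You flag these as ``the main obstacle,'' which is accurate — but as written, the proposal is an outline that relocates the theorem into two cited-but-not-proved ingredients rather than a self-contained argument.

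Since the paper itself defers to the literature here, the appropriate standard is whether your sketch faithfully describes that literature's strategy. It does, up to the caveats above; what it does not do is supply a proof.
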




On the other hand, pseudovarieties closed
under bidetermistic product
have no characterization
that, like in Theorem~\ref{t:characterization-of-left-right-deterministic-product.}, uses a Mal'cev product.

In this section, we have so far stayed in
the realm of $*$-languages, and in the corresponding
one of monoid pseudovarieties. But the definitions
and results  we reviewed have natural companions in the realms of $+$-varieties
and semigroup pseudovarieties.
A way to define these counterparts is
by restricting each language $L$ and $K$ to be either
a $+$-language or the language $\{1\}$.
We next see a concrete manifestation of this in the case of the bidetermistic product, the subject of attention in this paper.

\begin{Defi}\label{defi:V-sgp-bidet-closed}
  Let $\pv V$ be a semigroup pseudovariety,
  and $\mathcal V$ be the variety
  of $\pv V$-recognizable $+$-languages.
Then $\pv V$
is \emph{closed under bideterministic product}
if $LaK\in \mathcal V$ when $a$ is a letter and the next conditions are satisfied:
$L\in \mathcal V$ or $L=\{1\}$; $K\in \mathcal V$ or $K=\{1\}$; $La$ is a prefix code;
$aK$ is a suffix~code.
\end{Defi}

For $u\in A^*$, let $\be 1(u)$ be the first letter  of $u$ if $u\neq 1$, and $\be 1(1)=1$. Dually, $\te 1(u)$ is the last letter of $u$ if $u\neq 1$, and $\te 1(1)=1$. The maps $\be 1$ and $\te 1$
extend uniquely to continuous maps from $(\Om AS)^1$ into $A\cup\{1\}$.

\begin{Remark}\label{r:products-of-several-codes}
  The product of prefix codes is a prefix code.
  In particular, $Lu$ is a prefix code when
  $L$ is a prefix code, whenever $u\in A^*$.
  Dual remarks hold for suffix codes.
  This implies that,
  still assuming
  that $\pv V$ is closed under bideterministic product
  and that $\mathcal V$ is its corresponding variety of $+$-languages,
  if $L_1\,L_2,\ldots,L_n\in\mathcal V$,
  and $u_0,u_1,\ldots,u_n\in A^*$, with $u_i\neq 1$ when $i\neq 0$ and $i\neq n$,
  are such that
  $L_i\cdot \be 1(u_i)$ is a prefix code
  and $\te 1(u_{i-1})\cdot L_i$ is a suffix
  code for every $i\in \{1,\ldots,n\}$, 
  then $u_0L_1u_1L_2u_2\cdots u_{n-1}L_nu_n\in \mathcal V$.
\end{Remark}

\section{The Pin-Th\'erien expansion}
\label{sec:pin-ther-expans}

\subsection{The Pin-Th\'erien expansion of a finite monoid}
\label{sec:monoid-version}

In what follows, we consider
an onto homomorphism of monoids \mbox{$\varphi\colon A^*\to M$} such that $M$
is finite. The finiteness of $M$ is
frequently not important (sometimes it is, like in Theorem~\ref{t:injective-restriction-of-the-canonical-projection}), but that is the framework in which we are interested, and it is a general assumption also made in~\cite{Pin&Therien:1993}.

\begin{Defi}
  A \emph{good factorization} (with respect to $\varphi$) is
  a triple $(x_0,a,x_1)$ of $A^*\times A\times A^*$
such that $\varphi(x_0a)<_{\R}\varphi(x_0)$
and \mbox{$\varphi(ax_1)<_{\L}\varphi(x_1)$}.
  Two good factorizations $(x_0,a,x_1)$ and $(y_0,b,y_1)$
are said to be \emph{equivalent} when $\varphi(x_0)=\varphi(y_0)$, $a=b$
and $\varphi(x_1)=\varphi(y_1)$. A \emph{good factorization of $x\in A^*$}
is a good factorization $(x_0,a,x_1)$
such that $x=x_0\cdot a\cdot x_1$.
\end{Defi}

It is shown in~\cite{Branco:2006} that, for all $x,y\in A^*$, every good factorization of $x$ is equivalent to at most one good factorization of $y$.

\begin{Defi}\label{def:congruence}
  Let $\sim_{\varphi}$ be the relation on $A^*$ defined by $x\sim_{\varphi} y$
if and only if the following conditions are satisfied:
\begin{enumerate}
\item $\varphi(x)=\varphi(y)$;
\item each good factorization of $x$ is equivalent to a good factorization
  of~$y$;
\item each good factorization of $y$ is equivalent to a good factorization
  of~$x$.
\end{enumerate}
\end{Defi}

The relation~$\sim_{\varphi}$ is a congruence~\cite{Pin&Therien:1993}.

\begin{Defi}
Denote $A^*/{\sim_{\varphi}}$ by $M_\varphi$,
and by $\varphi_{\pv {bd}}$ the corresponding
quotient morphism from $A^*$ onto $A^*/{\sim_{\varphi}}$.
We say that $M_\varphi$ is the \emph{Pin-Th\'erien expansion of $M$
with respect to~$\varphi$}.
\end{Defi}

Note that there is a unique
onto monoid homomorphism
$p_{\varphi}\colon M_\varphi\to M$
such that $\varphi=p_\varphi\circ\varphi_{\pv{bd}}$.
Note also that the finiteness of $M$
guarantees the finiteness of $M_\varphi$~\cite{Pin&Therien:1993}.
The correspondence $\varphi\mapsto\varphi_{\pv {bd}}$
is indeed an expansion cut to generators in the sense of Birget and Rhodes,
as shown in~\cite{Pin&Therien:1993}.
In fact, it is proved in~\cite{Branco:2006} that it is an expansion
in a broader sense.





\begin{Defi}\label{def:Vbd}
  For a monoid pseudovariety $\pv V$,
  denote by $\pv V_{\pv {bd}}$ the
monoid pseudovariety generated by Pin-Th\'erien expansions of monoids in~$\pv V$.
We say that $\pv V$ is \emph{closed under Pin-Th\'erien expansion}
when~$\pv V=\pv V_{\pv {bd}}$.
\end{Defi}

\begin{Thm}[{\cite[Corollary~4.5]{Pin&Therien:1993}}]\label{t:V-Vbd-monoid-perspective}
  Let\/ $\pv V$ be a monoid pseudovariety. Then
  $\pv V=\pv V_{\pv {bd}}$
  if and only if $\pv V$ is closed under bideterministic product.
\end{Thm}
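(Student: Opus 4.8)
The plan is to prove the two implications separately, in both cases working language-theoretically via the variety $\mathcal V$ attached to $\pv V$ and the Pin-Th\'erien expansion $M_\varphi$. For the implication ``$\pv V$ closed under bideterministic product $\Rightarrow$ $\pv V=\pv V_{\pv{bd}}$'', I would take an onto homomorphism $\varphi\colon A^*\to M$ with $M\in\pv V$ and show $M_\varphi\in\pv V$, which suffices since $\pv V_{\pv{bd}}$ is \emph{generated} by such expansions. Since $M_\varphi$ is $A$-generated and finite, it is enough to show that for each $m\in M_\varphi$ the language $\varphi_{\pv{bd}}^{-1}(m)\subseteq A^*$ lies in $\mathcal V$. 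Each $\sim_\varphi$-class is, by Definition~\ref{def:congruence}, a finite Boolean combination of the languages $\varphi^{-1}(n)$ (which are in $\mathcal V$ since $M\in\pv V$) and of the languages describing the ``good-factorization type'' of a word; the latter are, for a fixed triple $(n_0,a,n_1)\in M\times A\times M$, sets of the form $\{x : x \text{ has a good factorization equivalent to } (n_0,a,n_1)\}$, which one writes as a finite union of marked products $L_{n_0}\,a\,K_{n_1}$ where $L_{n_0}$, $K_{n_1}$ are suitable $\mathcal V$-languages chosen so that $L_{n_0}a$ is a prefix code and $aK_{n_1}$ is a suffix code — this is exactly where the defining $\R$- and $\L$-strictness in the notion of good factorization is used. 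Closure of $\mathcal V$ under bideterministic product, together with closure of varieties under Boolean operations and inverse images of letters, then gives $\varphi_{\pv{bd}}^{-1}(m)\in\mathcal V$, hence $M_\varphi\in\pv V$.

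For the converse, assume $\pv V=\pv V_{\pv{bd}}$ and let $LaK$ be a bideterministic product with $L,K\in\mathcal V$, $La$ a prefix code, $aK$ a suffix code; I would exhibit a monoid in $\pv V$ recognizing $LaK$. Take $\psi\colon A^*\to M$ onto, with $M\in\pv V$, recognizing both $L$ and $K$; pass to the Pin-Th\'erien expansion $\psi_{\pv{bd}}\colon A^*\to M_\psi$, which is onto $M_\psi\in\pv V_{\pv{bd}}=\pv V$. The point is that $M_\psi$ recognizes $LaK$: if $x\sim_\psi x'$ and $x=x_0ax_1$ with $x_0\in L$, $x_1\in K$, then the code hypotheses force $(x_0,a,x_1)$ to be a good factorization of $x$ (the prefix-code property of $La$ gives $\psi(x_0a)<_\R\psi(x_0)$, dually for the suffix side), so $x'$ has an equivalent good factorization $(x_0',a,x_1')$, and since $\psi$ recognizes $L$ and $K$ and $\psi(x_0')=\psi(x_0)$, $\psi(x_1')=\psi(x_1)$, we get $x_0'\in L$, $x_1'\in K$, whence $x'\in LaK$. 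Thus $LaK$ is a union of $\sim_\psi$-classes, i.e.\ $LaK\in\mathcal V$.

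I expect the main obstacle to be the first implication, specifically the bookkeeping needed to express a single $\sim_\varphi$-class as a Boolean combination of bideterministically-built languages while keeping the prefix/suffix-code side conditions visibly satisfied at every marked product that appears; one must be careful that ``$x$ has a good factorization equivalent to $(n_0,a,n_1)$'' really does decompose as a \emph{finite} union $\bigcup L_{n_0}\,a\,K_{n_1}$ with each factor coded correctly, and that the finitely many good-factorization types of a word (bounded using the finiteness of $M$ and the near-uniqueness result of~\cite{Branco:2006} quoted before Definition~\ref{def:congruence}) can all be handled simultaneously. Once the languages $L_{n_0}$ and $K_{n_1}$ are identified — essentially as $\psi$-preimages intersected with the appropriate prefix/suffix conditions, which are themselves $\mathcal V$-languages because $\pv V$ contains $\pv{Sl}$ is not even needed here, only that codes are recognizable — the rest is routine closure manipulation. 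The converse, by contrast, is essentially immediate once the equivalence between the code hypotheses and the good-factorization condition is spelled out.
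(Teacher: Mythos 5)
Your argument is correct and follows essentially the same route as the source the paper cites for this statement (Pin--Th\'erien, Corollary~4.5), which the paper does not reprove. The obstacle you flag in the forward direction in fact dissolves cleanly: for a fixed triple $(n_0,a,n_1)$ with $n_0a<_{\R}n_0$ and $an_1<_{\L}n_1$, the set of words admitting a good factorization equivalent to $(n_0,a,n_1)$ is literally the \emph{single} marked product $\varphi^{-1}(n_0)\,a\,\varphi^{-1}(n_1)$ (no finite union is needed), and $\varphi^{-1}(n_0)a$ is automatically a prefix code precisely because of the strict $\R$-descent --- if $ua$ were a proper prefix of $va$ with $\varphi(u)=\varphi(v)=n_0$ one would write $v=uas$ and deduce $n_0=n_0a\varphi(s)$, contradicting $n_0a<_{\R}n_0$ --- with the dual argument giving the suffix-code condition; the remainder is Boolean closure of $\mathcal V$, exactly as you say.
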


The intersection of a family of monoid pseudovarieties closed under bideterministic
product is a pseudovariety closed under bideterministic product. Hence,
for each monoid pseudovariety $\pv V$ we may consider
the least monoid pseudovariety $\overline{\pv V}$
closed under bideterministic product and containing~$\pv V$.

\begin{Remark}\label{r:tower-of-bds}
  Consider the chain
    of pseudovarieties $(\pv V_n)_{n\geq 0}$
    recursively defined by
    $\pv V_0=\pv V$ and $\pv V_n=(\pv V_{n-1})_{\pv {bd}}$ for each $n\geq 1$.
    Then $\bigcup_{n\geq 0}\pv V_n$
    is closed for the bideterministic product
    (cf.~Theorem~\ref{t:V-Vbd-monoid-perspective})
    and $\overline{\pv V}=\bigcup_{n\geq 0}\pv V_n$.
  \end{Remark}

  \begin{Example}\label{eg:expansion-of-a-group}
    It is easy to see that
    the Pin-Th\'erien expansion of a
    finite group~$G$, viewed as a monoid, is $G$ itself.
    Hence, viewing a pseudovariety of groups~$\pv H$ as a monoid pseudovariety,
    one has $\pv H=\pv H_{\pv {bd}}=\overline{\pv H}$.
  \end{Example}
  
\begin{Example}\label{eg:closure-of-Sl-and-others}
  Let $\pv {Ecom}$ be the pseudovariety of monoids whose idempotents commute.  In~\cite{Pin&Therien:1993} it is shown that
  $\overline{\pv {Sl}\vee \pv H}=\pv {DH}\cap\pv {ECom}$,
  whenever $\pv H$ is a pseudovariety
  of groups. In particular, the equality
  $\overline{\pv {Sl}}=\pv J\cap \pv {ECom}$ holds.
\end{Example}

For a pseudovariety $\pv V$ of monoids, we denote by $\pv{RV}$ the class of finite monoids whose set of regular elements is a submonoid in $\pv V$. When $\pv V\subseteq \pv {CR}$,  $\pv {RV}$ is a pseudovariety, where  
$\pv {CR}$ is the pseudovariety of all finite
monoids which are completely regular.

\begin{Example}\label{eg:closure-of-CR-and-others}
  If $\pv V\subseteq \pv {CR}$, then
  $    \overline{\pv V}=\pv{RV}\cap \op x^\omega(xy)^\omega
        =(xy)^\omega=(xy)^\omega y^\omega\cl$.
        This formula was deduced in~\cite{Jiang:2000}
        from its special case, proved in~\cite{Branco:1994},
        in which~$\pv V\subseteq \pv {B}$.
\end{Example}

\subsection{The Pin-Th\'erien expansion of a finite semigroup}

In the following lines, we  define a semigroup version of the Pin-Th\'erien expansion.
Let $\varphi\colon A^+\to S$ be an onto homomorphism of
semigroups, with $S$ finite. Consider
the monoid homomorphism $\varphi^I\colon A^*\to S^I$ such that
$\varphi ^I(u)=\varphi(u)$, when $u\neq 1$.
Then the empty word $1$ of $A^*$
is the unique element of the $\sim_{\varphi^I}$-class of $1$.
Therefore,
if $I'=(\varphi^I)_{\pv {bd}}(1)$ is the identity of $(S^I)_{\varphi^I}$,
then the semigroup $S_\varphi=(\varphi^I)_{\pv {bd}}(A^+)$
satisfies $S_\varphi=(S^I)_{\varphi^I}\setminus\{I'\}$,
and so
\begin{equation*}
  (S_\varphi)^I=(S^I)_{\varphi^I},
\end{equation*}
where we are making the identification $I=I'$.
We may then consider the semigroup homomorphism
$\varphi_{\pv{bd}}\colon A^+\to S_\varphi$
obtained by the restriction of
$(\varphi^I)_{\pv{bd}}$ to $A^+$.
Note that
\begin{equation*}
  (\varphi_{\pv{bd}})^I=(\varphi^I)_{\pv{bd}}.
\end{equation*}
The semigroup $S_\varphi$ is the \emph{(semigroup) Pin-Th\'erien expansion
of~$S$ with respect to $\varphi$}. We then let $\sim_{\varphi}$ be the kernel of
$\varphi_{\pv{bd}}$. We also denote by $p_\varphi$
the unique onto semigroup homomorphism
$p_{\varphi}\colon S_\varphi\to S$
such that $\varphi=p_\varphi\circ\varphi_{\pv{bd}}$.
Note also that, for such a $\varphi$, we have
$(p_\varphi)^I=p_{\varphi^I}$.

\begin{Remark}
  The expansion $\varphi_{\pv{bd}}\colon A^+\to S_{\pv{bd}}$
  may equivalently be defined by adapting
  the definitions given in Subsection~\ref{sec:monoid-version},
  by letting $(x_0,a,x_1)$
  be a good factorization of $x_0ax_1\in A^+$ whenever
  $\varphi (x_0a)<_{\mathcal{R}}\varphi^I(x_0)$
  and  $\varphi (ax_1)<_{\mathcal{L}}\varphi^I(x_1)$.
\end{Remark}


For a pseudovariety of monoids $\pv V$, one
denotes by $\pv V_{\pv S}$ the least pseudovariety
of semigroups containing $\pv V$.
It is well known that $S\in\pv V_{\pv S}$ if and only if
$S^1\in\pv V$. Moreover, if $\pv V$ contains $\pv {Sl}$,
then $S\in\pv V_{\pv S}$ if and only if
$S^I\in\pv V$, a fact that we shall use
in the proof of the next proposition.

\begin{Prop}\label{p:Vbd-semigroup-monoid-version}
  If $\pv V$ is a pseudovariety of monoids
  containing $\pv {Sl}$,
  then the equality $(\pv V_{\pv S})_{\pv {bd}}=(\pv V_{\pv {bd}})_{\pv S}$
  holds.
\end{Prop}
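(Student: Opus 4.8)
The plan is to prove the equality $(\pv V_{\pv S})_{\pv {bd}}=(\pv V_{\pv {bd}})_{\pv S}$ by a double inclusion, using at each step the translation between the semigroup Pin-Th\'erien expansion and its monoidal counterpart provided by the identities $(S_\varphi)^I=(S^I)_{\varphi^I}$ and $(\varphi_{\pv{bd}})^I=(\varphi^I)_{\pv{bd}}$ established just above, together with the characterization $S\in\pv W_{\pv S}\iff S^I\in\pv W$ valid for monoid pseudovarieties $\pv W$ containing $\pv{Sl}$ (note that both $\pv V_{\pv S}$ and, since $\pv{Sl}\subseteq\pv V\subseteq\pv V_{\pv{bd}}$, the pseudovariety $\pv V_{\pv{bd}}$ contain $\pv{Sl}$, so this criterion applies on both sides).

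For the inclusion $(\pv V_{\pv{bd}})_{\pv S}\subseteq(\pv V_{\pv S})_{\pv{bd}}$, I would first check that $(\pv V_{\pv S})_{\pv{bd}}$ is a semigroup pseudovariety containing $\pv V_{\pv{bd}}$, which suffices since $(\pv V_{\pv{bd}})_{\pv S}$ is the least such. Containment of the generators amounts to: if $M\in\pv V_{\pv{bd}}$, then $M\in(\pv V_{\pv S})_{\pv{bd}}$. By definition of $\pv V_{\pv{bd}}$ as generated by Pin-Th\'erien expansions of monoids in $\pv V$, and since pseudovarieties are closed under the relevant operators, it is enough to treat $M=N_\psi$ with $N\in\pv V$ and $\psi\colon A^*\to N$ onto. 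Choosing a semigroup presentation, one realizes $N_\psi$ as $(N_\psi)=(S_\varphi)^I$-related data for a suitable surjection $\varphi\colon A^+\to S$ with $S^I=N$ (so $S\in\pv V_{\pv S}$); then $S_\varphi\in(\pv V_{\pv S})_{\pv{bd}}$ and, via $(S_\varphi)^I=(S^I)_{\varphi^I}=N_\psi$ (taking $\psi=\varphi^I$), we get $N_\psi=(S_\varphi)^I\in((\pv V_{\pv S})_{\pv{bd}})_{\pv S}\cap\text{monoids}$; since $(\pv V_{\pv S})_{\pv{bd}}$ already contains $\pv{Sl}$, the criterion $(S_\varphi)^I\in\pv W\iff S_\varphi\in\pv W_{\pv S}$ is not quite what is needed here — rather I want $M^1=M\in\pv W\iff M\in\pv W_{\pv S}$, which holds because $M$ is a monoid. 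So $N_\psi\in(\pv V_{\pv S})_{\pv{bd}}$, as desired. The only subtlety is arranging, for a given onto $\psi\colon A^*\to N$, a semigroup surjection $\varphi\colon A^+\to S$ with $\varphi^I=\psi$: take $S=\psi(A^+)$ (a subsemigroup of $N$, which lies in $\pv V_{\pv S}$) and $\varphi=\psi|_{A^+}$; then $S^I=N$ precisely because $1\in N$ is the unique $\sim$-preimage situation — here it is immediate since $N$ is a monoid and $\psi$ is onto, so $S^I=\psi(A^+)\cup\{1\}=N$.

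For the reverse inclusion $(\pv V_{\pv S})_{\pv{bd}}\subseteq(\pv V_{\pv{bd}})_{\pv S}$, I would again argue that $(\pv V_{\pv{bd}})_{\pv S}$ is a semigroup pseudovariety containing the generators of $(\pv V_{\pv S})_{\pv{bd}}$, namely the semigroup Pin-Th\'erien expansions $S_\varphi$ with $S\in\pv V_{\pv S}$ and $\varphi\colon A^+\to S$ onto. Given such an $S$, we have $S^I\in\pv V$ (using $\pv{Sl}\subseteq\pv V$ and the stated criterion). Form $\varphi^I\colon A^*\to S^I$; then $(S^I)_{\varphi^I}\in\pv V_{\pv{bd}}$ by definition. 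Now $(S_\varphi)^I=(S^I)_{\varphi^I}\in\pv V_{\pv{bd}}$, and since $\pv V_{\pv{bd}}\supseteq\pv{Sl}$, the criterion gives $S_\varphi\in(\pv V_{\pv{bd}})_{\pv S}$. This completes the inclusion and hence the proof.

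The main obstacle I anticipate is purely bookkeeping rather than conceptual: making sure that the passage between $\varphi$ and $\varphi^I$ genuinely carries expansions to expansions as the already-quoted identities assert, and that in the direction $(\pv V_{\pv{bd}})_{\pv S}\subseteq(\pv V_{\pv S})_{\pv{bd}}$ the monoid $M\in\pv V_{\pv{bd}}$ one must place inside $(\pv V_{\pv S})_{\pv{bd}}$ is reached from an \emph{onto} semigroup homomorphism with the right domain $A^+$ — one must not inadvertently need an expansion of $M$ itself but only of monoids $N\in\pv V$, so the reduction to generators of $\pv V_{\pv{bd}}$ must be invoked before translating to the semigroup side. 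Once these translations are lined up, each inclusion is a two-line application of the criterion $S\in\pv W_{\pv S}\iff S^I\in\pv W$ together with closure of pseudovarieties under the expansion-generating operators.
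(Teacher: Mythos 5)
Your treatment of the inclusion $(\pv V_{\pv S})_{\pv{bd}}\subseteq(\pv V_{\pv{bd}})_{\pv S}$ is correct and matches the paper exactly: from $\varphi\colon A^+\to S$ onto with $S\in\pv V_{\pv S}$ one gets $S^I\in\pv V$, hence $(S^I)_{\varphi^I}\in\pv V_{\pv{bd}}$, and the identity $(S_\varphi)^I=(S^I)_{\varphi^I}$ together with the criterion ``$T\in\pv W_{\pv S}\iff T^I\in\pv W$'' gives $S_\varphi\in(\pv V_{\pv{bd}})_{\pv S}$.

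The other inclusion, however, has two genuine gaps. First, the proposed reduction $S=\psi(A^+)$, $\varphi=\psi|_{A^+}$, with the claim $S^I=N$, only works when $\psi^{-1}(1_N)=\{1\}$. If a nonempty word maps to $1_N$, then $S=\psi(A^+)=N$, and $S^I=N\uplus\{I\}$ is a strictly larger monoid with a \emph{new} adjoined identity, so $S^I\neq N$ and $\varphi^I\neq\psi$. The set equality $\psi(A^+)\cup\{1_N\}=N$ is true but irrelevant, since the $I$ in $S^I$ is adjoined fresh and is not $1_N$ unless $1_N\notin S$. The paper handles this case explicitly: when $\psi^{-1}(1_M)\neq\{1\}$ it passes to the surjection $\Psi\colon A^*\to M^I$ with $\Psi^{-1}(I)=\{1\}$, uses that $M^I\in\pv V$ (this is where $\pv{Sl}\subseteq\pv V$ is used), and then exploits that the construction is an expansion cut to generators to pull back along $M^I\twoheadrightarrow M$.

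Second, and more fundamentally, even in the case $\psi^{-1}(1_N)=\{1\}$ the step from ``$S_\varphi\in(\pv V_{\pv S})_{\pv{bd}}$'' to ``$(S_\varphi)^I=N_\psi\in(\pv V_{\pv S})_{\pv{bd}}$'' is not justified. Adjoining an identity is not one of the closure operations of a pseudovariety of semigroups; knowing that a semigroup pseudovariety $\pv W$ is closed under $T\mapsto T^1$ is precisely the statement that $\pv W$ is monoidal, and that $(\pv V_{\pv S})_{\pv{bd}}$ is monoidal is essentially what the proposition is asserting, so this reasoning is circular. Your own text at this point (``$N_\psi=(S_\varphi)^I\in((\pv V_{\pv S})_{\pv{bd}})_{\pv S}\cap\text{monoids}$\dots'') applies $(\cdot)_{\pv S}$ to a semigroup pseudovariety, which is not defined, and the criterion you then invoke concerns membership of a monoid in a \emph{monoid} pseudovariety, not in the semigroup pseudovariety $(\pv V_{\pv S})_{\pv{bd}}$. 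The paper avoids this entirely by never passing through $(\cdot)^I$: it adjoins a fresh letter $b$ to $A$ mapped to $1_M$, forming $\psi_b\colon B^+\to M$ with $M$ viewed as a semigroup in $\pv V_{\pv S}$, and then shows by a careful case analysis on good factorizations that $M_\psi$ (as a semigroup) is a homomorphic image of the genuine semigroup Pin-Th\'erien expansion $M_{\psi_b}$. That direct surjection is the content of the proof, and it is precisely the piece your argument is missing.
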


\begin{proof}
  Let $\psi\colon A^*\to M$ be a surjective
  homomorphism onto a monoid $M$ of~$\pv V$. The
  proof of $(\pv V_{\pv S})_{\pv {bd}}\supseteq \pv V_{\pv {bd}}$,
  equivalently of
  $(\pv V_{\pv S})_{\pv {bd}}\supseteq (\pv V_{\pv {bd}})_{\pv S}$,
  is concluded once we show
  that $M_{\psi}$, viewed as a semigroup, belongs to $(\pv V_{\pv S})_{\pv {bd}}$.

  Denote by $1_M$ the identity of $M$. We consider two cases.

  Suppose first that $\psi^{-1}(1_M)=\{1\}$. Let $B=A\uplus\{b\}$, where $b$ is a new letter not in $A$.
  Consider the homomorphism $\lambda_b\colon B^*\to A^*$
  such that
  $\lambda_b(b)=1$ and $\lambda_b(a)=a$ for every $a\in A$.
  Denote $\lambda_b(u)$ by $\overline{u}$.
  Consider also
  the semigroup homomorphism
  $\psi_b\colon B^+\to M$
  such that
  $\psi_b(b)=1_M$ and $\psi_b(a)=\psi(a)$.
    We claim that, in the category of semigroups,
  $M_\psi$
  is a homomorphic
  image of $M_{\psi_b}$.
  For that purpose, we collect the following series of facts:
  \begin{enumerate}
  \item We have $b^+=\psi_b^{-1}(1_M)$ and
    $\{1\}=\psi^{-1}(1_M)$, thus $b^+\setminus \{b\}$
    and $\{b\}$
  are $\sim_{\psi_b}$-classes, 
  and $\{1\}$ is  a $\sim_\psi$-class.
 \item If $(x_0,a,x_1)\in B^*\times B\times B^*$ is a good factorization
   with respect to~$\psi_b$ of an element of $B^+\setminus b$, then $a\neq b$.
 \item Let  $(x_0,a,x_1)\in B^*\times A\times B^*$. Then $(x_0,a,x_1)$
   is a good factorization with respect to $(\psi_b)^I$ if and only if
   $(\overline{x_0},a,\overline{x_1})$ is a good
   factorization with respect to $\psi$.
   This is trus because $(\psi_b)^I(x_0)=\psi (\overline{x_0})$
   unless $x_0=1$, in which case $(\psi_b)^I(1\cdot a)<_{\R}(\psi_b)^I(1)=I$
   and $\psi(1\cdot a)<_{\R}\psi(1)=1_M$ hold,
   and because of the dual phenomena concerning
   the third component of the factorizations.
 \end{enumerate}
 Taking into account the partition
 $B^+=\{b\}\uplus (b^+\setminus\{b\})\uplus B^*AB^*$, it follows that
  the map from
  $M_{\psi_b}=B^+/{\sim_{\psi_b}}$ to $M_\psi=A^*/{\sim_{\psi}}$,
  sending $u/{\sim_{\psi_b}}$ to $\overline{u}/{\sim_{\psi}}$,
  is a well defined onto homomorphism of semigroups, thus establishing
  the~claim.
   As $M\in \pv V_{\pv S}$ and $M_{\psi_b}\in(\pv V_{\pv S})_{\pv {bd}}$,
   it follows
   that
   $M_\psi\in (\pv V_{\pv S})_{\pv {bd}}$.

  Suppose now that $\psi^{-1}(1_M)\neq\{1\}$.
  Then $\psi(A^+)=M$.
  Let $\Psi$ be the monoid
  homomorphism from $A^*$ onto $M^I$
  such that $\Psi(u)=\psi(u)$
  for every $u\in A^+$.
  Note that $\Psi$ is onto and that
  $\Psi^{-1}(I)=\{1\}$.
  Moreover, since $\pv {Sl}\subseteq \pv V$,
  the monoid $M^I$ belongs to $\pv V$.
  Hence, by the already proved case, we have $(M^I)_\Psi\in (\pv V_{\pv S})_{\pv {bd}}$.
  Let $\pi$ be the onto monoid homomorphism
  from $M^I$ to $M$ whose restriction
  to $M$ is the identity. Then $\pi\circ\Psi=\psi$.
  Because we are dealing with an expansion cut to generators,
  this implies that $M_\psi$ is a homomorphic image of $(M^I)_\Psi$,
  whence $M_\psi\in (\pv V_{\pv S})_{\pv {bd}}$.

  Finally, we show that
  $(\pv V_{\pv S})_{\pv {bd}}
  \subseteq
  (\pv V_{\pv {bd}})_{\pv S}$.
  Let $S\in \pv V_{\pv S}$,
  and let $\varphi\colon A^+\to S$ be an onto homomorphism.
  Since $\pv V$ contains~$\pv {Sl}$,
  we have $S^I\in \pv V$,
  whence $(S^I)_{\varphi^I}\in \pv V_{\pv {bd}}$.
  But $(S^I)_{\varphi^I}=(S_{\varphi})^I$,
  whence $S_\varphi\in (\pv V_{\pv {bd}})_{\pv S}$.
\end{proof}

We omit the proof
of the following
theorem,
since it can be made by just imitating
the proof in~\cite{Pin&Therien:1993} of its monoid analog (Theorem~\ref{t:V-Vbd-monoid-perspective}).

\begin{Thm}\label{t:semigroup-VVbd}
  A pseudovariety of semigroups $\pv V$
  satisfies~$\pv V=\pv V_{\pv{bd}}$ if and only
  if\/ $\pv V$ is closed under bideterministic product.
\end{Thm}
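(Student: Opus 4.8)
The plan is to reduce the semigroup statement to its monoid counterpart (Theorem~\ref{t:V-Vbd-monoid-perspective}) using the translation machinery of Definition~\ref{defi:V-sgp-bidet-closed} and Proposition~\ref{p:Vbd-semigroup-monoid-version}. First I would observe that, by the definition of $\pv V_{\pv{bd}}$ in the semigroup setting and the identity $(S_\varphi)^I=(S^I)_{\varphi^I}$ established above, one has $\pv V=\pv V_{\pv{bd}}$ for a semigroup pseudovariety $\pv V$ if and only if $\pv V$ is closed under taking semigroup Pin-Th\'erien expansions of its members. I would then aim to prove both implications by a direct adaptation of the monoid argument in~\cite{Pin&Therien:1993}, working with $+$-languages and the restriction convention $L,K\in\mathcal V$ or $L=\{1\}$ (resp.\ $K=\{1\}$) from Definition~\ref{defi:V-sgp-bidet-closed}.

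For the implication ``$\pv V=\pv V_{\pv{bd}}$ $\Rightarrow$ closed under bideterministic product'': given $L,K\in\mathcal V$ (or equal to $\{1\}$) with $La$ a prefix code and $aK$ a suffix code, choose an onto homomorphism $\varphi\colon A^+\to S$ with $S\in\pv V$ recognizing both $L$ and $K$ (taking a direct product of recognizers and adjoining $I$ as needed to handle the $\{1\}$ case via $\varphi^I$). The point is that $LaK$ is recognized by the Pin-Th\'erien expansion $\varphi_{\pv{bd}}\colon A^+\to S_\varphi$: the coding conditions on $La$ and $aK$ guarantee that membership of a word $u=u_1au_2$ in $LaK$ with $u_1\in L$, $u_2\in K$ can be detected from the good factorizations of $u$, which are exactly the data that $\sim_\varphi$ refines. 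Since $S_\varphi\in\pv V_{\pv{bd}}=\pv V$, we get $LaK\in\mathcal V$. For the converse implication, I would show that every semigroup Pin-Th\'erien expansion $S_\varphi$ lies in $\pv V$ when $\pv V$ is closed under bideterministic product, by exhibiting the separating homomorphisms $S_\varphi\to T$ (with $T\in\pv V$) needed to witness residual membership: distinct $\sim_\varphi$-classes are separated either by $\varphi$ itself or by a ``marked product'' language of the form $u_0L_1u_1\cdots u_{n-1}L_nu_n$ built from $\varphi^{-1}$-classes and the good factorizations distinguishing the two words, and Remark~\ref{r:products-of-several-codes} shows such languages are in $\mathcal V$; their syntactic semigroups are then in $\pv V$.

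The main obstacle I anticipate is the bookkeeping at the ``monoid boundary'': the passage between $\varphi\colon A^+\to S$ and $\varphi^I\colon A^*\to S^I$, the handling of the $L=\{1\}$ or $K=\{1\}$ degenerate cases in Definition~\ref{defi:V-sgp-bidet-closed}, and ensuring that the $+$-language $LaK$ never needs the empty word (it does not, since $a$ is a letter). One must check that a separating language arising from a good factorization of $x_0ax_1\in A^+$ — where possibly $x_0=1$ or $x_1=1$ — still translates into a legitimate $+$-language recognized in $\pv V$, using the extended maps $\be 1,\te 1$ on $(\Om AS)^1$ and the semigroup good-factorization convention from the Remark following Proposition~\ref{p:Vbd-semigroup-monoid-version}. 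Once this dictionary is set up carefully, the combinatorial heart of the argument is identical to~\cite{Pin&Therien:1993}, which is precisely why the authors state they omit the proof; a rigorous writeup would nonetheless need to verify that Proposition~\ref{p:Vbd-semigroup-monoid-version} can be invoked to transfer the equivalence, giving the alternative route $\pv V=\pv V_{\pv{bd}}\iff (\pv V_{\text{mon}})=(\pv V_{\text{mon}})_{\pv{bd}}$ where $\pv V_{\text{mon}}$ is the monoid pseudovariety generated by the $S^I$ with $S\in\pv V$, reducing everything to Theorem~\ref{t:V-Vbd-monoid-perspective}.
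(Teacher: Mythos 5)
Your main approach --- directly imitating the monoid argument from Pin--Th\'erien with careful bookkeeping at the $\{1\}$-boundary (the $L=\{1\}$, $K=\{1\}$ degenerate cases and the passage between $\varphi$ and $\varphi^I$) --- is exactly what the paper intends: the authors explicitly say they omit the proof because ``it can be made by just imitating the proof in~\cite{Pin&Therien:1993} of its monoid analog,'' so you have correctly identified both the strategy and the points requiring care. One caution about the alternative route you float at the end: reducing to Theorem~\ref{t:V-Vbd-monoid-perspective} via Proposition~\ref{p:Vbd-semigroup-monoid-version} cannot by itself replace the direct adaptation, since that proposition is only proved for monoid pseudovarieties containing $\pv{Sl}$ (equivalently, monoidal semigroup pseudovarieties above $\pv{Sl}$), whereas Theorem~\ref{t:semigroup-VVbd} is asserted for arbitrary semigroup pseudovarieties, and Example~\ref{eg:semigroup-PT-expansion-of-I} (where $\overline{\pv I}=\pv N$ as a semigroup pseudovariety but $\overline{\pv I}=\pv I$ as a monoid pseudovariety) shows the monoid--semigroup translation genuinely breaks down below $\pv{Sl}$.
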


As for monoids, let $\overline{\pv V}$ be
the least semigroup pseudovariety, containing
the semigroup pseudovariety $\pv V$,
which is closed under bideterministic product.
Note that Remark~\ref{r:tower-of-bds}
also holds for pseudovarieties of
semigroups. This fact
and Proposition~\ref{p:Vbd-semigroup-monoid-version}
yeld the following corollary.

\begin{Cor}\label{c:overline-commutes}
  If $\pv V$ is a
  monoid pseudovariety containing $\pv {Sl}$,
  then the equality $\overline{\pv V_{\pv S}}
  =\overline{\pv V}_{\pv S}$
  holds.\qed
\end{Cor}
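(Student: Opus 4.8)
The plan is to combine the recursive ``tower'' description of $\overline{(\cdot)}$ from Remark~\ref{r:tower-of-bds} with the single-step commutation already established in Proposition~\ref{p:Vbd-semigroup-monoid-version}. The key observation is that to obtain $\overline{\pv W}$ for a pseudovariety $\pv W$ one iterates the operator $\pv W\mapsto\pv W_{\pv{bd}}$ and takes the union; so if I can show that each stage of the semigroup tower over $\pv V_{\pv S}$ coincides with the $(\cdot)_{\pv S}$-image of the corresponding stage of the monoid tower over $\pv V$, the result follows by taking unions.

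First I would set $\pv V_0=\pv V$ and $\pv V_n=(\pv V_{n-1})_{\pv{bd}}$ as in Remark~\ref{r:tower-of-bds}, so that $\overline{\pv V}=\bigcup_{n\ge 0}\pv V_n$; likewise set $\pv W_0=\pv V_{\pv S}$ and $\pv W_n=(\pv W_{n-1})_{\pv{bd}}$, so that $\overline{\pv V_{\pv S}}=\bigcup_{n\ge 0}\pv W_n$ (the remark holding for semigroup pseudovarieties, as noted just before the corollary). The claim is that $\pv W_n=(\pv V_n)_{\pv S}$ for all $n\ge 0$. This I would prove by induction on $n$. The base case $n=0$ is the definition. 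For the inductive step, assuming $\pv W_{n-1}=(\pv V_{n-1})_{\pv S}$, I need $\pv W_n=(\pv W_{n-1})_{\pv{bd}}=((\pv V_{n-1})_{\pv S})_{\pv{bd}}$ to equal $(\pv V_n)_{\pv S}=((\pv V_{n-1})_{\pv{bd}})_{\pv S}$; this is exactly the content of Proposition~\ref{p:Vbd-semigroup-monoid-version} applied to the monoid pseudovariety $\pv V_{n-1}$. The one thing to verify is that Proposition~\ref{p:Vbd-semigroup-monoid-version} applies, i.e.\ that each $\pv V_{n-1}$ still contains $\pv{Sl}$: since $\pv V\supseteq\pv{Sl}$ by hypothesis and $\pv V_{n-1}\supseteq\pv V_0=\pv V$ by Remark~\ref{r:tower-of-bds}, this holds. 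Having established $\pv W_n=(\pv V_n)_{\pv S}$ for all $n$, I would finish by writing
\begin{equation*}
  \overline{\pv V_{\pv S}}=\bigcup_{n\ge 0}\pv W_n=\bigcup_{n\ge 0}(\pv V_n)_{\pv S}=\Bigl(\bigcup_{n\ge 0}\pv V_n\Bigr)_{\pv S}=\overline{\pv V}_{\pv S},
\end{equation*}
where the third equality uses that $(\cdot)_{\pv S}$ commutes with directed unions of pseudovarieties (a semigroup $S$ lies in $\bigl(\bigcup_n\pv V_n\bigr)_{\pv S}$ iff $S^1\in\bigcup_n\pv V_n$, i.e.\ iff $S^1\in\pv V_n$ for some $n$ since the chain is increasing, i.e.\ iff $S\in(\pv V_n)_{\pv S}$ for some $n$).

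I do not expect a serious obstacle here: the corollary is essentially a formal consequence of Proposition~\ref{p:Vbd-semigroup-monoid-version} and the tower description. The only mildly delicate point — and the one I would be careful to state explicitly — is the interchange of $(\cdot)_{\pv S}$ with the increasing union, which relies on the chain $(\pv V_n)_n$ being directed (indeed, increasing) rather than an arbitrary family; and the bookkeeping that $\pv{Sl}\subseteq\pv V_{n-1}$ persists up the tower, so that Proposition~\ref{p:Vbd-semigroup-monoid-version} is legitimately invoked at every stage.
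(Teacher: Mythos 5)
Your proof is correct and is exactly the argument the paper is signalling: the paper leaves the corollary to the reader, stating only that it follows from Remark~\ref{r:tower-of-bds} (in its semigroup version) and Proposition~\ref{p:Vbd-semigroup-monoid-version}, which is precisely the induction-on-the-tower-plus-directed-union you carry out. You were right to flag the two small points that need checking --- that $\pv{Sl}\subseteq\pv V_{n-1}$ persists up the tower (true since each $\pv V_n$ contains $\pv V$, the chain being increasing because $M$ is a quotient of $M_\varphi$), and that $(\cdot)_{\pv S}$ commutes with the increasing union --- and both are handled correctly.
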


In Corollary~\ref{c:overline-commutes},
the hypothesis $\pv V\supseteq\pv {Sl}$ is
needed as seen below.

\begin{Example}\label{eg:semigroup-PT-expansion-of-I}
  Let $\pv I$ be the class of trivial semigroups.
  Viewing $\pv I$ as a pseudovariety of monoids, one has
  $\overline{\pv I}=\pv I$ (Example~\ref{eg:expansion-of-a-group}),
  but if we view $\pv I$ as a pseudovariety of semigroups,
  then we get $\overline{\pv I}=\pv N$.
  Indeed, $\overline{\pv I}\subseteq \pv N$
  because an $\pv N$-recognizable language
  is either finite or co-finite, and since
  a co-finite language can neither be a prefix code nor a suffix code,
  one clearly has that $\pv N$ is closed under bideterministic product.
  On the other hand, $\pv N\subseteq \overline{\pv I}$
  since every
  finite language is
  the finite union of bideterministic products
  of the form $\{1\}\cdot a_1\cdot \{1\}\cdot
  a_2\cdots a_{n-1}\cdot \{1\}\cdot a_n\{1\}$, with $a_1,\ldots,a_n$
  letters.
\end{Example}

\section{Good factorizations of pseudowords}
\label{sec:good-fact-pseud}

Inspired by the concept of good factorization of a word,
we define an analog for pseudowords.

\begin{Defi}\label{defi:good-factorization-pseudowords}
  Let $\pv V$ be a pseudovariety of semigroups.
  A \emph{good factorization} of an element $x$
  of $\Om AV$ is a triple $(\pi,a,\rho)$ in $(\Om AV)^1\times A\times (\Om AV)^1$,
  with $x=\pi a \rho$, such that $\pi a<_\R\pi$ and $a\rho<_\L \rho$.
\end{Defi}

We omit next lemma's easy proof,
analog to that \mbox{of~\cite[Lemma 2.1]{Pin&Therien:1993}.}

\begin{Lemma}\label{l:shortening-good-factorizations}
  Let $\pv V$ be a pseudovariety of semigroups.
  Suppose that $(\pi,a,\rho)$ is a good factorization of
  an element of $\Om AV$. Let $\pi'$ be a suffix of $\pi$ and let $\rho'$ be a prefix of $\rho$. Then $(\pi',a,\rho')$ is a good factorization.
\end{Lemma}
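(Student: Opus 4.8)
The plan is to argue directly from the definitions, reducing the general claim to the two ``one-sided'' cases and handling each by a short computation with Green's quasi-orders.

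First I would set up notation: write $\pi=\pi''\pi'$ and $\rho=\rho'\rho''$ in $(\Om AV)^1$, where $\pi'$ is the given suffix of $\pi$ and $\rho'$ the given prefix of $\rho$, so that $x=\pi''\pi' a\rho'\rho''$. Since the condition defining a good factorization of $(\pi',a,\rho')$ is the conjunction of the two independent requirements $\pi'a<_\R\pi'$ and $a\rho'<_\L\rho'$, it suffices to prove each separately; by the left--right duality of the statement (replacing $\pv V$-pseudowords by their reversals), it is enough to prove that $\pi'a<_\R\pi'$ follows from $\pi a<_\R\pi$.

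For that implication, the key observation is that $\leq_\R$ is stable under left multiplication: if $s\leq_\R t$ in a semigroup (or compact semigroup, or in $(\Om AV)^1$), then $us\leq_\R ut$ for every $u$. I would first note that $\pi a<_\R\pi$ already gives $\pi a\leq_\R\pi$, hence, since always $\pi'a\geq_\R$ nothing forces equality, I instead argue by contradiction: suppose $\pi'a\,\R\,\pi'$, i.e.\ $\pi'=\pi'a t$ for some $t\in(\Om AV)^1$. Multiplying on the left by $\pi''$ yields $\pi=\pi''\pi'=\pi''\pi'a t=\pi a t$, so $\pi\leq_\R\pi a$, and combined with $\pi a\leq_\R\pi$ this gives $\pi a\,\R\,\pi$, contradicting $\pi a<_\R\pi$. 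Since we always have $\pi'a\leq_\R\pi'$ (as $\pi'a\in\pi'(\Om AV)^1$), this shows $\pi'a<_\R\pi'$. The dual argument, multiplying on the right by $\rho''$, shows $a\rho'<_\L\rho'$ from $a\rho<_\L\rho$, and combining the two gives that $(\pi',a,\rho')$ is a good factorization.

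I do not expect any serious obstacle here; the only point requiring a little care is the bookkeeping in $(\Om AV)^1$ (allowing $\pi'$ or $\rho'$ to be the adjoined identity $1$, and allowing $\pi''=1$ or $\rho''=1$), but in all these boundary cases the same one-line computations go through verbatim. This is indeed why the lemma's proof is ``easy'' and analogous to \cite[Lemma 2.1]{Pin&Therien:1993}; the substance is entirely the stability of $\leq_\R$ and $\leq_\L$ under one-sided multiplication, which holds in any semigroup and in particular in $(\Om AV)^1$.
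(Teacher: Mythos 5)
Your proof is correct and, as far as one can tell, matches the intended (omitted) argument: the paper explicitly skips the proof as ``easy'' and analogous to Lemma~2.1 of Pin--Th\'erien, and the argument you give --- writing $\pi=\pi''\pi'$, assuming $\pi'=\pi'at$ for contradiction, and left-multiplying by $\pi''$ to deduce $\pi\leq_\R\pi a$, then dualizing --- is exactly the one-line computation with the stability of $\leq_\R$ under left multiplication that the authors have in mind.
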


In the next definition the hypothesis
$\pv V\supseteq \pv N$
is required to
ensure that
 the elements of $A^+$ embed in $\Om AV$ (as isolated points).
 

\begin{Defi}\label{defi:plus-good-factorization-pseudowords}
  Let $\pv V$ be a semigroup pseudovariety containing $\pv N$.
  A \emph{$+$-good factorization} of $x\in\Om AV$
  is a triple $(\pi,u,\rho)$ in $(\Om AV)^1\times A^+\times (\Om AV)^1$, with $x=\pi u\rho$, such that $\pi\,\be 1(u)<_\R\pi$ and $\te 1(u)\,\rho<_\L \rho$.
\end{Defi}

Note that a good factorization is $+$-good factorization
(assuming \mbox{$\pv V\supseteq \pv N$}).

Starting in the next lemma, we use the usual notation $B(\pi,\varepsilon)$
for the open ball of center $\pi$ and radius $\varepsilon$.

  \begin{Lemma}\label{l:fall}
    Let $\pv V$ be a pseudovariety
    of semigroups containing~$\pv N$.
  Suppose that $\pi\in(\Om AV)^1$ and $a\in A$ are such that
  $\pi a<_\R \pi$.
  Then, there is a positive integer $k_0$
  such that, for every $k\geq k_0$,
  and for every $u\in A^\ast$,
  the set
  \begin{equation*}
    \Bigr[B\Bigl(\pi,\frac{1}{k}\Bigr)\cap A^\ast\Bigl]\cdot au
  \end{equation*}
  is a prefix code.
\end{Lemma}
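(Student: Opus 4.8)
The plan is to translate the hypothesis $\pi a<_\R\pi$ into a statement about words near $\pi$, and then argue directly that no word in the relevant set can be a proper prefix of another. First I would use the fact that $\pi a<_\R\pi$ means $\pi\notin \overline{\Om AV\,}\!a\,(\Om AV)^1$ is false --- more precisely, that $\pi a\le_\R\pi$ always, so the strictness says $\pi$ is not $\R$-below $\pi a$; equivalently $\pi\notin (\pi a)\,(\Om AV)^1$, and since $(\pi a)(\Om AV)^1$ is closed, there is an $\varepsilon>0$ with $B(\pi,\varepsilon)\cap (\pi a)(\Om AV)^1=\emptyset$. I would choose $k_0$ with $1/k_0<\varepsilon$; then for $k\ge k_0$, every element of $B(\pi,1/k)$ --- in particular every word $w\in B(\pi,1/k)\cap A^*$ --- fails to be $\R$-above $\pi a$, i.e.\ $wa\not\le_\R \pi a$ is the wrong inequality; what I actually want is that $\pi a\not\le_\R w a$ uniformly. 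Here I would lean on the standard fact that multiplication is continuous and that the ball can be shrunk so that $w\in B(\pi,1/k)$ forces $wa\in B(\pi a,\delta)$ for a prescribed $\delta$; combined with the previous closed-set argument applied to a slightly larger ball, I get that for all such $w$, $\pi a$ is not a prefix (not $\le_\R$-comparable from below) of $wa$.

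The core combinatorial step is then: suppose $w_1 a u$ and $w_2 a u$ are both in the set, with $w_1,w_2\in B(\pi,1/k)\cap A^*$, and suppose one is a prefix of the other, say $w_1 a u$ is a prefix of $w_2 a u$ in $A^+$. Since both end in the same finite word $au$, a prefix relation between them forces $w_1 a$ to be a prefix of $w_2 a$, hence $w_1$ a prefix of $w_2$ (using that $a$ is a single letter and comparing lengths), so $w_2 = w_1 a^{?}\cdots$; more carefully, $w_2 = w_1 z$ for some $z\in A^*$ with $za$ a prefix of $au\cdots$, and if the prefix relation is proper then $z\ne 1$ and $\be1(z)=a$... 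I would organize this as: $w_1 a u \preceq w_2 a u$ with $|w_1 a u|\le |w_2 a u|$ implies $w_1 a u \preceq w_2 a$ (since the suffix $u$ dangling past position $|w_1 a u|$ must coincide with a suffix of $w_2 a u$), and then $w_1 \preceq w_2 a \preceq w_2 a u$, giving $w_1 a \preceq w_2 a$, so in fact $w_2 \in w_1 a A^*$ when the containment is proper. But $w_2\in B(\pi,1/k)$ means $w_2$ is close to $\pi$, and $w_2\in w_1 a (\Om AV)^1$ with $w_1$ also close to $\pi$ would, passing to limits along a suitable subsequence, put $\pi\in \pi a (\Om AV)^1$, contradicting $\pi a<_\R\pi$. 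This is where the radius bound from the first paragraph is used to turn ``close to $\pi$'' into the actual contradiction without taking limits: $w_2\in B(\pi,1/k)$ and $w_2\in w_1 a(\Om AV)^1 \subseteq B(\pi,1/k)\cdot a\cdot(\Om AV)^1$, and for $k$ large this set --- or rather its intersection with $B(\pi,1/k)$ --- is empty by the closed-set argument, since $B(\pi,1/k)\cdot a$ is contained in a small ball around $\pi a$ disjoint from $B(\pi,1/k)$.

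The step I expect to be the main obstacle is making the uniformity in $k$ (and independence from $u$) genuinely clean: the naive argument gives, for each $w$, \emph{some} ball avoiding it, but I need a \emph{single} $k_0$ that works simultaneously for all $w\in B(\pi,1/k)\cap A^*$ and all $u\in A^*$. The $u$-independence is actually the easy part, because the combinatorial reduction above strips $u$ off entirely and reduces everything to a statement about $w_1,w_2$ and the single letter $a$; so really I only need one $k_0$ such that $B(\pi,1/k_0)\cdot a$ lies in a ball around $\pi a$ disjoint from $B(\pi,1/k_0)$, and this follows from $\pi\notin \overline{B(\pi,\varepsilon)}\cdot a$ for small $\varepsilon$ --- which in turn is exactly a reformulation of $\pi a<_\R \pi$ via compactness, since $\overline{B(\pi,\varepsilon)}\cdot a$ decreases to $\{\pi a\}\cdot$(nothing extra)... here I would be careful: $\bigcap_\varepsilon \overline{B(\pi,\varepsilon)\cdot a} = \{\pi\}\cdot a$? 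Not quite; rather I want $\pi\notin \overline{B(\pi,\varepsilon)\cdot a\cdot(\Om AV)^1}$, and since $\bigcap_\varepsilon \overline{B(\pi,\varepsilon)\cdot a\cdot (\Om AV)^1} = \pi a(\Om AV)^1 \not\ni \pi$, compactness of $\Om AV$ (hence of the decreasing intersection of these closed sets) yields a single $\varepsilon$, and thus a single $k_0$, that works. Once $k_0$ is fixed this way, the verification that the displayed set is a prefix code is the short combinatorial argument of the second paragraph.
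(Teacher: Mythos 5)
Your proposal is correct and rests on the same essential ideas as the paper's proof, namely that $\pi a<_\R\pi$ translates to $\pi\notin\pi a(\Om AV)^1$ and that compactness converts this into a uniform separation near $\pi$. The organizational route is different though: the paper first reduces to $u=1$ via Remark~\ref{r:products-of-several-codes}, then assumes the set $J$ of bad radii is infinite, extracts $w_k,z_k\in B(\pi,1/k)\cap A^*$ with $z_k=w_kat_k$, and passes to limits to reach $\pi=\pi at$; your version instead produces $k_0$ up front, by noting that $\bigcap_\varepsilon\overline{B(\pi,\varepsilon)\cdot a\cdot(\Om AV)^1}=\pi a(\Om AV)^1\not\ni\pi$ and invoking compactness of the decreasing family, and then runs the combinatorics only once. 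Both routes are fine; yours front-loads the compactness and avoids the proof-by-contradiction scaffolding, while the paper's is arguably shorter because the reduction to $u=1$ is outsourced to the remark. Two small points in your write-up deserve a fix: first, the intermediate claim ``$w_1au\preceq w_2au$ with $|w_1au|\le|w_2au|$ implies $w_1au\preceq w_2a$'' is false when $u$ is long (take $w_1=b$, $w_2=bab$, $a=a$, $u=baba$: then $w_1au$ has length $6$ but $w_2a$ has length $4$); the correct step is simply that $w_1a$ and $w_2a$ are both prefixes of $w_2au$ with $|w_1a|<|w_2a|$, hence $w_1a$ is a proper prefix of $w_2a$, and since both end in $a$ one gets $w_2=w_1at'$ with $t'\in A^*$. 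Second, the opening-paragraph detour about ``$wa\not\le_\R\pi a$'' versus ``$\pi a\not\le_\R wa$'' can be cut entirely; as you discover in the third paragraph, what the combinatorics actually demands is $B(\pi,1/k_0)\cap B(\pi,1/k_0)\cdot a\cdot(\Om AV)^1=\emptyset$, and the nested-compact-sets argument delivers exactly that.
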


\begin{proof}
  The proof reduces immediately to the case $u=1$ (cf.~first two sentences in Remark~\ref{r:products-of-several-codes}).
  Let~$J$ be the set of positive integers $k$
    for which $\Bigr[B\Bigl(\pi,\frac{1}{k}\Bigr)\cap A^\ast\Bigl]\cdot a$
    is not a prefix code.
    Suppose that $J$ is infinite.
     For each $k\in J$, we may consider
    distinct elements $w_k$ and $z_k$
    of $B\Bigl(\pi,\frac{1}{k}\Bigr)\cap A^\ast$
    such that $w_ka$ is a prefix of $z_ka$.
    For such elements, there is $t_k\in A^\ast$ with
    $z_k=w_kat_k$.
    Note that the sequences $(z_k)_{k\in J}$
    and $(w_k)_{k\in J}$
    converge to $\pi$ and that $(t_k)_{k\in J}$
    has some accumulation point $t$ in the compact space
    $(\Om AV)^1$.
    Hence, we have the equality $\pi=\pi at$,
    contradicting the hypothesis
    that $\pi a<_\R \pi$.
    To avoid the contradiction, the set $J$ must be finite.
\end{proof}

In the following proofs, we shall frequently use, without
reference, that
if~$\pv V$
is a semigroup pseudovariety
closed under bideterministic product, then~$\pv V$ contains $\pv N$ (cf.~Example~\ref{eg:semigroup-PT-expansion-of-I}).


  \begin{Cor}\label{c:fall}
  Let $\pv V$ be a pseudovariety of semigroups closed under bideterministic product.
  Suppose that $\pi\in(\Om AV)^1$ and $a\in A$ are such that
  $\pi a<_\R \pi$.
  Then, there is a positive integer $k_0$
  such that, for every $k\geq k_0$,
  and for every $u\in A^\ast$,
  the set $B\Bigl(\pi,\frac{1}{k}\Bigr)\cdot au$
  is clopen.
\end{Cor}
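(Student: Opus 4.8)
The plan is to deduce the clopenness of $B\bigl(\pi,\frac1k\bigr)\cdot au$ from Lemma~\ref{l:fall} together with the closure of $\pv V$ under bideterministic product, via Theorem~\ref{t:Vrecognizable-clopensets}. The set $B\bigl(\pi,\frac1k\bigr)\cdot au$ is the closure in $\Om AV$ of the $+$-language $L_k\cdot au$, where $L_k = B\bigl(\pi,\frac1k\bigr)\cap A^*$; indeed $B\bigl(\pi,\frac1k\bigr)$ is clopen in $\Om AV$ (an open ball whose radius is of the form $\frac1k$ is clopen in a profinite semigroup metrized in the standard way, since the metric takes values in a discrete set around each point — more precisely, $B(\pi,\frac1k)$ equals the set of pseudowords not separated from $\pi$ by any homomorphism onto a semigroup of size below a suitable bound, which is clopen), hence it is the closure of its trace $L_k$ on $A^+$, and multiplication by the fixed word $au\in A^+$ is continuous, so $\overline{L_k\cdot au} = \overline{L_k}\cdot au = B\bigl(\pi,\frac1k\bigr)\cdot au$ (using that $B(\pi,\frac1k)$, being clopen, contains $\pi$ as a non-isolated-in-general point but its trace is dense in it). So it suffices to show that $L_k\cdot au$ is $\pv V$-recognizable whenever $k\ge k_0$, and then invoke Theorem~\ref{t:Vrecognizable-clopensets}.

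The key step is the following. Since $B\bigl(\pi,\frac1k\bigr)$ is clopen, $L_k$ is a $\pv V$-recognizable $+$-language (or is empty, or equals $\{1\}$ only if $\pi=1$, but in that case write $u$ with leading letter and absorb it; in any event $L_k\cup\{1\}$ causes no trouble as Definition~\ref{defi:V-sgp-bidet-closed} explicitly allows the $\{1\}$ factor). Writing $au = a\cdot u$ with $a$ a letter, I want to exhibit $L_k\cdot au$ as a bideterministic marked product. Apply Lemma~\ref{l:fall} (legitimately, since $\pv V$ closed under bideterministic product implies $\pv V\supseteq\pv N$): for $k\ge k_0$ the language $L_k\cdot a\cdot u$ has the property that $L_k\cdot a$ is a prefix code. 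For the suffix-code side, note $L_k\cdot a\cdot u = (L_k\cdot a)\cdot u\cdot\{1\}$, and the marked product $(L_k a u)\cdot \te1(u)$... — here I instead view the product as $L_k \cdot a \cdot (u)$ where the right factor is the finite language $\{u\}$, which is trivially $\pv V$-recognizable, and $a\cdot\{u\}$, equal to $\{au\}$, is a one-element language hence automatically a suffix code. Thus $L_k a u$ is a marked product $L\,a\,K$ with $L=L_k\in\mathcal V$ (or $\{1\}$), $K=\{u\}\in\mathcal V$ (or $\{1\}$ if $u=1$), $La = L_k a$ a prefix code by Lemma~\ref{l:fall}, and $aK=\{au\}$ a suffix code. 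By Definition~\ref{defi:V-sgp-bidet-closed} and the hypothesis that $\pv V$ is closed under bideterministic product, $L_k a u\in\mathcal V$, i.e.\ it is $\pv V$-recognizable, so its closure $B\bigl(\pi,\frac1k\bigr)\cdot au$ is open in $\Om AV$ by Theorem~\ref{t:Vrecognizable-clopensets}.

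It remains to see this set is also closed. It is the continuous image of the compact set $B\bigl(\pi,\frac1k\bigr)$ (clopen, hence closed, hence compact in the compact space $\Om AV$) under right multiplication by $au$, which is continuous; the continuous image of a compact set in a Hausdorff space is compact, hence closed. So $B\bigl(\pi,\frac1k\bigr)\cdot au$ is clopen, as claimed. The main obstacle I anticipate is the bookkeeping around degenerate cases — whether $\pi=1$ (so $L_k$ may fail to be a genuine $+$-language) and whether $u=1$ — but Definition~\ref{defi:V-sgp-bidet-closed} was set up precisely to allow $\{1\}$ as either outer factor, and a singleton $\{au\}$ (or $\{a\}$) is trivially both a prefix and a suffix code, so these cases are handled uniformly; the only real content is Lemma~\ref{l:fall}, already available, giving the prefix-code condition on the left.
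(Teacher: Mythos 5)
Your proof is correct and follows essentially the same route as the paper's: invoke Lemma~\ref{l:fall} to get the prefix-code condition on $L_k\cdot a$, observe that $L_k\cdot a\cdot\{u\}$ is then a bideterministic marked product (with $\{au\}$ trivially a suffix code and $\{1\}$ allowed by Definition~\ref{defi:V-sgp-bidet-closed}), conclude $\pv V$-recognizability by hypothesis, and pass to closures via Theorem~\ref{t:Vrecognizable-clopensets}. The separate compactness argument for closedness is harmless but redundant, since the set is by construction a closure.
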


\begin{proof}
  By Lemma~\ref{l:fall}, there is
  a positive integer $k_0$ such that
  for every $k\geq k_0$,
    the set  $\Bigr[B\Bigl(\pi,\frac{1}{k}\Bigr)\cap A^\ast\Bigl]\cdot a$
    is a prefix code.
    Hence, for $k\geq k_0$ and $u\in A^\ast$, the language
    $\Bigr[B\Bigl(\pi,\frac{1}{k}\Bigr)\cap A^\ast\Bigl]\cdot a\cdot u$
    is a bideterministic product of
    $\pv V$-recognizable languages,
    and thus it is itself $\pv V$-recognizable
    by the hypothesis that $\pv V$ is closed under bideterministic product.
    Taking the topological closure in $\Om AV$,
    we conclude that $B\Bigl(\pi,\frac{1}{k}\Bigr)\cdot au$ is clopen.
\end{proof}

\begin{Lemma}\label{l:good-and-+-good}
  Let $\pv V$ be a pseudovariety of semigroups closed under bideterministic product.
  Suppose that $(\pi,u,\rho)$ is a $+$-good factorization
  of~$x\in\Om AV$.
  Let $a,b\in A$, $v,w\in A^\ast$
  be such that $u=av=wb$.
  Then $(\pi,a,v\rho)$
  and $(\pi w,b,\rho)$
  are good factorizations of $x$.
\end{Lemma}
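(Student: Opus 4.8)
The plan is to prove $(\pi, a, v\rho)$ is a good factorization of $x$, since the statement for $(\pi w, b, \rho)$ is entirely dual (swap the roles of prefixes and suffixes, $\R$ and $\L$). Observe first that $x = \pi u \rho = \pi a v \rho$, so the decomposition is of the required form. By the definition of $+$-good factorization, $\be 1(u) = a$, so the hypothesis $\pi\,\be 1(u) <_\R \pi$ already gives $\pi a <_\R \pi$, which is one of the two conditions we need. Thus the only real content is the $\L$-condition: I must show $a\,(v\rho) <_\L v\rho$, i.e. that prepending $a$ to $v\rho$ strictly lowers it in the $\leq_\L$ order.

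For this I would argue by contradiction. Since always $a v\rho \leq_\L v\rho$, if the inequality is not strict then $a v\rho \mathrel{\L} v\rho$, meaning there exists $s \in (\Om AV)^1$ with $v\rho = s\,a\,v\rho$. The key is then to push this relation through to the right end of the word and contradict the $\L$-condition coming from the hypothesis, namely $\te 1(u)\,\rho <_\L \rho$. Writing $u = av = wb$ we have $\te 1(u) = b$, so the hypothesis reads $b\rho <_\L \rho$. The idea is that $v\rho = s a v\rho$ iterates: applying it repeatedly, $v\rho = (sa)^n v\rho$ for all $n$, and taking $\omega$-powers, $v\rho = (sa)^\omega v\rho$. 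Since $a$ and the whole of $v$ lie to the left of $b$ inside $u = avb'$ where... more carefully, $u = av$ and the last letter of $v$ is $b$ (as $u = wb$ and, if $v \ne 1$, the last letter of $u$ equals the last letter of $v$; the case $v = 1$ means $u = a = b$ and must be handled separately but is easy since then the $+$-good hypothesis directly gives both conditions). So $v = v'b$ for some $v' \in A^*$, and $v\rho = v'b\rho$. From $v\rho <_\L$-equivalent to $av\rho$ one would like to deduce $b\rho \mathrel{\L} $ something contradicting $b\rho <_\L \rho$; but this needs care because $s$ is a pseudoword, not a word.

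The cleaner route, and the one I expect the paper to take, is to avoid $\omega$-powers and instead invoke Lemma~\ref{l:fall} (or Corollary~\ref{c:fall}) together with an approximation argument, exactly as in the proof of Lemma~\ref{l:fall}. Concretely: suppose $a v\rho \mathrel{\L} v\rho$, witnessed by $v\rho = s\,a\,v\rho$ for some $s\in(\Om AV)^1$. Approximate $s$, $v$ (a word already), $\rho$ by words; more precisely, since the relation $\pi a <_\R \pi$ from the hypothesis lets us apply Lemma~\ref{l:fall} to $\pi$ and $a$, we get that $[B(\pi,1/k)\cap A^*]\cdot a$ is a prefix code for all large $k$. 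Now $\pi$ is (a limit of words, and) such that $\pi a <_\R \pi$; approximating $\pi$ by words $w_k \to \pi$ with $w_k \in A^*$ and using $v\rho = s a v\rho$ approximated by words, one builds, for infinitely many $k$, two distinct words in $B(\pi,1/k)\cap A^*$ whose images under $\cdot a$ are prefix-comparable, contradicting the prefix-code property. This is the same contradiction mechanism as in Lemma~\ref{l:fall}'s proof, just applied one level up.

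The main obstacle is the bookkeeping in this approximation: ensuring that when we replace the pseudowords $\pi$, $\rho$, $s$ by nearby words, the resulting equalities (or prefix relations) genuinely hold between \emph{words} and genuinely violate the prefix-code conclusion of Lemma~\ref{l:fall}, rather than holding only in the limit. In particular one must be careful that the "$s$" produced from $v\rho = s a v\rho$, when approximated, still attaches on the correct side and that iterating the relation (to get enough length to force a prefix comparison strictly inside the ball) is done uniformly in $k$. The degenerate cases $v = 1$ (so $a = b$, $u$ a single letter) and $\pi = 1$ or $\rho = 1$ should be checked separately but are immediate from the definitions. Once the $\L$-condition $a(v\rho) <_\L v\rho$ is established, combining it with $\pi a <_\R \pi$ gives that $(\pi, a, v\rho)$ is a good factorization of $x$, and the dual argument — using $\te 1(u)\rho <_\L \rho$ and the suffix-code version of Lemma~\ref{l:fall} — gives that $(\pi w, b, \rho)$ is a good factorization of $x$.
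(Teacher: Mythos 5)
Your setup is correct: by symmetry it suffices to prove the claim for $(\pi,a,v\rho)$, the $\R$-condition $\pi a<_\R\pi$ is inherited directly from $\pi\,\be 1(u)<_\R\pi$ since $\be 1(u)=a$, and the only content is the $\L$-condition $a(v\rho)<_\L v\rho$. You also correctly identify the hypothesis that must ultimately be contradicted, namely $\te 1(u)\,\rho<_\L\rho$ (i.e.\ $b\rho<_\L\rho$, after observing that $v=v'b$ when $v\neq 1$). So far this matches the paper.

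The gap is in the concrete approximation argument. Having reached $v\rho=s\,a\,v\rho$, you propose to invoke Lemma~\ref{l:fall} in its \emph{prefix-code} form, applied to $\pi$ and $a$ (via $\pi a<_\R\pi$), and to produce two distinct words in $B(\pi,1/k)\cap A^*$ with prefix-comparable $\cdot a$-images. This does not work: the relation $v\rho=sav\rho$ lives entirely inside the right factor $v\rho$ of $x=\pi av\rho$ and carries no information about $\pi$ or about elements of $B(\pi,1/k)$. There is no mechanism to extract from it two elements of the ball around $\pi$ that violate the prefix-code property of $[B(\pi,1/k)\cap A^*]\cdot a$. In other words, you state the right target ($b\rho<_\L\rho$) but then reach for the wrong tool (the $\R$/prefix-code side, around $\pi$ and $a$).

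What actually closes the argument — and what the paper does — is the \emph{dual}, suffix-code form. From $b\rho<_\L\rho$ and $v=v'b$, the dual of Lemma~\ref{l:fall} gives that $v\cdot[B(\rho,1/k)\cap A^*]$ is a suffix code for all large $k$, and the dual of Corollary~\ref{c:fall} gives that $v\cdot B(\rho,1/k)$ is a clopen neighborhood of $v\rho$. Approximating the witness $z$ (with $v\rho=zav\rho$) and $\rho$ by words $z_n,\rho_n\in A^+$, the words $z_nav\rho_n$ converge to $v\rho$, so a suitable subsequence lies in the clopen set $v\cdot B(\rho,1/k)$ with $\rho_{n_k}\in B(\rho,1/k)$. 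Each such word then lies simultaneously in $v\cdot[B(\rho,1/k)\cap A^*]$ and in $A^+\cdot v\cdot[B(\rho,1/k)\cap A^*]$, contradicting the suffix-code property. Replacing your prefix-code step around $\pi$ by this suffix-code step around $\rho$ fixes the proof; the rest of your outline (the symmetry reduction, handling $v=1$ and $\rho=1$ separately, and the duality for $(\pi w,b,\rho)$) is sound.
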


\begin{proof}
  By symmetry, it suffices to show that
  $(\pi,a,v\rho)$ is a good factorization.
  We suppose that $\rho\neq 1$
  and $v\neq 1$, as
  both the cases $\rho=1$ and $v=1$ are trivial.
  We only need to show
  $av\rho<_{\L}v\rho$.
  Suppose on the contrary that
  $av\rho\mathrel{\L}v\rho$.
  Then $v\rho=z av\rho$
  for some $z\in \Om AV $.
  Let $(z_n)_n$ and $(\rho_n)_n$ be sequences of elements of $A^+$
  respectively converging to $z$ and $\rho$.
  Thanks to the dual of Corollary~\ref{c:fall}, there
  is a positive integer $k_0$
  such that, for every $k\geq k_0$,
  the set
  $v\cdot B\Bigl(\rho,\frac{1}{k}\Bigr)$
  is a clopen neighborhood of $v\rho$.
  Therefore, as
  $z_nav\rho_n$ converges to
  $zav\rho=v\rho$, we can build subsequences $(z_{n_k})_{k\geq k_0}$
  and $(\rho_{n_k})_{k\geq k_0}$ such that
  $z_{n_k}av\rho_{n_k}\in v\cdot B\Bigl(\rho,\frac{1}{k}\Bigr)$,
  with $\rho_{n_k}\in B\Bigl(\rho,\frac{1}{k}\Bigr)$,
  for every $k\geq k_0$.
  But then $z_{n_k}av\rho_{n_k}$ is an element of
  the intersection
  \begin{equation*}
    A^+\cdot v\Bigr[B\Bigl(\rho,\frac{1}{k}\Bigr)\cap A^\ast\Bigl]
    \cap v\Bigr[B\Bigl(\rho,\frac{1}{k}\Bigr)\cap A^\ast\Bigl],
  \end{equation*}
  for every $k\geq k_0$.
  This contradicts
  the dual of Lemma~\ref{l:fall}.
\end{proof}

  \begin{Prop}\label{p:a-sort-of-open-multiplication-+}
    Let $\pv V$ be a pseudovariety of semigroups closed under bideterministic
    product. Suppose that $(\pi,u,\rho)$ is a $+$-good factorization of
    an element $x$ of\/ $\Om AV$.
    For each positive integer $k$,
    consider the subset $L_k(\pi,u,\rho)$ of
    $\Om AV$
    defined by:
    \begin{equation*}
      L_k(\pi,u,\rho)=B\Bigl(\pi,\frac{1}{k}\Bigr)\cdot u\cdot
      B\Bigl(\rho,\frac{1}{k}\Bigr).
    \end{equation*}
    For all sufficiently large $k$,
    the set $L_k(\pi,u,\rho)$
    is a clopen neighborhood of $x$.
  \end{Prop}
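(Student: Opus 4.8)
The plan is to combine the prefix-code statement of Lemma~\ref{l:fall}, its dual, and the fact that $\pv V$ is closed under bideterministic product, so that $L_k(\pi,u,\rho)$ becomes the topological closure of a bideterministic marked product of $\pv V$-recognizable languages. First I would decompose $u$ as $u = a\cdot u'\cdot b$ with $a=\be1(u)$ and $b=\te1(u)$ (allowing the degenerate case $u=a$ when $u$ has length one, treated separately but identically). From Lemma~\ref{l:fall} applied to the pair $(\pi,a)$ — which is legitimate since the $+$-good factorization hypothesis gives $\pi\,\be1(u)<_\R\pi$, i.e.\ $\pi a<_\R\pi$ — there is $k_1$ such that for all $k\geq k_1$ the language $\bigl[B\bigl(\pi,\frac1k\bigr)\cap A^*\bigr]\cdot a u'$ is a prefix code; equivalently $\bigl[B\bigl(\pi,\frac1k\bigr)\cap A^*\bigr]\cdot\be1(au')$ is a prefix code (the multiplication by the rest of $au'$ only keeps it a prefix code by Remark~\ref{r:products-of-several-codes}). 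Dually, since $\te1(u)\,\rho<_\L\rho$, the dual of Lemma~\ref{l:fall} yields $k_2$ such that for $k\geq k_2$ the language $b u'\cdot\te1\bigl(B\bigl(\rho,\frac1k\bigr)\cap A^*\bigr)$ — more precisely $u' b\cdot\bigl[B\bigl(\rho,\frac1k\bigr)\cap A^*\bigr]$ read from the right — is a suffix code.

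Next I would set $k_0=\max\{k_1,k_2\}$ and, for $k\geq k_0$, write $M_k = B\bigl(\pi,\frac1k\bigr)\cap A^*$ and $N_k = B\bigl(\rho,\frac1k\bigr)\cap A^*$, so that $M_k$ and $N_k$ are $\pv V$-recognizable (they are traces of clopen sets, each a finite union of open balls of the form $wA^*\cup\{w\}$ intersected suitably, hence clopen, hence $\pv V$-recognizable by Theorem~\ref{t:Vrecognizable-clopensets}). The set $L_k(\pi,u,\rho)\cap A^*$ equals $M_k\cdot u\cdot N_k$, which we view as the marked product $(M_k\cdot \text{prefix of }u)\,\be1(\cdot)\,(\text{suffix of }u\cdot N_k)$: concretely, $M_k a u' b N_k$, a marked product whose left factor is $M_k$, whose marking letter is $a$, and whose right factor is $u' b N_k$, and which by the two code conditions above is a bideterministic product of $\pv V$-recognizable languages (using $\{1\}$ as a factor in the degenerate cases, as permitted by Definition~\ref{defi:V-sgp-bidet-closed}). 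Hence $M_k u N_k\in\mathcal V$, and by Theorem~\ref{t:Vrecognizable-clopensets} its closure in $\Om AV$ is clopen. It remains to identify that closure with $L_k(\pi,u,\rho)$: since multiplication in $\Om AV$ is continuous and $A^+$ is dense, one has $\overline{M_k u N_k}\subseteq \overline{M_k}\cdot u\cdot\overline{N_k}$, and the reverse containment follows because $B\bigl(\pi,\frac1k\bigr)=\overline{M_k}$ (the closure of $M_k$ in $\Om AV$ — an open ball of a metric space with $A^+$ dense and isolated), similarly for $\rho$, so the product of the two closed balls is contained in the closure of the product. Finally $x=\pi u\rho\in L_k(\pi,u,\rho)$ because $\pi\in B(\pi,\frac1k)$ and $\rho\in B(\rho,\frac1k)$ trivially, so $L_k(\pi,u,\rho)$ is a clopen neighborhood of $x$, as claimed.

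The main obstacle I anticipate is the bookkeeping around the marked-product structure and the degenerate cases: one must be careful that $u' b N_k$ really is a suffix-coded-on-the-left factor in the sense required, i.e.\ that $\te1(u_{i-1})\cdot L_i$-type conditions of Remark~\ref{r:products-of-several-codes} are all met when $u'$ is empty or when $\pi=1$ or $\rho=1$. When $\pi=1$, the left factor should be taken to be $\{1\}$ and the marking letter $a$; the prefix-code condition $\{1\}\cdot a$ is automatic. When $u$ has length $1$ (so $u'=1$ and $a=b$), the product is $M_k\, a\, N_k$ and one uses the single letter $a$ as the marking, with $M_k\cdot a$ a prefix code by Lemma~\ref{l:fall} and $a\cdot N_k$ a suffix code by its dual. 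The only genuinely non-formal input is the observation that $\overline{B(\pi,\frac1k)\cap A^*}=B(\pi,\frac1k)$, which holds because $A^+$ is dense in $\Om AV$ and each open ball is a clopen set (as $\pv V\supseteq\pv N$ makes the metric ultrametric-like on balls), so its intersection with the dense set $A^+$ is dense in it; this is routine and I would state it in one line.
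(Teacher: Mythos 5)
Your proposal is correct and takes essentially the same route as the paper: use Lemma~\ref{l:fall} and its dual to get the prefix/suffix code conditions for large $k$, invoke closure under bideterministic product to conclude that the trace of $L_k(\pi,u,\rho)$ in $A^+$ is $\pv V$-recognizable, and pass to closures via Theorem~\ref{t:Vrecognizable-clopensets}. The only difference is cosmetic: the paper sets up the recognizability of $M_ku N_k$ in one stroke by citing Remark~\ref{r:products-of-several-codes} with the whole middle word $u$, rather than peeling off $\be1(u)$ and $\te1(u)$ separately as you do; and the paper treats the identity $\overline{M_k u N_k}=L_k(\pi,u,\rho)$ as clear from density of $A^+$, whereas you verify it explicitly. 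Both are correct; yours is slightly more elaborate but hits exactly the same points.
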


  \begin{proof}
        Since $\Om AV\setminus A^+$ is an ideal of $\Om AV$,
    we have
    \begin{equation}\label{eq:a-sort-of-open-multiplication-1}
      L_k(\pi,u,\rho)\cap A^+
      =
      \Bigr[B\Bigl(\pi,\frac{1}{k}\Bigr)\cap A^\ast\Bigl]
      \cdot u
      \cdot
      \Bigr[B\Bigl(\rho,\frac{1}{k}\Bigr)\cap A^\ast\Bigl].
    \end{equation}
    By Lemma~\ref{l:fall},
    there is a positive integer $k_1$
    such that, for every $k\geq k_1$,
    the set  $\Bigr[B\Bigl(\pi,\frac{1}{k}\Bigr)\cap A^\ast\Bigl]\cdot \be 1(u)$
    is a prefix code.
    By the dual of Lemma~\ref{l:fall}, there is a positive integer $k_2$,
    such that, for every $k\geq k_1$,
    the set
    $\te 1(u)\cdot \Bigr[B\Bigl(\pi,\frac{1}{k}\Bigr)\cap A^\ast\Bigl]$
    is a suffix code.
    Therefore, for $k\geq \max\{k_1,k_2\}$,
    the product in the right side of~\eqref{eq:a-sort-of-open-multiplication-1}
    is $\pv V$-recognizable (cf.~Remark~\ref{r:products-of-several-codes}), and
    so its closure in $\Om AV$, the set $L_k(u,a,v)$,
    is open.
  \end{proof}

  The proof of the following proposition is an adaptation of part
  of the proof of~\cite[Lemma 3.2]{Almeida&ACosta&Costa&Zeitoun:2017},
  where a description of compact metric semigroups with
  open multiplication is given in terms of a property of sequences.

 \begin{Prop}\label{p:consequences-on-sequences}
    Let $\pv V$ be a semigroup pseudovariety closed under bideterministic
    product. Suppose that $(\pi,u,\rho)$ is a $+$-good factorization of
    $x\in \Om AV$.
    Let $(x_n)_n$ be a sequence of elements of\/ $\Om AV$ converging to $x$.
    There are sequences $(\pi_n)_n$
    and $(\rho_n)_n$ in $(\Om AV)^1$,
    respectively converging to $\pi$ and $\rho$,
    such that $x_n=\pi_n u\rho_n$
    for all sufficiently large $n$.
  \end{Prop}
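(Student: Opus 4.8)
The plan is to deduce this sequential statement directly from Proposition~\ref{p:a-sort-of-open-multiplication-+}, which provides, for all sufficiently large $k$, a clopen neighborhood $L_k(\pi,u,\rho)=B(\pi,\frac1k)\cdot u\cdot B(\rho,\frac1k)$ of $x$. First I would fix $k_0$ large enough that $L_k(\pi,u,\rho)$ is a neighborhood of $x$ for every $k\geq k_0$. Since $x_n\to x$, for each $k\geq k_0$ there is an index $N_k$ such that $x_n\in L_k(\pi,u,\rho)$ for all $n\geq N_k$; we may and do choose the $N_k$ strictly increasing in $k$. Then for each $n\geq N_{k_0}$ set $k(n)=\max\{k\geq k_0 : N_k\leq n\}$, so that $k(n)\to\infty$ as $n\to\infty$ and $x_n\in L_{k(n)}(\pi,u,\rho)$. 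By definition of $L_{k(n)}(\pi,u,\rho)$ we can pick $\pi_n\in B(\pi,\frac{1}{k(n)})$ and $\rho_n\in B(\rho,\frac{1}{k(n)})$ in $(\Om AV)^1$ with $x_n=\pi_n u\rho_n$. For the finitely many $n<N_{k_0}$ we make an arbitrary choice (for instance $\pi_n=x_n$, $u=$ any fixed letter only if needed — but in fact the statement only claims the factorization for sufficiently large $n$, so these indices are irrelevant).

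It remains to check that $\pi_n\to\pi$ and $\rho_n\to\rho$. This is immediate: for every $\varepsilon>0$, choosing $n$ large enough that $k(n)>1/\varepsilon$ forces $\pi_n\in B(\pi,\varepsilon)$ and $\rho_n\in B(\rho,\varepsilon)$. Hence both sequences converge as required, and $x_n=\pi_n u\rho_n$ for all $n\geq N_{k_0}$, i.e.\ for all sufficiently large $n$.

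There is essentially no obstacle here; the content has already been extracted in Proposition~\ref{p:a-sort-of-open-multiplication-+} (and further back in Lemma~\ref{l:fall} and its use of closure under bideterministic product). The only mild point of care is the bookkeeping that lets one pass from "for each large $k$, a tail of the sequence lies in $L_k$" to "a single sequence of factorizations with the radii shrinking to $0$": this is the standard diagonal-type argument, packaged above via the function $k(n)$. One should also note that the $\pi_n,\rho_n$ genuinely live in $(\Om AV)^1$ and not merely in $A^*$ — the balls $B(\pi,\frac1k)$ and $B(\rho,\frac1k)$ are taken in $(\Om AV)^1$, so this is automatic; if one preferred factorizations through $A^+$ one would instead intersect with $A^+$ as in~\eqref{eq:a-sort-of-open-multiplication-1}, but the statement as given imposes no such requirement.
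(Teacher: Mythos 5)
Your proof is correct and follows essentially the same approach as the paper: both invoke Proposition~\ref{p:a-sort-of-open-multiplication-+} to obtain the clopen neighborhoods $L_k(\pi,u,\rho)$ and then run the same diagonal-type bookkeeping (the paper partitions the indices via a strictly increasing sequence $(n_k)$ where you instead define the radius index $k(n)$, but these are just two notations for the same construction).
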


  \begin{proof}
    For each integer $k\geq 1$,
    consider the set $L_k(\pi,u,\rho)$
    as in Proposition~\ref{p:a-sort-of-open-multiplication-+},
    and let $k_0$ be such that $L_k(\pi,u,\rho)$ is a clopen neighborhood
    of $x$ for every $k\geq k_0$ (such $k_0$ exists by Proposition~\ref{p:a-sort-of-open-multiplication-+}).
    For each $k\geq k_0$, take $p_k\in\ZZ^+$ such that
    $x_n\in L_k(\pi,u,\rho)$ when $n\geq p_k$.
    Let $(n_k)_{k\geq k_0}$ be the strictly increasing sequence
    defined by $n_{k_0}=p_{k_0}$ and $n_k=\max\{n_{k-1}+1,p_k\}$
    for $k>k_0$.
    When $n_k\leq n<n_{k+1}$,
    take $\pi_n\in B\Bigl(\pi,\frac{1}{k}\Bigr)$
    and $\rho_n\in B\Bigl(\rho,\frac{1}{k}\Bigr)$
    such that $x_n=\pi_nu\rho_n$,
    which we can do as $x_n\in L_k(\pi,u,\rho)$.
    If $n<n_{k_0}$, take $\pi_n=\rho_n=1$.
    Clearly, $(\pi_n)_n$ and $(\rho_n)_n$ respectively
    converge to $\pi$ and $\rho$.
  \end{proof}


  We are ready to prove the next theorem,
  a sort of generalization of the equidivisibility property, observed in~\cite{Almeida&ACosta:2017}, of
  the finitely generated
  free profinite semigroups
  over pseudovarieties closed under unambiguous product.

    \begin{Thm}\label{t:bidet-equidivisibility}
    Let $\pv V$ be a pseudovariety of semigroups closed under bideterministic
    product. Suppose that
    $(u,a,v)$ is a good factorization of
    an element~$x$ of~$\Om AV$.
    Let $x=wbz$ be a factorization of $x$ such that $b\in A$.
    Then, at least one of the three following cases occurs:
    \begin{enumerate}
    \item $u=w$, $a=b$ and $v=z$;\label{item:bidet-equidivisibility-1}
    \item $u=wbt$ and $z=tav$ for some $t\in(\Om AV)^1$;\label{item:bidet-equidivisibility-2}
    \item $w=uat$ and $v=tbz$ for some $t\in(\Om AV)^1$.\label{item:bidet-equidivisibility-3}
    \end{enumerate}
  \end{Thm}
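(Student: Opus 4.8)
The plan is to reduce the statement to a purely combinatorial dichotomy about factorizations of \emph{words}, and then pass to the limit using Proposition~\ref{p:consequences-on-sequences}. First I would pick sequences $(w_n)_n$, $(z_n)_n$ in $(A^+)^1$ with $w_n\to w$ and $z_n\to z$ such that $w_n\cdot b\cdot z_n\to x$; such sequences exist since $A^+$ is dense and multiplication is continuous. Then $(w_n b z_n)_n$ is a sequence of words converging to $x$. Since $(u,a,v)$ is a good factorization of $x$, Proposition~\ref{p:consequences-on-sequences} (applied with the $+$-good factorization $(\pi,u,\rho)=(u,a,v)$, viewing the single letter $a$ as an element of $A^+$) yields sequences $(u_n)_n$, $(v_n)_n$ in $(\Om AV)^1$ converging to $u$ and $v$ such that, for all large $n$, $w_n b z_n = u_n\, a\, v_n$. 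The point is that I now have two factorizations of the \emph{same} element $w_n b z_n$; but I do not yet know that $u_n, v_n$ are words. To fix this, I would instead apply Proposition~\ref{p:consequences-on-sequences} more carefully so that the approximating factors land in clopen sets of the form $B(u,1/k)\cdot a\cdot B(v,1/k)$, and intersect with $A^+$ (using that $\Om AV\setminus A^+$ is an ideal, as in the proof of Proposition~\ref{p:a-sort-of-open-multiplication-+}) to conclude the factors may be taken in $(A^*)^1$. So for all large $n$, inside the free semigroup, $w_n b z_n = u_n a v_n$ with $u_n, v_n$ words.

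Next comes the elementary but crucial combinatorial step on words: if a word $y$ admits two factorizations $y = w_n b z_n = u_n a v_n$ with $a,b\in A$, then one of three things happens — the two marked positions coincide (so $w_n=u_n$, $a=b$, $z_n=v_n$), or the $a$-position lies strictly to the left of the $b$-position (so $u_n a t_n = w_n$ and $v_n = t_n b z_n$ for a unique word $t_n$), or strictly to the right ($u_n = w_n b t_n$ and $z_n = t_n a v_n$). By passing to a subsequence, I may assume the \emph{same} one of the three alternatives holds for all $n$ in the subsequence, and in alternatives (2) and (3) that $(t_n)_n$ converges to some $t\in(\Om AV)^1$ (by compactness). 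Taking limits in each of the three word-level identities, using continuity of multiplication and $u_n\to u$, $v_n\to v$, $w_n\to w$, $z_n\to z$, gives exactly one of the three conclusions of the theorem.

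The main obstacle I expect is the bookkeeping in the first paragraph: Proposition~\ref{p:consequences-on-sequences} as stated produces factors in $(\Om AV)^1$, not in $A^*$, and one must genuinely exploit the clopen neighborhoods $L_k(u,a,v)=B(u,1/k)\cdot a\cdot B(v,1/k)$ from Proposition~\ref{p:a-sort-of-open-multiplication-+} together with the ideal property of $\Om AV\setminus A^+$ to force the approximating factors to be words — this is where closure under bideterministic product is really used (via Lemma~\ref{l:fall} and Corollary~\ref{c:fall}, which make those sets clopen). Everything after that is the finite-word dichotomy plus a compactness-and-continuity limiting argument, both routine. A minor point to be careful about: in alternatives (2) and (3) one should check the limit identities are consistent with $x=u a v = w b z$, which is automatic since all the word-level identities already refine $w_n b z_n = u_n a v_n$.
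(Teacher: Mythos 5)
Your proposal follows essentially the same route as the paper's proof: approximate $w,z$ by words, apply Proposition~\ref{p:consequences-on-sequences} to obtain word-level factorizations of $w_nbz_n$ matching $(u,a,v)$, invoke the elementary trichotomy for two marked factorizations of the same word, pass to a subsequence where one alternative persists, and take limits by compactness. The subtlety you flag (that Proposition~\ref{p:consequences-on-sequences} literally produces factors in $(\Om AV)^1$ rather than $A^*$) is real but resolves even more cheaply than you suggest: once $u_nav_n=w_nbz_n\in A^+$, the fact that $\Om AV\setminus A^+$ is an ideal already forces $u_n,v_n\in A^*$, so there is no need to revisit the clopen sets $L_k$.
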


  \begin{proof}
     As $A^*$ is dense in $(\Om AV)^1$,
     we may consider sequences $(w_n)_n$
     and $(z_n)_n$ of elements of $A^*$
     respectively converging to $w$ and $z$.
     Let $x_n=w_nbz_n$. Note that $\lim x_n=x$,
     and so, by Proposition~\ref{p:consequences-on-sequences},
     there are sequences $(u_n)_n$
     and $(v_n)_n$ of elements of $A^*$,
     respectively converging to $u$
     and $v$,  and a positive integer~$p$, such that $u_nav_n=w_nbz_n$, $n\geq p$.
     Since the latter is an equality of words of $A^*$,
     for each $n\geq p$ one of the following three situations occurs:
    \begin{enumerate}
      [label=(\alph*),series=axioms]
    \item $u_{n}=w_n$, $a=b$ and $v_{n}=z_n$;\label{item:bidet-equidivisibility-1ap}
    \item $u_{n}=w_nbt_{n}$ and $z_n=t_{n}av_{n}$ for some
      $t_{n}\in A^\ast$;\label{item:bidet-equidivisibility-2ap}
    \item $w_n=u_{n}at_{n}$ and $v_{n}=t_{n}bz_n$
      for some $t_{n}\in A^\ast$.\label{item:bidet-equidivisibility-3ap}
    \end{enumerate}
    We let $J_1$, $J_2$ and $J_3$ be the sets of positive integers $n$
    greater or equal than~$p$
    for which, respectively, situations~\ref{item:bidet-equidivisibility-1ap},
    \ref{item:bidet-equidivisibility-2ap} and
    \ref{item:bidet-equidivisibility-3ap} occur.
    At least one of the three sets is infinite.
    Suppose that $J_2$ is infinite.
    For each $n\in J_2$, let $t_{n}\in A^\ast$
    be as in \ref{item:bidet-equidivisibility-2ap}.
    By compactness, the sequence $(t_{n})_{n\in J_2}$
    has some accumulation point $t$ in
    $(\Om AV)^1$.
    Taking limits, we get $u=wbt$ and $z=tav$, and so
    if $J_2$ is infinite then Case~\eqref{item:bidet-equidivisibility-2}
    holds.
    Arguing in a similar manner, we conclude that
    Case~\eqref{item:bidet-equidivisibility-3} holds
    if  $J_3$ is infinite, and that
    Case~\eqref{item:bidet-equidivisibility-1} holds
    if $J_1$ is infinite.
  \end{proof}





\section{Pseudowords without good factorizations}

An analog of the next proposition, and of the corollary
following it, is implicitly proved in~\cite{Pin&Therien:1993}
for good factorizations with respect to a homomorphism defined
in a free monoid (cf.~proof of \cite[Theorem~2.6]{Pin&Therien:1993}).

  \begin{Prop}\label{p:product-of-not-good-factorization}
  Let $\pv V$ be a pseudovariety of semigroups closed under bideterministic
  product.
  The set of elements of $\Om AV$
  without good factorizations is a closed subsemigroup of $\Om AV$.
\end{Prop}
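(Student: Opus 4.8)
The plan is to verify the two closure properties separately: that the set $X$ of elements of $\Om AV$ without good factorizations is closed under multiplication, and that it is topologically closed. The second is the easier half: if $x_n\to x$ with each $x_n$ having no good factorization, suppose toward a contradiction that $(\pi,a,\rho)$ is a good factorization of $x$. Since a good factorization is in particular a $+$-good factorization, Proposition~\ref{p:consequences-on-sequences} yields sequences $\pi_n\to\pi$ and $\rho_n\to\rho$ with $x_n=\pi_n a\rho_n$ for all large $n$. Because $\pi a<_\R\pi$ and $a\rho<_\L\rho$ are open conditions in the relevant sense — more precisely, $\{s : sa<_\R s\}$ is open by Corollary~\ref{c:fall} applied appropriately, and dually for $a\rho<_\L\rho$ — for $n$ large we get $\pi_n a<_\R\pi_n$ and $a\rho_n<_\L\rho_n$, so $(\pi_n,a,\rho_n)$ is a good factorization of $x_n$, a contradiction. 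I should double-check whether the openness of these relations is literally available or needs a short argument via Lemma~\ref{l:fall}/Corollary~\ref{c:fall}; I expect it follows since if $s_n\to s$ and $s_na\mathrel{\R}s_n$ for infinitely many $n$, a compactness/limit argument gives $sa\mathrel{\R}s$.

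For the multiplicative closure, let $x,y\in X$ and suppose, for contradiction, that $xy$ has a good factorization $(\pi,a,\rho)$, so $xy=\pi a\rho$ with $\pi a<_\R\pi$ and $a\rho<_\L\rho$. Now apply Theorem~\ref{t:bidet-equidivisibility} to this good factorization of $xy$ together with the factorization $xy=x\cdot y$; but $x$ or $y$ might be letter-free at the cut point, so I would instead write $x=x'c$ with $c\in A$ the last letter of $x$ (if $x\in A^+$ the argument is a trivial direct check, and $x\notin\{1\}$ since $\Om AV$ is a semigroup), apply Theorem~\ref{t:bidet-equidivisibility} to the good factorization $(\pi,a,\rho)$ of $xy$ and the factorization $xy=x'\cdot c\cdot y$... actually the cleaner route: apply equidivisibility directly to the factorization $xy = x \cdot y$ after noting $\pi a\rho = (xy)$, comparing the cut "$a$" with a cut inside $x$ or inside $y$. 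The three cases of Theorem~\ref{t:bidet-equidivisibility} then say: either the distinguished letter $a$ lies strictly inside $x$ — giving $\pi a\rho'= x$ for a suffix $\rho'$ of $\rho$, and by Lemma~\ref{l:shortening-good-factorizations} $(\pi,a,\rho')$ is a good factorization of $x$, contradicting $x\in X$; or symmetrically $a$ lies strictly inside $y$, giving a good factorization of $y$; or $a$ is exactly the "seam" between $x$ and $y$.

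The seam case is the one real obstacle. Here $xy=\pi a\rho$ with $\pi = x$ (or a factorization $x = \pi\pi''$ and $\rho = \rho'' y$ with $\pi''a\rho''$ trivial), i.e. essentially $x = \pi$, $a$ the first letter of $y$, $y = a\rho$. Then the good-factorization conditions read $xa <_\R x$ and $ay <_\L y$ where $a = \be 1(y)$. I need to turn this into a contradiction with $x,y\in X$. The idea is that $xa<_\R x$ together with $y$ having first letter $a$ lets me manufacture a good factorization of $x$: since $x\in A^+$ is impossible to assume, write $x$ as a limit of words and use the $+$-good factorization $(\,x,\, a,\, \rho\,)$ of $xy$ — wait, that is what we started with. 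Instead: from $xa<_\R x$ and $x\notin X^{c}$... Let me reconsider. Since $x$ has no good factorization, in particular for the last letter $c$ of $x$, writing $x = x'c$, the triple $(x', c, 1)$ is not good, meaning $x'c \mathrel{\R} x'$, i.e. $x\mathrel{\R} x'$; iterating, $x\mathrel{\R}x''$ for every prefix, so actually $x\leq_\R s$ forces little. The productive observation is: if $x$ has no good factorization then for every factorization $x=\pi d$ with $d\in A$ we have $\pi d\mathrel\R\pi$, and symmetrically $d'\rho\mathrel\L\rho$ for $x = d'\rho$. Combined with $xa<_\R x$: take $a=\be1(y)$ and a prefix $d'\rho' = y$, $d' = a$; from $ay<_\L y$ and $x$ having no good factorization I want a contradiction. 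I would push the sequence argument: approximate $x$ by words $x_n\to x$ and $y$ by words $y_n\to y$ with $\be1(y_n)=a$; then $x_ny_n\to xy$, and I want to relocate the good cut. I expect the resolution to use Theorem~\ref{t:bidet-equidivisibility} a \emph{second} time, or a direct compactness argument parallel to the proof of that theorem, to show that $xa<_\R x$ and $ay<_\L y$ with $a=\be1(y)$ would already give a good factorization of $x$ (taking $\pi'= $ some suffix of $x$, $\rho' = 1$) or of $y$. The anticipated hard point is precisely making this seam analysis airtight without circularity; everything else is routine compactness plus Lemma~\ref{l:shortening-good-factorizations}.
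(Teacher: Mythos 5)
Your handling of topological closure is fine and is essentially the paper's argument rephrased: the set $\{s : sa <_\R s\}$ is open (since $\leq_\R$ is a closed quasi-order on a compact semigroup, the complement $\{s : sa\mathrel{\R}s\}$ is closed), so membership persists along convergent sequences; combined with Proposition~\ref{p:consequences-on-sequences} this gives the contradiction. Your reference to Corollary~\ref{c:fall} is not really the right tool for this (it speaks about clopenness of a certain product neighborhood, not directly about openness of $\{s:sa<_\R s\}$), but the fallback argument you sketch — $s_n\to s$ with $s_na\mathrel{\R}s_n$ infinitely often implies $sa\mathrel{\R}s$ by compactness — is exactly what the paper does, so this half is sound.

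The multiplicative closure is where you have a genuine gap, and you flag it yourself: \emph{``The seam case is the one real obstacle.''} It is not an obstacle at all, and you already have the tool in hand. Fix the setup the way the paper does: write $y=bz$ with $b=\be 1(y)$ and compare the good factorization $(\pi,a,\rho)$ of $xy$ against the factorization $xy = x\cdot b\cdot z$ via Theorem~\ref{t:bidet-equidivisibility}. The three cases become: (i) $\pi=x$, $a=b$, $\rho=z$; (ii) $\pi=xbt$, $z=ta\rho$; (iii) $x=\pi at$, $\rho=tbz$. In case (ii), $bt$ is a suffix of $\pi$, so by Lemma~\ref{l:shortening-good-factorizations} $(bt,a,\rho)$ is a good factorization, and $bt\cdot a\cdot\rho = bz = y$ — contradiction. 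In case (iii), $t$ is a prefix of $\rho$, so $(\pi,a,t)$ is a good factorization and $\pi a t = x$ — contradiction. The ``seam'' case (i) is the \emph{easiest}: Lemma~\ref{l:shortening-good-factorizations} permits shortening $\pi$ to the empty suffix $1$, and since $1\cdot a = a <_\R 1$ automatically holds in $(\Om AV)^1$ (any $a\in A$ is strictly $\R$-below $1$), the triple $(1,a,\rho)=(1,b,z)$ is a good factorization of $bz=y$, contradiction. No second application of Theorem~\ref{t:bidet-equidivisibility}, no new compactness argument, and no circularity. You already cited Lemma~\ref{l:shortening-good-factorizations} in the other two cases; you simply failed to notice that it also applies with the empty word, which is exactly what closes the case you thought was hard.
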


\begin{proof}
  Denote by $S$ the set of elements of $\Om AV$
  without good factorizations.

  Let $x,y\in S$.
  Suppose that $xy$ has some good factorization
  $(u,a,v)$.
  Take $b\in A$ and $z\in (\Om AV)^1$ such that $y=bz$.
  Applying Theorem~\ref{t:bidet-equidivisibility}
  to $u\cdot a\cdot v=x\cdot b\cdot z$,
  we conclude that one of the following occurs:
  \begin{enumerate}
  \item $u=x$, $a=b$ and $v=z$;
  \item $u=xbt$ and $z=tav$ for some $t\in (\Om AV)^1$;
  \item $x=uat$ and $v=tbz$ for some $t\in (\Om AV)^1$.
  \end{enumerate}
  In the first case,
  as $(u,a,v)$ is a good factorization, so is $(1,a,v)=(1,b,z)$.
  But $bz=y$ has no good factorizations, by hypothesis, and so the first case does not hold.
  If we are in the second case, then, as $bt$ is a suffix of $u$,
  we deduce from Lemma~\ref{l:shortening-good-factorizations}
  that $(bt,a,v)$ is a good factorization of $bz=y$,
  contradicting the hypothesis that $y$ has no good factorizations.
  Similarly, the third case is in contradiction with $x$ not having good factorizations. Therefore, $xy$ has no good factorization, and $S$ is a subsemigroup of $\Om AV$.

  Finally, let $(x_n)_n$ be a sequence of elements of
  $S$ converging in $\Om AV$ to~$x\in \Om AV$. Suppose that $x\notin S$.
  We may then consider a good factorization $(u,a,v)$ of~$x$.
  By Proposition~\ref{p:consequences-on-sequences},
  there are sequences $(u_n)_n$ and $(v_n)_n$
  of elements of $(\Om AV)^1$, respectively
  converging to $u$ and $v$, and there is $p$
  such that $x_n=u_nav_n$ for all $n\geq p$.
  Consider the sets
    \begin{equation*}
      J_1=\{n\geq p\mid u_na\mathrel{\mathcal R}u_n\}
      \quad\text{and}\quad
    J_2=\{n\geq p\mid av_n\mathrel{\mathcal L}v_n\}.
  \end{equation*}
  Since $x_n\in S$, every integer  $n$ greater or equal
  to $p$ belongs to $J_1\cup J_2$, and so
  at least one of the sets $J_1$ and $J_2$ is infinite.
  Suppose that $J_1$ is infinite.
  Since $\mathcal R$ is a closed relation in $(\Om AV)^1$,
  taking limits we get $ua\mathrel{\mathcal R}u$,
  contradicting that $(u,a,v)$ is a good factorization.
  Similarly, a contradiction arises if $J_2$ is infinite.
  Therefore,  $x\in S$ and so $S$ is closed.
\end{proof}

  \begin{Cor}\label{c:product-of-regulars-has-not-good-factorization}
  Let $\pv V$ be a pseudovariety of semigroups closed under bideterministic
  product. If $\pi$ is a product of regular elements of $\Om AV$,
  then $\pi$ has no good factorization.
\end{Cor}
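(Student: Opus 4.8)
The plan is to reduce to the case of one regular element and then invoke the equidivisibility-type Theorem~\ref{t:bidet-equidivisibility}. By Proposition~\ref{p:product-of-not-good-factorization}, the elements of $\Om AV$ without good factorizations form a subsemigroup of $\Om AV$; hence a product of regular elements has no good factorization as soon as every single regular element of $\Om AV$ has none, and the whole matter reduces to proving the latter.

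To do so, I would argue by contradiction. Suppose $r\in\Om AV$ is regular and that $(\pi,a,\rho)$ is a good factorization of $r$. Choose $s\in\Om AV$ with $r=rsr$ and put $t=\rho s\pi\in\Om AV$. Then
\[
  r=rsr=(\pi a\rho)\,s\,(\pi a\rho)=\pi a\,(\rho s\pi)\,a\,\rho=(\pi a t)\cdot a\cdot\rho,
\]
so $r=(\pi a t)\cdot a\cdot\rho$ is a factorization of $r$ whose middle factor is the letter $a$. The key step is to apply Theorem~\ref{t:bidet-equidivisibility} to the good factorization $(\pi,a,\rho)$ of $r$ and to this factorization $r=(\pi a t)\cdot a\cdot\rho$. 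Each of the three resulting alternatives is impossible: the first two produce an equality $\pi=\pi a t'$ with $t'\in(\Om AV)^1$, hence $\pi\leq_\R\pi a$, contradicting $\pi a<_\R\pi$; the third produces an equality $\rho=\tau a\rho$ with $\tau\in(\Om AV)^1$, hence $\rho\leq_\L a\rho$, contradicting $a\rho<_\L\rho$. This contradiction shows that $r$ has no good factorization, which finishes the proof.

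I do not expect a genuine obstacle: the substance is carried entirely by Theorem~\ref{t:bidet-equidivisibility} (which is where closure under bideterministic product enters) together with Proposition~\ref{p:product-of-not-good-factorization}, and what remains is a routine inspection of the three cases, plus a check that the degenerate situations $\pi=1$ or $\rho=1$ are harmless, the relevant equality then being simply unsatisfiable.
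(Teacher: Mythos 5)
Your proposal is correct and follows essentially the same approach as the paper's proof: both reduce to a single regular element via Proposition~\ref{p:product-of-not-good-factorization}, then exploit the identity $r=rsr$ to manufacture a second factorization of $r$ through the letter $a$ and apply Theorem~\ref{t:bidet-equidivisibility}. The only difference is cosmetic: the paper keeps the prefix $x$ fixed and writes $\pi = x\cdot a\cdot(ys\pi)$, whereas you keep the suffix $\rho$ fixed and write $r=(\pi a\rho s\pi)\cdot a\cdot\rho$, which is just the mirror-image of the same move.
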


\begin{proof}
  By Proposition~\ref{p:product-of-not-good-factorization},
  it suffices to show that an arbitrary regular element
  $\pi$ of $\Om AV$ has no good factorizations.
  Take~$s\in\Om AV$ such that $\pi=\pi s\pi$.
  Suppose there is a good factorization $(x,a,y)$ of $\pi$.
  As $\pi=x\cdot a\cdot ys\pi$,
  by Theorem~\ref{t:bidet-equidivisibility}
  one of three cases holds:
  $y=ys\pi$,
  or $xa$ is a prefix of $x$,
  or
  $ays\pi$ is a suffix of~$y$.
  The second case immediately contradicts $(x,a,y)$ being a good factorization.
  And since $ay$ is a suffix of $\pi$,
  in the first and third cases we get $y\mathrel{\mathcal L} ay$,
  also a contradiction. Hence, $\pi$ has no good factorization.
\end{proof}

The next lemma and the theorem that follows it are proved
in~\cite{Pin&Therien:1993} for the corresponding monoid versions.
We prove them in the semigroup versions with a somewhat different approach: we use pseudowords. For a semigroup~$S$, we let $Reg(S)$
be the set of regular elements of~$S$,
and let $\langle X\rangle $
be the subsemigroup of $S$ generated by
a nonempty subset $X$ of $S$.

\begin{Lemma}\label{l:product-of-regulars-in-pintherienS}
  Let $\varphi\colon A^+\to S$ be a homomorphism onto a finite semigroup,
  and let $u\in A^+$
  be such that $\varphi_{\pv{bd}}(u)$
  is a product of regular elements of $S_{\varphi}$.
  Then $u$ has no good factorizations with respect to $\varphi$.
\end{Lemma}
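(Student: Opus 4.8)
The plan is to transfer Corollary~\ref{c:product-of-regulars-has-not-good-factorization} from an ambient free pro-$\pv W$ semigroup to the homomorphism $\varphi$. Let $\pv W$ be the least pseudovariety of semigroups that is closed under bideterministic product and contains the pseudovariety generated by~$S$. Then $\pv N\subseteq\pv W$, $S\in\pv W$, and also $S_\varphi\in\pv W$, since $S_\varphi$ is a Pin-Th\'erien expansion of $S\in\pv W$ and $\pv W=\pv W_{\pv{bd}}$ (Theorem~\ref{t:semigroup-VVbd}). Hence $\varphi$ and $\varphi_{\pv{bd}}$ extend to continuous onto homomorphisms $\widehat\varphi\colon\Om AW\to S$ and $\psi\colon\Om AW\to S_\varphi$ restricting on $A^+$ to $\varphi$ and $\varphi_{\pv{bd}}$, with $\widehat\varphi=p_\varphi\circ\psi$. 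Let $\widehat{\varphi}^{I}$ be the monoidal extension $(\Om AW)^1\to S^I$ of $\widehat\varphi$, and call a factorization $x=\pi a\rho$ in $(\Om AW)^1\times A\times(\Om AW)^1$ a \emph{$\varphi$-good factorization of $x$} when $\widehat\varphi(\pi a)<_\R\widehat{\varphi}^{I}(\pi)$ and $\widehat\varphi(a\rho)<_\L\widehat{\varphi}^{I}(\rho)$. Two remarks: since $\widehat{\varphi}^{I}$ restricts to $\varphi^I$ on $A^*$ and $\Om AW\setminus A^+$ is an ideal, a good factorization of the word $u$ with respect to $\varphi$ is precisely a $\varphi$-good factorization of the pseudoword $u$; and every $\varphi$-good factorization is a good factorization in $\Om AW$, because $\widehat\varphi(\pi a)<_\R\widehat{\varphi}^{I}(\pi)$ forces $\pi a<_\R\pi$ in $(\Om AW)^1$ (apply $\widehat\varphi$ to a hypothetical equality $\pi=\pi a z$) and dually.

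The core is the following transfer property: if $\pi,\sigma\in\Om AW$ satisfy $\psi(\pi)=\psi(\sigma)$ and $(\pi_0,a,\pi_1)$ is a $\varphi$-good factorization of $\pi$, then $\sigma$ admits a $\varphi$-good factorization $(\sigma_0,a,\sigma_1)$ with $\widehat{\varphi}^{I}(\sigma_0)=\widehat{\varphi}^{I}(\pi_0)$ and $\widehat{\varphi}^{I}(\sigma_1)=\widehat{\varphi}^{I}(\pi_1)$. To see this, choose words or empty words $\pi_0^{(k)}\to\pi_0$, $\pi_1^{(k)}\to\pi_1$ and words $\sigma^{(k)}\to\sigma$, and put $\pi^{(k)}=\pi_0^{(k)}a\pi_1^{(k)}\to\pi$. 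For large $k$, discreteness of $S$ and $S_\varphi$ gives $\pi^{(k)}\sim_\varphi\sigma^{(k)}$ and makes $(\pi_0^{(k)},a,\pi_1^{(k)})$ a good factorization of $\pi^{(k)}$ with respect to $\varphi$, with $\varphi^I(\pi_i^{(k)})$ eventually constant. By Definition~\ref{def:congruence} each such $\sigma^{(k)}$ then has a good factorization $(\sigma_0^{(k)},a,\sigma_1^{(k)})$ with respect to $\varphi$ equivalent to $(\pi_0^{(k)},a,\pi_1^{(k)})$; extracting a subsequence with $\sigma_0^{(k)}\to\sigma_0$ and $\sigma_1^{(k)}\to\sigma_1$ in the compact space $(\Om AW)^1$, one gets $\sigma=\sigma_0a\sigma_1$, and passing to the limit in the equivalence and in the inequalities (using continuity of $\widehat\varphi$) yields that $(\sigma_0,a,\sigma_1)$ is a $\varphi$-good factorization of $\sigma$ with the required matching of $\widehat\varphi$-images.

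Now assume, for contradiction, that $u$ has a good factorization with respect to $\varphi$; equivalently, the pseudoword $u$ has a $\varphi$-good factorization. Write $\varphi_{\pv{bd}}(u)=r_1\cdots r_n$ with each $r_i$ regular in $S_\varphi$. As a continuous onto homomorphism of compact semigroups reflects regularity, in the sense that every regular element of the quotient has a regular preimage, each $r_i=\psi(\hat r_i)$ for some regular $\hat r_i\in\Om AW$, and $\hat u:=\hat r_1\cdots\hat r_n$ is a product of regular pseudowords with $\psi(\hat u)=\varphi_{\pv{bd}}(u)=\psi(u)$. By Corollary~\ref{c:product-of-regulars-has-not-good-factorization}, $\hat u$ has no good factorization in $\Om AW$. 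But the transfer property, applied with $\pi=u$ and $\sigma=\hat u$, produces a $\varphi$-good factorization of $\hat u$, which is in particular a good factorization of $\hat u$ --- a contradiction. Hence $u$ has no good factorization with respect to $\varphi$.

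The step I expect to be the main obstacle is the transfer property: it is the bridge between the finite-level notion of good factorization with respect to $\varphi$ (stated through the Green quasi-orders of $S^I$) and the pseudoword-level notion (stated through those of $\Om AW$), and the difficulty is that homomorphisms need not reflect strictness of the quasi-orders. The workaround is to approximate pseudoword factorizations by word factorizations and invoke the defining property of the congruence $\sim_\varphi$ together with compactness. The hypothesis that $\pv W$ is closed under bideterministic product is used only through Corollary~\ref{c:product-of-regulars-has-not-good-factorization}, hence ultimately through the weak equidivisibility of Theorem~\ref{t:bidet-equidivisibility}.
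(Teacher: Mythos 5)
Your proof is correct and follows essentially the same route as the paper's: lift $\varphi_{\pv{bd}}(u)$ to a product of regular pseudowords, apply Corollary~\ref{c:product-of-regulars-has-not-good-factorization} there, and carry the absence of good factorizations back to $u$ via word approximations, compactness, and the defining conditions of the congruence $\sim_\varphi$. The paper simplifies two of your steps: it works directly in $\Om AS$ rather than $\Om AW$ (the detour through $\pv W$ is unnecessary, since $\pv S$ is itself closed under bideterministic product and every finite semigroup is trivially pro-$\pv S$), and instead of setting up a general pseudoword-level transfer property it argues in the opposite direction, showing that in a word sequence converging to the lift only finitely many terms can admit good factorizations (else an accumulation point would yield a good factorization of the lift, contradicting the Corollary), and then passing to $u$ through $\sim_\varphi$.
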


\begin{proof}
  Consider the unique continuous
  homomorphism $\widehat{\varphi_{\pv{bd}}}\colon\Om AS\to S_\varphi$
  extending $\varphi_{\pv{bd}}$.
  Every regular element of $S_\varphi$
  is the image by $\widehat{\varphi_{\pv{bd}}}$ of a regular element of $\Om AS$,
  and so we may take $w$ in $\langle Reg(\Om AS)\rangle$
  with $\varphi_{\pv{bd}}(u)=\widehat{\varphi_{\pv{bd}}}(w)$.
  Let $(w_n)_n$ be a sequence of words converging to $w$.
  Take the set $J$ of positive integers $n$
  such that $w_n$ has a good factorization $(x_{n},a_n,y_{n})$
  with respect to~$\varphi$.
  Suppose that $J$ is infinite.
  Let $(x,a,y)$ be an accumulation point
  of
  $(x_{n},a_n,y_{n})_{n\in J}$.
  Then $w=xay$,
  and for every $n$ in an infinite subset of $J$,
  one has $\varphi(x_{n})=\widehat\varphi(x)$,
  $a_n=a$ and $\varphi(y_{n})=\widehat\varphi(y)$.
  Therefore, $\widehat\varphi(x)<_{\mathcal R}\widehat\varphi(xa)$ and
  $\widehat\varphi(y)<_{\mathcal L}\widehat\varphi(ay)$ hold,
  thus $x<_{\mathcal R}xa$
  and $y<_{\mathcal L}ay$. Hence, $(x,a,y)$ is a good factorization
  of~$w$. But this contradicts Corollary~\ref{c:product-of-regulars-has-not-good-factorization},
  and so $J$  must be finite.
  As $\widehat{\varphi_{\pv{bd}}}(w)=\varphi_{\pv{bd}}(w_n)$
  for all large enough~$n$, we conclude that
  $\varphi_{\pv{bd}}(u)=\varphi_{\pv{bd}}(v)$ for
  some $v\in A^+$ without good factorizations
  with respect to $\varphi$. By the definition of the congruence
  $\sim_{\varphi}$, it follows that $u$ has no good factorizations
  with respect to~$\varphi$.
\end{proof}

\begin{Thm}\label{t:injective-restriction-of-the-canonical-projection}
  Let $\varphi\colon A^+\to S$ be a homomorphism onto a finite semigroup.
  Then $p_{\varphi}\colon S_{\varphi}\to S$
  restricts to an isomorphism
  $\langle Reg(S_{\varphi})\rangle\to \langle Reg(S)\rangle $.
\end{Thm}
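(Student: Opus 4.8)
The plan is to show that $p_\varphi$ maps $\langle Reg(S_\varphi)\rangle$ onto $\langle Reg(S)\rangle$ and that this restriction is injective. Surjectivity is the easy half: if $s\in S$ is regular, pick $u\in A^+$ with $\varphi(u)=s$ and with $u$ chosen inside a regular $\mathcal J$-class representative so that $\varphi_{\pv{bd}}(u)$ is regular in $S_\varphi$ — more carefully, since $p_\varphi$ is onto and $\sim_\varphi$ refines $\ker\varphi$, one shows directly that every idempotent $e$ of $S$ lifts to an idempotent of $S_\varphi$ (using that $\varphi_{\pv{bd}}(w^{\omega})$ is idempotent and maps to $e$ when $\varphi(w)=e$), and then a regular element $s=s t s$ lifts to $\widetilde s=\widetilde s\,\widetilde t\,\widetilde s$ by lifting $s$ and $t$ arbitrarily; hence $p_\varphi(\langle Reg(S_\varphi)\rangle)=\langle Reg(S)\rangle$.

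The substance is injectivity. Suppose $\alpha,\beta\in\langle Reg(S_\varphi)\rangle$ satisfy $p_\varphi(\alpha)=p_\varphi(\beta)$; I want $\alpha=\beta$. Write $\alpha=\varphi_{\pv{bd}}(u)$ and $\beta=\varphi_{\pv{bd}}(v)$ with $u,v\in A^+$, where $u$ (resp.\ $v$) can be taken to be a product of words each mapping into a regular $\mathcal J$-class, so that $\varphi_{\pv{bd}}(u)$ and $\varphi_{\pv{bd}}(v)$ are products of regular elements of $S_\varphi$. By Lemma~\ref{l:product-of-regulars-in-pintherienS}, neither $u$ nor $v$ has a good factorization with respect to $\varphi$. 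Now $p_\varphi(\alpha)=p_\varphi(\beta)$ means $\varphi(u)=\varphi(v)$. To conclude $u\sim_\varphi v$ I must check conditions (2) and (3) of Definition~\ref{def:congruence}: but $u$ has \emph{no} good factorizations, so condition (2) is vacuously satisfied, and likewise condition (3) for $v$; hence $u\sim_\varphi v$, i.e.\ $\alpha=\varphi_{\pv{bd}}(u)=\varphi_{\pv{bd}}(v)=\beta$.

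Thus $p_\varphi$ restricted to $\langle Reg(S_\varphi)\rangle$ is a bijection onto $\langle Reg(S)\rangle$, and since it is the restriction of a semigroup homomorphism, it is an isomorphism. The one point that needs care — and which I expect to be the main obstacle in writing this cleanly — is the claim that every element of $\langle Reg(S_\varphi)\rangle$ can be represented by a word $u\in A^+$ whose $\varphi_{\pv{bd}}$-image is literally a product of regular elements of $S_\varphi$ (not merely that $u$ maps to such a product along the extended map $\widehat{\varphi_{\pv{bd}}}$), so that Lemma~\ref{l:product-of-regulars-in-pintherienS} applies verbatim. This is handled by observing that each regular element of $S_\varphi$ is $\varphi_{\pv{bd}}(r)$ for some $r\in A^+$ — e.g.\ take $r$ with $\varphi_{\pv{bd}}(r)$ an idempotent-bound representative, using that in a finite semigroup a regular element lies in the image of a word under $\varphi$ and remains regular after applying $\varphi_{\pv{bd}}$ only when chosen suitably; concretely one uses that for any $w\in A^+$ with $\varphi_{\pv{bd}}(w)$ regular, some power rewriting puts it in the required form — and then concatenating finitely many such words $r$ gives the desired $u$ with $\varphi_{\pv{bd}}(u)$ an explicit finite product of regular elements, to which Lemma~\ref{l:product-of-regulars-in-pintherienS} applies directly.
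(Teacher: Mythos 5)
Your injectivity argument is the paper's: words $u,v$ representing elements of $\langle Reg(S_\varphi)\rangle$ have no good factorizations with respect to $\varphi$ (Lemma~\ref{l:product-of-regulars-in-pintherienS}), so conditions (2) and (3) of Definition~\ref{def:congruence} hold vacuously and $\sim_\varphi$ on such words collapses to $\ker\varphi$. But the ``main obstacle'' you flag at the end is a phantom. The hypothesis of Lemma~\ref{l:product-of-regulars-in-pintherienS} is a condition on the \emph{element} $\varphi_{\pv{bd}}(u)$ of $S_\varphi$ — that it be a product of regular elements of $S_\varphi$ — not a condition on the word $u$, on how $u$ is decomposed, or on $\widehat{\varphi_{\pv{bd}}}$. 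Since $\alpha\in\langle Reg(S_\varphi)\rangle$ means, by definition of $\langle\,\cdot\,\rangle$, that $\alpha$ is a finite product of regular elements of $S_\varphi$, \emph{every} $u\in A^+$ with $\varphi_{\pv{bd}}(u)=\alpha$ satisfies the hypothesis verbatim. The paper accordingly just takes arbitrary preimages; the ``power rewriting'' manoeuvre you describe is unnecessary, and as written it is not even a precise claim. This suggests you misread the hypothesis of Lemma~\ref{l:product-of-regulars-in-pintherienS} as a condition on $u$ rather than on $\varphi_{\pv{bd}}(u)$.

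Your surjectivity step also contains an error: given $s=sts$ in $S$, arbitrary lifts $\tilde s$ and $\tilde t$ of $s$ and $t$ to $S_\varphi$ do not satisfy $\tilde s=\tilde s\tilde t\tilde s$ in general. The idempotent-lifting observation via $\omega$-powers is fine, but to lift a regular element you must form a suitable $\omega$-power combination, e.g.\ $(\tilde s\tilde t)^\omega\tilde s(\tilde t\tilde s)^\omega$, which is regular in $S_\varphi$ and still maps to $s$. The paper simply invokes finiteness of $S_\varphi$ for this standard fact and does not elaborate; if you spell it out, it must be done correctly.
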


\begin{proof}
  Since $S_\varphi$ is finite,
  we have $p_\varphi(\langle Reg(S_{\varphi})\rangle)=\langle Reg(S)\rangle$,
  so it remains to show the
  restriction is one-to-one.
  Take $s,t\in \langle Reg(S_{\varphi})\rangle$.
  Let $u,v\in A^+$ be such that $s=\varphi_{\pv{bd}}(u)$
  and $t=\varphi_{\pv{bd}}(v)$.
  By Lemma~\ref{l:product-of-regulars-in-pintherienS},
  both $u$ and $v$ have no good factorizations
  with respect to $\varphi$.
  Therefore, we have $\varphi_{\pv{bd}}(u)=\varphi_{\pv{bd}}(v)$
  if and only if $\varphi(u)=\varphi(v)$,
  that is, $s=t$ if and only if $p_\varphi(s)=p_\varphi(t)$.
\end{proof}

For the sake of conciseness, say that
a pseudoword $\pi\in \Om AS$ is \emph{$\pv V$-regular}
when $[\pi ]_{\pv V}$ is regular, and $\pi $ is  \emph{$\pv V$-multiregular} if $[\pi ]_{\pv V}$ is a finite
product of $\pv V$-regular pseudowords
(actually, one may drop the finiteness assumption in subsequent results, but the assumption is nevertheless included because of the examples we have in mind).

 The following result is a sufficient condition  to ``climb up'' a pseudoidentity from
 $\pv V$ to $\pv V_{\pv {bd}}$.

  \begin{Prop}\label{p:lift-pid}
    Let $\pv V$ be a pseudovariety of semigroups.
    If $\pi,\rho\in\Om XS$
    are
     $\pv V_{\pv {bd}}$-multiregulars,
    then
    $\pv V\models \pi=\rho$
    implies
    $\pv V_{\pv {bd}}\models \pi=\rho$.
  \end{Prop}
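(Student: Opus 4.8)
The plan is to reduce the statement, via Reiterman's theorem, to a claim about the free pro-$\pv V$ and pro-$\pv V_{\pv{bd}}$ semigroups, and then to transfer a satisfied pseudoidentity across the Pin-Th\'erien expansion using Theorem~\ref{t:injective-restriction-of-the-canonical-projection}. More precisely, suppose $\pv V\models\pi=\rho$ with $\pi,\rho\in\Om XS$ both $\pv V_{\pv{bd}}$-multiregular; I must show that an arbitrary $T\in\pv V_{\pv{bd}}$ satisfies $\pi=\rho$. By definition of $\pv V_{\pv{bd}}$ it suffices to treat the case $T=S_\varphi$ for some onto homomorphism $\varphi\colon A^+\to S$ with $S\in\pv V$. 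Given any map $X\to S_\varphi$, one lifts it (using that $A^+$ is free and $\varphi_{\pv{bd}}$ is onto, or directly that $S_\varphi$ is $A$-generated) to a continuous homomorphism $\widehat{\varphi_{\pv{bd}}}\colon\Om XS\to S_\varphi$, and it is enough to show $\widehat{\varphi_{\pv{bd}}}(\pi)=\widehat{\varphi_{\pv{bd}}}(\rho)$. Composing with $p_\varphi$ gives $\widehat\varphi\colon\Om XS\to S$, and since $S\in\pv V$ the hypothesis $\pv V\models\pi=\rho$ yields $p_\varphi(\widehat{\varphi_{\pv{bd}}}(\pi))=\widehat\varphi(\pi)=\widehat\varphi(\rho)=p_\varphi(\widehat{\varphi_{\pv{bd}}}(\rho))$.

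The point is now to conclude $\widehat{\varphi_{\pv{bd}}}(\pi)=\widehat{\varphi_{\pv{bd}}}(\rho)$ from $p_\varphi$ identifying the two images, which needs those images to lie in a region where $p_\varphi$ is injective. Here the hypothesis that $\pi,\rho$ are $\pv V_{\pv{bd}}$-multiregular enters: writing $[\pi]_{\pv V_{\pv{bd}}}=\tau_1\cdots\tau_m$ with each $\tau_j$ a $\pv V_{\pv{bd}}$-regular pseudoword, I would argue that $\widehat{\varphi_{\pv{bd}}}(\pi)$ is a product of regular elements of $S_\varphi$. Indeed $\widehat{\varphi_{\pv{bd}}}$ factors through $p_{\pv V_{\pv{bd}}}\colon\Om XS\to\Om X{V_{\pv{bd}}}$ (as $S_\varphi\in\pv V_{\pv{bd}}$), so $\widehat{\varphi_{\pv{bd}}}(\pi)$ is the image of $\tau_1\cdots\tau_m$, and a continuous homomorphism carries a regular element of $\Om X{V_{\pv{bd}}}$ to a regular element of the finite semigroup $S_\varphi$; hence $\widehat{\varphi_{\pv{bd}}}(\pi)\in\langle Reg(S_\varphi)\rangle$, and likewise $\widehat{\varphi_{\pv{bd}}}(\rho)\in\langle Reg(S_\varphi)\rangle$. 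By Theorem~\ref{t:injective-restriction-of-the-canonical-projection}, $p_\varphi$ restricted to $\langle Reg(S_\varphi)\rangle$ is injective, so from $p_\varphi(\widehat{\varphi_{\pv{bd}}}(\pi))=p_\varphi(\widehat{\varphi_{\pv{bd}}}(\rho))$ we get $\widehat{\varphi_{\pv{bd}}}(\pi)=\widehat{\varphi_{\pv{bd}}}(\rho)$, as desired.

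There is a bookkeeping subtlety: $\pi\in\Om XS$ is a single pseudoword, while the multiregular decomposition lives in $\Om X{V_{\pv{bd}}}$, i.e. it is $[\pi]_{\pv V_{\pv{bd}}}$ that factors as a product of regular pseudowords. This is fine, since all I need is that $p_{\pv V_{\pv{bd}}}(\pi)$ be a product of $\pv V_{\pv{bd}}$-regular elements of $\Om X{V_{\pv{bd}}}$, and then lift each factor to a regular pseudoword in $\Om XS$ (which exists because, in any compact $A$-generated semigroup, regular elements pull back to regular elements along continuous onto homomorphisms — or simply because $p_{\pv V_{\pv{bd}}}$ is onto and regular $\mathcal J$-classes correspond appropriately). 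I expect the main obstacle to be exactly this transfer of regularity: making sure that "$[\pi]_{\pv V_{\pv{bd}}}$ is a product of regular pseudowords of $\Om X{V_{\pv{bd}}}$" suffices to conclude "$\widehat{\varphi_{\pv{bd}}}(\pi)$ is a product of regular elements of $S_\varphi$", so that Theorem~\ref{t:injective-restriction-of-the-canonical-projection} applies. Once that is in place, the rest is the routine Reiterman-style verification sketched above. Note also that the case distinction on whether $\pv V$ contains $\pv N$ does not arise here because $\pv V_{\pv{bd}}$ automatically contains $\pv N$ once it is nontrivial (cf.~Example~\ref{eg:semigroup-PT-expansion-of-I}), and the trivial case is immediate.
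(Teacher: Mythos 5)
Your argument is correct and matches the paper's proof in substance: both reduce to showing that each generator $S_\varphi$ (with $S\in\pv V$) satisfies $\pi=\rho$, both observe that the induced map $\Om XS\to S_\varphi$ factors through $\Om X{V_{\pv{bd}}}$ so that multiregularity of $\pi,\rho$ forces their images to land in $\langle Reg(S_\varphi)\rangle$, and both conclude via the injectivity of $p_\varphi$ on $\langle Reg(S_\varphi)\rangle$ (Theorem~\ref{t:injective-restriction-of-the-canonical-projection}). The only real differences are cosmetic: you overload the symbols $\widehat{\varphi_{\pv{bd}}}$ and $\widehat\varphi$ for maps out of $\Om XS$ (in the paper they denote maps out of $\Om AS$), and the paper routes the arbitrary $\widehat\psi\colon\Om XS\to S_\varphi$ through an auxiliary lift $\zeta\colon\Om XS\to\Om AS$ with $\widehat\psi=\widehat{\varphi_{\pv{bd}}}\circ\zeta$, whereas you factor directly through $\Om X{V_{\pv{bd}}}$; also, your worry about ``lifting factors back to $\Om XS$'' is unnecessary, since a homomorphism image of a product of regular elements of $\Om X{V_{\pv{bd}}}$ is already a product of regular elements of $S_\varphi$.
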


  \begin{proof}
    Let $\varphi\colon A^+\to S$ be a homomorphism
    onto a semigroup~$S$ of $\pv V$.
    Take an arbitrary homomorphism $\psi\colon X^+\to S_\varphi$.
    Let us show that $\widehat\psi(\pi)=\widehat\psi(\rho)$.

    Since $\widehat{\varphi_{\pv{bd}}}$ is onto,
    by the freeness of $\Om XS$
    there is a continuous homomorphism
    $\zeta\colon \Om XS\to \Om AS$ such that
    $\widehat\psi=\widehat{\varphi_{\pv{bd}}}\circ\zeta$.
    As $S\models\pi=\rho$,
    we have
    \begin{equation}\label{eq:equational-sufficient-cond-for-closure-1}
      \widehat\varphi(\zeta(\pi))=\widehat\varphi(\zeta(\rho)).
    \end{equation}
    As $S_\varphi\in \pv V_{\pv {bd}}$,
    there is a continuous homomorphism
    $\beta\colon\Om A{V_{\pv{bd}}}\to S_\varphi$
    such that
    \begin{equation}\label{eq:equational-sufficient-cond-for-closure-2}
      \widehat{\varphi_{\pv {bd}}}=\beta\circ p_{\pv V_{\pv {bd}}}.
    \end{equation}
    From the hypothesis that $\pi$ and $\rho$
    are  $\pv V_{\pv {bd}}$-multiregulars
    we get that the pseudowords
    $\zeta(\pi)$ and $\zeta(\rho)$ are also
    $\pv V_{\pv {bd}}$-multiregulars,
    and so,
    in view of~\eqref{eq:equational-sufficient-cond-for-closure-2},
    we conclude that $\widehat{\varphi_{\pv {bd}}}(\zeta(\pi))$
    and $\widehat{\varphi_{\pv {bd}}}(\zeta(\rho))$
    are both products of regular elements of
    $S_{\varphi}$.
    Then, applying Theorem~\ref{t:injective-restriction-of-the-canonical-projection},
    we obtain from equality~\eqref{eq:equational-sufficient-cond-for-closure-1}
    the equality $\widehat{\varphi_{\pv {bd}}}(\zeta(\pi))=\widehat{\varphi_{\pv {bd}}}(\zeta(\rho))$,
    that is $\widehat\psi(\pi)=\widehat\psi(\rho)$.
    Since $\psi$ is an arbitrary homomorphism from $X^+$
    into $S_{\varphi}$, we conclude that $S_{\varphi}\models \pi=\rho$.
    This shows that $\pv V_{\pv {bd}}\models \pi=\rho$.
  \end{proof}

  The characterization of
  $\overline{\pv V}$ observed in~Remark~\ref{r:tower-of-bds}
  (more precisely, the \emph{semigroup} pseudovariety
  version of Remark~\ref{r:tower-of-bds})
  and Proposition~\ref{p:lift-pid}
  allow us to deduce the following,
    with a straightforward inductive argument.
  
    \begin{Cor}\label{c:lift-pid}
    Let $\pv V$ be a pseudovariety of semigroups.
    If $\pi,\rho\in\Om XS$
    are
     $\overline{\pv V}$-multiregulars,
    then
    $\pv V\models \pi=\rho$
    implies
    $\overline{\pv V}\models \pi=\rho$.\qed
  \end{Cor}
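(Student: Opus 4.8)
The plan is to prove Corollary~\ref{c:lift-pid} by induction along the tower $(\pv V_n)_{n\geq 0}$ defined in the semigroup analog of Remark~\ref{r:tower-of-bds}, namely $\pv V_0=\pv V$, $\pv V_n=(\pv V_{n-1})_{\pv{bd}}$, with $\overline{\pv V}=\bigcup_{n\geq 0}\pv V_n$. Fix $\pi,\rho\in\Om XS$ that are $\overline{\pv V}$-multiregulars and assume $\pv V\models\pi=\rho$. The goal is to show $\pv V_n\models\pi=\rho$ for every $n\geq 0$, from which $\overline{\pv V}\models\pi=\rho$ follows immediately, since any $S\in\overline{\pv V}$ lies in some $\pv V_n$.

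The base case $n=0$ is the hypothesis. For the inductive step, suppose $\pv V_{n-1}\models\pi=\rho$. I want to apply Proposition~\ref{p:lift-pid} with $\pv V_{n-1}$ in the role of ``$\pv V$'', so that its conclusion gives $(\pv V_{n-1})_{\pv{bd}}=\pv V_n\models\pi=\rho$. To invoke Proposition~\ref{p:lift-pid} I must check that $\pi$ and $\rho$ are $(\pv V_{n-1})_{\pv{bd}}$-multiregulars, i.e.\ $\pv V_n$-multiregulars. This is where the one genuinely substantive point lies: being $\overline{\pv V}$-multiregular must be shown to imply being $\pv V_n$-multiregular for each $n$. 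The key observation is that the natural projection $\Om AS\to\Om A{V_n}$ factors through $\Om A{\overline V}$ (because $\pv V_n\subseteq\overline{\pv V}$), and that a continuous homomorphism maps regular elements to regular elements and products of regulars to products of regulars. Hence if $[\pi]_{\overline{\pv V}}$ is a finite product of $\overline{\pv V}$-regular pseudowords, applying the projection $\Om A{\overline V}\to\Om A{V_n}$ shows $[\pi]_{\pv V_n}$ is a finite product of $\pv V_n$-regular pseudowords; the same for $\rho$. So the multiregularity hypothesis propagates down the whole tower, and in particular $\pi,\rho$ are $\pv V_n$-multiregulars for every $n$, as needed.

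With that in hand the induction runs cleanly: assuming $\pv V_{n-1}\models\pi=\rho$ and knowing $\pi,\rho$ are $\pv V_n$-multiregulars $=(\pv V_{n-1})_{\pv{bd}}$-multiregulars, Proposition~\ref{p:lift-pid} applied to the pseudovariety $\pv V_{n-1}$ yields $\pv V_n\models\pi=\rho$. By induction $\pv V_n\models\pi=\rho$ for all $n\geq 0$, so every member of $\overline{\pv V}=\bigcup_{n\geq 0}\pv V_n$ satisfies $\pi=\rho$, i.e.\ $\overline{\pv V}\models\pi=\rho$.

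I expect the only real obstacle to be making the propagation of multiregularity down the tower fully rigorous, in particular the claim that a $\overline{\pv V}$-regular pseudoword projects to a $\pv V_n$-regular pseudoword; this rests on the general fact that the image of a regular element under a continuous semigroup homomorphism between compact semigroups is regular (if $\pi=\pi s\pi$ then the images satisfy the same identity), together with the commuting triangle of natural projections $\Om AS\to\Om A{\overline V}\to\Om A{V_n}$. Everything else — the base case, the application of Proposition~\ref{p:lift-pid}, and the passage to the union — is routine, which is why the corollary is stated as following by ``a straightforward inductive argument.''
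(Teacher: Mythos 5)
Your proof is correct and is exactly the "straightforward inductive argument" the paper alludes to: induction along the tower $(\pv V_n)$ from Remark~\ref{r:tower-of-bds}, with Proposition~\ref{p:lift-pid} supplying the inductive step. You also correctly isolate and justify the one point the paper leaves implicit — that $\overline{\pv V}$-multiregularity propagates to $\pv V_n$-multiregularity via the projection $\Om X{\overline V}\to\Om X{V_n}$, which preserves regular elements and hence finite products of regulars.
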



    \begin{Defi}\label{defi:multiregularly-based}
    Let $\pv V$ and $\pv W$ be semigroup pseudovarieties
    with \mbox{$\pv V\subseteq\pv W$}.
    Say that $\pv V$ is \emph{multiregularly based in $\pv W$}
    if it has a basis $\Sigma$ of pseudoidentities
    such that, for every pseudoidentity $(\pi=\rho)$ in $\Sigma$,
    both $\pi$ and $\rho$ are
    $\pv W$-multiregular pseudowords. If $\pv W=\pv S$,
    then we just say that $\pv V$ is \emph{multiregularly based}.
  \end{Defi}

  \begin{Prop}\label{p:equational-sufficient-cond-for-closure}
    Let $\pv V$ and $\pv W$ be pseudovarieties
    of semigroups
    with $\pv V\subseteq \pv W$ and $\pv V$ is multiregularly based in $\pv W$.
    If\/ $\pv W$ is closed under bideterministic product, then
    so is $\pv V$.
  \end{Prop}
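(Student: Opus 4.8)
The plan is to argue by contradiction using the closure of $\pv W$ and the fact that $\pv V$ has a basis of pseudoidentities between $\pv W$-multiregular pseudowords. Suppose $\pv V$ is not closed under bideterministic product. By Theorem~\ref{t:semigroup-VVbd} (the semigroup version), this means $\pv V\neq\pv V_{\pv{bd}}$, so there is a semigroup $S\in\pv V$ whose Pin-Th\'erien expansion $S_\varphi$ (with respect to some onto homomorphism $\varphi\colon A^+\to S$) does not belong to $\pv V$. Since $\pv V=\op\Sigma\cl$ for a basis $\Sigma$ of pseudoidentities between $\pv W$-multiregular pseudowords, there is a pseudoidentity $(\pi=\rho)\in\Sigma$ that fails in $S_\varphi$, where $\pi,\rho\in\Om XS$ are both $\pv W$-multiregular.

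Next I would derive the contradiction. Because $S\in\pv V\subseteq\pv W$ and $\pv W$ is closed under bideterministic product, Theorem~\ref{t:semigroup-VVbd} gives $\pv W=\pv W_{\pv{bd}}$, so $S_\varphi\in\pv W_{\pv{bd}}=\pv W$. Now I want to invoke Proposition~\ref{p:lift-pid} with the pseudovariety $\pv W$ in place of $\pv V$: since $\pi$ and $\rho$ are $\pv W_{\pv{bd}}$-multiregular (as $\pv W=\pv W_{\pv{bd}}$, this is just being $\pv W$-multiregular), and since $\pv W\models\pi=\rho$ — which holds because $\pv V\subseteq\pv W$ would need $\pv W$ to satisfy the basis of $\pv V$, which is \emph{not} automatic. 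This is the subtlety: $\pv V\subseteq\pv W$ does not give $\pv W\models\Sigma$.

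So the argument must be organized the other way. The key point is that $\pi$ and $\rho$ are $\pv W$-multiregular, hence $\pv V_{\pv{bd}}$-multiregular provided $\pv V_{\pv{bd}}\subseteq\pv W$; and indeed $\pv V_{\pv{bd}}\subseteq\pv W_{\pv{bd}}=\pv W$ since $\pv V\subseteq\pv W$. Therefore $\pi$ and $\rho$ are $\pv V_{\pv{bd}}$-multiregular, and Proposition~\ref{p:lift-pid} applies directly to $\pv V$: from $\pv V\models\pi=\rho$ we get $\pv V_{\pv{bd}}\models\pi=\rho$. Running this over every pseudoidentity in the basis $\Sigma$, we conclude $\pv V_{\pv{bd}}\models\Sigma$, hence $\pv V_{\pv{bd}}\subseteq\op\Sigma\cl=\pv V$. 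The reverse inclusion $\pv V\subseteq\pv V_{\pv{bd}}$ always holds (a monoid/semigroup is a quotient of its Pin-Th\'erien expansion via $p_\varphi$, so $\pv V$ is generated by quotients of its expansions), giving $\pv V=\pv V_{\pv{bd}}$, which by Theorem~\ref{t:semigroup-VVbd} means $\pv V$ is closed under bideterministic product.

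The main obstacle I anticipate is verifying carefully that each pseudoword appearing in the basis $\Sigma$, which is $\pv W$-multiregular by hypothesis, remains $\pv V_{\pv{bd}}$-multiregular — that is, that regularity of $[\pi]_{\pv W}$ descends along the natural projection $\Om X{W}\to\Om X{V_{\pv{bd}}}$ (equivalently, ascends from $\pv V_{\pv{bd}}$ to $\pv W$ appropriately). Since $\pv V_{\pv{bd}}\subseteq\pv W$, the natural projection factors as $\Om XS\to\Om X{W}$ through $\Om X{V_{\pv{bd}}}$, and a regular element of $\Om X{W}$ is the image of a regular element of $\Om X{V_{\pv{bd}}}$ only if we choose the preimage correctly; but what we actually need is the converse direction, namely that $[\pi]_{\pv V_{\pv{bd}}}$ is regular, which follows because $[\pi]_{\pv W}$ being a product of regulars and the projection $\Om X{V_{\pv{bd}}}\to\Om X{W}$ being onto does \emph{not} immediately give it. The clean route is: $\pi$ is $\pv W$-multiregular means $\pi\in\Om XS$ has $[\pi]_{\pv W}$ a product of $\pv W$-regular pseudowords; writing $\pi$ itself (in $\Om XS$) as a product of $\pv S$-regular pseudowords suffices, since the image in \emph{any} quotient of a regular element is regular — so one should require, and the hypothesis does give, that the $\pi$ in the basis are products of pseudowords whose $\pv W$-classes are regular, and by replacing them with genuinely $\pv S$-multiregular representatives (possible since $\op\Sigma\cl$ is unchanged when we lift each side to an $\pv S$-multiregular preimage with the same $\pv W$-class, which exists because $p_{\pv W}$ maps $\langle Reg\rangle$ onto $\langle Reg\rangle$), the descent to $\pv V_{\pv{bd}}$-multiregularity is automatic. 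I would state this normalization of the basis as the first lemma-step of the proof.
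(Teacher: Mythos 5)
Your middle paragraph is correct and reproduces the paper's argument essentially verbatim: from $\pv V\subseteq\pv W$ and closure of $\pv W$ one gets $\pv V_{\pv{bd}}\subseteq\pv W$, so the $\pv W$-multiregular sides of each basis pseudoidentity are automatically $\pv V_{\pv{bd}}$-multiregular, Proposition~\ref{p:lift-pid} gives $\pv V_{\pv{bd}}\models\Sigma$, and hence $\pv V_{\pv{bd}}\subseteq\pv V$, which by Theorem~\ref{t:semigroup-VVbd} is the conclusion.

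The worry you raise in your final paragraph, however, rests on a reversed projection. Since $\pv V_{\pv{bd}}\subseteq\pv W$, the natural projection goes $\Om X{W}\to\Om X{V_{\pv{bd}}}$, not $\Om X{V_{\pv{bd}}}\to\Om X{W}$ as you wrote. This map is an onto continuous semigroup homomorphism, and any semigroup homomorphism sends regular elements to regular elements; so if $[\pi]_{\pv W}$ is a product of regular elements of $\Om X{W}$, its image $[\pi]_{\pv V_{\pv{bd}}}$ is a product of regular elements of $\Om X{V_{\pv{bd}}}$, with no choice of preimage involved. The proposed normalization of the basis to $\pv S$-multiregular representatives is therefore unnecessary, as is the detour through a contradiction in your first paragraph, which you correctly abandon.
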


  \begin{proof}
    Let~$\Sigma$ be a basis for $\pv V$ such that, for every pseudoidentity $(\pi=\rho)$ in $\Sigma$,
    both $\pi$ and $\rho$ are
    $\pv W$-multiregulars.
    Fix an element $(\pi=\rho)$ of $\Sigma$.
    Since $\pv V\subseteq \pv W$
    and $\pv W$ is closed under bideterministic product,
    the inclusion $\pv V_{\pv {bd}}\subseteq\pv W$ holds.
    It then follows from Proposition~\ref{p:lift-pid}
    that $\pv V_{\pv {bd}}\models \pi=\rho$.
    This shows that $\pv V_{\pv {bd}}\subseteq\pv V$,
    that is, $\pv V$ is closed under bideterministic product.
  \end{proof}

   \begin{Example}\label{eg:examples-of-application-of-equational-sufficient-cond-for-closure}
   The pseudovarieties $\pv{DS}=\op ((xy)^\omega (yx)^\omega (xy)^\omega )^\omega =(xy)^\omega \cl $ and
   $\pv J=\op (xy)^\omega =(yx)^\omega , x^{\omega +1} =x^\omega \cl$ are pseudovarieties of semigroups closed under bideterministic product,
   in view of Proposition~\ref{p:equational-sufficient-cond-for-closure}.
   \end{Example}



\begin{Cor}\label{c:localization-bideterministic-product}
If\/ $\pv V$ is a pseudovariety of semigroups multiregularly based  then the pseudovariety $\Lo{V}$ is closed under bideterministic product.
\end{Cor}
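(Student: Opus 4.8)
The plan is to apply Proposition~\ref{p:equational-sufficient-cond-for-closure} with $\pv W=\pv S$. Concretely, I would take a basis $\Sigma$ of pseudoidentities for $\pv V$ in which every side of every pseudoidentity is a multiregular pseudoword (this exists by hypothesis, since $\pv V$ is multiregularly based). The goal is to produce, from $\Sigma$, a basis for $\Lo{V}$ whose pseudoidentities again have only multiregular sides; then Proposition~\ref{p:equational-sufficient-cond-for-closure} (with $\pv W=\pv S$, which is trivially closed under bideterministic product) immediately gives that $\Lo V$ is closed under bideterministic product.

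The key step is the standard description of a basis for $\Lo V$ from a basis for $\pv V$. Recall that, by definition, $S\in\Lo V$ if and only if $eSe\in\pv V$ for every idempotent $e$ of $S$. The usual way to turn this into pseudoidentities is: for each pseudoidentity $(\pi=\rho)$ in $\Sigma$ on an alphabet $X=\{x_1,\dots,x_n\}$, and for a fresh variable $y$, one forms the pseudoidentity obtained by replacing each variable $x_i$ by $y^\omega x_i y^\omega$ (so that the substituted words lie in the ``local monoid at $y^\omega$'') and multiplying on both sides by $y^\omega$; that is, one considers $(y^\omega\,\widehat\pi\,y^\omega = y^\omega\,\widehat\rho\,y^\omega)$, where $\widehat\pi,\widehat\rho$ denote $\pi,\rho$ after the substitution $x_i\mapsto y^\omega x_i y^\omega$. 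The collection $\Sigma_{\Lo{}}$ of all these pseudoidentities, over all $(\pi=\rho)\in\Sigma$, is a basis for $\Lo V$; this is a well-known fact (it can be cited from~\cite{Almeida:1994a}). The point I then need is that each side $y^\omega\,\widehat\pi\,y^\omega$ is a multiregular pseudoword: since $y^\omega$ is idempotent it is regular, and if $\pi$ is a product of regular pseudowords $r_1 r_2\cdots r_m$, then after the substitution and the flanking by $y^\omega$ one gets a product $y^\omega\widehat r_1 y^\omega\widehat r_2 y^\omega\cdots y^\omega$; each $y^\omega\widehat r_j y^\omega$ lies in $y^\omega(\Om XS)y^\omega$, and a pseudoword of the form $y^\omega z y^\omega$ is always regular because $(y^\omega z y^\omega)(y^\omega)(y^\omega z y^\omega)=y^\omega z y^\omega$. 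Hence every side of every pseudoidentity in $\Sigma_{\Lo{}}$ is a finite product of regular pseudowords, i.e.\ multiregular.

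Having established that $\Lo V$ is multiregularly based (in $\pv S$), I would then invoke Proposition~\ref{p:equational-sufficient-cond-for-closure} with $\pv V$ there replaced by $\Lo V$ and $\pv W=\pv S$: since $\pv S$ is closed under bideterministic product and $\Lo V$ is multiregularly based in $\pv S$, it follows that $\Lo V$ is closed under bideterministic product, as required.

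I expect the main obstacle to be the first claim, namely pinning down precisely the correct form of the basis $\Sigma_{\Lo{}}$ for $\Lo V$ and verifying the substituted pseudowords are genuinely multiregular in the formal sense of Definition~\ref{defi:multiregularly-based} (that is, that $[\,y^\omega\widehat r_j y^\omega\,]_{\pv S}$ is regular — which is automatic by the idempotent-sandwiching argument above). The rest is routine once that bookkeeping is in place.
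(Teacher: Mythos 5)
Your overall strategy coincides with the paper's: take a multiregular basis $\Sigma$ for $\pv V$, pass to the standard basis for $\Lo V$ given by the substitution $x\mapsto z^\omega x z^\omega$, check that the resulting sides are again multiregular, and conclude with Proposition~\ref{p:equational-sufficient-cond-for-closure} (taking $\pv W=\pv S$). That is exactly how the paper argues.

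However, your justification that the transformed pseudowords are multiregular contains a genuine error. You claim that a pseudoword of the form $y^\omega z y^\omega$ is always regular because $(y^\omega z y^\omega)(y^\omega)(y^\omega z y^\omega)=y^\omega z y^\omega$. This identity is false: the left-hand side equals $y^\omega z y^\omega z y^\omega$, and in a free profinite semigroup $y^\omega z y^\omega z y^\omega \neq y^\omega z y^\omega$ (for instance with $z$ a letter distinct from $y$ — take a finite quotient recognizing the language $y^*zy^*$ to separate them). In fact $y^\omega z y^\omega$ need not be regular at all, so the ``idempotent-sandwiching'' argument does not by itself give what you need. The correct reason, which is what the paper implicitly uses, is that the continuous endomorphism $\varphi(x)=z^\omega x z^\omega$ is a semigroup homomorphism and homomorphisms preserve regularity: if $r$ satisfies $r=rsr$, then $\varphi(r)=\varphi(r)\varphi(s)\varphi(r)$. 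Applying this factor by factor to a multiregular pseudoword $u=r_1\cdots r_m$ shows $\varphi(u)=\varphi(r_1)\cdots\varphi(r_m)$ is again multiregular. With that repair your argument is fine; as written, the regularity step is unsound.
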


\begin{proof}
Suppose that $\pv V=\op \Sigma \cl$, where $\Sigma $ is a set of pseudoidentities. For each pseudoidentity $(u=v)\in \Sigma $, consider
an alphabet $A$ that contains $c(u)\cup c(v)$ and a letter
$z\notin c(u)\cup c(v)$.
Let
$\varphi_{u,v}$ be the unique
continuous endomorphism of $\Om AS$
such that 
$\varphi(x)=z^\omega xz^\omega $
for every letter $x$ of $A$. Then the equality
$\Lo{V}= \op \varphi_{u,v}(u)=\varphi_{u,v}(v) \mid (u=v)\in \Sigma \cl$
holds.
Clearly, if $u$ and $v$ are multiregulars,
then the same happens with
$\varphi_{u,v}(u)$ and $\varphi_{u,v}(v)$.
Hence, if $\pv V$ is multiregularly based, then $\Lo{V}$ is multiregularly based and, by Proposition~\eqref{p:equational-sufficient-cond-for-closure},
$\Lo{V}$ is closed under bideterministic product.
\end{proof}

  We close this section
  applying Corollary~\ref{c:product-of-regulars-has-not-good-factorization}
  in the proof of the following technical lemma,
  to be used later on.

\begin{Lemma}\label{l:finite-prefix-exclusion}
  Let $\pv V$ be a pseudovariety of semigroups
  closed under bideterministic product.
  Consider an element $x$ of $\Om AV$
  such that $x\leq_{\R} y$
  for some regular element $y$ of $\Om AV$.
  Then, there is not a good factorization
  $(u,a,\pi)$ of $x$
  such that $u\in A^\ast$.
\end{Lemma}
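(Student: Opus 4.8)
The plan is a proof by contradiction. Suppose $(u,a,\pi)$ is a good factorization of $x$ with $u\in A^{*}$; the goal is to manufacture from it a good factorization of a regular element of $\Om AV$, which is impossible by Corollary~\ref{c:product-of-regulars-has-not-good-factorization} (both that corollary and Theorem~\ref{t:bidet-equidivisibility} below carry the standing hypothesis that $\pv V$ is closed under bideterministic product).

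First I would turn the hypothesis $x\leq_{\R}y$ into an idempotent absorbing $x$ on the left. Write $x=ys$ with $s\in(\Om AV)^{1}$, and $y=yty$ with $t\in\Om AV$ (as $y$ is regular). Then $e:=yt$ is an idempotent of $\Om AV$ satisfying $ey=y$, so $x=ys=eys=ex$; moreover $e\neq 1$ because $y\in\Om AV$, and $e$ is regular, so $e$ has no good factorization -- this is the fact to be contradicted. Since $e\neq 1$, using that words are dense in $\Om AV$ and that $\te 1$ is continuous, I would write $e=e_{1}c$ with $c:=\te 1(e)\in A$ and $e_{1}\in(\Om AV)^{1}$, so that $x=ex=e_{1}\cdot c\cdot x$ is a factorization of $x$ through a letter. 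Now apply Theorem~\ref{t:bidet-equidivisibility} to the good factorization $(u,a,\pi)$ of $x$ and the factorization $x=e_{1}cx$; one of the following holds:
\begin{enumerate}
\item $u=e_{1}$, $a=c$ and $\pi=x$;
\item $u=e_{1}ct$ and $x=ta\pi$ for some $t\in(\Om AV)^{1}$;
\item $e_{1}=uat$ and $\pi=tcx$ for some $t\in(\Om AV)^{1}$.
\end{enumerate}

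It remains to eliminate each case. In case~(1), $\pi=x$ gives $x=ua\pi=uax$, hence $x\leq_{\L}ax$, whereas $a\pi<_{\L}\pi$ reads $ax<_{\L}x$, a contradiction. In case~(2), $u=e_{1}ct$ is a finite word, so its prefix $e_{1}$ is a (possibly empty) word and $e=e_{1}c$ is a nonempty word, which cannot be idempotent, a contradiction. In case~(3) I would check that $(u,a,tc)$ is a good factorization of $e=e_{1}c=uatc$: the inequality $ua<_{\R}u$ is inherited from $(u,a,\pi)$, and $a(tc)<_{\L}tc$ follows from $a\pi<_{\L}\pi$ together with $\pi=tcx$, since if one had $tc\leq_{\L}atc$ then right-multiplication by $x$ would give $tcx\leq_{\L}atcx$, against $atcx<_{\L}tcx$. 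This good factorization of the regular element $e$ contradicts Corollary~\ref{c:product-of-regulars-has-not-good-factorization}, so no good factorization $(u,a,\pi)$ of $x$ with $u\in A^{*}$ can exist. The main obstacle is choosing the factorization of $x$ to feed into the weak equidivisibility theorem and, in case~(3), recognizing that the data produced reassembles into a genuine good factorization of the idempotent~$e$; the degenerate situations $u=1$ or $\pi=1$ need no separate treatment, as the arguments above go through verbatim.
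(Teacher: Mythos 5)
Your proof is correct, but it follows a genuinely different route from the paper's. The paper converts the hypothesis $x\leq_{\R}y$ directly into a second factorization of $x$ by cutting the regular element $y$ at position exactly $|u|$: it writes $y=y_{0}by_{1}$ with $y_{0}\in A^{*}$, $|y_{0}|=|u|$, $b\in A$, and compares $(u,a,\pi)$ with $(y_{0},b,y_{1}z)$ where $x=yz$. Because $u$ and $y_{0}$ are finite words of the same length, cases (2) and (3) of Theorem~\ref{t:bidet-equidivisibility} are immediately impossible, so only case (1) survives; the shortening Lemma~\ref{l:shortening-good-factorizations} then turns $(y_{0},a,y_{1})$ into a good factorization of $y$, contradicting Corollary~\ref{c:product-of-regulars-has-not-good-factorization}. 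You instead first build an idempotent $e$ with $e\neq 1$ and $ex=x$, peel off its last letter, and compare against $(e_{1},c,x)$, which forces you to dispose of all three cases of the weak equidivisibility theorem separately. The paper's choice of the comparison factorization is thus the more economical one (the length argument collapses the case split), whereas yours is perhaps more conceptual in that it reduces the problem to the fact that a regular element absorbing $x$ from the left cannot itself have a good factorization. Both proofs ultimately lean on the same three ingredients: Theorem~\ref{t:bidet-equidivisibility}, the fact that $\Om AV\setminus A^{+}$ is an ideal (so factors of a finite word are finite words), and Corollary~\ref{c:product-of-regulars-has-not-good-factorization}.

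One small imprecision worth flagging: you invoke $\te 1$ as a continuous map on $(\Om AV)^{1}$, but the paper only establishes continuity on $(\Om AS)^{1}$, and the continuous extension to $\Om AV$ requires $\pv D_1\subseteq\pv V$, which does not follow from closure under bideterministic product (recall $\overline{\pv I}=\pv N$). The conclusion you actually need -- that $e$ admits \emph{some} factorization $e=e_{1}c$ with $c\in A$ and $e_{1}\in(\Om AV)^{1}$ -- is nonetheless true by a compactness argument (take words $w_{n}\to e$, pass to a subnet where the final letter is constant and the initial segments converge), so this is a matter of presentation rather than a gap in the argument.
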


\begin{proof}
  Suppose, on the contrary, that there is a good factorization
  $(u,a,\pi)$ of $x$
  such that $u\in A^\ast$.
  Let $x=yz$, with $z\in (\Om AV)^1$.
  Since $y$ is regular, it has a factorization
  $y=y_0\cdot b\cdot y_1$, such that $y_0\in A^*$, $b\in A$,
  $y_1\in \Om AV$ and $|y_0|=|u|$.
  Applying Theorem~\ref{t:bidet-equidivisibility}
  to compare the factorizations
  $(u,a,\pi)$ and $(y_0,b,y_1z)$ of $x$, and since $u$ and $y_0$ are finite words of the same length,
   we conclude that $u=y_0$, $a=b$ and $\pi=y_1z$.
  As the triple $(y_0,b,y_1z)$
  is then a good factorization of $x$,
  we may apply Lemma~\ref{l:shortening-good-factorizations} to conclude
  that the triple $(y_0,a,y_1)$ is a good factorization of $y$.
  But this contradicts Corollary~\ref{c:product-of-regulars-has-not-good-factorization}.
\end{proof}

\section{Organized factorizations}


In this section, we quickly review
the factorizations of pseudowords
as products of words and regular elements over $\pv J$,
and then proceed to an abstraction of that property.
First, it is convenient to recall the following
properties, going back to~\cite{Azevedo:1989}.
We give~\cite[Chapter 8]{Almeida:1994a} as reference.

\begin{Prop}\label{p:properties-modulo-J}
  Let $\pi,\rho\in\Om AS$. The following properties hold:
  \begin{enumerate}
  \item $\pi$ is $\pv J$-regular if and only if $\pi$ is $\pv {DS}$-regular;
    \label{item:properties-modulo-J-1}
  \item if $\pi$ and $\rho$ are $\pv J$-regular,
    then $\pv J\models \pi=\rho$ if and only if $c(\pi)=c(\rho)$;\label{item:properties-modulo-J-2}
  \item for every
    pseudovariety of semigroups $\pv V$
  such that $\pv {Sl}\subseteq \pv {V}\subseteq \pv {DS}$,
  if $\pi$ is $
  \pv V$-regular,
  then $\pv V\models \pi\rho\mathrel{\R}\pi$
  if and only if $c(\rho)\subseteq c(\pi)$.\label{item:properties-modulo-J-3}
\end{enumerate}
\end{Prop}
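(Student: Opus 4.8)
The statement to prove is Proposition~\ref{p:properties-modulo-J}, which collects three classical facts about $\pv J$-regular pseudowords. These are standard results going back to Azevedo and documented in Almeida's book, so the plan is to assemble a proof from known ingredients rather than develop anything new.

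\textbf{Plan for item~\eqref{item:properties-modulo-J-1}.} I would argue that $\pv J$-regularity and $\pv{DS}$-regularity of a pseudoword $\pi\in\Om AS$ both coincide with a purely combinatorial condition on $\pi$, namely that $\pi$ lies in the topological closure of the set of words whose content is ``stable'' in an appropriate sense — concretely, that $[\pi]_{\pv{Sl}}$ can be realised by an idempotent-supported factorization. The clean way: a pseudoword $\pi$ is $\pv J$-regular if and only if $\pi \mathrel{\mathcal J} \pi^{\omega+1}$ in $\Om A{\pv J}$; since $\pv J \subseteq \pv{DS}$, $\pv{DS}$-regularity implies $\pv J$-regularity trivially by projecting. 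For the converse, I would use the description of regular $\J$-classes in $\Om A{\pv{DS}}$: an element is $\pv{DS}$-regular iff its content equals the content of some factor witnessing the regularity, and the key point is that the natural projection $\Om A{\pv{DS}} \to \Om A{\pv J}$ maps the regular part onto the regular part injectively on content classes. I would cite \cite[Chapter 8]{Almeida:1994a} for the structural description of $\Om A{\pv{DS}}$ that makes this work.

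\textbf{Plan for item~\eqref{item:properties-modulo-J-2}.} For $\pv J$-regular $\pi, \rho$: the content is obviously a $\pv J$-invariant, so $\pv J \models \pi = \rho$ forces $c(\pi) = c(\rho)$. Conversely, I would invoke the fact that $\Om A{\pv J}$ has trivial regular $\J$-classes (that is the defining property of $\pv J$) together with the fact that in $\Om A{\pv J}$ a regular element is uniquely determined by its content — this is precisely the statement that the regular $\J$-classes of $\Om A{\pv J}$ are singletons indexed by subsets of $A$, which appears in \cite[Chapter 8]{Almeida:1994a}. So $c(\pi)=c(\rho)$ together with both being regular yields $[\pi]_{\pv J} = [\rho]_{\pv J}$, i.e. $\pv J \models \pi=\rho$.

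\textbf{Plan for item~\eqref{item:properties-modulo-J-3}, which is the main obstacle.} Here we have $\pv{Sl}\subseteq \pv V \subseteq \pv{DS}$, $\pi$ is $\pv V$-regular, and we must show $\pv V \models \pi\rho \mathrel{\mathcal R} \pi$ iff $c(\rho) \subseteq c(\pi)$. The easy direction: if $\pv V \models \pi\rho \mathrel\R \pi$ then $\pv{Sl} \models \pi\rho \mathrel\R \pi$, and in the semilattice $\Om A{\pv{Sl}}$ the relation $\R$ is equality and the product is union, so $c(\pi)\cup c(\rho) = c(\pi)$, giving $c(\rho)\subseteq c(\pi)$. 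The hard direction is the converse: assuming $c(\rho)\subseteq c(\pi)$ and $\pi$ $\pv V$-regular, deduce $\pi\rho \mathrel\R \pi$ in every semigroup of $\pv V$. I would reduce to the free pro-$\pv V$ semigroup and use that $\pi$ being $\pv V$-regular means $[\pi]_{\pv V} = [\pi]_{\pv V}\, s\, [\pi]_{\pv V}$ for some $s$, so $[\pi]_{\pv V}$ lies in a regular $\R$-class; then the structural theory of $\Om A{\pv{DS}}$ (every regular $\J$-class is a completely simple semigroup, and $\R$-classes within it are governed by content via the ``left content'' stabilizing) gives that right-multiplying a $\pv{DS}$-regular element by anything with content inside $c(\pi)$ stays in the same $\R$-class — and this passes down to $\pv V$. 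The delicate point I expect to wrestle with is justifying that the content-based criterion for $\R$-equivalence, which is cleanest in $\Om A{\pv{DS}}$, transfers correctly to an arbitrary intermediate $\pv V$; for this I would lean on the fact that $\pv V \supseteq \pv{Sl}$ guarantees the content map is defined and that $\pv V$-regular implies $\pv{DS}$-regular (by item~\eqref{item:properties-modulo-J-1} applied after noting regularity only weakens under projection), then run the computation in $\Om A{\pv{DS}}$ and project. I would cite \cite[Chapter 8]{Almeida:1994a} and \cite{Azevedo:1989} for the relevant structural facts about $\Om A{\pv{DS}}$.
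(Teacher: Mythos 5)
The paper offers no proof of this proposition: it simply records it as a known fact, citing \cite{Azevedo:1989} and \cite[Chapter 8]{Almeida:1994a}. So there is no in-paper argument to compare against; I can only assess your sketches on their own.

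For items~\eqref{item:properties-modulo-J-1} and~\eqref{item:properties-modulo-J-2}, your sketches are in the right spirit and point at the correct structural results, though they stay at the level of ``invoke the structure theorem for $\Om A{\pv{DS}}$'' without carrying out the argument. That matches how the paper itself treats the proposition, so there is nothing to object to there.

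For item~\eqref{item:properties-modulo-J-3} there is a genuine error. Your plan hinges on the assertion that, for $\pv{Sl}\subseteq\pv V\subseteq\pv{DS}$, being $\pv V$-regular implies being $\pv{DS}$-regular, so that you can ``run the computation in $\Om A{\pv{DS}}$ and project.'' That implication is false: every pseudoword is $\pv{Sl}$-regular (since $\Om A{\pv{Sl}}$ is a semilattice, hence idempotent), yet a finite word such as $ab$ is certainly not $\pv{DS}$-regular. Regularity only transfers \emph{downward} along projections: if $\pv V\subseteq\pv W$ then $\pv W$-regular implies $\pv V$-regular, not conversely; your appeal to item~\eqref{item:properties-modulo-J-1} would only help if $\pv J\subseteq\pv V$, which the hypotheses do not grant. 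Worse, the route itself is broken irrespective of regularity: the conclusion $\pv V\models \pi\rho\mathrel{\R}\pi$ may hold while $\pv{DS}\models\pi\rho\mathrel{\R}\pi$ fails. Take $\pv V=\pv{Sl}$, $\pi=ab$, $\rho=a$: then $\pi$ is $\pv{Sl}$-regular and $c(\rho)\subseteq c(\pi)$, so $\pv{Sl}\models ab\cdot a\mathrel{\R} ab$ (both sides equal $\{a,b\}$ in the free semilattice), but $\pv{DS}\not\models ab\cdot a\mathrel{\R}ab$ (the syntactic semigroup of $\{ab\}$ lies in $\pv N\subseteq\pv{DS}$ and does not satisfy $aba\,x=ab$ for any $x$). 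So establishing the $\R$-relation over $\pv{DS}$ and projecting down is not even an available strategy. The argument must be run inside $\Om A{\pv V}$ itself, using only that $\Om A{\pv V}$ is pro-$\pv{DS}$ (so regular $\J$-classes are closed completely simple subsemigroups) together with the content map furnished by $\pv{Sl}\subseteq\pv V$, rather than by lifting to $\Om A{\pv{DS}}$.
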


\begin{Defi}\label{defi:J-reduced-factorizations}
  A factorization
  $\pi=u_0\cdot\pi_1\cdot u_1\cdot\pi_2\cdots \pi_{n-1}\cdot u_{n-1}\cdot\pi_n\cdot u_n$
  of an element
  $\pi$ of $\Om AS$
  is~\emph{$\pv J$-reduced}
  if the next four conditions are satisfied:
\begin{enumerate}
\item $u_i\in A^*$ for every $i\in\{0,1,\ldots,n\}$;
\item $\pi_i$ is $\pv J$-regular for every $i\in\{1,\ldots,n\}$;
\item if $u_i=1$ and $1\leq i\leq n-1$,
  then $c(\pi_i)$ and $c(\pi_{i+1})$ are incomparable;
\item $\te 1(u_{i-1})\notin c(\pi_i)$
  and $\be 1(u_{i})\notin c(\pi_i)$, for every $i\in \{1,\ldots,n\}$.
\end{enumerate}
If, moreover, the fifth condition $u_0=u_1=\ldots=u_{n-1}=u_n=1$
is also satisfied, then we say that
$\pi=\pi_1\pi_2\cdots \pi_{n-1}\pi_n$ is a \emph{$\pv J$-reduced  multiregular element of} $\Om AS$.
\end{Defi}

\begin{Lemma}\label{l:J-reduced-are-all}
  Suppose that $\pi\in\Om AS$
  is a product of $n$ pseudowords
  that are $\pv J$-regular. Then $\pi$
  factorizes as a $\pv J$-reduced multiregular pseudoword
  $\pi_1\ldots \pi_k$ for some $k\leq n$.
 \end{Lemma}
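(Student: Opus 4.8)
The plan is to argue by induction on $n$, the engine being to repeatedly \emph{merge} two consecutive factors whenever their contents are comparable. Write $\pi=\sigma_1\cdots\sigma_n$ with each $\sigma_i\in\Om AS$ being $\pv J$-regular. When $n=1$ there is nothing to do: a single $\pv J$-regular pseudoword is already a $\pv J$-reduced multiregular pseudoword, since with all the words $u_i$ of Definition~\ref{defi:J-reduced-factorizations} equal to $1$, condition~(3) concerns no index at all, and condition~(4) holds vacuously because $\te 1(1)=\be 1(1)=1$, which lies outside the content $c(\sigma_1)\subseteq A$ of any pseudoword. For $n\ge 2$, I would distinguish two cases. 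If $c(\sigma_i)$ and $c(\sigma_{i+1})$ are incomparable for every $i\in\{1,\ldots,n-1\}$, then $\pi=\sigma_1\cdots\sigma_n$ already satisfies every clause of Definition~\ref{defi:J-reduced-factorizations}, and we take $k=n$.

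Otherwise there is an index $i$ with $c(\sigma_i)$ and $c(\sigma_{i+1})$ comparable, and the key point is that the merged pseudoword $\sigma_i\sigma_{i+1}$ is again $\pv J$-regular. If $c(\sigma_{i+1})\subseteq c(\sigma_i)$, then, as $\pv{Sl}\subseteq\pv J\subseteq\pv{DS}$ and $\sigma_i$ is $\pv J$-regular, Proposition~\ref{p:properties-modulo-J}\eqref{item:properties-modulo-J-3} gives $\pv J\models\sigma_i\sigma_{i+1}\mathrel{\R}\sigma_i$; thus $[\sigma_i\sigma_{i+1}]_{\pv J}$ belongs to the $\R$-class of the regular element $[\sigma_i]_{\pv J}$ of the compact semigroup $\Om AJ$, and so is itself regular. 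The symmetric case $c(\sigma_i)\subseteq c(\sigma_{i+1})$ is handled dually, via the $\L$-relation and the left--right dual of Proposition~\ref{p:properties-modulo-J}\eqref{item:properties-modulo-J-3}. Hence, by associativity, $\pi=\sigma_1\cdots\sigma_{i-1}\,(\sigma_i\sigma_{i+1})\,\sigma_{i+2}\cdots\sigma_n$ is a product of $n-1$ $\pv J$-regular pseudowords, and the induction hypothesis yields a $\pv J$-reduced multiregular factorization of $\pi$ with $k\le n-1\le n$ factors.

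The one thing to keep an eye on is that merging replaces the two contents $c(\sigma_i)$ and $c(\sigma_{i+1})$ by their union $c(\sigma_i)\cup c(\sigma_{i+1})$, which may now be comparable with $c(\sigma_{i-1})$ or $c(\sigma_{i+2})$ even if these were previously incomparable; this is harmless, since the induction is carried on the number of factors and we re-apply the hypothesis once that number drops. Beyond this, the proof is routine, and I foresee no real obstacle: the only two points needing a word of justification are the preservation of $\pv J$-regularity under merging, settled above with Proposition~\ref{p:properties-modulo-J} and the fact recalled in the preliminaries that in a compact semigroup the $\R$- and $\L$-classes of a regular element consist of regular elements, and the observation that condition~(4) in the definition of a $\pv J$-reduced factorization is automatic once all the $u_i$ are trivial.
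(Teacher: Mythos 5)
Your proof is correct and follows essentially the same route as the paper's: induction on the number of factors, with the merge of two adjacent factors whose contents are comparable, justified by Proposition~\ref{p:properties-modulo-J}\eqref{item:properties-modulo-J-3}. Your write-up is slightly more detailed (spelling out the $\R$-class/$\L$-class regularity argument and the vacuity of conditions~(3)--(4) when all $u_i=1$), but the underlying idea is identical.
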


 \begin{proof}
   Let $\pi=\pi_1\cdots\pi_n$ be a factorization into
   $\pv J$-regular pseudowords.
   We show the lemma by induction on $n$.
   The base case is trivial.
   Suppose the lemma holds for smaller values of $n$.
   If $c(\pi_i)$ and $c(\pi_{i+1})$
   are incomparable for each $i\in\{1,\ldots,n-1\}$, then the factorization is already $\pv J$-reduced.
   If, on the contrary, $c(\pi_j)$ and $c(\pi_{j+1})$
   are comparable for some~$j$, then
   $\pi_j'=\pi_j\pi_{j+1}$ is a $\pv J$-regular pseudoword
   (cf.~Proposition~\ref{p:properties-modulo-J}\eqref{item:properties-modulo-J-3})
   and $\pi=\pi_1\cdots \pi_{j-1}\pi_{j}'\pi_{j+2}\cdots \pi_n$
   is a factorization of $\pi$ into less than
   $n$ $\pv J$-regular factors. We may then apply the induction hypothesis.
 \end{proof}


As supporting references for
the next theorem,
we give~\cite[Section 4]{Almeida&Azevedo:1993} and~\cite[Theorem 8.1.11]{Almeida:1994a}.

\begin{Thm}\label{t:uniqueness-J-factorizations}
  Every element of
  $\Om AS$ has a $\pv J$-reduced factorization,
  and each $\pv J$-reduced factorization is unique modulo $\pv J$, that is,
  if
  \begin{align*}
  \pi&=u_0\cdot\pi_1\cdot u_1\cdot\pi_2\cdots \pi_{n-1}\cdot u_{n-1}\cdot\pi_n\cdot u_n,\\
  \rho&=v_0\cdot\rho_1\cdot v_1\cdot\rho_2\cdots \rho_{m-1}\cdot v_{m-1}
   \cdot \rho_m\cdot v_m,
 \end{align*}
 are $\pv J$-reduced factorizations
 such
 that $\pv J\models \pi=\rho$, then $m=n$, $u_i=v_i$
 and $\pv J\models \pi_j=\rho_j$
 for every $i\in\{0,1,\ldots,n\}$
 and $j\in \{1,\ldots,n\}$.
\end{Thm}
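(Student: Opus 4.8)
The plan is to prove Theorem~\ref{t:uniqueness-J-factorizations} in two stages: first existence, then uniqueness modulo $\pv J$. For existence, I would start from the structural fact that every pseudoword $\pi\in\Om AS$ that is not itself a finite word can be written using the factorizations coming from the ``iterated'' structure of $\Om AS$ — concretely, one decomposes $\pi$ along its $\leq_\J$-order: the non-regular ``infinite'' parts of $\pi$ live strictly below $\pv J$-regular pseudowords, and between consecutive regular blocks there sit finite words. More carefully, I would invoke the known description (Azevedo, \cite[Chapter 8]{Almeida:1994a}) that a pseudoword is either a finite word or has a factorization $\pi=\alpha\cdot\pi_1\cdot\beta$ where $\pi_1$ is a $\pv J$-maximal $\pv J$-regular factor; iterating and using that $\Om AS$ is compact (so this process terminates after finitely many regular blocks, since the $\J$-classes involved form an antichain/descending structure of bounded length once we fix that each block is $\pv J$-maximal among what remains), one gets a factorization into finite words and $\pv J$-regular pseudowords. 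Then I would clean it up using Lemma~\ref{l:J-reduced-are-all}: merge adjacent regular factors $\pi_i,\pi_{i+1}$ with $u_i=1$ whenever their contents are comparable (Proposition~\ref{p:properties-modulo-J}\eqref{item:properties-modulo-J-3} guarantees the product stays $\pv J$-regular), and absorb any first/last letter of an adjacent word that lies in $c(\pi_i)$ into $\pi_i$ (again preserving $\pv J$-regularity by the same content criterion), thereby enforcing conditions (3) and (4) of Definition~\ref{defi:J-reduced-factorizations}.

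For uniqueness, suppose $\pi=u_0\pi_1u_1\cdots\pi_nu_n$ and $\rho=v_0\rho_1v_1\cdots\rho_mv_m$ are $\pv J$-reduced and $\pv J\models\pi=\rho$. I would argue by induction on $n+m$, reading the factorizations from the left. The key point is that the leftmost $\pv J$-regular ``landmark'' is canonically determined: working in $\Om AJ$, the image of $\pi$ determines the pair (initial finite prefix, content of the first regular block) — condition (4) ensures $u_0$ is exactly the maximal prefix lying in $(A\setminus c(\pi_1))^*$ up to the point where a letter of $c(\pi_1)$ first appears, and one shows $\be 1$ applied after stripping $u_0$, together with the $\J$-order, pins down $c(\pi_1)$. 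Hence $u_0=v_0$ and $c(\pi_1)=c(\rho_1)$, so $\pv J\models\pi_1=\rho_1$ by Proposition~\ref{p:properties-modulo-J}\eqref{item:properties-modulo-J-2}. Then I would cancel: since over $\pv J$ the relevant prefixes agree, the remainders $u_1\pi_2\cdots u_n$ and $v_1\rho_2\cdots v_m$ are again $\pv J$-reduced and equal modulo $\pv J$, and the induction hypothesis finishes it. (Condition (3) is what prevents ``splitting'' ambiguities when some $u_i=1$: incomparable contents mean a genuine boundary that any $\pv J$-reduced factorization of the same pseudoword must respect.)

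The main obstacle is the uniqueness half, specifically making rigorous the claim that ``the first regular block is canonically readable from the $\pv J$-class.'' The danger is that a $\pv J$-regular pseudoword $\pi_1$ and the finite words on either side could, in principle, be repackaged — e.g. a letter could migrate across a boundary, or two short regular blocks could merge — and one must show the $\pv J$-reduced conditions rule all of this out. I expect to handle this by a careful bookkeeping argument about contents and the $\leq_\J$ quasi-order in $\Om AJ$: the image in $\Om AJ$ of a $\pv J$-reduced factorization is a specific normal form (an alternation of words and content-sets, with the incomparability and non-membership constraints), and two such normal forms that are equal in $\Om AJ$ must be literally identical. This reduces the semigroup-theoretic uniqueness to a combinatorial uniqueness of normal forms in the (well-understood) structure of $\Om AJ$, which is exactly where the references \cite[Section 4]{Almeida&Azevedo:1993} and \cite[Theorem 8.1.11]{Almeida:1994a} do the heavy lifting; I would cite them for that combinatorial core rather than re-derive it.

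A subtle secondary point worth isolating as a lemma: one must check that in the existence construction the number of regular blocks is indeed finite. This follows because each block chosen is $\pv J$-maximal among the regular factors of the remaining suffix, and in $\Om AS$ the chain of such successive maxima is governed by strictly decreasing content (or incomparable content) data over a fixed finite alphabet $A$, so it cannot go on forever; compactness ensures the ``leftover infinite tail'' between choices is itself eventually a regular block or a finite word. I would state this termination explicitly before assembling the final $\pv J$-reduced factorization via Lemma~\ref{l:J-reduced-are-all}.
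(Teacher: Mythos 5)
The paper gives no proof of this theorem: it is imported from \cite[Section 4]{Almeida&Azevedo:1993} and \cite[Theorem 8.1.11]{Almeida:1994a}, and your sketch correctly traces the standard two-stage argument (iterated extraction of $\pv J$-maximal regular factors plus compactness for existence, a left-to-right normal-form comparison in $\Om AJ$ for uniqueness) and ultimately, as it should, defers the combinatorial core to those same references, so the approaches coincide. One small slip worth fixing: condition (4) of Definition~\ref{defi:J-reduced-factorizations} constrains only $\te 1(u_0)$ and $\be 1(u_1)$, so $u_0$ need not lie in $(A\setminus c(\pi_1))^*$ as you assert; the correct point is that the last letter of $u_0$ escapes $c(\pi_1)$, which still suffices to pin down the boundary but should be phrased accordingly.
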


The following interesting observation will be used later on.

\begin{Lemma}\label{l:letters-make-products-of-regulars-fall}
  Let $\pv V$ be a semigroup pseudovariety
  in the interval $[\pv {J},\pv {DS}]$.
  Suppose that $\pi=\pi_1\cdots \pi_n \in \Om AS$
  is a $\pv J$-reduced multiregular pseudoword,
  and let $\rho\in \Om AS$.
  Then $\pv V\models \pi\rho\mathrel{\R}\pi$
  if and only if
  $c(\rho)\subseteq c(\pi_n)$.
\end{Lemma}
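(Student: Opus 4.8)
The plan is to reduce the statement to Proposition~\ref{p:properties-modulo-J}\eqref{item:properties-modulo-J-3} applied to the single $\pv V$-regular pseudoword $\pi_n$. First I would observe that the ``if'' direction is essentially immediate: if $c(\rho)\subseteq c(\pi_n)$, then by Proposition~\ref{p:properties-modulo-J}\eqref{item:properties-modulo-J-3} we have $\pv V\models\pi_n\rho\mathrel{\R}\pi_n$, so $\pv V\models\pi_n\rho=\pi_n s$ for a suitable pseudoword witnessing the $\R$-relation; multiplying on the left by $\pi_1\cdots\pi_{n-1}$ gives $\pv V\models\pi\rho\leq_{\R}\pi$, and the reverse inequality $\pi\leq_{\R}\pi\rho$ is trivial. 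More carefully, since $\R$ is a stable relation one gets $\pv V\models\pi\rho\mathrel{\R}\pi$ directly from $\pv V\models\pi_n\rho\mathrel{\R}\pi_n$ by left-multiplying both sides by $\pi_1\cdots\pi_{n-1}$.

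For the ``only if'' direction, suppose $\pv V\models\pi\rho\mathrel{\R}\pi$; I want to deduce $c(\rho)\subseteq c(\pi_n)$. The key tool is Theorem~\ref{t:uniqueness-J-factorizations}: the factorization $\pi=\pi_1\cdots\pi_n$ is a $\pv J$-reduced (multiregular) factorization of $\pi$, and I would like to exploit its essential uniqueness together with the relation $\pi\mathrel{\R}\pi\rho$ over $\pv V$, hence over $\pv J$ since $\pv J\subseteq\pv V$. The relation $\pi\rho\leq_{\R}\pi$ means $\pv V\models\pi\rho=\pi$? No---it means there is nothing; rather $\pi\rho\mathrel{\R}\pi$ forces $\pi\rho$ and $\pi$ to have a common $\pv J$-reduced factorization modulo $\pv J$ after absorbing $\rho$. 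Concretely, write a $\pv J$-reduced factorization of $\pi\rho$; compare it via Theorem~\ref{t:uniqueness-J-factorizations} with a $\pv J$-reduced factorization of $\pi\rho'$ for the witness $\rho'$ of $\pi\rho\mathrel{\R}\pi$, i.e. $\pi=\pi\rho\rho'$ over $\pv V$; peeling off the first $n-1$ regular blocks $\pi_1,\dots,\pi_{n-1}$ (which must match, since letters separating regular blocks in a $\pv J$-reduced factorization are determined, and the content of $\pi_i$ is determined modulo $\pv J$), I reduce to the case $n=1$: that $\pv V\models\pi_n\rho\mathrel{\R}\pi_n$ with $\pi_n$ a $\pv V$-regular pseudoword, whereupon Proposition~\ref{p:properties-modulo-J}\eqref{item:properties-modulo-J-3} yields $c(\rho)\subseteq c(\pi_n)$ exactly as desired.

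The main obstacle I anticipate is making the ``peeling off'' step rigorous: a priori $\pi\rho$ need not itself be in $\pv J$-reduced form (the word $u_n$ at the end is empty, but $\te 1$ of the last letter of $\pi_n$ might equal $\be 1(\rho)$, or $c(\rho)$ might be comparable with $c(\pi_n)$), so one must first normalize $\pi\rho$ to a genuine $\pv J$-reduced factorization and then invoke uniqueness. The cleanest route is probably to argue directly with contents: since $\pv V\subseteq\pv{DS}$, the content map is a morphism and $c(\pi\rho\rho')=c(\pi)$; combined with $\pi\mathrel{\R}\pi\rho$ over $\pv J$ and the description of $\R$ in terms of $\pv J$-reduced factorizations (Theorem~\ref{t:uniqueness-J-factorizations}), one sees that appending $\rho$ cannot create a new regular block nor change the ``tail content'', forcing $c(\rho)\subseteq c(\pi_n)$. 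A slicker alternative, avoiding a full re-derivation, is to note that $\pi\mathrel{\R}\pi\rho$ over $\pv V$ implies $\pi_n\rho''\mathrel{\R}\pi_n$ over $\pv V$ where $\rho''$ collects everything appended after the last regular block $\pi_n$ (using that $\pi_1\cdots\pi_{n-1}$ has a cancellable-on-the-left behaviour modulo $\pv J$ coming from uniqueness of $\pv J$-reduced factorizations), and then close with Proposition~\ref{p:properties-modulo-J}\eqref{item:properties-modulo-J-3}. I would present the content-based argument as the primary one since it is the most self-contained.
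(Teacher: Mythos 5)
Your ``if'' direction matches the paper's. For the ``only if'' direction, your plan --- invoke the uniqueness of $\pv J$-reduced factorizations (Theorem~\ref{t:uniqueness-J-factorizations}) and reduce to Proposition~\ref{p:properties-modulo-J}\eqref{item:properties-modulo-J-3} applied to a single regular block --- is the right conceptual framework, but there is a genuine gap that you yourself flag and do not close. The paper's decisive move, which you are missing, is to take $x$ with $\pv V\models\pi=\pi\rho x$ and then pass to the idempotent $(\rho x)^\omega$: iterating gives $\pv V\models\pi_1\cdots\pi_n=\pi_1\cdots\pi_n(\rho x)^\omega$, and the right-hand side is now a product of $n+1$ $\pv J$-regular pseudowords, so Lemma~\ref{l:J-reduced-are-all} produces a $\pv J$-reduced multiregular factorization of it by successive gluing. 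Since $\pi_1,\dots,\pi_n$ is already $\pv J$-reduced, the only adjacent pair that can be glued is $(\pi_n,(\rho x)^\omega)$, giving $\pi_n'=\pi_n(\rho x)^\omega$; Theorem~\ref{t:uniqueness-J-factorizations} then forces exactly $n$ blocks with $\pv J\models\pi_n=\pi_n'$, and comparing contents yields $c(\rho)\subseteq c(\pi_n)$. Without the $\omega$-power trick, $\pi\rho\rho'$ is in general not a product of regular pseudowords, so Lemma~\ref{l:J-reduced-are-all} does not apply and your ``peeling off'' / normalization of $\pi\rho\rho'$ is left as a hand-wave. Your proposed fallback via contents alone also does not work: the content morphism only yields $c(\rho)\subseteq c(\pi)=c(\pi_1)\cup\cdots\cup c(\pi_n)$, which is strictly weaker than the required $c(\rho)\subseteq c(\pi_n)$.
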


\begin{proof}
  As $\pv V\models \pi_n\rho\mathrel{\R}\pi_n$
  implies
  $\pv V\models \pi\rho\mathrel{\R}\pi$,
  the ``if'' part is immediate in
  view of Proposition~\ref{p:properties-modulo-J}.
  Conversely, suppose that
  $\pv V\models \pi\rho\mathrel{\R}\pi$,
  and let $x$ be such that $\pv V\models \pi=\pi \rho x$.
  We then have
  $\pv V\models \pi_1\cdots\pi_n=\pi_1\cdots\pi_n(\rho x)^\omega$.
  By the uniqueness of $\pv J$-reduced factorizations
  (Theorem~\ref{t:uniqueness-J-factorizations}),
  the sets $c(\pi_n)$ and $c((\rho x)^\omega)$
   must be comparable,
   and so $\pi'_n=\pi_n(\rho x)^\omega$
   is $\pv V$-regular.
   Again by the uniqueness of $\pv J$-reduced
   factorizations and by Lemma~\ref{l:J-reduced-are-all} ,
   the factorization
   $\pi_1\cdots\pi_{n-1}\pi'_n$
   must be $\pv J$-reduced,
   and moreover $\pv J\models \pi_n=\pi_n'$.
   In particular, we have $c(\rho)\subseteq c(\pi_n)$.
\end{proof}


Next is an abstraction of
some features of being $\pv J$-reduced.

  \begin{Defi}\label{defi:factorizations-levels}

    Let us consider a factorization of $\pi\in \Om AS$ of the form
    \begin{equation}\label{eq:factorizations-levels-1}
      \pi=
      u_0\cdot\pi_1\cdot u_1\cdot\pi_2\cdots \pi_{n-1}\cdot u_{n-1}
      \cdot
      \pi_n\cdot u_n,
    \end{equation}
    for some $n\geq 0$,
    such that $u_0,u_n\in A^\ast$, $u_i\in A^+$ when $1\leq i\leq n-1$,
    and $\pi_i\notin A^+$ when $1\leq i\leq n$.

    Let $\pv V$ be a semigroup pseudovariety.
    The factorization~\eqref{eq:factorizations-levels-1}
    is
    \emph{multiregularly organized in $\pv V$} if  the pseudowords $\pi_i$, $1\leq i\leq n$, are $\pv V$-multiregular.
    The same factorization~\eqref{eq:factorizations-levels-1}
    is
    \emph{organized with short $\pv V$-breaks}
    if  the following conditions hold:
    \begin{enumerate}
    [label=(SB.\arabic*),series=axioms]
    \item $[\pi_i\,\be 1(u_i)]_{\pv V}<_{\R}[\pi_i]_{\pv V}$ when
      $1\leq i\leq n-1$;\label{item:SBR1}
    \item $[\te 1(u_{i})\,\pi_{i+1}]_{\pv V}<_{\L}[\pi_{i+1}]_{\pv V}$
      when $1\leq i\leq n-1$;\label{item:SBR2}
    \item if $u_0\neq 1$ then $[\te 1(u_{0})\,\pi_1]_{\pv V}<_{\L}[\pi_1]_{\pv V}$;\label{item:SBR3}
    \item if $u_n\neq 1$ then $[\pi_n\,\be 1(u_n)]_{\pv V}<_{\R}[\pi_n]_{\pv V}$.\label{item:SBR4}
    \end{enumerate}
    Finally, the factorization~\eqref{eq:factorizations-levels-1}
    is
    \emph{organized with long $\pv V$-breaks}
    if  the following conditions hold:
    \begin{enumerate}
      [label=(LB.\arabic*),series=axioms]
    \item $[u_0\pi_1u_1\pi_2\cdots\pi_{i-1}u_{i-1}\pi_i\,\be 1(u_i)]_{\pv V}
         <_{\R}
         [u_0\pi_1u_1\pi_2\cdots\pi_{i-1}u_{i-1}\pi_i]_{\pv V}$
         for each $i\in \{1,\ldots,n-1\}$;\label{item:LBR1}
       \item $[\te 1(u_{i})\,\pi_{i+1}u_{i+1}\pi_{i+2}\cdots u_{n-1}\pi_nu_n]_{\pv V}<_{\L}
         [\pi_{i+1}u_{i+1}\pi_{i+2}\cdots u_{n-1}\pi_nu_n]_{\pv V}$
         for each $i\in \{1,\ldots,n-1\}$;\label{item:LBR2}
    \item if $u_0\neq 1$ then
      $ [\te 1(u_{0})\,\pi_1u_1\pi_2\cdots u_{n-1}\pi_nu_n]_{\pv V}<_{\L}
      [\pi_1u_1\pi_2\cdots u_{n-1}\pi_nu_n]_{\pv V}$;\label{item:LBR3}
    \item if $u_n\neq 1$ then
      $[u_0\pi_1u_1\pi_2\cdots u_{n-1}\pi_n\,\be 1(u_n)]_{\pv V}
      <_{\R}[u_0\pi_1u_1\pi_2\cdots u_{n-1}\pi_{n}]_{\pv V}$.\label{item:LBR4}
    \end{enumerate}
  \end{Defi}

  \begin{Remark}\label{r:long-implies-short}
    A factorization~\eqref{eq:factorizations-levels-1} that is organized with long $\pv V$-breaks
    is a factorization that is organized with short $\pv V$-breaks.
  \end{Remark}

    \begin{Defi}\label{defi:organizing-pseudovariety}
    A semigroup pseudovariety $\pv W$ is \emph{organizing}
    if, for every alphabet~$A$, every $\pi\in\Om AS$ is multiregularly organized in $\pv W$.
  \end{Defi}

  \begin{Example}\label{eg:organizing-pseudovarieties}
    The pseudovarieties $\pv {DS}$
    and $\pv {DS}\cap\pv{ECom}$
    are organizing pseudovarieties,
    the former by Theorem~\ref{t:uniqueness-J-factorizations}
    (and Proposition~\ref{p:properties-modulo-J}\eqref{item:properties-modulo-J-1}), the latter by an analog result of Almeida and Weil (Proposition 3.7
    in~\cite{Almeida&Weil:1994c}).
\end{Example}


            \begin{Prop}\label{p:reduction-of-pseudoword}
              Let $\pv V$ be a pseudovariety of semigroups.
        Suppose that $\pi$
        is an element of $\Om AS$
        with a factorization multiregularly organized in
        a semigroup pseudovariety $\pv W$.
        Then there are $\pi'\in\Om AS$ and a set $\Upsilon_\pi$
        of pseudoidentities over~$A$
        satisfying the following conditions:
        \begin{enumerate}
        \item
          $\pv V\subseteq\op\Upsilon_\pi\cl\subseteq \op\pi=\pi'\cl$;\label{item:reduction-of-pseudoword-1}
        \item $\pi'$ has a factorization multiregularly organized in $\pv W$ and with short
          \mbox{$\pv V$-breaks};\label{item:reduction-of-pseudoword-3}
        \item if $(\varpi=\varrho)$ belongs to $\Upsilon_\pi$,
          then $\varpi$ and $\varrho$\label{item:reduction-of-pseudoword-4}
          are  $\pv W$-multiregulars.
        \end{enumerate}
      \end{Prop}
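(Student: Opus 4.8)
The plan is to induct on the number $n$ of multiregular factors $\pi_1,\dots,\pi_n$ appearing in the given factorization~\eqref{eq:factorizations-levels-1} of $\pi$ that is multiregularly organized in $\pv W$, and to successively adjust the boundary letters so that the short-break conditions \ref{item:SBR1}--\ref{item:SBR4} are forced. The base case $n=0$ is immediate: take $\pi'=\pi$ and $\Upsilon_\pi=\emptyset$ (or a basis for $\pv V$), since with no $\pi_i$'s there is nothing to verify. For the inductive step, I would focus first on the rightmost factor $\pi_n$ and the word $u_{n-1}$ (and, at the very end of the chain of adjustments, on $u_n$ handling \ref{item:SBR4}). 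The idea is that if $[\,\te 1(u_{n-1})\,\pi_n\,]_{\pv V}<_{\L}[\pi_n]_{\pv V}$ already holds, we leave that break alone; otherwise $[\,\te 1(u_{n-1})\,\pi_n\,]_{\pv V}\mathrel{\L}[\pi_n]_{\pv V}$, which by Lemma~\ref{l:letters-make-products-of-regulars-fall} (applied to the dual, i.e.\ to $\te 1(u_{n-1})$ as a one-letter prefix and a $\pv J$-reduced multiregular form of $\pi_n$, which exists by Lemma~\ref{l:J-reduced-are-all} and Proposition~\ref{p:properties-modulo-J}) means the letter $\te 1(u_{n-1})$ already lies in the content of the first $\pv J$-regular block of $\pi_n$; absorbing that letter into $\pi_n$ produces a pseudoword $\pi_n'$ with $\pv V\models \te 1(u_{n-1})\,\pi_n=\pi_n'$ that is still $\pv W$-multiregular. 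One then shortens $u_{n-1}$ by deleting its last letter, records the pseudoidentity $(\te 1(u_{n-1})\,\pi_n=\pi_n')$ into $\Upsilon_\pi$ — both sides $\pv W$-multiregular, as required by~\eqref{item:reduction-of-pseudoword-4} — and repeats. Since $u_{n-1}$ has finite length this absorption terminates, either emptying $u_{n-1}$ (in which case the factor merges with $\pi_{n-1}u_{n-1}\pi_n$ collapsing to fewer factors, and the induction hypothesis applies) or leaving a genuine break satisfying \ref{item:SBR2} for $i=n-1$ (and symmetrically one processes the $\R$-side to get \ref{item:SBR1}).

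Each pseudoidentity put into $\Upsilon_\pi$ is a consequence of the identities that are valid in $\pv V$ about how letters fall out of regular $\J$-classes: by Proposition~\ref{p:properties-modulo-J}\eqref{item:properties-modulo-J-3} (and Lemma~\ref{l:letters-make-products-of-regulars-fall}) the relevant equalities hold in every semigroup of $\pv V$, so one may define $\Upsilon_\pi$ to be a basis of $\pv V$ together with all the finitely many absorption pseudoidentities generated during the process. This guarantees $\pv V\subseteq\op\Upsilon_\pi\cl$. On the other hand, each step was designed so that applying the recorded pseudoidentity rewrites the current factorization into the next one without changing the overall pseudoword modulo $\op\Upsilon_\pi\cl$; chaining these rewrites gives $\op\Upsilon_\pi\cl\subseteq\op\pi=\pi'\cl$, which is condition~\eqref{item:reduction-of-pseudoword-1}. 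Condition~\eqref{item:reduction-of-pseudoword-3} is exactly what the termination of the procedure yields: after all boundary letters on both sides of every interior factor (and the two outer words, via \ref{item:SBR3}--\ref{item:SBR4}) have been absorbed as far as they will go, what remains is by construction a factorization that is still multiregularly organized in $\pv W$ (absorptions preserve $\pv W$-multiregularity) and now satisfies \ref{item:SBR1}--\ref{item:SBR4}.

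The main obstacle I anticipate is bookkeeping the interaction between adjacent breaks: after absorbing a letter of $u_{i-1}$ into $\pi_i$, the modified $\pi_i'$ could in principle interact with $\pi_{i-1}$ through $u_{i-1}$ becoming empty, forcing a merge $\pi_{i-1}u_{i-1}\pi_i'\rightsquigarrow$ a single multiregular factor and thereby re-opening break conditions on the $\L$-side between $\pi_{i-2}$ and the merged factor that had previously been settled. To keep the induction well-founded I would process the factors in a fixed order (say, peel off breaks from the right end inward, and within each factor absorb on the $\R$-side then the $\L$-side), and phrase the inductive statement so that the number of factors $n$ is the primary induction parameter — any merge strictly decreases $n$, so one simply re-enters the induction hypothesis — while the absorption loop inside a single factor terminates because word lengths decrease. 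One should also double-check that the short-break conditions are genuinely \emph{preserved} by later absorptions at other boundaries: absorbing a letter of $u_i$ on the left of $\pi_{i+1}$ does not touch $\pi_i$ or $u_{i-1}$, so \ref{item:SBR1}--\ref{item:SBR2} at index $i-1$ stay intact, and the localized nature of each modification (it alters only one $\pi_j$ and trims one $u_j$) is what makes this clean. Finally, the outer words $u_0,u_n$ are handled by the same absorption applied once, using Lemma~\ref{l:letters-make-products-of-regulars-fall} for $\pi_1$ and $\pi_n$, to secure \ref{item:SBR3} and \ref{item:SBR4}.
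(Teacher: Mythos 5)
Your overall induction skeleton is the right one and matches the paper's at a structural level (peel off failing breaks one at a time, each absorption reduces a well-founded measure, record the resulting pseudoidentities in $\Upsilon_\pi$). However, the mechanism you propose for the absorption step does not work at the stated level of generality, and there is a second, smaller issue with how you populate $\Upsilon_\pi$.

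The central gap is your reliance on Lemma~\ref{l:letters-make-products-of-regulars-fall} and Proposition~\ref{p:properties-modulo-J}\eqref{item:properties-modulo-J-3}. Those are only available when $\pv V$ lies in $[\pv J,\pv {DS}]$ (respectively $[\pv {Sl},\pv{DS}]$), whereas Proposition~\ref{p:reduction-of-pseudoword} is stated for an \emph{arbitrary} pseudovariety of semigroups $\pv V$ and an arbitrary $\pv W$. There is no content mapping argument available in that generality. Moreover, even where those lemmas do apply, your conclusion that ``absorbing that letter into $\pi_n$ produces a pseudoword $\pi_n'$ \dots\ that is still $\pv W$-multiregular'' is unjustified: knowing that $\te 1(u_{n-1})$ lies in the $\pv V$-content of the first $\pv J$-regular block of $\pi_n$ gives no control over regularity of $\te 1(u_{n-1})\pi_n$ modulo the unrelated pseudovariety $\pv W$. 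The paper's proof avoids all of this with a more elementary device that works for any $\pv V,\pv W$: if the short-break condition fails at index $j$, say $[\pi_j\,\be 1(u_j)]_{\pv V}\mathrel{\R}[\pi_j]_{\pv V}$ with $u_j=av$, take $x\in\Om AS$ with $[\pi_j]_{\pv V}=[\pi_jax]_{\pv V}$, hence $[\pi_j]_{\pv V}=[\pi_j(ax)^\omega]_{\pv V}$; record the pseudoidentity $\pi_j=\pi_j(ax)^\omega$ (both sides $\pv W$-multiregular, since $(ax)^\omega$ is a regular element of $\Om AS$); and replace the factor $\pi_ju_j$ by $\rho v$ where $\rho=\pi_j(ax)^\omega a$. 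The pseudoword $\rho$ is $\pv W$-multiregular simply because it is the product of the $\pv W$-multiregular $\pi_j$ with the $\Om AS$-regular element $(ax)^\omega a$ — no content argument is involved. The induction is on $\sigma=n+\sum_i|u_i|$, which drops by $1$ in the generic case and by $2$ when a merge is forced.

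The second issue is your decision to ``define $\Upsilon_\pi$ to be a basis of $\pv V$ together with all the finitely many absorption pseudoidentities.'' Condition~\eqref{item:reduction-of-pseudoword-4} requires \emph{every} pseudoidentity in $\Upsilon_\pi$ to be between $\pv W$-multiregulars; a basis of $\pv V$ will in general violate this. The inclusion $\pv V\subseteq\op\Upsilon_\pi\cl$ does not need a basis — it only requires each member of $\Upsilon_\pi$ to be satisfied by $\pv V$, which is already guaranteed for the absorption pseudoidentities $\pi_j=\pi_j(ax)^\omega$ by construction. You should put nothing else into $\Upsilon_\pi$.
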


      \begin{proof}
        By hypothesis, there is a factorization
        \begin{equation*}
          \pi=
      u_0\cdot\pi_1\cdot u_1\cdot\pi_2\cdots \pi_{n-1}\cdot u_{n-1}
      \cdot
      \pi_n\cdot u_n
    \end{equation*}
    which is multiregularly organized in $\pv W$.
          Let $s=\sum_{i=0}^n|u_i|$ and $\sigma=n+s$.
          We show the proposition by induction
          on $\sigma\geq 0$.
    If $n=0$ or $s=0$ (the latter implies $n=1$), then just take $\pi'=\pi$
    and $\Upsilon_\pi=\emptyset$.
    In particular, this shows the initial step of the induction.

    Suppose the proposition
    holds for smaller values of $\sigma$
    and that $n>0$ and $s>0$.
     Let $J$ be the set of integers $i$ in $\{1,\ldots,n\}$
     such that
     $[\pi_i\,\be 1(u_i)]_{\pv V}\mathrel{\R}[\pi_i]_{\pv V}$
     and $u_i\neq 1$,
     or such that
     $[\te 1(u_{i-1})\,\pi_{i}]_{\pv V}\mathrel{\L}[\pi_{i}]_{\pv V}$
     and $u_{i-1}\neq 1$.
    In fact, one always has $u_i\neq 1$, except, perhaps, when
    $i=0$ or $i=n$.
    If $J=\emptyset$, then we just take $\pi'=\pi$
    and $\Upsilon_\pi=\emptyset$.
    Suppose that $J\neq\emptyset$,
    and let~$j\in J$.
    Without loss of generality, we assume
    that
    $[\pi_j\,\be 1(u_j)]_{\pv V}\mathrel{\R}[\pi_j]_{\pv V}$
     and $u_j\neq 1$.
    Let $a=\be 1(u_j)$, and let $v\in A^\ast$ be such that $u_j=av$.
    Then, there is $x\in\Om AS$ such that
    \begin{equation}\label{eq:reduction-of-pseudoword-3}
      [\pi_j]_{\pv V}=[\pi_jax]_{\pv V}=[\pi_j(ax)^\omega]_{\pv V}.
    \end{equation}
    Let $\rho=\pi_j(ax)^\omega a$.
    Since $(ax)^\omega a$
    is a regular element of $\Om AS$ and
    $[\pi_j]_{\pv W}$ is $\pv W$-multiregular,
    we know that $[\rho]_{\pv W}$ is also $\pv W$-multiregular. Consider the pseudoword
    $\bar\pi$ with factorization
        \begin{equation*}
          \bar\pi=
          u_0\cdot\pi_1\cdot u_1\cdot\pi_2\cdots
          \pi_{j-1}\cdot u_{j-1}\cdot \rho\cdot v\cdot \pi_{j+1}
          \cdots
          \pi_{n-1}\cdot u_{n-1}
      \cdot
      \pi_n\cdot u_n.
    \end{equation*}
    If $v\neq 1$,
    then this factorization is multiregularly organized in $\pv W$.
    If $v=1$, then we also obtain a factorization
    of $\bar\pi$
    which is multiregularly organized in $\pv W$
    by ``gluing'' $\rho$ and $\pi_{j+1}$.
    In any case, we produce a factorization of
    $\bar\pi$ which is multiregularly organized in $\pv W$
    and with smaller value for $\sigma$ than that we had in~$\pi$.
    We may therefore apply the induction hypothesis
    to obtain a pseudoword $\pi'\in\Om AS$
    and a set $\Upsilon_{\bar\pi}$
    of pseudoidentities over $A$
    satisfying the following conditions:
        \begin{enumerate}
        \item $\pv V\subseteq\op\Upsilon_{\bar\pi}\cl\subseteq \op\bar\pi=\pi'\cl$;
        \item $\pi'$ has a factorization multiregularly organized in $\pv W$ and with short
          $\pv V$-breaks;
        \item if $(\varpi=\varrho)$ belongs to $\Upsilon_{\bar\pi}$,
          then $\varpi$ and $\varrho$
          are $\pv W$-multiregulars.
        \end{enumerate}
        Let
          $\Upsilon_{\pi}=\Upsilon_{\bar\pi}\cup \{(\pi_j=\pi_j(ax)^\omega)\}$.
        By~\eqref{eq:reduction-of-pseudoword-3},
        we know that
        $\pv V\subseteq \op \Upsilon_{\pi}\cl$.
        Moreover, as $[\pi_j]_{\pv W}$
        and $[\pi_j(ax)^\omega]_{\pv W}$
        are both $\pv W$-multiregulars,
        we immediately get that Condition~\eqref{item:reduction-of-pseudoword-4}
        in the statement of the proposition holds for $\Upsilon_\pi$.
        Consider a semigroup $S$ in $\op\Upsilon_\pi\cl$.
        Then $S$ belongs to $\op\Upsilon_{\bar\pi}\cl$,
        whence
        \begin{equation*}
          S\models \bar\pi=\pi'.
        \end{equation*}
        On the other hand,
        we also have $S\models \pi_j=\pi_j(ax)^\omega$,
        which implies that
        \begin{equation*}
          S\models \rho\cdot v=\pi_j(ax)^\omega av=\pi_ju_j.
        \end{equation*}
        We immediately conclude that $S\models \pi=\bar\pi=\pi'$.
        We have therefore showed that $\op\Upsilon_\pi\cl\subseteq \op\pi=\pi'\cl$,
        concluding the inductive step of the proof.
      \end{proof}









      \begin{Cor}\label{c:basis-of-nice-pseudoidentities}
        Let $\pv V$ and $\pv W$ be pseudovarieties
        of semigroups, with $\pv W$ being an organizing
      pseudovariety.
      Then $\pv V$ has a basis of pseudoidentities with factorizations multiregularly organized in
      $\pv W$ and with short $\pv V$-breaks.
    \end{Cor}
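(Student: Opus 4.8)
The plan is to apply Reiterman's theorem to get \emph{some} basis $\Sigma_0$ of pseudoidentities for $\pv V$, and then to systematically replace each pseudoidentity in $\Sigma_0$ by an equivalent collection of pseudoidentities having the required form, using the machinery already developed. First I would take an arbitrary pseudoidentity $(u=v)\in\Sigma_0$ over an alphabet $A$. Since $\pv W$ is organizing (Definition~\ref{defi:organizing-pseudovariety}), both $u$ and $v$ admit factorizations multiregularly organized in $\pv W$. So I may apply Proposition~\ref{p:reduction-of-pseudoword} twice: once to $u$, producing a pseudoword $u'$ with a factorization multiregularly organized in $\pv W$ and with short $\pv V$-breaks, together with a set $\Upsilon_u$ of pseudoidentities satisfying $\pv V\subseteq\op\Upsilon_u\cl\subseteq\op u=u'\cl$ and whose members are between $\pv W$-multiregulars; and once to $v$, producing $v'$ and $\Upsilon_v$ analogously.

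Next I would assemble the candidate basis. Set
\[
  \Sigma=\bigcup_{(u=v)\in\Sigma_0}\bigl(\Upsilon_u\cup\Upsilon_v\cup\{(u'=v')\}\bigr).
\]
I must check two things: that every pseudoidentity in $\Sigma$ has the desired shape, and that $\op\Sigma\cl=\pv V$. For the shape: the pseudoidentities in $\Upsilon_u$ and $\Upsilon_v$ are between $\pv W$-multiregulars by Proposition~\ref{p:reduction-of-pseudoword}\eqref{item:reduction-of-pseudoword-4}; a $\pv W$-multiregular pseudoword is, in particular, one that factorizes as a single-block factorization $\pi_1$ with $n=1$, $u_0=u_1=1$, which is multiregularly organized in $\pv W$ and vacuously has short $\pv V$-breaks (conditions \ref{item:SBR1}--\ref{item:SBR4} are empty when $n=1$ and $u_0=u_n=1$). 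And $u'$, $v'$ come with factorizations multiregularly organized in $\pv W$ and with short $\pv V$-breaks by Proposition~\ref{p:reduction-of-pseudoword}\eqref{item:reduction-of-pseudoword-3}. So every member of $\Sigma$ is of the required form.

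For the equality $\op\Sigma\cl=\pv V$: the inclusion $\pv V\subseteq\op\Sigma\cl$ follows because $\pv V\subseteq\op\Upsilon_u\cl$ and $\pv V\subseteq\op\Upsilon_v\cl$ for every defining pseudoidentity, and because $\pv V\models u=u'$ and $\pv V\models v=v'$ (as $\op\Upsilon_u\cl\subseteq\op u=u'\cl$ and similarly for $v$), so $\pv V\models u'=v'$ since $\pv V\models u=v$. For the reverse inclusion, let $S\in\op\Sigma\cl$. Fix a defining pseudoidentity $(u=v)\in\Sigma_0$. Then $S\in\op\Upsilon_u\cl\subseteq\op u=u'\cl$, so $S\models u=u'$; likewise $S\models v=v'$; and $S\models u'=v'$ since $(u'=v')\in\Sigma$. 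Chaining these, $S\models u=v$. As $(u=v)$ was an arbitrary element of the basis $\Sigma_0$ for $\pv V$, we get $S\in\op\Sigma_0\cl=\pv V$, so $\op\Sigma\cl\subseteq\pv V$. The main obstacle — really the only nontrivial point — is verifying that the factorizations delivered by Proposition~\ref{p:reduction-of-pseudoword} genuinely satisfy Definition~\ref{defi:factorizations-levels}'s shape constraints (the $u_i\in A^+$ for $1\le i\le n-1$ and $\pi_i\notin A^+$ requirements), but this is guaranteed by the proposition's statement, so the argument is essentially a bookkeeping assembly once Proposition~\ref{p:reduction-of-pseudoword} is in hand.
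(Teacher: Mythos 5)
Your proof is correct and follows exactly the paper's argument: apply Proposition~\ref{p:reduction-of-pseudoword} to each side of each pseudoidentity in some initial basis, and assemble the new basis from the $\Upsilon$-sets together with the pseudoidentities between the reduced pseudowords; the two inclusions are then checked by chaining. The only extra thing you do is spell out why a pseudoidentity between $\pv W$-multiregulars itself has a factorization of the required form; that observation is right (though your single-block factorization with $n=1$, $u_0=u_1=1$ tacitly requires $\pi\notin A^+$, with the complementary case $\pi\in A^+$ covered by taking $n=0$, $u_0=\pi$), and it makes explicit a verification the paper leaves implicit.
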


    \begin{proof}
      Let $\Sigma$ be a basis of pseudoidentities
      for $\pv V$.
      For each $(\pi=\rho)\in\Sigma$,
      let
      \begin{equation*}
        \Gamma_{(\pi=\rho)}=
        \Upsilon_\pi\cup\Upsilon_\rho\cup\{(\pi'=\rho')\},
      \end{equation*}
      where we follow the definition included in Proposition~\ref{p:reduction-of-pseudoword}.
      Consider the union $\Gamma=\bigcup_{(\pi=\rho)\in\Sigma}\Gamma_{(\pi=\rho)}$.
      It suffices to show that
      $\pv V=\op\Gamma\cl$.

      Suppose that $S\in\pv V$.
      For each $\pi$,
      one has
      $S\models \Upsilon_\pi$
      and $S\models \pi=\pi'$.
      In particular, $S\models \pi=\rho$
      implies $S\models \pi'=\rho'$.
      We conclude that if $(\pi=\rho)\in\Sigma$,
      then $S\models \Gamma_{(\pi=\rho)}$ holds.
      This establishes the inclusion
      $\pv V\subseteq \op\Gamma\cl$.

      Conversely, let $S\in \op\Gamma\cl$.
      Take $(\pi=\rho)\in\Sigma$.
      Because $S\models \Upsilon_\pi$, we have
      $S\models \pi=\pi'$.
      Similarly, $S\models \rho=\rho'$ holds.
      But $S\in \op\Gamma\cl$ also implies
      $S\models \pi'=\rho'$,
      and so, all together, we get $S\models \pi=\rho$.
      This shows $\op\Gamma\cl\subseteq \pv V$.
    \end{proof}

  \section{Breaking factorizations assuming bideterministic closure}
  \label{sec:break-fact-assum}
  
  In this section we
  see how closure under bidetermistic product
  allows us to decompose the pseudoidentities
  found in Corollary~\ref{c:basis-of-nice-pseudoidentities}
  into pieces involving only
  products of regular elements over the organizing pseudovariety.

  \begin{Prop}\label{p:weak-organized-is-strong-organized}
    Consider a semigroup pseudovariety
    closed under bideterministic product.
  Let $\pi\in\Om AS$.
  A factorization
  of~$\pi$
  is organized with short $\pv V$-breaks
  if and only if it is organized with long $\pv V$-breaks.
  \end{Prop}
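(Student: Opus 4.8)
The plan is to prove the non-trivial implication, that a factorization organized with short $\pv V$-breaks is organized with long $\pv V$-breaks; the converse is Remark~\ref{r:long-implies-short}. Since closure under bideterministic product is self-dual, by the evident left-right symmetry of Definition~\ref{defi:factorizations-levels} it suffices to establish~\ref{item:LBR1} and~\ref{item:LBR4}, as~\ref{item:LBR2} and~\ref{item:LBR3} follow by applying the statement to the reversal. Write $q_j=[\pi_j]_{\pv V}\in\Om AV$; since the natural projection $\Om AS\to\Om AV$ maps $A^+$ bijectively onto $A^+$ (as $\pv V\supseteq\pv N$ here), each $q_j\notin A^+$. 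Both~\ref{item:LBR1} (for a given $i$) and~\ref{item:LBR4} are instances of the following statement, which is the core of the proof. \emph{Claim:} let $\pi\in\Om AV\setminus A^+$ and $a\in A$ satisfy $\pi a<_\R\pi$, and let $\mu\in(\Om AV)^1$ be such that either $\mu=1$, or $\mu=\mu'b$ with $\mu'\in(\Om AV)^1$, $b\in A$ and $b\pi<_\L\pi$; then $\mu\pi a<_\R\mu\pi$. For~\ref{item:LBR1} one takes $\pi=q_i$, $a=\be 1(u_i)$ and $\mu=[u_0\pi_1u_1\cdots u_{i-1}]_{\pv V}$: the hypothesis $\pi a<_\R\pi$ is~\ref{item:SBR1}; if $i=1$ and $u_0=1$ then $\mu=1$ and~\ref{item:LBR1} is just~\ref{item:SBR1}; otherwise $\mu$ ends with the letter $b=\te 1(u_{i-1})$ (or $\te 1(u_0)$ when $i=1$), for which $b\pi<_\L\pi$ is~\ref{item:SBR2} (or~\ref{item:SBR3}); and $\mu\pi=[u_0\pi_1u_1\cdots u_{i-1}\pi_i]_{\pv V}$. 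The reduction of~\ref{item:LBR4} is identical, using~\ref{item:SBR4} and~\ref{item:SBR2}/\ref{item:SBR3}.

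To prove the Claim, assume $\mu\ne1$ and suppose, for contradiction, that $\mu\pi a\mathrel\R\mu\pi$; then $\mu\pi=\mu\pi at$ for some $t\in(\Om AV)^1$, hence $\mu\pi=\mu\pi e$ with $e=(at)^\omega$, an idempotent with $\be 1(e)=a$. A routine compactness argument (approximating $e$ by words starting with $a$ and passing to an accumulation point) gives $f\in(\Om AV)^1$ with $e=af$, so $\mu\pi=(\mu\pi)\,a\,f$. Now distinguish cases according to whether $\mu'b<_\R\mu'$.

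\emph{Case $\mu'b<_\R\mu'$.} Then $(\mu',b,\pi)$ is a good factorization of $\mu\pi$ (the condition $b\pi<_\L\pi$ being available by hypothesis), so Theorem~\ref{t:bidet-equidivisibility} applies to it and to the factorization $\mu\pi=(\mu\pi)\,a\,f$, and each of its three alternatives yields a contradiction: the first forces $b=a$ and $\mu'=\mu'b\pi$, whence $\mu'a\mathrel\R\mu'$, against $\mu'b<_\R\mu'$; the second forces $\mu'=\mu'(b\pi at')$ for some $t'$, again $\mu'b\mathrel\R\mu'$; the third forces $\pi=t'af=t'e$, whence $\pi=\pi e=(\pi a)f$ and so $\pi a\mathrel\R\pi$, against $\pi a<_\R\pi$. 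This settles the Claim in this case.

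The remaining case, $\mu'b\mathrel\R\mu'$, is the main obstacle: here $\mu'=\mu'bs$ for some $s$, so $\mu'=\mu'g$ with $g=(bs)^\omega$ idempotent and ending with $b$, and the triple $(\mu',b,\pi)$ is no longer a good factorization of $\mu\pi$, so one cannot feed the equidivisibility theorem directly. The plan is to exploit that $g$, being idempotent, is regular and hence has no good factorization (Corollary~\ref{c:product-of-regulars-has-not-good-factorization}): decomposing $\mu\pi=\mu'(gb\pi)$ and analysing the admissible factorizations of the product $gb\pi$ with a further appeal to Theorem~\ref{t:bidet-equidivisibility}, one aims either to reduce to the previous case or to contradict $b\pi<_\L\pi$ or $\pi a<_\R\pi$ outright. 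Making this reduction airtight — controlling how the idempotent $g$ interacts with the (non-word) prefix $\mu'$ — is where the real work of the proof lies.
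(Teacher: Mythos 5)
Your reduction of the problem to the Claim is sound, and Case~1 of your argument (when $\mu'b<_\R\mu'$) is correct. But the proof has a genuine gap exactly where you flag it: Case~2 ($\mu'b\mathrel\R\mu'$) is not a degenerate case you can deal with as an afterthought. It can occur whenever $i\ge 2$ and $\mu'$ does not end with a word; and in that situation there is no good factorization of $\mu\pi$ at the cut-point $b$, so Theorem~\ref{t:bidet-equidivisibility} has nothing to bite on. Your sketch of how to handle it (factoring $\mu=\mu'g b$ and appealing to Corollary~\ref{c:product-of-regulars-has-not-good-factorization}) does not actually control the interaction between $g$ and $\pi$, and without a good factorization of $\mu\pi$ itself the equidivisibility theorem simply does not apply.

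The deeper problem is that your Claim carries the wrong hypotheses to be provable. The paper's proof is by induction on $n$: it first uses the induction hypothesis to promote the truncated factorization $u_0\pi_1u_1\cdots\pi_{n-1}u_{n-1}$ to one with \emph{long} $\pv V$-breaks, so that in particular $[u_0\pi_1\cdots\pi_{n-1}]_{\pv V}\cdot\be 1(u_{n-1})<_\R[u_0\pi_1\cdots\pi_{n-1}]_{\pv V}$ is available. Together with~\ref{item:SBR2} at $i=n-1$, this gives a $+$-good factorization $\bigl([u_0\pi_1\cdots\pi_{n-1}]_{\pv V},\,u_{n-1},\,[\pi_n]_{\pv V}\bigr)$, which is fed into Lemma~\ref{l:fall} and Proposition~\ref{p:a-sort-of-open-multiplication-+} to build a prefix code $P_k$; the alleged $\R$-equivalence then produces, for large $m$, a word in $P_k\cap P_kA^+$, a contradiction. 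Note that the $<_\R$ input used there concerns $\be 1(u_{n-1})$. Your Claim instead isolates only the last letter $b=\te 1(u_{i-1})$ of $\mu$ and asks for a condition on $\mu'b$, i.e.\ a condition governed by $\te 1(u_{i-1})$; these are different letters in general, and the available short-break (and even long-break) hypotheses do not give you $\mu'b<_\R\mu'$. So no amount of massaging will complete Case~2 from the hypotheses of the Claim as stated. The paper never factors through such a claim: it argues directly with the three-block prefix code, which bundles the two independent falls $[u_0\pi_1\cdots\pi_{n-1}]_{\pv V}\be 1(u_{n-1})<_\R[u_0\pi_1\cdots\pi_{n-1}]_{\pv V}$ and $[\pi_n]_{\pv V}\be 1(u_n)<_\R[\pi_n]_{\pv V}$ without ever having to produce a good factorization at a single cut-point inside $\mu$. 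If you want to salvage the equidivisibility-theorem route, you would first need a substitute for the prefix-code argument that settles the $\mu'b\mathrel\R\mu'$ case; as written, the proof does not close.
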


  \begin{proof}
    Recall that the ``if'' part of the theorem
    is immediate (Remark~\ref{r:long-implies-short}).

    Conversely, consider a factorization
     \begin{equation}\label{eq:weak-organized-is-strong-organized-0}
       \pi=
       u_0\cdot\pi_1\cdot u_1\cdot\pi_2\cdots \pi_{n-1}\cdot u_{n-1}
       \cdot
       \pi_n\cdot u_n,
     \end{equation}
     that is organized with short $\pv V$-breaks.
     We prove by induction on $n\geq 0$ that it is
     organized with long $\pv V$-breaks.
     The base case $n=0$ holds trivially. Suppose that
     $n>0$ and that  the theorem holds for smaller values of $n$.

     We show that Conditions~\ref{item:LBR1} and~\ref{item:LBR4}
     in Definition~\ref{defi:factorizations-levels}
     hold for~\eqref{eq:weak-organized-is-strong-organized-0}.
     Since the factorization
     $u_0\cdot\pi_1\cdot u_1\cdot\pi_2\cdots \pi_{n-1}\cdot u_{n-1}$
     is clearly organized with short $\pv V$-breaks,
     it is, by the induction hypothesis, organized with long $\pv V$-breaks.
     Therefore, to establish Conditions~\ref{item:LBR1} and \ref{item:LBR4}
     in Definition~\ref{defi:factorizations-levels}
     for the factorization~\eqref{eq:weak-organized-is-strong-organized-0},
     it only remains to show that
     it is impossible to have $u_n\neq 1$ and
     \begin{equation}\label{eq:weak-organized-is-strong-organized-2}
       [u_0\pi_1u_1\pi_2\cdots\pi_{n-1}u_{n-1}\pi_n\,\be 1(u_n)]_{\pv V}
      \mathrel{\R}[u_0\pi_1u_1\pi_2\cdots\pi_{n-1}u_{n-1}\pi_{n}]_{\pv V}.
     \end{equation}
     Suppose, on the contrary, that that is possible.
     Take $\rho=u_0\pi_1u_1\pi_2\cdots\pi_{n-1}$ and
     $u_n=bw$, with $b\in A$ and $w\in A^*$.
     Then, by Lemma~\ref{l:fall}, for every sufficiently
     large positive integer $k$,
     the sets
     \begin{equation*}
       \Bigr[B\Bigl([\rho]_{\pv V},\frac{1}{k}\Bigr)\cap A^\ast\Bigl]\cdot u_{n-1}
       \quad\text{and}\quad
       \Bigr[B\Bigl([\pi_n]_{\pv V},\frac{1}{k}\Bigr)\cap A^\ast\Bigl]\cdot b
     \end{equation*}
     are prefix codes.
     Therefore, for every sufficiently large $k$,
     their product
     \begin{equation*}
       P_k=\Bigr[B\Bigl([\rho]_{\pv V},\frac{1}{k}\Bigr)\cap A^\ast\Bigl]
       \cdot u_{n-1}\cdot
       \Bigr[B\Bigl([\pi_n]_{\pv V},\frac{1}{k}\Bigr)\cap A^\ast\Bigl]
       \cdot b
     \end{equation*}
     is a prefix code.
     Following the notation
     of Proposition~\ref{p:a-sort-of-open-multiplication-+},
     note that
     \begin{equation*}
       P_k
       =\Bigl[L_k\Bigl([\rho]_{\pv V},u_{n-1},[\pi_n]_{\pv V}\Bigr)\cap A^+\Bigr]\cdot b.
     \end{equation*}
     On the other hand, since
     $\bigl([\rho]_{\pv V},u_{n-1},[\pi_n]_\pv V\bigr)$
     is a $+$-good factorization,
     the language
       $L_k\bigl([\rho]_{\pv V},u_{n-1},[\pi_n]_{\pv V}\bigr)\cap A^+$
     is $\pv V$-recognizable for every sufficiently large $k$
     (cf.~Proposition~\ref{p:a-sort-of-open-multiplication-+}).
     Therefore, again applying the hypothesis
     that~$\pv V$ is closed under bideterministic product,
     we  conclude that $P_k$ is a $\pv V$-recognizable language,
     and so the closure $\overline{P_k}^{\pv V}$ of $P_k$ in $\Om AV$
     is a clopen
     subset of $\Om AV$.
     Since
     \begin{equation*}
       \overline{P_k}^\pv V=
       B\Bigl([\rho]_{\pv V},\frac{1}{k}\Bigr)\cdot u_{n-1}\cdot
       B\Bigl([\pi_n]_{\pv V},\frac{1}{k}\Bigr)\cdot b,
     \end{equation*}
     we have
     $[\rho \cdot u_{n-1}\cdot \pi_n\cdot b]_{\pv V}\in \overline{P_k}^\pv V$.
     As we are assuming that~\eqref{eq:weak-organized-is-strong-organized-2},
     holds, there is $x\in\Om AS$ such that
     $[\rho \cdot u_{n-1}\cdot \pi_n]_{\pv V}=
       [\rho \cdot u_{n-1}\cdot \pi_n\cdot b x]_{\pv V}$
     and thus
          \begin{equation}\label{eq:weak-organized-is-strong-organized-3}
       [\rho \cdot u_{n-1}\cdot \pi_n\cdot b]_{\pv V}=
       [\rho \cdot u_{n-1}\cdot \pi_n\cdot b xb]_{\pv V}.
     \end{equation}
     Let $(\rho_m)_m$, $(\pi_{n,m})_m$ and $(x_m)_m$
     be sequences of elements of $A^+$ respectively
     converging in $\Om AS$ to $\rho$, $\pi_n$ and $x$,
     and let $w_m=\rho_m\cdot u_{n-1}\cdot \pi_{n,m}\cdot bx_mb$.
     Note that, for every sufficiently large $m$,
     one has $\rho_m\in B\Bigl([\rho]_{\pv V},\frac{1}{k}\Bigr)$
     and also $\pi_{n,m}\in B\Bigl([\pi_n]_{\pv V},\frac{1}{k}\Bigr)$,
     thus $\rho_m\cdot u_{n-1}\cdot \pi_{n,m}\cdot b\in P_k$
     and $w_m\in P_k\cdot A^+$.
     On the other hand, by~\eqref{eq:weak-organized-is-strong-organized-3},
     the sequence of words $(w_m)_m$ converges
     in $\Om AV$ to $[\rho \cdot u_{n-1}\cdot \pi_nb]_{\pv V}$.
     Since the latter has $\overline{P_k}^\pv V$ as a neighborhood,
     we conclude that for sufficiently large $m$
     the word $w_m$ belongs to the intersection
     $P_k\cap P_k A^+$.
     But this contradicts
     $P_k$ being a prefix code. Therefore, in order to avoid this contradiction,
     one must not have~\eqref{eq:weak-organized-is-strong-organized-2}
     whenever $u_n\neq 1$.

      We established
      Conditions~\ref{item:LBR1} and~\ref{item:LBR4} in Definition~\ref{defi:factorizations-levels}
      for the factorization~\eqref{eq:weak-organized-is-strong-organized-0}.
     Symmetrically,
     Conditions~\ref{item:LBR2} and~\ref{item:LBR3}
     hold for the same factorization. This concludes the inductive step.
   \end{proof}

We are now ready for showing the central result of this paper.

\begin{Thm}\label{t:cut-nice-factorizations}
  Let $\pv V$ be a semigroup pseudovariety
  closed under bideterministic product.
  Take $\pi,\rho\in\Om AS$ such that
  $\pv V\models \pi=\rho$. If
   \begin{align}
   \pi&= u_0\cdot\pi_1\cdot u_1\cdot\pi_2\cdots \pi_{n-1}\cdot u_{n-1}
   \cdot \pi_n\cdot u_n,\label{eq:cut-nice-factorizations-1.1}
   \\
   \rho&= v_0\cdot\rho_1\cdot v_1\cdot\rho_2\cdots \rho_{m-1}\cdot v_{m-1}
   \cdot \rho_m\cdot v_m,\label{eq:cut-nice-factorizations-1.2}
    \end{align}
    are factorizations
    multiregularly organized in $\pv V$
    and with short $\pv V$-breaks,
    then~\mbox{$n=m$}, $u_i=v_i$ and $\pv V\models \pi_j=\rho_j$ for every
    $i\in \{0,1,\ldots,n\}$ and $j\in \{1,\ldots,n\}$.
  \end{Thm}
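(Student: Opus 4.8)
The plan is to argue by induction on the complexity $N=n+m+\sum_{i=0}^n|u_i|+\sum_{j=0}^m|v_j|$, working throughout inside $\Om AV$ and writing $\bar x$ for $[x]_{\pv V}$; recall $\pv V\supseteq\pv N$, so that $A^+$ embeds in $\Om AV$ as a set of isolated points and $\Om AV\setminus A^+$ is an ideal (whence the non-word blocks $\bar\pi_i$, being products of regular elements, lie outside $A^+$). By Proposition~\ref{p:weak-organized-is-strong-organized} I may assume both factorizations are organized with \emph{long} $\pv V$-breaks; this is crucial because long breaks yield good factorizations of $\bar\pi$ \emph{itself} at the breaks (short breaks only give good factorizations of the individual $\pi_i$), while conversely short breaks — hence again, by Proposition~\ref{p:weak-organized-is-strong-organized}, long breaks — are inherited by any strictly shorter sub-factorization, so the induction is legitimate. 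The base case $n=0$ is immediate: then $\bar\pi=[u_0]_{\pv V}\in A^+$, forcing $\bar\rho\in A^+$, hence $m=0$ (otherwise $\bar\rho$ would have the factor $\bar\rho_1\in\Om AV\setminus A^+$), and then $u_0=v_0$ since words embed in $\Om AV$; the case $m=0$ is symmetric.

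For the inductive step ($n,m\geq 1$) note first that each $\bar\pi_i$ and $\bar\rho_j$ is a product of regular elements of $\Om AV$, hence has no good factorization (Corollary~\ref{c:product-of-regulars-has-not-good-factorization}) and is $\leq_\R$ a regular element. Using the long‑break conditions one checks that every present word‑block yields a $+$-good factorization of $\bar\pi$ with that block in the middle — e.g.\ $(1,u_0,[\pi_1u_1\cdots u_n]_{\pv V})$ if $u_0\neq 1$ (by the break condition at $u_0$ and $\be 1(u_0)<_\R I$), $([\pi_1]_{\pv V},u_1,[\pi_2\cdots u_n]_{\pv V})$ if $n\geq 2$, and $([\pi_1\cdots\pi_n]_{\pv V},u_n,1)$ if $u_n\neq 1$ — which Lemma~\ref{l:good-and-+-good} converts into genuine good factorizations of $\bar\pi=\bar\rho$. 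I then split on $u_0,v_0$. If exactly one is trivial, say $u_0=1\neq v_0$, then $\bar\pi=\bar\pi_1\cdot(\cdots)\leq_\R\bar\pi_1\leq_\R(\text{regular})$, while the $+$-good factorization at $v_0$ (via Lemma~\ref{l:good-and-+-good}) produces a good factorization of $\bar\pi$ with a word as first component, contradicting Lemma~\ref{l:finite-prefix-exclusion}; so this case is vacuous. If $u_0,v_0\neq 1$, they share the first letter $a=\be 1(\bar\pi)$; writing $u_0=au_0'$, $v_0=av_0'$ and applying Theorem~\ref{t:bidet-equidivisibility} to the good factorization $(1,a,[u_0'\pi_1\cdots u_n]_{\pv V})$ of $\bar\rho$ against $\bar\rho=1\cdot a\cdot[v_0'\rho_1\cdots v_m]_{\pv V}$, the two "prefix" alternatives fail (they force $1=at$), so $[u_0'\pi_1\cdots u_n]_{\pv V}=[v_0'\rho_1\cdots v_m]_{\pv V}$; these are factorizations organized with short $\pv V$-breaks of strictly smaller complexity, and the induction hypothesis gives the conclusion after restoring $a$.

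The heart is the case $u_0=v_0=1$. If $n=1$ and $u_1=1$, then $\bar\pi=\bar\pi_1$ has no good factorization, and — arguing as above, any nontrivial $v_j$ would produce one on $\bar\rho=\bar\pi$ — all $v_j$ are trivial, so $m=1$ by the structural constraint on middle blocks and we are done (symmetrically if $m=1$, $v_1=1$). Otherwise both sides carry a $+$-good factorization $(\bar\pi_1,w,\bar\tau)$, resp.\ $(\bar\rho_1,w',\bar\tau')$, with $w,w'\in A^+$ the relevant block, which Lemma~\ref{l:good-and-+-good} turns into good factorizations $(\bar\pi_1,d,X')$ and $(\bar\rho_1,e,Y')$ of $\bar\pi=\bar\rho$. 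Apply Theorem~\ref{t:bidet-equidivisibility} to the first of these against $\bar\rho=\bar\rho_1\cdot e\cdot Y'$. The alternative $\bar\pi_1=\bar\rho_1\cdot e\cdot t$ is excluded: $\bar\rho_1 e<_\R\bar\rho_1$ (part of the good factorization $(\bar\rho_1,e,Y')$), and from $eY'<_\L Y'$ together with $Y'=t\cdot d\cdot X'$ one deduces $et<_\L t$ (or $t=1$); so $(\bar\rho_1,e,t)$ is a good factorization of $\bar\pi_1$, impossible by Corollary~\ref{c:product-of-regulars-has-not-good-factorization}; the symmetric alternative is excluded likewise. Hence $\bar\pi_1=\bar\rho_1$ (i.e.\ $\pv V\models\pi_1=\rho_1$), $d=e$, $X'=Y'$, so $[u_1\pi_2\cdots u_n]_{\pv V}=[v_1\rho_2\cdots v_m]_{\pv V}$; these are again factorizations organized with short $\pv V$-breaks, with $n$ and $m$ each dropped by one, and the induction hypothesis concludes (the corners $n=1$ or $m=1$ are checked directly and force $n=m=1$ by the same word/non-word dichotomy).

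The main obstacle is exactly this last case: converting the weak equidivisibility of Theorem~\ref{t:bidet-equidivisibility} into genuine matching requires killing the two "one block is a proper prefix of the other" alternatives, and the only leverage available is that the $\pi_i$ are products of regular elements, hence (Corollary~\ref{c:product-of-regulars-has-not-good-factorization}) admit no good factorization whatsoever. The delicate point is that a good factorization needs both an $\R$-drop and an $\L$-drop, and while the $\R$-drop is handed to us directly by the break conditions, the $\L$-drop must be extracted from the fact that Lemma~\ref{l:good-and-+-good} actually delivers a \emph{good} factorization rather than just a one-sided $+$-good one. A secondary but unavoidable nuisance is the bookkeeping showing that the sub-factorizations produced after each reduction are still legitimate factorizations organized with short $\pv V$-breaks, and the separate handling of the degenerate sides with $n=0$ or $n=1$.
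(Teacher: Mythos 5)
Your proof is correct, and it follows essentially the same strategy as the paper's: induction on the total complexity $n+m+\sum|u_i|+\sum|v_j|$, upgrading short to long $\pv V$-breaks via Proposition~\ref{p:weak-organized-is-strong-organized}, turning word blocks into good factorizations through Lemma~\ref{l:good-and-+-good}, and killing the skewed alternatives of Theorem~\ref{t:bidet-equidivisibility} with Corollary~\ref{c:product-of-regulars-has-not-good-factorization} and Lemma~\ref{l:finite-prefix-exclusion}. The only real difference is cosmetic: you peel the first letter of $u_0$ (and of $u_1$ after matching $\pi_1$), reducing one symbol at a time, whereas the paper peels the last letter of $u_0$ and uses a length-alignment trick with $v_0=v'bv''$; both give the same inductive descent, and the delicate points you identify as central are exactly the ones the paper handles.
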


  \begin{proof}
    If $n=0$, then $\pi\in A^+$. Since $\pv N\models \pi=\rho$,
    we then must have $\pi=\rho$, $m=0$ and $u_0=\pi=\rho=v_0$.

    Suppose that $n,m\geq 1$.
    We prove the theorem by induction on
    \begin{equation*}
      s=n+m+\sum_{i=0}^n|u_i|+\sum_{j=0}^m|v_j|.
    \end{equation*}
    Note that $s\geq n+m\geq 2$.
    If $s=2$, then $n=m=1$ and $u_i=v_j=1$ for every possible $i,j$.
    Therefore, $\pv V\models \pi =\pi _1=\rho _1=\rho$ and the theorem
    holds in the base case $s=2$.

    Suppose that $s>2$, and suppose that the theorem holds for smaller values
    of $s$. We consider the (possibly empty) pseudowords
    \begin{equation*}
   \bar\pi= \pi_2\cdots \pi_{n-1}\cdot u_{n-1}
   \cdot \pi_n\cdot u_n
   \qquad\text{and}\qquad
   \bar\rho= \rho_2\cdots \rho_{m-1}\cdot v_{m-1}
   \cdot \rho_m\cdot v_m.
 \end{equation*}
 Note that, according to Proposition~\ref{p:weak-organized-is-strong-organized},
 both factorizations~\eqref{eq:cut-nice-factorizations-1.1}
 and~\eqref{eq:cut-nice-factorizations-1.2}
 are multiregularly organized with long $\pv V$-breaks. Therefore,
 if $u_0=u'a$, with $a\in A$ and $u'\in A^*$, then
 $(u',a,[\pi_1u_1\bar\pi]_{\pv V})$
 is a good factorization,
 and if
 $v_0=v'b$, with $b\in A$ and $v'\in A^*$, then
 $(v',b,[\rho_1u_1\bar\rho]_{\pv V})$
 is a good factorization.
 Then, taking into account
 that both $\pi_1$ and $\rho_1$
 have regular elements of
 $\Om AV$ as prefixes (they are $\pv V$-multiregulars),
 applying Lemma~\ref{l:finite-prefix-exclusion}
 we conclude that $u_0=1$ if and only if $v_0=1$.

    Suppose that $u_0\neq 1$ and $v_0\neq 1$.
    Without loss of generality, assume that $|u_0|\leq |v_0|$.
 Consider the factorization
 $u_0=u'a$ such that $a\in A$ and $u'\in A^\ast$,
 and the factorization $v_0=v'bv''$ such that $b\in A$, $v',v''\in A^*$ and $|v'|=|u'|$.
 Since $u'$ and $v'$ are finite words,
 when we use Theorem~\ref{t:bidet-equidivisibility}
 to compare the factorizations
 $(u',a,[\pi_1u_1\bar\pi]_{\pv V})$
 and
 $(v',b,[v''\rho_1v_1\bar\rho]_{\pv V})$
 of $[\pi]_{\pv V}=[\rho]_{\pv V}$,
 the first of which is a good one,
 the only possibility
 is that $u'=v'$, $a=b$, and
 \begin{equation}\label{eq:cut-indunction-1}
      \pv V\models
      \pi_1\cdot u_1\cdot\pi_2\cdots \pi_{n-1}\cdot u_{n-1}
      \cdot \pi_n\cdot u_n
      =v''\cdot\rho_1\cdot v_1\cdot\rho_2\cdots \rho_{n-1}\cdot v_{n-1}
   \cdot \rho_m\cdot v_m.
 \end{equation}
 Since
 \begin{align*}
   &(n+m+\sum_{i=0}^n|u_i|+\sum_{j=0}^m|v_j|)-
   (n+m+\sum_{i=1}^n|u_i|+|v''|+\sum_{j=1}^m|v_j|)=\\
   =&|u_0|+|v_0|-|v''|=|u_0|+|v'|+1>0,
 \end{align*}
 we may apply in~\eqref{eq:cut-indunction-1} the induction hypothesis,
 from which we conclude that $n=m$, $v''=1$, $u_i=v_i$ and $\pv V\models \pi_j=\rho_j$ for every
 $i\in \{0,1,\ldots,n\}$ and $j\in \{1,\ldots,n\}$.
 Therefore, we may suppose from hereon that $u_0=v_0=1$.

 Suppose that $u_1=1$ also holds (the case $v_1=1$ is symmetric).
 Note that then one has $n=1$.
 Moreover, $[\pi]_{\pv V}=[\rho]_{\pv V}$ has no good factorizations
 (cf.~Corollary~\ref{c:product-of-regulars-has-not-good-factorization}),
 whence it has no $+$-good factorization
 (cf.~Lemma~\ref{l:good-and-+-good}).
 The latter implies $m=1$, $v_0=v_1=1$.
 Hence, the theorem holds in this~case.

    Finally, we suppose that $u_1\neq 1$ and $v_1\neq 1$.
    Consider factorizations
    $u_1=c\bar u_1$ with $c\in A$ and $\bar u_1\in A^*$,
    and
    $v_1=d\bar v_1$ with $d\in A$ and $\bar v_1\in A^*$.
    Compare the factorizations
    $([\pi_1]_{\pv V},c,[\bar u_1\bar \pi]_{\pv V})$
    and
    $([\rho_1]_{\pv V},d,[\bar v_1\bar \rho]_{\pv V})$.
    They are good, by Lemma~\ref{l:good-and-+-good}.
    By Theorem~\ref{t:bidet-equidivisibility},
    one of the following cases
    holds:
    \begin{enumerate}
    \item $[\pi_1]_{\pv V}=[\rho_1]_{\pv V}$,
      $c=d$ and $[\bar u_1\bar \pi]_{\pv V}=[\bar v_1\bar \rho]_{\pv V}$;\label{item:1}
    \item $[\pi_1]_{\pv V}=[\rho_1dt]_{\pv V}$
      and $[tc\bar u_1\bar \pi ]_{\pv V}=[\bar v_1\bar \rho]_{\pv V}$,
      for some $t\in (\Om AS)^1$;\label{item:2}
   \item $[\pi_1ct]_{\pv V}=[\rho_1]_{\pv V}$
     and $[\bar u_1\bar \pi ]_{\pv V}=[td\bar v_1\bar \rho]_{\pv V}$,
     for some $t\in (\Om AS)^1$.\label{item:3}
    \end{enumerate}
    Suppose Case~\eqref{item:2} holds.
    Since
    $([\rho_1]_{\pv V},d,[tc\bar u_1\bar \pi]_{\pv V})
      =([\rho_1]_{\pv V},d,[\bar v_1\bar \rho]_{\pv V})$
    is a good factorization
    of $[\rho]_{\pv V}$,
    it follows from Lemma~\ref{l:shortening-good-factorizations} that
    $([\rho_1],d,[t]_{\pv V})$
    is a good factorization of
    $[\rho_1dt]_{\pv V}=[\pi_1]_{\pv V}$.
    But this is impossible in view of Corollary~\ref{c:product-of-regulars-has-not-good-factorization},
    because $[\pi_1]_{\pv V}$
    is a product of regular elements.
    Therefore, Case~\eqref{item:2} does not hold.
    By symmetry, we conclude that Case~\eqref{item:3} is also impossible,
    and therefore only Case~\eqref{item:1} holds.
    We may then apply the induction hypothesis
    to
    \begin{equation*}
      \pv V\models
      \bar u_1\cdot\pi_2
      \cdot u_2\cdot\pi_3
      \cdots \pi_{n-1}\cdot u_{n-1}
      \cdot \pi_n\cdot u_n
      =\bar v_1\cdot\rho_2\cdot v_2\cdot\rho_3\cdots \rho_{m-1}\cdot v_{m-1}
   \cdot \rho_m\cdot v_m
    \end{equation*}
    to obtain
    $\bar u_1=\bar v_1$ (and so $u_1=v_1$),
    $n=m$,  $u_i=v_i$ and $\pv V\models \pi_j=\rho_j$ for every
    $i\in \{0,1,\ldots,n\}$ and $j\in \{1,\ldots,n\}$.

    This exhausts all possibles cases
    to consider in the inductive step.
  \end{proof}


  \begin{Cor}\label{c:basis-of-products-of-regular-elements}
    Let $\pv V$ be a pseudovariety of semigroups
    closed under bideterministic product.
    Suppose that $\pv W$ is an organizing pseudovariety
    containing~$\pv V$.
    Then $\pv V$ is multiregularly based in $\pv W$.
  \end{Cor}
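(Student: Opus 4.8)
The plan is to feed the basis produced by Corollary~\ref{c:basis-of-nice-pseudoidentities} into Theorem~\ref{t:cut-nice-factorizations} and then retain only the ``regular blocks'' of each pseudoidentity. First I would invoke Corollary~\ref{c:basis-of-nice-pseudoidentities} (legitimate because $\pv W$ is organizing) to obtain a basis $\Sigma$ of $\pv V$ in which every $(\pi=\rho)\in\Sigma$ carries factorizations
\[
\pi=u_0\pi_1u_1\cdots\pi_{n-1}u_{n-1}\pi_nu_n,\qquad
\rho=v_0\rho_1v_1\cdots\rho_{m-1}v_{m-1}\rho_mv_m
\]
that are multiregularly organized in $\pv W$ and have short $\pv V$-breaks. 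To apply Theorem~\ref{t:cut-nice-factorizations} I need these factorizations to be multiregularly organized in $\pv V$; this follows from $\pv V\subseteq\pv W$, since the natural projection $\Om AW\to\Om AV$ is a continuous onto homomorphism and therefore carries regular elements to regular elements, and $[\cdot]_{\pv V}$ is obtained by composing $[\cdot]_{\pv W}$ with this projection, so every $\pv W$-multiregular pseudoword is $\pv V$-multiregular (the requirement of having short $\pv V$-breaks is unaffected). As $(\pi=\rho)$ belongs to a basis for $\pv V$ we have $\pv V\models\pi=\rho$, so Theorem~\ref{t:cut-nice-factorizations} (available because $\pv V$ is closed under bideterministic product) gives $n=m$, $u_i=v_i$ for every $i$, and $\pv V\models\pi_j=\rho_j$ for every $j\in\{1,\ldots,n\}$.

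I would then set
\[
\Sigma'=\bigl\{(\pi_j=\rho_j):(\pi=\rho)\in\Sigma,\ 1\le j\le n\bigr\},
\]
a set of pseudoidentities between $\pv W$-multiregular pseudowords, and show that $\pv V=\op\Sigma'\cl$. The inclusion $\pv V\subseteq\op\Sigma'\cl$ is immediate from $\pv V\models\pi_j=\rho_j$. Conversely, let $S\in\op\Sigma'\cl$ and $(\pi=\rho)\in\Sigma$; using $n=m$, $u_i=v_i$, and $S\models\pi_j=\rho_j$ for every $j$, evaluation of the two factorizations block by block yields $S\models\pi=\rho$, whence $S\models\Sigma$ and so $S\in\pv V$. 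Thus $\Sigma'$ is a basis of $\pv V$ consisting of pseudoidentities between $\pv W$-multiregular pseudowords, i.e.\ $\pv V$ is multiregularly based in $\pv W$.

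The one point that I expect to require care, and the only place where anything nontrivial happens, is the transfer from ``multiregularly organized in $\pv W$'' to ``multiregularly organized in $\pv V$'', which is what lets Theorem~\ref{t:cut-nice-factorizations} (phrased for organization in $\pv V$) be applied to a basis delivered by Corollary~\ref{c:basis-of-nice-pseudoidentities} (which organizes in $\pv W$); everything else is bookkeeping. A harmless degenerate case is $n=0$: then $\pi=u_0\in A^+$, Theorem~\ref{t:cut-nice-factorizations} forces $\rho=\pi$, so the pseudoidentity is of the form $(w=w)$ and contributes no element to $\Sigma'$, which is fine since it holds in every semigroup.
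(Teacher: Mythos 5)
Your proof is correct and follows the same route as the paper's: invoke Corollary~\ref{c:basis-of-nice-pseudoidentities}, observe that $\pv W$-multiregular implies $\pv V$-multiregular (the paper compresses this to the parenthetical ``(whence in $\pv V$)''), apply Theorem~\ref{t:cut-nice-factorizations} to cut each pseudoidentity into blocks, and check that the resulting set of block pseudoidentities is again a basis. The one place you flagged as needing care—the transfer from organization in $\pv W$ to organization in $\pv V$—is handled exactly as you argue, and the rest matches the paper's bookkeeping.
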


  \begin{proof}
    By Corollary~\ref{c:basis-of-nice-pseudoidentities},
    $\pv V$ has a basis $\Sigma$ of
    pseudoidentities multiregularly organized in $\pv W$ (whence in $\pv V$)  and with short $\pv V$-breaks.
    Let $(\pi=\rho)\in \Sigma$.
    Suppose that
    the next factorizations
    are multiregularly organized in $\pv W$
  and have short $\pv V$-breaks:
        \begin{equation*}
      \pi= u_0\cdot\pi_1\cdot u_1\cdot\pi_2\cdots \cdot u_{n-1}
   \cdot \pi_n\cdot u_n
   \quad\text{and}\quad
   \rho= v_0\cdot\rho_1\cdot v_1\cdot\rho_2\cdots \cdot v_{m-1}
   \cdot \rho_m\cdot v_m.
 \end{equation*}
    By Theorem~\ref{t:cut-nice-factorizations},
    we know that  $n=m$, $u_i=v_i$ and $\pv V\models \pi_j=\rho_j$ for every
    $i\in \{0,1,\ldots,n\}$ and $j\in \{1,\ldots,n\}$.
    The integer $n$ depends on $(\pi=\rho)$, and for that
    reason we denote it by $n_{(\pi=\rho)}$.
    For each $(\pi=\rho)\in\Sigma$,
    let $\Gamma_{(\pi=\rho)}=\{(\pi_j=\rho_j)\mid 1\leq j\leq n_{(\pi=\rho)}\}$.
    Consider the set of pseudoidentities
    $\Gamma=\bigcup_{(\pi=\rho)\in\Sigma}\Gamma_{(\pi=\rho)}$.
    To conclude the proof, it suffices to show
    that $\pv V=\op\Gamma\cl$. We already saw that $\pv V\models \Gamma$.
    Conversely, suppose that $S$ is a semigroup
    such that $S\models \Gamma$.
    Fix a pseudoidentity $(\pi=\rho)\in\Sigma$.
    For each $j\in\{1,\ldots,n_{(\pi=\rho)}\}$,
    we have  $S\models \pi_j=\rho_j$.
    This clearly implies $S\models \pi=\rho$,
    in view of the factorizations of $\pi$ and $\rho$
    with which we are working with.
    Hence, we have $S\models \Sigma$, that is,  $S\in \pv V$.
    This concludes the proof that $\pv V=\op\Gamma\cl$.
  \end{proof}

  We next highlight the case where the organizing pseudovariety is $\pv {DS}$.

  \begin{Thm}\label{t:equational-characterization-bidet-J-DS}
    Suppose that $\pv V$
    is a semigroup
    pseudovariety in the interval
    $[\pv {Sl},\pv {DS}]$. The following conditions are equivalent:
    \begin{enumerate}
    \item $\pv V$ is closed under bideterministic product;\label{item:equational-characterization-bidet-J-DS-1}
    \item $\pv V$ is multiregularly based in $\pv {DS}$;\label{item:equational-characterization-bidet-J-DS-2}
    \item $\pv V$ is multiregularly based.\label{item:equational-characterization-bidet-J-DS-3}
    \end{enumerate}
\end{Thm}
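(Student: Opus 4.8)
The plan is to prove the three conditions equivalent by establishing the cycle $(1)\Rightarrow(2)\Rightarrow(3)\Rightarrow(1)$, with $\pv{DS}$ serving as an auxiliary pseudovariety that is simultaneously organizing (Example~\ref{eg:organizing-pseudovarieties}), closed under bideterministic product (Example~\ref{eg:examples-of-application-of-equational-sufficient-cond-for-closure}), and a superpseudovariety of $\pv V$ (by the hypothesis $\pv V\subseteq\pv{DS}$); I will also use that $\pv S$ is trivially closed under bideterministic product, since every $+$-language is $\pv S$-recognizable.

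Two of the three implications are immediate from results already available. For $(1)\Rightarrow(2)$ I would apply Corollary~\ref{c:basis-of-products-of-regular-elements} with $\pv W=\pv{DS}$: since $\pv V$ is closed under bideterministic product and $\pv{DS}$ is an organizing pseudovariety containing $\pv V$, the pseudovariety $\pv V$ is multiregularly based in $\pv{DS}$. For $(3)\Rightarrow(1)$ I would apply Proposition~\ref{p:equational-sufficient-cond-for-closure} with $\pv W=\pv S$: here $\pv V\subseteq\pv S$, $\pv V$ is multiregularly based in $\pv S$ by hypothesis~(3), and $\pv S$ is closed under bideterministic product, so $\pv V$ is closed under bideterministic product.

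The remaining implication $(2)\Rightarrow(3)$ is the heart of the matter, and I would reduce it to the following lifting assertion: \emph{every $\pv{DS}$-multiregular pseudoword $\pi\in\Om XS$ admits a $\pv S$-multiregular pseudoword $\widehat\pi\in\Om XS$ with $[\widehat\pi]_{\pv{DS}}=[\pi]_{\pv{DS}}$.} Granting this, take a basis $\Sigma$ of $\pv V$ whose pseudoidentities $(\pi=\rho)$ are all between $\pv{DS}$-multiregular pseudowords (this is what~(2) provides), fix lifts $\widehat\pi,\widehat\rho$ for each such pseudoidentity, and set
\[
  \widehat\Sigma=\{(\widehat\pi=\widehat\rho)\mid(\pi=\rho)\in\Sigma\}\cup\{((xy)^\omega(yx)^\omega(xy)^\omega)^\omega=(xy)^\omega\}.
\]
All of these pseudoidentities are between $\pv S$-multiregular pseudowords: the $\widehat\pi,\widehat\rho$ by the choice of lifts, and the two sides of the last pseudoidentity are idempotents of a free profinite semigroup, hence regular pseudowords. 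To see $\op\widehat\Sigma\cl=\pv V$, observe that $\pv V\subseteq\pv{DS}$ together with $[\widehat\pi]_{\pv{DS}}=[\pi]_{\pv{DS}}$ and $[\widehat\rho]_{\pv{DS}}=[\rho]_{\pv{DS}}$ gives $\pv V\models\widehat\Sigma$; conversely, a finite semigroup $S$ with $S\models\widehat\Sigma$ satisfies the defining pseudoidentity of $\pv{DS}$, hence $S\in\pv{DS}$, hence $S\models\pi=\widehat\pi$ and $S\models\rho=\widehat\rho$ for every $(\pi=\rho)\in\Sigma$, so $S\models\Sigma$ and $S\in\pv V$.

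The main obstacle, then, is the lifting assertion. I expect it to follow from the behaviour of the natural projection $p_{\pv{DS}}\colon\Om XS\to\Om X{DS}$ on regular elements: idempotents lift along any onto continuous homomorphism of compact semigroups, and, more to the point, every regular $\mathcal J$-class of $\Om X{DS}$ is the image of a (necessarily regular) $\mathcal J$-class of $\Om XS$ with $p_{\pv{DS}}$ mapping the corresponding Rees coordinates and maximal subgroups onto those of the target; consequently every $\pv{DS}$-regular element of $\Om X{DS}$ is $p_{\pv{DS}}$ of a regular pseudoword. Writing $[\pi]_{\pv{DS}}=x_1\cdots x_k$ with each $x_i$ regular in $\Om X{DS}$, choosing regular pseudowords $\sigma_i\in\Om XS$ with $[\sigma_i]_{\pv{DS}}=x_i$, and taking $\widehat\pi=\sigma_1\cdots\sigma_k$ then yields a $\pv S$-multiregular pseudoword with $[\widehat\pi]_{\pv{DS}}=[\pi]_{\pv{DS}}$. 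An alternative, avoiding the Rees-coordinate bookkeeping, would be to read the assertion off the known description of the regular part of $\Om X{DS}$; in either case this is the step I expect to require the most care.
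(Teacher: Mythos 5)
Your proof has the same overall structure as the paper's, and the implications $(1)\Rightarrow(2)$ (via Corollary~\ref{c:basis-of-products-of-regular-elements} with $\pv W=\pv{DS}$) and $(3)\Rightarrow(1)$ (via Proposition~\ref{p:equational-sufficient-cond-for-closure} with $\pv W=\pv S$) match the paper exactly. The same is true for the skeleton of $(2)\Rightarrow(3)$: reducing it to a lift of each $\pv{DS}$-regular factor, and then appending the pseudoidentity defining $\pv{DS}$ to $\Sigma'$ is also what the paper does.

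Where you diverge is in how you produce the lift $\sigma_i$ of a $\pv{DS}$-regular pseudoword $\pi_i$. You invoke a general lifting assertion (every regular element of $\Om X{DS}$ is $p_{\pv{DS}}$ of a regular element of $\Om XS$) and propose to justify it via lifting of regular $\J$-classes and their Rees coordinates under $p_{\pv{DS}}$. That assertion is true, but you leave it as a sketch and flag it as the step ``requiring the most care''. The paper sidesteps all of this with a one-liner: take $\sigma_i=\pi_i^{\omega+1}$. Since regular elements of a semigroup in $\pv{DS}$ are group elements, $[\pi_i]_{\pv{DS}}^{\omega+1}=[\pi_i]_{\pv{DS}}$, so $[\sigma_i]_{\pv{DS}}=[\pi_i]_{\pv{DS}}$; and $\pi_i^{\omega+1}$ is always regular in $\Om AS$ (with weak inverse $\pi_i^{\omega-1}$). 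This is precisely the $\sigma_i$ your argument needs, obtained without any appeal to the structure theory of $p_{\pv{DS}}$. If you incorporate that explicit choice, the rest of your write-up of $(2)\Rightarrow(3)$, including the role of the $\pv{DS}$-pseudoidentity in $\widehat\Sigma$, is correct and coincides with the paper's argument.
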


\begin{proof}
  \eqref{item:equational-characterization-bidet-J-DS-1}$\Rightarrow$\eqref{item:equational-characterization-bidet-J-DS-2}:
  This is a direct application
  of Corollary~\ref{c:basis-of-products-of-regular-elements},
  in view of the fact that
  $\pv {DS}$ is an organizing pseudovariety.

  \eqref{item:equational-characterization-bidet-J-DS-2}$\Rightarrow$\eqref{item:equational-characterization-bidet-J-DS-3}:
  If $\pi=\pi_1\cdots\pi_n$ is a 
   $\pv {DS}$-multiregular pseudoword,
  then the product $\pi'=\pi_1^{\omega+1}\cdots\pi_n^{\omega+1}$
  is a multiregular pseudoword
  for which we have $\pv {DS}\models \pi=\pi'$.
  Therefore, if $\Sigma$ is a basis for
  $\pv V$ formed by pseudoidentities
  between $\pv {DS}$-multiregular pseudowords,
  then
  \begin{equation*}
    \Sigma'=\{(\pi'=\rho')\mid (\pi=\rho)\in\Sigma\}
    \cup
    \{((xy)^\omega(yx)^\omega(xy)^\omega)^\omega=(xy)^\omega\}
  \end{equation*}
  is a basis for $\pv V$,
  comprised solely by pseudoidentities
  between products of $\pv S$-regular pseudowords.

  \eqref{item:equational-characterization-bidet-J-DS-3}$\Rightarrow$\eqref{item:equational-characterization-bidet-J-DS-1}:
  It follows from Proposition~\ref{p:equational-sufficient-cond-for-closure}.
 \end{proof}

\section{Factorizations in the global}
\label{sec:fact-glob}
 

\subsection{Semigroupoids}\label{subsec-preamble-semigroupoids}


For the reader
to situate himself better, we give some notation and recall some facts
on semigroupoids.
We refer to~\cite{Jones:1996,Almeida&Weil:1996,Rhodes&Steinberg:2009qt}.

A \emph{semigroupoid} $S$ is a graph $V(S)\cup E(S)$ endowed with two
operations $\alpha,\ \omega \colon E(S)\to V(S)$ which give respectively the
\emph{beginning} and \emph{end} vertices of each edge, and a partial
associative multiplication on $E(S)$ given by: for $s,t\in E$, $st$ is
defined if and only if $\omega (s)=\alpha (t)$ and, then,
$\alpha(st)=\alpha (s)$ and $\omega(st)=\omega (t)$.
For a graph $\Gamma$, the \emph{free semigroupoid} $\Gamma
^+$ on $\Gamma $ has as vertex-set $V(\Gamma )$ and as edges the
non-empty  paths on $\Gamma $.

Every semigroup $S$ may be  viewed as a semigroupoid by
taking the set of edges $S$ with both ends at an added vertex.
Conversely, for a semigroupoid~$S$ and a vertex $v$ of
$S$, the set $S(v)$ of all \emph{loops} at vertex $v$ constitutes
an semigroup called the \emph{local semigroup} of $S$
at~$v$.

A \emph{pseudovariety of  semigroupoids} is a class of
finite  semigroupoids closed under taking  divisors of
semigroupoids, and finitary direct products. The
pseudovariety of all finite  semigroupoids  is denoted by
$\pv{Sd}$.

From hereon, we assume that all semigroupoids have a finite number of vertices.
A \emph{compact semigroupoid} $S$ is a semigroupoid endowed with
a compact topology on $E(S)$ and the discrete topology
in the finite set $V(S)$, with respect to which the partial operations $\alpha$,
$\omega$, and edge multiplication are continuous (see~\cite{Almeida&ACosta:2007a} for delicate questions related with infinite-vertex
   semigroupoids).
Finite semigroupoids
equipped with the discrete topology become compact semigroupoids.
Let $\pv V$ be a pseudovariety of semigroupoids. A compact semigroupoid is \emph{pro-$\pv V$} if it is compact and every pair of distinct coterminal edges $u$ and $v$ can be separated by a continuous semigroupoid homomorphism into a semigroupoid of $\pv V$.
 For a finite graph $\Gamma $, a compact
semigroupoid $S$ is $\Gamma $-\emph{generated} if there
is a graph homomorphism $\varphi \colon \Gamma \to S$ such
that the subgraph of $S$ generated
by $\varphi (\Gamma )$ is dense.
We denote by $\Om \Gamma V$ the free pro-$\pv V$ $\Gamma $-generated
semigroupoid.
The semigroupoid  $\Om \Gamma V$ has
the usual universal property.



For each finite graph $A$,
   the free semigroupoid $A^+$ generated by $A$
   is dense in $\Om A{Sd}$.
   The edges of $A^+$
   are the
   nonempty paths on $A$, whence the name \emph{pseudopath}
   for the edges of $\Om A{Sd}$.
A generalization of Reiterman's Theorem  states that the
pseudovarieties of  semigroupoids  are the classes of finite semigroupoids
defined by pseudoidentities, that is formal identities between
\emph{pseudopaths}
 of finitely generated free pro-$\pv {Sd}$ semigroupoids (\cite{Jones:1996,Almeida&Weil:1996}).
 
 We are mostly interested in two kinds of semigroupoid pseudovarieties
 induced by a semigroup pseudovariety $\pv V$:
 the pseudovariety $g\pv V$ of semigroupoids generated
 by~$\pv V$ (the~\emph{global of $\pv V$}),
 and the pseudovariety $\ell\pv V$ of semigroupoids whose local semigroups
belong to $\pv V$. One has $g\pv V\subseteq\ell\pv V$, and if the equality holds, then the pseudovariety $\pv V$ is said to be \emph{local}.


 Let $A$ be a finite graph. In general a pseudoidentity
 between edges $\pi$ and $\rho$ of $\Om A{Sd}$
 is denoted $(A;\pi=\rho)$,
 because if $A$ is a subgraph of $B$,
 then a finite semigroupoid satisfying $(B;\pi=\rho)$
 may not satisfy $(A;\pi=\rho)$, see \cite[pages 100 and 101]{Rhodes&Steinberg:2009qt}.
 However, if $\pv V$ is a semigroup pseudovariety, then  we can write a pseudoidentity $(A;\pi=\rho)$ satisfied by $\Om A{}g\pv V$
 simply by $(\pi=\rho)$, because in that case there is no dependence
 on $A$~\cite[Theorem 2.5.15]{Rhodes&Steinberg:2009qt}.

 \subsection{Semigroupoid versions of previous definitions and results}

 Several basic concepts for semigroups carry on
 to semigroupoids without substantial modifications.
 For example, in a semigroupoid $S$
 one may consider the Green relations
 between edges, an edge $s$ is regular
 if $s=sxs$ for some edge $x$ of~$S$, etc.
 Definitions~\ref{defi:factorizations-levels}
 and~\ref{defi:organizing-pseudovariety}
 also carry on to (pseudovarieties of) semigroupoids with no real modifications: just replace pseudowords by pseudopaths,
    and words by paths.
 For example, an edge $\pi$ of $\Om A{Sd}$ is
 \emph{$\pv V$-regular} if its canonical projection $[\pi]_{\pv V}$
 in $\Om AV$ is regular, where $\pv V$ is a semigroupoid pseudovariety.
 Next are the semigroupoid versions of
 Propositions~\ref{p:reduction-of-pseudoword}
 and Corollary~\ref{c:basis-of-nice-pseudoidentities},
 for which entirely analogous proofs hold.

              \begin{Prop}\label{p:reduction-of-pseudoword-sgpoid}
        Let $\pv V$ be a pseudovariety of semigroupoids and let $A$ a finite graph.
        Suppose that $\pi$
        is an edge of $\Om A{Sd}$
        with a factorization multiregularly organized in $\pv W$, where
        $\pv W$ is a semigroupoid pseudovariety.
        Then there is an edge $\pi'$ in $\Om A{Sd}$ and a set $\Upsilon_\pi$
        of pseudoidentities over~$A$
        satisfying the following conditions:
        \begin{enumerate}
        \item
          $\pv V\subseteq\op\Upsilon_\pi\cl\subseteq \op (A;\pi=\pi')\cl$;\label{item:reduction-of-pseudoword-sgpoid-1}
        \item $\pi'$ has a factorization multiregularly organized in $\pv W$ and with short
          \mbox{$\pv V$-breaks};\label{item:reduction-of-pseudoword-sgpoid-3}
        \item if $(A;\varpi=\varrho)$ belongs to $\Upsilon_\pi$,
          then $\varpi$ and $\varrho$\label{item:reduction-of-pseudoword-sgpoid-4}
          are  $\pv W$-multiregulars.\qed
        \end{enumerate}
      \end{Prop}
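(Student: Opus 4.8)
The plan is to transcribe the proof of Proposition~\ref{p:reduction-of-pseudoword} almost verbatim, reading ``pseudopath of $\Om A{Sd}$'' for ``pseudoword'' and ``path of $A^+$'' for ``word'', and writing every pseudoidentity in the indexed form $(A;\cdot=\cdot)$. Since every edge that occurs is built from the given edge $\pi$, all of them are composable in $\Om A{Sd}$ over the single fixed graph~$A$, so no clash with the $(A;\cdot)$-versus-$(B;\cdot)$ phenomenon arises and the classes $\op\cdot\cl$ are unambiguous throughout. Starting from a factorization $\pi=u_0\cdot\pi_1\cdot u_1\cdot\pi_2\cdots\pi_{n-1}\cdot u_{n-1}\cdot\pi_n\cdot u_n$ that is multiregularly organized in $\pv W$, I would argue by induction on $\sigma=n+\sum_{i=0}^{n}|u_i|$. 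The base cases---$n=0$, or $\sum_{i=0}^{n}|u_i|=0$, or the set $J$ of indices at which the short-$\pv V$-break condition fails on a non-empty $u_i$ (defined exactly as in the semigroup proof) being empty---are disposed of by taking $\pi'=\pi$ and $\Upsilon_\pi=\emptyset$.

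For the inductive step, fix $j\in J$; by left--right symmetry assume $[\pi_j\,\be 1(u_j)]_{\pv V}\mathrel{\R}[\pi_j]_{\pv V}$ with $u_j\neq 1$, and write $a=\be 1(u_j)$, $u_j=av$. As $p_{\pv V}\colon\Om A{Sd}\to\Om AV$ is onto there is an edge $x$ of $\Om A{Sd}$, necessarily a loop at $\omega(\pi_j)=\alpha(a)$, with $[\pi_j]_{\pv V}=[\pi_j ax]_{\pv V}=[\pi_j(ax)^\omega]_{\pv V}$, the $\omega$-power being computed in the compact local semigroup of $\Om A{Sd}$ at $\alpha(a)$. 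Just as in the semigroup case, $(ax)^\omega a$ is a regular edge of $\Om A{Sd}$, so $\rho=\pi_j(ax)^\omega a$ has $[\rho]_{\pv W}$ $\pv W$-multiregular, being a product of the $\pv W$-multiregular edge $\pi_j$ with a regular edge. Replacing $\pi_j\cdot u_j$ by $\rho\cdot v$ (and gluing $\rho$ with $\pi_{j+1}$ when $v=1$) gives a factorization of some edge $\bar\pi$, again multiregularly organized in $\pv W$ and with strictly smaller $\sigma$; the induction hypothesis then yields $\pi'$ and $\Upsilon_{\bar\pi}$, and I would set $\Upsilon_\pi=\Upsilon_{\bar\pi}\cup\{(A;\pi_j=\pi_j(ax)^\omega)\}$. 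Conditions~\eqref{item:reduction-of-pseudoword-sgpoid-1}, \eqref{item:reduction-of-pseudoword-sgpoid-3} and~\eqref{item:reduction-of-pseudoword-sgpoid-4} are then verified exactly as in the proof of Proposition~\ref{p:reduction-of-pseudoword}: a finite semigroupoid $S\in\op\Upsilon_\pi\cl$ satisfies $(A;\bar\pi=\pi')$ and $(A;\pi_j=\pi_j(ax)^\omega)$, whence $\rho\cdot v=\pi_j(ax)^\omega av=\pi_j u_j$ in~$S$ and therefore $S\models(A;\pi=\bar\pi=\pi')$; and $\pi_j$ and $\pi_j(ax)^\omega$ are $\pv W$-multiregular by construction.

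The induction and the case analysis are literally those of the semigroup proof, so the only points that need attention are routine transpositions. The $\omega$-power and the Green-relation and regularity computations, which in the semigroup setting take place in $\Om AS$, have to be carried out in the appropriate local semigroup of $\Om A{Sd}$, using the semigroupoid analog---available because these basic notions carry over to semigroupoids without real change---of the fact recalled in the preliminaries for compact semigroups that an $\R$- or $\L$-class meeting the set of regular edges consists entirely of regular edges. One must also keep every pseudoidentity over the fixed graph~$A$, as noted above. I expect the main (and essentially only) obstacle to be this bookkeeping; there is no new mathematical difficulty compared with the semigroup case.
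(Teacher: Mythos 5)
Your plan coincides with the paper's: Proposition~\ref{p:reduction-of-pseudoword-sgpoid} is stated without proof precisely because the authors regard it as obtained by an ``entirely analogous'' transcription of the proof of Proposition~\ref{p:reduction-of-pseudoword}, which is what you carry out. Your attention to the two genuine points of bookkeeping -- keeping the graph~$A$ fixed throughout so that the pseudoidentities $(A;\cdot=\cdot)$ are unambiguous, and computing the $\omega$-power inside the compact local semigroup at the relevant vertex -- is exactly what the transposition requires.

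One small slip in the write-up: you describe the edge $x$ with $[\pi_j]_{\pv V}=[\pi_j ax]_{\pv V}$ as ``necessarily a loop at $\omega(\pi_j)=\alpha(a)$''. In fact $x$ runs from $\omega(a)$ to $\alpha(a)$ (it is needed to close the path back to $\omega(\pi_j)$), so $x$ is a loop only when $a$ itself is a loop; what is always a loop at $\alpha(a)=\omega(\pi_j)$ is the product $ax$, and that is the element whose $\omega$-power you take. This does not affect the argument, since every step you perform uses $(ax)^\omega$ and $\pi_j(ax)^\omega a$ rather than $x$ on its own, but the phrase should be corrected.
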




      \begin{Cor}\label{c:basis-of-nice-pseudoidentities-sgpoid}
                Let $\pv V$ and $\pv W$ be pseudovarieties
        of semigroupoids, with $\pv W$ being an organizing
      pseudovariety.
      Then $\pv V$ has a basis of pseudoidentities with factorizations multiregularly organized in
      $\pv W$ and with short $\pv V$-breaks.\qed
    \end{Cor}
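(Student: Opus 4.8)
The plan is to follow the proof of Corollary~\ref{c:basis-of-nice-pseudoidentities} step by step, carrying the ambient graph along inside every pseudoidentity. First I would take a basis $\Sigma$ of pseudoidentities for $\pv V$, which exists by the semigroupoid analog of Reiterman's theorem recalled in Subsection~\ref{subsec-preamble-semigroupoids}. Fix $(A;\pi=\rho)\in\Sigma$. Since $\pv W$ is organizing, the edges $\pi$ and $\rho$ of $\Om A{Sd}$ admit factorizations multiregularly organized in $\pv W$, so Proposition~\ref{p:reduction-of-pseudoword-sgpoid} applies to each of them and yields edges $\pi',\rho'$ of $\Om A{Sd}$ --- each with a factorization multiregularly organized in $\pv W$ and with short $\pv V$-breaks --- together with sets $\Upsilon_\pi,\Upsilon_\rho$ of pseudoidentities over $A$ between $\pv W$-multiregular edges, such that $\pv V\subseteq\op\Upsilon_\pi\cl\subseteq\op(A;\pi=\pi')\cl$ and $\pv V\subseteq\op\Upsilon_\rho\cl\subseteq\op(A;\rho=\rho')\cl$.

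Then I would put $\Gamma_{(A;\pi=\rho)}=\Upsilon_\pi\cup\Upsilon_\rho\cup\{(A;\pi'=\rho')\}$ and $\Gamma=\bigcup_{(A;\pi=\rho)\in\Sigma}\Gamma_{(A;\pi=\rho)}$. Every pseudoidentity of $\Gamma$ is of the required shape: $(A;\pi'=\rho')$ by clause~\eqref{item:reduction-of-pseudoword-sgpoid-3} of the Proposition, and a pseudoidentity between $\pv W$-multiregular edges because each of its sides carries the trivial one-block factorization (or an empty-block factorization, should that side happen to be a path), for which conditions~\ref{item:SBR1}--\ref{item:SBR4} are vacuous. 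It then remains to prove $\pv V=\op\Gamma\cl$. For the inclusion $\pv V\subseteq\op\Gamma\cl$: if $S\in\pv V$ then $S\models\Upsilon_\pi$ and $S\models\Upsilon_\rho$, hence $S\models(A;\pi=\pi')$ and $S\models(A;\rho=\rho')$; as also $S\models(A;\pi=\rho)$, we get $S\models(A;\pi'=\rho')$, so $S\models\Gamma_{(A;\pi=\rho)}$ for every element of $\Sigma$. For the reverse inclusion: if $S\models\Gamma$ then, for each $(A;\pi=\rho)\in\Sigma$, from $S\models\Upsilon_\pi$, $S\models\Upsilon_\rho$ and $S\models(A;\pi'=\rho')$ one obtains successively $S\models(A;\pi=\pi')$, $S\models(A;\rho=\rho')$, and finally $S\models(A;\pi=\rho)$, so $S\models\Sigma$ and $S\in\pv V$.

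The one place I expect to need care, rather than a genuine obstacle, is the dependence of a pseudoidentity on the graph over which it is read, noted in Subsection~\ref{subsec-preamble-semigroupoids}: for a fixed $(A;\pi=\rho)\in\Sigma$ all the pseudoidentities put into $\Gamma_{(A;\pi=\rho)}$ live over the very same graph $A$, so the implications chained above never mix graphs and the argument runs exactly as in the semigroup case. Everything else is the verbatim transcription of the proof of Corollary~\ref{c:basis-of-nice-pseudoidentities}, using Proposition~\ref{p:reduction-of-pseudoword-sgpoid} in place of Proposition~\ref{p:reduction-of-pseudoword}.
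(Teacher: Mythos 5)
Your proof is correct and follows essentially the same approach the paper takes (the paper explicitly states that Corollary~\ref{c:basis-of-nice-pseudoidentities-sgpoid} has an "entirely analogous" proof to Corollary~\ref{c:basis-of-nice-pseudoidentities}, and you reproduce that proof with the ambient graph carried along). Your extra remark that the pseudoidentities $(A;\varpi=\varrho)\in\Upsilon_\pi$ already satisfy the conclusion's shape requirement -- via the trivial one-block factorization $\varpi=1\cdot\varpi\cdot 1$ when $\varpi\notin A^+$, or the $n=0$ factorization when $\varpi\in A^+$, in which case conditions \ref{item:SBR1}--\ref{item:SBR4} hold vacuously -- is a point the paper leaves implicit; you are right to make it explicit, and the observation is correct.
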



 \subsection{The interval $[\pv {Sl},\pv {DS}\cap\pv{RS}]$}

 We denote by $\pv{RS}$ the class of finite semigroups $S$
 whose set of regular elements is a subsemigroup of $S$.
 Although $\pv {RS}$ is not a  pseudovariety,
  the class $\pv {DS}\cap\pv {RS}$ is
  a semigroup pseudovariety,
  with
  \begin{equation}\label{eq:basis-DSRS}
      \pv {RS}\cap \pv{DS}=
      \pv{DS}\cap\op x^{\omega+1}y^{\omega+1}=(x^{\omega+1}y^{\omega+1})^{\omega+1}\cl,
  \end{equation}
    as the regular elements of a semigroup
    of $\pv {DS}$ are its group elements.
    Note also that \eqref{eq:basis-DSRS}
    yields the following corollary of Proposition~\ref{p:equational-sufficient-cond-for-closure}.

  \begin{Cor}\label{c:DS-RS-is-closed-for-bideterministic-product}
    The pseudovariety
    $\pv {DS}\cap\pv {RS}$ is closed under bideterministic product.\qed
  \end{Cor}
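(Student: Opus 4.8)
The plan is to invoke Proposition~\ref{p:equational-sufficient-cond-for-closure} with $\pv W=\pv {DS}$ and $\pv V=\pv {DS}\cap\pv {RS}$. We already have $\pv V\subseteq\pv W$, and by Example~\ref{eg:examples-of-application-of-equational-sufficient-cond-for-closure} the pseudovariety $\pv {DS}$ is closed under bideterministic product. Hence it suffices to verify that $\pv {DS}\cap\pv {RS}$ is multiregularly based in $\pv {DS}$, i.e.\ that it admits a basis of pseudoidentities whose two sides are always $\pv {DS}$-multiregular pseudowords.

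Here one uses the description~\eqref{eq:basis-DSRS}: a basis for $\pv {DS}\cap\pv {RS}$ is
\begin{equation*}
  \Sigma=\bigl\{\,\bigl((xy)^\omega(yx)^\omega(xy)^\omega\bigr)^\omega=(xy)^\omega\;,\ \ x^{\omega+1}y^{\omega+1}=(x^{\omega+1}y^{\omega+1})^{\omega+1}\,\bigr\},
\end{equation*}
namely the given basis of $\pv {DS}$ together with the pseudoidentity $x^{\omega+1}y^{\omega+1}=(x^{\omega+1}y^{\omega+1})^{\omega+1}$. It then remains to check that every pseudoword occurring in $\Sigma$ is $\pv S$-multiregular; this is enough, since an $\pv S$-regular pseudoword is $\pv {DS}$-regular (its image under the projection onto $\Om{A}{DS}$ is regular), so an $\pv S$-multiregular pseudoword is $\pv {DS}$-multiregular. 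Now $(xy)^\omega$ and $\bigl((xy)^\omega(yx)^\omega(xy)^\omega\bigr)^\omega$ are idempotents of a free profinite semigroup, hence regular; the pseudoword $x^{\omega+1}$ lies in the maximal subgroup of $\Om{\{x\}}{S}$ with identity $x^\omega$, because $x^{\omega+1}\cdot x^{\omega-1}\cdot x^{\omega+1}=x^{\omega+1}$, so it too is regular; consequently $x^{\omega+1}y^{\omega+1}$ is a product of two regular pseudowords, and, writing $w=x^{\omega+1}y^{\omega+1}$, the pseudoword $(x^{\omega+1}y^{\omega+1})^{\omega+1}=w^\omega\cdot x^{\omega+1}\cdot y^{\omega+1}$ is a product of three regular pseudowords.

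Thus $\pv {DS}\cap\pv {RS}$ is multiregularly based in $\pv {DS}$, and Proposition~\ref{p:equational-sufficient-cond-for-closure} yields that it is closed under bideterministic product. There is no real obstacle in this argument: the only step deserving a comment is the regularity of $x^{\omega+1}$, which is the familiar fact that $\{x^{\omega+k}:k\in\ZZ\}$ spans a profinite cyclic group inside $\Om{\{x\}}{S}$, everything else being immediate from the definitions of $\pv S$-regular and $\pv S$-multiregular pseudowords.
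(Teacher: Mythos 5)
Your proof is correct and follows exactly the route the paper intends: the remark preceding the corollary says that the basis~\eqref{eq:basis-DSRS} (i.e.\ the defining pseudoidentity of $\pv{DS}$ together with $x^{\omega+1}y^{\omega+1}=(x^{\omega+1}y^{\omega+1})^{\omega+1}$) yields the result via Proposition~\ref{p:equational-sufficient-cond-for-closure}, and your verification that every side of every pseudoidentity in that basis is a finite product of regular pseudowords is precisely what is needed. The only cosmetic difference is that you invoke the proposition with $\pv W=\pv{DS}$ and then pass from $\pv S$-multiregularity to $\pv{DS}$-multiregularity, whereas—since what you actually check is $\pv S$-multiregularity—you could take $\pv W=\pv S$ directly (which is trivially closed under bideterministic product), making the appeal to the bideterministic closure of $\pv{DS}$ unnecessary.
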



  Denote by $Reg(S)$ the subgraph,
   of the semigroupoid $S$, formed by the regular edges of $S$.
  Let $\pv{RSd}$ be the class of finite semigroupoids
  $S$ such that $Reg(S)$
  is a subsemigroupoid of $S$. Here $\ell{\pv{RS}}$ is the class of
  finite semigroupoids whose local semigroups belong to $\pv {RS}$.
  

  \begin{Prop}\label{p:local-regular-pseudopaths}
   The equality $\ell{\pv{RS}}=\pv {RSd}$ holds.
  \end{Prop}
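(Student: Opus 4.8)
The plan is to prove the two inclusions separately, with $\pv{RSd}\subseteq\ell{\pv{RS}}$ being routine and $\ell{\pv{RS}}\subseteq\pv{RSd}$ carrying the actual content.

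For $\pv{RSd}\subseteq\ell{\pv{RS}}$, I would take a finite semigroupoid $S$ in which $Reg(S)$ is a subsemigroupoid, fix a vertex $v$, and first check that a loop $s$ at $v$ is regular in $S$ if and only if it is regular in the local semigroup $S(v)$: if $s=sxs$ for an edge $x$ of $S$, then requiring $sx$ and $(sx)s$ to be defined forces $\alpha(x)=\omega(x)=v$, so $x\in S(v)$. Hence $Reg(S(v))=Reg(S)\cap S(v)$, and since both $Reg(S)$ and $S(v)$ are closed under composition, so is their intersection; thus $Reg(S(v))$ is a subsemigroup of $S(v)$, that is $S(v)\in\pv{RS}$. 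As $v$ is arbitrary, $S\in\ell{\pv{RS}}$.

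For $\ell{\pv{RS}}\subseteq\pv{RSd}$, I would take a finite semigroupoid $S$ whose local semigroups all lie in $\pv{RS}$ and two regular edges $s\colon u\to v$ and $t\colon v\to w$, and show that $st$ is regular. Choosing $x$ and $y$ with $s=sxs$ and $t=tyt$ (necessarily $x\colon v\to u$ and $y\colon w\to v$), I would set $e=xs$ and $f=ty$, which are idempotent loops at $v$; since $s=se$ and $t=ft$, one gets $st=s(ef)t$. As $e$ and $f$ are regular elements of $S(v)$ and $S(v)\in\pv{RS}$, the product $ef$ is a regular element of $S(v)$, so there is $h\in S(v)$ with $ef=(ef)h(ef)$. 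Putting $z=yhx\colon w\to u$ and using $ty=f$, $xs=e$ and the idempotency of $e$ and $f$, I would compute
\[
st\cdot z\cdot st=s(ef)(ty)h(xs)(ef)t=s(ef)f\,h\,e(ef)t=s(ef)h(ef)t=s(ef)t=st,
\]
so $st$ is regular; hence $Reg(S)$ is a subsemigroupoid and $S\in\pv{RSd}$.

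The only delicate step is the second inclusion, more precisely the choice of the regular witness $z$ for $st$: the point is to reduce regularity of $st$ to regularity, inside the single local semigroup $S(v)$, of the product of the two idempotents extracted from the regularity witnesses of $s$ and $t$, which is exactly where the hypothesis $S\in\ell{\pv{RS}}$ is used. All the other verifications are bookkeeping: checking that the displayed products are defined and have the stated source and target vertices, which is immediate from the partial composition of the semigroupoid.
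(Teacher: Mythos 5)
Your proof is correct and follows essentially the same route as the paper: for $\ell\pv{RS}\subseteq\pv{RSd}$ you extract the idempotent loops $e=xs$ and $f=ty$ at $\omega(s)$, invoke regularity of $ef$ in the local semigroup to get a witness $h$, and verify that $yhx$ witnesses regularity of $st$ — this is exactly the paper's computation (where the intermediate witness is called $z$ and the final one is $yzx$). The only difference is that you spell out the easy direction $\pv{RSd}\subseteq\ell\pv{RS}$ via the identity $Reg(S(v))=Reg(S)\cap S(v)$, which the paper dispatches as immediate.
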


  \begin{proof}
    Clearly, for every semigroupoid $S$,
    and every vertex $v$ of $S$,
    an element of the local semigroup $S_v$
    of $S$ at $v$ is regular in $S_v$ if and only if
    it is regular in $S$.
    Therefore, the inclusion
    $\pv {RSd}\subseteq \ell{\pv {RS}}$ is immediate.

    Conversely, let $S\in \ell{\pv {RS}}$,
    and let $s,t$ be consecutive regular edges of $S$.
    Take edges $x$ and $y$
    such that $s=sxs$ and $t=tyt$.
    Note that $xs$ and $ty$ are idempotents rooted at
    the vertex $v=\omega s$.
    Since $xs$ and $ty$ are regular elements
    of the local semigroup $S_v$ at $v$, we know that
    $xs\cdot ty=xs\cdot ty\cdot z\cdot xs\cdot ty$ for some
    loop $z$ belonging to $S_v$.
    We then have
    \begin{equation*}
      st=s\cdot xs\cdot ty\cdot t
      =s\cdot xs\cdot ty\cdot z\cdot xs\cdot ty\cdot t=sxs\cdot t\cdot yzx\cdot s\cdot tyt
      =st\cdot yzx\cdot st,
    \end{equation*}
    thus showing that the edge $st$ is regular in $S$.
  \end{proof}

  \begin{Cor}\label{c:local-regular-pseudopaths}
    For any finite graph $A$,
    every product of regular edges of\/ $\Om A{\ell(\pv {DS}\cap\pv {RS})}$ is
    a regular edge of $\Om A{\ell(\pv {DS}\cap\pv {RS})}$.\qed
  \end{Cor}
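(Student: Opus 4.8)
The plan is to reduce Corollary~\ref{c:local-regular-pseudopaths} to Proposition~\ref{p:local-regular-pseudopaths} together with the general fact, recorded earlier for semigroupoids, that a product of regular edges in a \emph{finite} semigroupoid of a pseudovariety is regular precisely when that pseudovariety is contained in $\pv{RSd}$. First I would observe that $\pv{DS}\cap\pv{RS}$ is a pseudovariety of semigroups (as noted after~\eqref{eq:basis-DSRS}), so $\ell(\pv{DS}\cap\pv{RS})$ is a pseudovariety of semigroupoids, and by Proposition~\ref{p:local-regular-pseudopaths} we have $\ell(\pv{DS}\cap\pv{RS})\subseteq\ell\pv{RS}=\pv{RSd}$. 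Hence every finite semigroupoid in $\ell(\pv{DS}\cap\pv{RS})$ has the property that its regular edges form a subsemigroupoid.

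Next I would transfer this to the free pro-$\ell(\pv{DS}\cap\pv{RS})$ semigroupoid. Let $\pi=\pi_1\cdots\pi_n$ be a product of regular edges of $\Om A{\ell(\pv{DS}\cap\pv{RS})}$; I want to show $\pi$ is regular, i.e.\ $\pi=\pi x\pi$ for some edge $x$. Fix a continuous semigroupoid homomorphism $\varphi$ onto a finite semigroupoid $T\in\ell(\pv{DS}\cap\pv{RS})$. The image $\varphi(\pi_i)$ is a regular edge of $T$ for each $i$ (regularity is preserved by onto continuous homomorphisms between compact semigroupoids, as an idempotent witness can be lifted or, more simply, since $\pi_i=\pi_i s_i\pi_i$ maps to $\varphi(\pi_i)=\varphi(\pi_i)\varphi(s_i)\varphi(\pi_i)$). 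Since $T\in\pv{RSd}$, the product $\varphi(\pi)=\varphi(\pi_1)\cdots\varphi(\pi_n)$ is a regular edge of $T$; in a finite semigroupoid this means $\varphi(\pi)\mathrel{\R}e$ for a unique idempotent $e$ in its $\R$-class, equivalently $\varphi(\pi)\in\varphi(\pi)T\varphi(\pi)$ and there is an edge $t\in T$ with $\varphi(\pi)=\varphi(\pi)\,t\,\varphi(\pi)$. One then passes to the inverse limit: the sets $\{x\in(\Om A{\ell(\pv{DS}\cap\pv{RS})})^1: \widehat\varphi(\pi)=\widehat\varphi(\pi x\pi)\}$, taken over all such $\varphi$, form a filtered family of nonempty closed subsets of a compact space (they are nonempty by the finite case, closed by continuity, and stable under finite intersection because $\ell(\pv{DS}\cap\pv{RS})$ is closed under finite products), hence have nonempty intersection. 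Any $x$ in that intersection satisfies $\pi=\pi x\pi$, so $\pi$ is regular. A small point is whether $x$ can be taken to be an edge (rather than a formal identity at a vertex) — but $\pi=\pi x\pi$ forces $x$ to be coterminal with the idempotent-power range of $\pi$, and for $n\ge 1$ and $\pi\ne$ trivial this is an honest edge; the degenerate case is handled directly.

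The main obstacle I anticipate is purely bookkeeping about semigroupoids rather than anything deep: making sure regularity behaves well under continuous onto homomorphisms and under inverse limits of compact semigroupoids with the prescribed finite vertex set, and checking that the witness $x$ lives in the right hom-set. Since the excerpt already remarks (in Subsection~\ref{subsec-preamble-semigroupoids} and the paragraph introducing the semigroupoid versions) that the basic Green-relation machinery and the notion of regular edge carry over ``without substantial modifications,'' I would simply invoke that, cite Proposition~\ref{p:local-regular-pseudopaths}, and present the inverse-limit argument in a couple of lines; indeed the corollary is exactly the semigroupoid analogue, restricted to $\pv{DS}\cap\pv{RS}$, of Proposition~\ref{p:product-of-not-good-factorization}'s consequence that products of regular pseudowords are well behaved, and no new idea beyond Proposition~\ref{p:local-regular-pseudopaths} is needed.
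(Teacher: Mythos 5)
Your route is sound in outline but genuinely different from the one the corollary is riding on, and it has a subtlety you don't address. The paper's \qed signals that the corollary is immediate from Proposition~\ref{p:local-regular-pseudopaths}, and the reason it is immediate is that the ``conversely'' half of that proof never uses finiteness of the semigroupoid: given consecutive regular edges $s=sxs$ and $t=tyt$, the only input is that the idempotents $xs$ and $ty$ live in a local semigroup whose regular elements form a subsemigroup. The local semigroups of $\Om A{\ell(\pv {DS}\cap\pv {RS})}$ are pro-$(\pv {DS}\cap\pv {RS})$, and in a pro-$(\pv{DS}\cap\pv{RS})$ semigroup regular elements are exactly the group elements and their products are again group elements, by~\eqref{eq:basis-DSRS}. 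So the exact same computation $st=st\cdot yzx\cdot st$ produces a witness of regularity of $st$ directly inside $\Om A{\ell(\pv {DS}\cap\pv {RS})}$; no passage through finite quotients is needed, and one then finishes by induction on the number of factors.

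Your inverse-limit approach can be made to work, but as written it skips a real point: for a continuous onto homomorphism $\varphi$ from $\Om A{\ell(\pv {DS}\cap\pv {RS})}$ to a finite $T\in\ell(\pv{DS}\cap\pv{RS})$, the fact that $\varphi(\pi)$ is regular in $T$ gives you an edge $t\in T(\varphi(v),\varphi(u))$ with $\varphi(\pi)=\varphi(\pi)\,t\,\varphi(\pi)$, but it does \emph{not} give you a preimage of $t$ in the hom-set $\Om A{\ell(\pv {DS}\cap\pv {RS})}(v,u)$ where your $x$ needs to live: $\varphi$ may identify vertices, so the edges of $T$ in that hom-set can all come from \emph{other} hom-sets of the source. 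Thus ``nonempty by the finite case'' is not automatic. This is fixable (for instance, replace $T$ by the image of $\varphi$ after ``separating vertices,'' i.e.\ pass to the quotient whose vertex set is $V(A)$ and whose edges from $v$ to $u$ are $T(\varphi(v),\varphi(u))$; membership in $\ell(\pv{DS}\cap\pv{RS})$ is preserved and these quotients still separate points, so one can restrict the inverse system to them and the hom-set mismatch disappears), but this extra reduction is precisely the bookkeeping your proposal waves at without carrying out, and it is what the direct local argument renders unnecessary.
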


  \subsection{Honest pseudovarieties}

  Recall that a semigroupoid homomorphism
  is \emph{faithful} if it maps
  distinct coterminal edges to distinct coterminal edges.  
  
  \begin{Prop}[{\cite{Almeida:1996c}}]\label{p:fidelity}
    If $\pv V$ is a pseudovariety of
 semigroups, then the unique continuous semigroupoid homomorphism
 from $\Om A {}{g \pv V}$ onto $\Om {E(A)}V$
 that maps $[a]_{g\pv V}$ to $[a]_{\pv V}$, for every $a\in E(A)$,
 is a
 faithful homomorphism.
 \end{Prop}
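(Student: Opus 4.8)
The plan is to read faithfulness off from the description of the pseudoidentities valid in $g\pv V$ in terms of those valid in $\pv V$. Write $\eta\colon\Om A{}{g\pv V}\to\Om{E(A)}V$ for the homomorphism in the statement. First I would set up three auxiliary continuous semigroupoid homomorphisms: the natural quotient $\kappa\colon\Om A{Sd}\to\Om A{}{g\pv V}$, which is onto and, since both free semigroupoids have vertex set $V(A)$, the identity on vertices; the unique homomorphism $c\colon\Om A{Sd}\to\Om{E(A)}S$ collapsing all vertices and sending each edge $a$ of $A$ to the letter $a$ of the alphabet $E(A)$ (so that a genuine path $a_1\cdots a_n$ of $A^+$ is sent to the word $a_1\cdots a_n\in E(A)^+$); and the natural projection $p_{\pv V}\colon\Om{E(A)}S\to\Om{E(A)}V$. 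The composites $\eta\circ\kappa$ and $p_{\pv V}\circ c$ are continuous semigroupoid homomorphisms from $\Om A{Sd}$ to $\Om{E(A)}V$ that agree on the generating edges (both send an edge $a$ of $A$ to $[a]_{\pv V}$), and hence $\eta\circ\kappa=p_{\pv V}\circ c$.

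The one substantive step I would prove is: \emph{if $\sigma,\tau$ are coterminal edges of $\Om A{Sd}$ with $\pv V\models c(\sigma)=c(\tau)$, then $\kappa(\sigma)=\kappa(\tau)$.} For this, note that every $S\in\pv V$, viewed as a one-vertex semigroupoid, satisfies the semigroupoid pseudoidentity $\sigma=\tau$: a graph homomorphism from $A$ into $S$ is merely a map $E(A)\to S$, so the corresponding continuous homomorphism $\Om A{Sd}\to S$ factors through $c$, and the hypothesis $S\models c(\sigma)=c(\tau)$ then forces $\sigma$ and $\tau$ to have the same image. The finite semigroupoids satisfying the pseudoidentity $\sigma=\tau$ form a pseudovariety of semigroupoids (it is defined by that single pseudoidentity) which contains $\pv V$, hence contains $g\pv V$; since $\Om A{}{g\pv V}$ is residually in $g\pv V$ it too satisfies $\sigma=\tau$, and that is exactly the equality $\kappa(\sigma)=\kappa(\tau)$.

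Granting this, faithfulness follows directly. Let $\pi,\rho$ be distinct coterminal edges of $\Om A{}{g\pv V}$. Since $\kappa$ is onto and the identity on vertices, there are coterminal edges $\sigma,\tau$ of $\Om A{Sd}$ with $\kappa(\sigma)=\pi$ and $\kappa(\tau)=\rho$. From $\kappa(\sigma)\neq\kappa(\tau)$ and the contrapositive of the substantive step we get $\pv V\not\models c(\sigma)=c(\tau)$, that is, $p_{\pv V}(c(\sigma))\neq p_{\pv V}(c(\tau))$ in $\Om{E(A)}V$; and $p_{\pv V}(c(\sigma))=\eta(\kappa(\sigma))=\eta(\pi)$, likewise $p_{\pv V}(c(\tau))=\eta(\rho)$, so $\eta(\pi)\neq\eta(\rho)$. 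As all edges of $\Om{E(A)}V$ are coterminal, this says exactly that $\eta$ maps distinct coterminal edges to distinct coterminal edges, i.e.\ $\eta$ is faithful.

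I do not foresee a real obstacle: apart from the universal properties of the free profinite semigroupoids involved, the argument only uses that a class of finite semigroupoids defined by a pseudoidentity is a pseudovariety and that a profinite semigroupoid residually in a class satisfies all pseudoidentities valid throughout that class --- both standard in the semigroupoid adaptation of Reiterman's theorem, which is the part one must set up carefully. A route avoiding the explicit use of pseudoidentities would instead separate $\pi$ and $\rho$ by a homomorphism into a finite $T\in g\pv V$ (residual finiteness of $\Om A{}{g\pv V}$), use that $T$ divides --- in the object-expanding sense appropriate to semigroupoids --- some member $S$ of $\pv V$ regarded as a one-vertex semigroupoid, and transport the separation to $S$; the delicate point in that approach is lifting the finitely many generator values of the homomorphism through the surjective half of a division witnessing $T\prec S$, after which one invokes that every homomorphism from $\Om A{}{g\pv V}$ into a one-vertex semigroupoid factors through $\eta$.
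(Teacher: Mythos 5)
The paper does not reproduce a proof of this proposition --- it is cited to Almeida's note~\cite{Almeida:1996c}, whose title (``a syntactical proof\ldots'') already signals the pseudoidentity-based route you take. Your reconstruction is correct and is essentially the standard argument: you set up the commuting square $\eta\circ\kappa=p_{\pv V}\circ c$, observe that for a one-vertex semigroupoid every continuous homomorphism out of $\Om A{Sd}$ factors through $c$, deduce that $\pv V\models c(\sigma)=c(\tau)$ forces $\pv V$ (hence $g\pv V$, hence the pro-$g\pv V$ semigroupoid $\Om A{}{g\pv V}$) to satisfy the semigroupoid pseudoidentity $(A;\sigma=\tau)$, and read faithfulness off by lifting coterminal edges through the vertex-preserving surjection $\kappa$. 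The only ingredients beyond universal properties are (i) that a class of finite semigroupoids defined by a single pseudoidentity over a fixed graph is a pseudovariety of semigroupoids, and (ii) that a pro-$\pv W$ semigroupoid satisfies every pseudoidentity valid throughout~$\pv W$; both are part of the Jones/Almeida--Weil semigroupoid Reiterman framework the paper already invokes. Your second, division-based sketch is also viable but, as you note, would require care with lifting generators through the surjective half of a semigroupoid division; the pseudoidentity argument sidesteps that cleanly and is the preferable write-up.
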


The next proposition brings nothing new, but we did not find a
direct reference for it. The \emph{content} $c(\pi)$
of a pseudopath $\pi $ of $\Om A{Sd}$ is the subgraph $X\cup \alpha (X)\cup \omega (X)$ of $A$ where $X\subseteq E(A)$ satisfies
$c([\pi ]_{\pv S} )=\{[a]_{\pv S}\mid a\in X\}$.
For pseudopaths $\pi,\rho $, we write $c(\rho )\subseteq c(\pi )$
when $c( \rho )$ is a subgraph of~$c(\pi )$.

  \begin{Prop}\label{p:absorption-of-paths-by-regulars}
    Let $\pv V$ be a pseudovariety of semigroupoids in the interval
    $[g\pv {Sl},g\pv {DS}]$.
    Let $A$ be a finite graph.
    Suppose that the edge $\pi$ of\/ $\Om A{Sd}$ is regular in~$\pv V$,
    and let $\rho$ be an edge of $\Om A{Sd}$
    such that $\omega (\pi )=\alpha (\rho )$.
    Then we have $\pv V\models \pi \rho\mathrel{\R}\pi$
    if and only if $c(\rho)\subseteq c(\pi)$.
  \end{Prop}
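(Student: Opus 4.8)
The plan is to use the bounds $g\pv{Sl}\subseteq\pv V\subseteq g\pv{DS}$ to push each implication to one of the two extreme pseudovarieties, and then to transport the semigroup facts recorded in Proposition~\ref{p:properties-modulo-J} to the semigroupoid setting along the faithful homomorphism of Proposition~\ref{p:fidelity}. \emph{Necessity.} Assume $\pv V\models\pi\rho\mathrel\R\pi$. Since $g\pv{Sl}\subseteq\pv V$ this yields $[\pi\rho]_{g\pv{Sl}}\mathrel\R[\pi]_{g\pv{Sl}}$ in $\Omvar{A}{g\pv{Sl}}$. Composing with the canonical continuous homomorphism onto $\Om{E(A)}{Sl}$, the free semilattice on $E(A)$, which sends each $a\in E(A)$ to $[a]_{\pv{Sl}}$, and using that $\mathcal R$ is the identity relation in every semilattice, we get that the images of $\pi\rho$ and of $\pi$ coincide; but these images are exactly the sets of edges occurring in $\pi\rho$ and in $\pi$, so $c(\pi\rho)=c(\pi)$ and therefore $c(\rho)\subseteq c(\pi)$ (the vertices of $c(\rho)$ are endpoints of edges of $\rho$, hence already in $c(\pi)$).

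\emph{Sufficiency.} Assume $c(\rho)\subseteq c(\pi)$ and that $\pi$ is $\pv V$-regular. Since $\pv V\subseteq g\pv{DS}$, it suffices to prove $g\pv{DS}\models\pi\rho\mathrel\R\pi$, and projecting regular elements along $\Omvar{A}{\pv V}\to\Omvar{A}{g\pv{DS}}$ shows that $\pi$ is $g\pv{DS}$-regular; writing $[\pi]_{g\pv{DS}}=[\pi]_{g\pv{DS}}\,s\,[\pi]_{g\pv{DS}}$, the edge $e:=s[\pi]_{g\pv{DS}}$ is an idempotent loop at $v:=\omega(\pi)$ with $[\pi]_{g\pv{DS}}e=[\pi]_{g\pv{DS}}$ and, comparing contents, $c(e)=c(\pi)$. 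First I would reduce to the case where $\rho$ is a loop at $v$: approximating $e$ by closed walks at $v$ of content $c(\pi)$ produces a path $q$ from $\omega(\rho)$ to $v$ with $c(q)\subseteq c(\pi)$, and since an equality $[\pi]=[\pi\rho q]\zeta$ reads as $[\pi]=[\pi\rho](q\zeta)$, the relation $\pi\rho\mathrel\R\pi$ follows from $\pi(\rho q)\mathrel\R\pi$; so we may replace $\rho$ by the loop $\rho q$, whose content is still contained in $c(\pi)$. Now $\pi\rho$ and $\pi$ are \emph{coterminal} edges from $\alpha(\pi)$ to $v$. Applying the faithful onto homomorphism $\Phi\colon\Omvar{A}{g\pv{DS}}\to\Om{E(A)}{DS}$ of Proposition~\ref{p:fidelity} (which fixes generators, hence preserves content): $\Phi([\pi])$ is $\pv{DS}$-regular, $c(\Phi[\rho])\subseteq c(\Phi[\pi])$, so Proposition~\ref{p:properties-modulo-J}\eqref{item:properties-modulo-J-3} gives $\Phi([\pi\rho])\mathrel\R\Phi([\pi])$ in $\Om{E(A)}{DS}$.

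The step I expect to be the main obstacle is to transport this $\mathcal R$-relation back to $\Omvar{A}{g\pv{DS}}$: faithfulness only finishes the argument once one exhibits a \emph{loop} $\zeta$ at $v$ with $\Phi(\pi\rho\zeta)=\Phi(\pi)$, for then $\pi\rho\zeta=\pi$ (the two sides being coterminal), hence $\pi\leq_\R\pi\rho$, while $\pi\rho\leq_\R\pi$ is automatic. Starting from $\Phi([\pi])=\Phi([\pi\rho])\,\widehat z$ with $c(\widehat z)\subseteq c(\pi)=c(\Phi(e))$, one uses that $\Phi(e)$ is an idempotent of content $c(\pi)$ whose maximal subgroup is generated, inside $\Om{E(A)}{DS}$, by images of closed walks at $v$ passing through the edges of $c(\pi)$ (so it is contained in $\Phi(\text{loops at }v)$), to replace $\widehat z$ by the image of such a loop without altering the product $\Phi([\pi\rho])\widehat z$. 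A cleaner organisation of this last part is to carry out the whole sufficiency argument inside the local semigroup of $\Omvar{A}{g\pv{DS}}$ at $v$, which is a pro-$\pv{DS}$ semigroup, once $\rho$ has been turned into a loop at $v$, invoking there the structure of regular $\mathcal J$-classes of $\pv{DS}$; the point requiring care is then the comparison between the content of a pseudopath and the content internal to that local semigroup.
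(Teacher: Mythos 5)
Your necessity direction matches the paper's: restrict to $g\pv{Sl}$ and read off contents. For sufficiency your high-level strategy is also the paper's — turn $\rho$ into a loop at $\omega(\pi)$ by appending a path inside the strongly connected graph $c(\pi)$, work modulo $\pv{DS}$, and invoke the fidelity of the projection $\Om A{}{g\pv{DS}}\to\Om{E(A)}{DS}$ from Proposition~\ref{p:fidelity}. But the step you flag as the obstacle is indeed a genuine gap in the proposal: after obtaining the $\R$-relation $\Phi(\pi\rho)\mathrel\R\Phi(\pi)$ in $\Om{E(A)}{DS}$ via Proposition~\ref{p:properties-modulo-J}\eqref{item:properties-modulo-J-3}, you still need to exhibit an explicit \emph{loop} $\zeta$ at $\omega(\pi)$ with $\Phi(\pi\rho\zeta)=\Phi(\pi)$ before faithfulness applies (faithfulness only separates coterminal edges). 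Your sketch — that the maximal subgroup of $\Phi(e)$ lies in the image of the loops at $\omega(\pi)$, so a witness $\widehat z$ can be replaced by the image of a loop — is plausible, but it is asserted rather than proved, and you yourself note the delicate content comparison it would require.

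The paper sidesteps this entirely by never trying to lift an $\R$-relation. Instead it builds the witness loop upfront: with $\pv V\models\pi=\pi z\pi$ and a path $v$ in $c(\pi)$ from $\omega(\rho)$ to $\alpha(z)$, it forms the idempotent loop $s=((z\pi)^\omega\rho v(z\pi)^\omega)^\omega$ at $\omega(\pi)$ in $\Om A{Sd}$. Since $c(s)=c(z\pi)=c(\pi)$, over $\pv{DS}$ the pseudowords $s$ and $(z\pi)^\omega$ are idempotents in the same $\H$-class, hence equal: $\pv{DS}\models s=(z\pi)^\omega$. This is an \emph{equality between coterminal loops}, so Proposition~\ref{p:fidelity} applies directly to yield $g\pv{DS}\models s=(z\pi)^\omega$, hence $\pv V\models s=z\pi$. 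Then $\pv V\models\pi=\pi s=\pi\rho v z\pi(\rho v z\pi)^{\omega-1}$ gives $\pi\leq_\R\pi\rho$ at once. The lesson is that fidelity is easy to exploit for an equality of loops but awkward for a quasi-order relation; if you reorganise your sufficiency argument around an explicit $\omega$-power loop as the paper does, the gap you identified disappears.
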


  \begin{proof}
    If $\pv V\models \pi \rho\mathrel{\R}\pi$,
    then $\pv {Sl}\models \pi \rho\mathrel{\R}\pi$ holds because
    $\pv V$ contains $g\pv{Sl}$,
    whence $c(\rho)\subseteq c(\pi)$.
    Conversely, suppose that $c(\rho)\subseteq c(\pi)$.
    As $\pi$ is $\pv V$-regular,
    there is $z\in\Om A{Sd}$ such that $\pv V\models\pi=\pi z\pi$ (which implies
    that $[z\pi]_{\pv V}$ is idempotent),
    and the graph $c(\pi)$
    is strongly connected.
    Therefore, and since $c(\rho)\subseteq c(\pi)$,
    there is a path $v$ in $c(\pi)$
    such that $\alpha(v)=\omega(\rho)$
    and $\omega(v)=\alpha(z)$.
    We may then consider
    the idempotent loop $s=((z\pi)^\omega \rho v(z\pi)^\omega)^\omega$ of $\Om A{Sd}$.
    Because $c(\rho)\subseteq c(\pi)$,
    we also have $c(\pi)=c(s)=c(z\pi)$
    and $\pv {DS}\models s=(z\pi)^\omega$.
    Applying Proposition~\ref{p:fidelity},
    we get $g\pv {DS}\models s=(z\pi)^\omega$,
    thus $\pv V\models s=z\pi$.
    Therefore,
    $\pv V\models \pi=\pi s=\pi \rho v z\pi(\rho v z\pi )^{\omega -1}$,
    establishing $\pv V\models \pi\mathrel{\R} \pi \rho$.
  \end{proof}

         \begin{Defi}\label{defi:honest-pseudovarieties}
         A pseudovariety of semigroups $\pv V$ is \emph{honest}
         if, for every finite graph $A$, $a\in A$ and $\pi\in\Om A{Sd}$
         such that $\pi$ is a product of
         regular elements of $\Om A {}{g\pv V}$,
         the following conditions hold:
         \begin{enumerate}
         \item when $\omega \pi=\alpha a$, one has
           $\pv V\models\pi a\mathrel{\mathcal R}\pi
           \implies g\pv V\models\pi a\mathrel{\mathcal R}\pi$;
         \item when $\omega a=\alpha \pi$,
           one has $\pv V\models a \pi \mathrel{\mathcal L}\pi
           \implies g\pv V\models a \pi\mathrel{\mathcal L}\pi$.
         \end{enumerate}
       \end{Defi}

       \begin{Prop}\label{p:examples-of-honest-pseudovarieties}
         The semigroup pseudovarieties
         in the intervals
         $[\pv {J},\pv {DS}]$
         and
         $[\pv {Sl},\pv {DS}\cap\pv {RS}]$
         are honest.
       \end{Prop}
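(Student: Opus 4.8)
The plan is to deduce both statements from Proposition~\ref{p:absorption-of-paths-by-regulars}, applied to the semigroupoid pseudovariety $g\pv V$, which lies in $[g\pv {Sl},g\pv {DS}]$ because $g$ is monotone and $\pv {Sl}\subseteq\pv V\subseteq\pv {DS}$. By left--right duality it suffices to verify the first condition of Definition~\ref{defi:honest-pseudovarieties}, so fix a finite graph $A$, an edge $a\in A$, and a pseudopath $\pi\in\Om A{Sd}$ with $\omega\pi=\alpha a$ such that $[\pi]_{g\pv V}$ is a product of $g\pv V$-regular edges of $\Om A {}{g\pv V}$; the task is to show that $\pv V\models\pi a\mathrel{\R}\pi$ forces $g\pv V\models\pi a\mathrel{\R}\pi$. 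Both cases begin the same way: since $\pv {Sl}\subseteq\pv V$, the hypothesis already yields $\pv {Sl}\models\pi a\mathrel{\R}\pi$, hence the content inclusion $c(a)\subseteq c(\pi)$; so only the distribution of this content among the regular factors has to be controlled.

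For $\pv V\in[\pv {Sl},\pv {DS}\cap\pv {RS}]$ I would first promote ``product of regular edges'' to ``regular edge''. From $g\pv V\subseteq g(\pv {DS}\cap\pv {RS})\subseteq\ell(\pv {DS}\cap\pv {RS})$ it follows that $\Om A {}{g\pv V}$ is a continuous homomorphic image of $\Om A{\ell(\pv {DS}\cap\pv {RS})}$; lifting each regular factor of $[\pi]_{g\pv V}$ to a regular edge over the same pair of vertices (possible by the standard lifting of regular elements, cf.\ the proof of Lemma~\ref{l:product-of-regulars-in-pintherienS}) and applying Corollary~\ref{c:local-regular-pseudopaths}, one obtains that $[\pi]_{g\pv V}$ is itself a regular edge. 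Then Proposition~\ref{p:absorption-of-paths-by-regulars}, applied to the regular edge $[\pi]_{g\pv V}$ and to $\rho=a$, says that $g\pv V\models\pi a\mathrel{\R}\pi$ holds exactly when $c(a)\subseteq c(\pi)$, which is already known; this disposes of the second interval.

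For $\pv V\in[\pv {J},\pv {DS}]$ a product of $g\pv V$-regular edges need not be $g\pv V$-regular, so one must pinpoint the factor absorbing $a$. Writing $[\pi]_{g\pv V}=\sigma_1\cdots\sigma_k$ with each $\sigma_i$ $g\pv V$-regular, I would repeatedly glue two consecutive factors of comparable content: when, say, $c(\sigma_i)\subseteq c(\sigma_{i+1})$, the left--right dual of Proposition~\ref{p:absorption-of-paths-by-regulars} (read inside $\Om A {}{g\pv V}$) gives $\sigma_i\sigma_{i+1}\mathrel{\L}\sigma_{i+1}$, and then a short computation from $\sigma_{i+1}=\sigma_{i+1}x\sigma_{i+1}$ and $\sigma_{i+1}=t\,\sigma_i\sigma_{i+1}$ shows $\sigma_i\sigma_{i+1}$ is again regular (dually if $c(\sigma_{i+1})\subseteq c(\sigma_i)$). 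Iterating yields $[\pi]_{g\pv V}=\tau_1\cdots\tau_\ell$ with each $\tau_j$ a regular edge and $c(\tau_j)$, $c(\tau_{j+1})$ incomparable for all $j$. Now push this factorization through the canonical (faithful, Proposition~\ref{p:fidelity}) homomorphism $p\colon\Om A {}{g\pv V}\to\Om {E(A)}V$ and, for each $j$, choose any preimage $\hat t_j\in\Om {E(A)}S$ of $p(\tau_j)$; then $\hat\pi:=\hat t_1\cdots\hat t_\ell\in\Om {E(A)}S$ satisfies $[\hat\pi]_{\pv V}=[\pi]_{\pv V}$, each $\hat t_j$ is $\pv J$-regular (its $\pv J$-class is that of $p(\tau_j)$, which is regular since $\pv J$-, $\pv V$- and $\pv {DS}$-regularity coincide on $[\pv J,\pv {DS}]$ by Proposition~\ref{p:properties-modulo-J}\eqref{item:properties-modulo-J-1}), and $c(\hat t_j)=c(\tau_j)$; hence $\hat\pi=\hat t_1\cdots\hat t_\ell$ is a $\pv J$-reduced multiregular pseudoword. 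By Lemma~\ref{l:letters-make-products-of-regulars-fall}, $\pv V\models\pi a\mathrel{\R}\pi$ (equivalently $\pv V\models\hat\pi a\mathrel{\R}\hat\pi$) holds iff $c(a)\subseteq c(\hat t_\ell)=c(\tau_\ell)$; a last application of Proposition~\ref{p:absorption-of-paths-by-regulars} to the regular edge $\tau_\ell$ (lifted to $\Om A{Sd}$) then gives $\tau_\ell a\mathrel{\R}\tau_\ell$ in $\Om A {}{g\pv V}$, whence $[\pi]_{g\pv V}\,a=\tau_1\cdots\tau_\ell\,a\mathrel{\R}\tau_1\cdots\tau_\ell=[\pi]_{g\pv V}$, that is $g\pv V\models\pi a\mathrel{\R}\pi$. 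The main obstacle is exactly this last paragraph: one must convert a factorization living in the profinite semigroupoid $\Om A {}{g\pv V}$ into a genuine $\pv J$-reduced factorization over $\Om {E(A)}S$, so that the uniqueness of $\pv J$-reduced factorizations — packaged in Lemma~\ref{l:letters-make-products-of-regulars-fall} — identifies the content of the absorbing factor, while keeping regularity and the three content levels ($\pv S$, $\pv V$, $\pv J$) synchronised under all the projections.
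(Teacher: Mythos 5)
Your proof is correct and uses essentially the same ingredients as the paper's: Proposition~\ref{p:local-regular-pseudopaths}/Corollary~\ref{c:local-regular-pseudopaths} for the interval $[\pv{Sl},\pv{DS}\cap\pv{RS}]$, and the combination of Proposition~\ref{p:absorption-of-paths-by-regulars} with the $\pv J$-reduction machinery and Lemma~\ref{l:letters-make-products-of-regulars-fall} for the interval $[\pv J,\pv{DS}]$. The only real deviation is in the $[\pv J,\pv{DS}]$ case, where you glue consecutive comparable regular factors directly inside $\Om A{}{g\pv V}$ and only then transfer via Proposition~\ref{p:fidelity} to $\Om{E(A)}S$, whereas the paper projects to $\Om{E(A)}S$ immediately and invokes Lemma~\ref{l:J-reduced-are-all} there; both routes perform the same reduction, just in a different ambient semigroupoid.
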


       \begin{proof}
         Suppose that $\pv V\in [\pv {J},\pv {DS}]$.
         Let $\pi$ and $a$ be as in Definition~\ref{defi:honest-pseudovarieties}, with $\omega \pi=\alpha a$
         (the case $\alpha \pi=\omega a$ is symmetric).
         By Lemma~\ref{l:J-reduced-are-all},
         the projection of $\pi$ in $\Om {E(A)}S$,
         still denoted $\pi$,
         is a $\pv J$-reduced multiregular
         $\pi_1\cdots\pi_n$
         in $\Om {E(A)}S$.
         Applying Lemma~\ref{l:letters-make-products-of-regulars-fall},
         from $\pv V\models \pi\mathrel{\R}\pi a$ we get $a\in c(\pi_n)$.
         It follows from Proposition~\ref{p:absorption-of-paths-by-regulars}
         that $g\pv V\models \pi_na\mathrel{\R}\pi_n$,
         whence $g\pv V\models \pi a\mathrel{\R}\pi$.

         Finally, if $\pv V\in [\pv {Sl},\pv {DS}\cap\pv {RS}]$,
         then a product of regular elements of
         $\Om A{}g\pv V$ is
         a regular element of $\Om A{}g\pv V$, by~Proposition~\ref{p:local-regular-pseudopaths}.
         Therefore,
         $\pv V\models\pi a\mathrel{\mathcal R}\pi$
         implies
         $g\pv V\models\pi a\mathrel{\mathcal R}\pi$
         by Proposition~\ref{p:absorption-of-paths-by-regulars}
         also in this case.
       \end{proof}

       We leave open the problem of
       identifying all the honest pseudovarieties.

    \begin{Prop}\label{p:sgpoid-basis-of-products-of-regular-elements}
    Let $\pv V$ be a honest pseudovariety of semigroups
    that is closed under bideterministic product.
    Suppose $\pv W$ is a semigroup pseudovariety
    containing $\pv V$ and such that $g\pv W$
    is an organizing pseudovariety.
    Then $g\pv V$ is multiregularly based in $g\pv W$.
  \end{Prop}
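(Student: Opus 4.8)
The plan is to transcribe the proof of Corollary~\ref{c:basis-of-products-of-regular-elements} to the setting of semigroupoids, using Corollary~\ref{c:basis-of-nice-pseudoidentities-sgpoid} in place of Corollary~\ref{c:basis-of-nice-pseudoidentities} and transporting the role of Theorem~\ref{t:cut-nice-factorizations} from $g\pv V$ back to $\pv V$ through the faithful homomorphism of Proposition~\ref{p:fidelity}. Fix a finite graph $A$, and let $\theta_{\pv V}\colon\Om A{Sd}\to\Om{E(A)}V$ be the composite of the canonical projection $\Om A{Sd}\to\Om A{}{g\pv V}$ with the faithful homomorphism of Proposition~\ref{p:fidelity}; it is the unique continuous homomorphism sending $a$ to $[a]_{\pv V}$ for $a\in E(A)$, and $\theta_{\pv V}=p_{\pv V}\circ\theta_{\pv S}$. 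Since a faithful homomorphism is injective on each set of coterminal edges, for coterminal $\pi,\rho\in\Om A{Sd}$ one has $g\pv V\models\pi=\rho$ if and only if $\pv V\models\theta_{\pv S}(\pi)=\theta_{\pv S}(\rho)$, the latter read as a pseudoidentity over the alphabet $E(A)$. Moreover $\theta_{\pv S}$ restricts to an injection of finite paths into $E(A)^+$, commutes with $\be 1$ and $\te 1$, and sends $g\pv W$-multiregular edges to $\pv W$-multiregular pseudowords, which are not finite words (continuous homomorphisms preserve products of regular elements, a product of regular pseudowords is never a finite word, and the complement of $E(A)^+$ is an ideal).

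Next I would apply Corollary~\ref{c:basis-of-nice-pseudoidentities-sgpoid} to the semigroupoid pseudovarieties $g\pv V\subseteq g\pv W$ ($g\pv W$ being organizing by hypothesis), obtaining a basis $\Sigma$ for $g\pv V$ each of whose members $(\pi=\rho)$ has factorizations $\pi=u_0\pi_1u_1\cdots\pi_nu_n$ and $\rho=v_0\rho_1v_1\cdots\rho_mv_m$ multiregularly organized in $g\pv W$ and with short $g\pv V$-breaks. Fix such a pseudoidentity. Applying $\theta_{\pv S}$ term by term yields factorizations of $\theta_{\pv S}(\pi)$ and $\theta_{\pv S}(\rho)$ over $E(A)$, which I claim are multiregularly organized in $\pv W$ and with short $\pv V$-breaks. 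The organization in $\pv W$ is immediate from the last sentence of the previous paragraph (note that, as $\pv V\subseteq\pv W$, each $\pi_j$, being $g\pv W$-multiregular, is also $g\pv V$-multiregular — this is used below). For the breaks, each short $g\pv V$-break, say $[\pi_i\,\be 1(u_i)]_{g\pv V}<_{\R}[\pi_i]_{g\pv V}$, amounts to $g\pv V\not\models\pi_i\,\be 1(u_i)\mathrel{\R}\pi_i$; since $\pi_i$ is a product of regular elements of $\Om A{}{g\pv V}$, the contrapositive of the defining implication of honesty gives $\pv V\not\models\pi_i\,\be 1(u_i)\mathrel{\R}\pi_i$, that is $[\theta_{\pv S}(\pi_i)\,\be 1(\theta_{\pv S}(u_i))]_{\pv V}<_{\R}[\theta_{\pv S}(\pi_i)]_{\pv V}$, the corresponding short $\pv V$-break; the $\L$-breaks descend dually via the second clause of honesty.

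Since $\pv V$ is closed under bideterministic product, Theorem~\ref{t:cut-nice-factorizations} now applies to $\theta_{\pv S}(\pi)=\theta_{\pv S}(\rho)$ and gives $n=m$, $\theta_{\pv S}(u_i)=\theta_{\pv S}(v_i)$, and $\pv V\models\theta_{\pv S}(\pi_j)=\theta_{\pv S}(\rho_j)$ for all relevant $i,j$. Injectivity of $\theta_{\pv S}$ on finite paths yields $u_i=v_i$, which makes $\pi_j$ and $\rho_j$ coterminal, so $g\pv V\models\pi_j=\rho_j$ by the equivalence in the first paragraph. Setting $\Gamma=\bigcup_{(\pi=\rho)\in\Sigma}\{(\pi_j=\rho_j)\mid 1\le j\le n_{(\pi=\rho)}\}$ — a set of pseudoidentities between $g\pv W$-multiregular pseudopaths — one checks, exactly as in Corollary~\ref{c:basis-of-products-of-regular-elements}, that $g\pv V\models\Gamma$ and that any finite semigroupoid satisfying $\Gamma$ satisfies every $(\pi=\rho)\in\Sigma$ by reassembling its pieces (using $u_i=v_i$), hence belongs to $g\pv V$; thus $g\pv V=\op\Gamma\cl$ and $g\pv V$ is multiregularly based in $g\pv W$.

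The crux — and the sole place where honesty and the inclusion $\pv V\subseteq\pv W$ are consumed — is the second paragraph: showing that $\theta_{\pv S}$ turns a factorization multiregularly organized in $g\pv W$ with short $g\pv V$-breaks into one multiregularly organized in $\pv W$ with short $\pv V$-breaks, honesty being exactly the tool that lets the $\R$- and $\L$-breaks descend from $g\pv V$ to $\pv V$. The remaining ingredients — the fidelity equivalence derived from Proposition~\ref{p:fidelity}, and the fact that $\theta_{\pv S}$ respects the combinatorial shape of the factorizations — are routine.
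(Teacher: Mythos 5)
Your proof is correct and follows essentially the same route as the paper's: obtain a basis via Corollary~\ref{c:basis-of-nice-pseudoidentities-sgpoid}, project through the faithful homomorphism of Proposition~\ref{p:fidelity} to transfer the factorizations to $\Om{E(A)}{V}$, invoke honesty to see the short $g\pv V$-breaks descend to short $\pv V$-breaks, apply Theorem~\ref{t:cut-nice-factorizations}, and lift the component pseudoidentities back to $g\pv V$ by fidelity and coterminality. Your write-up simply makes explicit several routine verifications (injectivity of $\theta_{\pv S}$ on paths, preservation of multiregularity, the observation that $g\pv W$-multiregular implies $g\pv V$-multiregular so honesty applies) that the paper's proof compresses into one sentence.
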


  \begin{proof}
    By Corollary~\ref{c:basis-of-nice-pseudoidentities-sgpoid},
    the global $g\pv V$ has a basis $\Sigma$
    formed by edge pseudoidentities
    $(\pi=\rho)$
    with pseudopath factorizations
    \begin{equation*}
      \pi= u_0\cdot\pi_1\cdot u_1\cdot\pi_2\cdots \cdot u_{n-1}
      \cdot \pi_n\cdot u_n
       \quad\text{and}\quad
      \rho= v_0\cdot\rho_1\cdot v_1\cdot\rho_2\cdots \cdot v_{m-1}
      \cdot \rho_m\cdot v_m
    \end{equation*}
    that are both organized in $g\pv W$ and with short $g\pv V$-breaks.
    Projecting in $\Om {E(A)}V$
    and seeing these factorizations
    as pseudowords factorizations,
    we see that they are multiregularly organized in $\pv W$
    and with short $\pv V$-breaks, the latter property holding because
    $\pv V$ is honest.
    Combining with $\pv V\models \pi=\rho$
    and Theorem~\ref{t:cut-nice-factorizations},
    we get that $n=m$, $u_i=v_i$
    and $\pv V\models \pi_j=\rho_j$
 for every $i\in\{0,1,\ldots,n\}$
 and $j\in \{1,\ldots,n\}$.
 In particular, we have the equalities $\alpha(\pi_j)=\omega(u_{j-1})=\omega(v_{j-1})=\alpha(\rho_j)$
 and $\omega(\pi_j)=\alpha(u_{j})=\alpha(v_{j})=\omega(\rho_j)$.
 Therefore, by Proposition~\ref{p:fidelity}, we have
 $g\pv V\models \pi_j=\rho_j$, for every $j\in \{1,\ldots,n\}$.

   As the integer $n$ depends on $(\pi=\rho)$, we denote it by $n_{(\pi=\rho)}$. For each $(\pi=\rho)\in\Sigma$,
   consider the set of pseudopath pseudoidentities
   defined by $\Gamma_{(\pi=\rho)}=\{(\pi_j=\rho_j)\mid 1\leq j\leq n_{(\pi=\rho)}\}$.
    Take the union
    $\Gamma=\bigcup_{(\pi=\rho)\in\Sigma}\Gamma_{(\pi=\rho)}$
    Then, in view of the conclusion at which we arrived in the previous
    paragraph,  we have $g\pv V=\op\Gamma\cl$, an equality whose
    detailed justification follows
    exactly the same argument as in the last lines of the proof of Corollary~\ref{c:basis-of-products-of-regular-elements}.
  \end{proof}

  \begin{Cor}\label{c:gV-is-multiregularly-based}
    Suppose that $\pv V$ is a pseudovariety
    closed under bideterministic product
    and belonging to one of the intervals $[\pv {Sl},\pv {DS}\cap \pv{RS}]$
    or $[\pv {J},\pv {DS}]$.
    Then $g\pv V$ is multiregularly based.
  \end{Cor}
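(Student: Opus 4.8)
The plan is to derive the corollary from Proposition~\ref{p:sgpoid-basis-of-products-of-regular-elements} and Proposition~\ref{p:examples-of-honest-pseudovarieties}, and then to perform a ``global'' version of the passage from $\pv{DS}$-multiregular to $\pv{Sd}$-multiregular carried out in the proof of the implication \eqref{item:equational-characterization-bidet-J-DS-2}$\Rightarrow$\eqref{item:equational-characterization-bidet-J-DS-3} of Theorem~\ref{t:equational-characterization-bidet-J-DS}. A pseudovariety $\pv V$ lying in $[\pv J,\pv{DS}]$ or in $[\pv{Sl},\pv{DS}\cap\pv{RS}]$ satisfies $\pv V\subseteq\pv{DS}$ and is honest, by Proposition~\ref{p:examples-of-honest-pseudovarieties}. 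Taking $\pv W=\pv{DS}$ --- which contains $\pv V$, and whose global $g\pv{DS}$ is an organizing pseudovariety of semigroupoids, the semigroupoid counterpart of Example~\ref{eg:organizing-pseudovarieties} (coming from the structure of $\Om A{}g\pv{DS}$, i.e.\ from a ``global'' form of the uniqueness of $\pv J$-reduced factorizations of pseudopaths) --- Proposition~\ref{p:sgpoid-basis-of-products-of-regular-elements} provides a basis $\Gamma$ for $g\pv V$ whose members are edge pseudoidentities with both sides $g\pv{DS}$-multiregular pseudopaths.

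Next I would replace each $g\pv{DS}$-regular factor by a $\pv{Sd}$-regular one. Let $\sigma$ be an edge of $\Om A{Sd}$ that is $g\pv{DS}$-regular. Since $[\sigma]_{g\pv{DS}}$ is regular, there is an edge $z$ of $\Om A{Sd}$ from $\omega(\sigma)$ to $\alpha(\sigma)$ with $g\pv{DS}\models\sigma=\sigma z\sigma$; then $g\pv{DS}\models(\sigma z)^\omega=\sigma z$, so the edge $\sigma'=(\sigma z)^\omega\sigma$ is coterminal with $\sigma$, satisfies $g\pv{DS}\models\sigma'=\sigma z\sigma=\sigma$, and is $\pv{Sd}$-regular, because $\sigma'\cdot z(\sigma z)^{\omega-1}\cdot\sigma'=(\sigma z)^\omega\sigma=\sigma'$. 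Performing this substitution on every factor of a $g\pv{DS}$-multiregular pseudopath turns each pseudoidentity $(\pi=\rho)$ of $\Gamma$ into a pseudoidentity $(\pi'=\rho')$ between $\pv{Sd}$-multiregular pseudopaths with $g\pv{DS}\models\pi=\pi'$ and $g\pv{DS}\models\rho=\rho'$.

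Finally, put $\Gamma'=\{(\pi'=\rho')\mid(\pi=\rho)\in\Gamma\}\cup\Delta$, where $\Delta$ is a basis for $g\pv{DS}$ made of pseudoidentities between idempotent loops, which are in particular $\pv{Sd}$-multiregular (e.g.\ the pseudoidentity $((xy)^\omega(yx)^\omega(xy)^\omega)^\omega=(xy)^\omega$ read over a one-vertex two-loop graph, if $\pv{DS}$ is local). Since $g\pv V\subseteq g\pv{DS}$, one checks that $\op\Gamma'\cl=g\pv V$ exactly as in the last lines of the proof of Corollary~\ref{c:basis-of-products-of-regular-elements}: any $S\models\Gamma'$ satisfies $\Delta$, hence lies in $g\pv{DS}$, hence satisfies $\pi=\pi'$ and $\rho=\rho'$ and therefore $\pi=\rho$ for every $(\pi=\rho)\in\Gamma$, whence $S\in g\pv V$; the reverse inclusion is immediate. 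As all members of $\Gamma'$ are pseudoidentities between $\pv{Sd}$-multiregular pseudopaths, $g\pv V$ is multiregularly based.

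I expect the main obstacle to be the two facts about $\pv{DS}$ in the global setting that this plan treats as known: that $g\pv{DS}$ is organizing, and that $g\pv{DS}$ admits a basis of pseudoidentities between idempotent (hence $\pv{Sd}$-multiregular) loops. The former is a global analogue of the uniqueness of $\pv J$-reduced factorizations of pseudopaths and should be extracted from Almeida--Weil-type descriptions of $\Om A{}g\pv{DS}$; the latter follows once $\pv{DS}$ is known to be local, or otherwise requires exhibiting a multiregular basis of $g\pv{DS}$ directly. The remaining ingredients --- honesty (Proposition~\ref{p:examples-of-honest-pseudovarieties}), the equidivisibility machinery packaged in Proposition~\ref{p:sgpoid-basis-of-products-of-regular-elements}, and the loop construction $\sigma\mapsto(\sigma z)^\omega\sigma$ --- are routine.
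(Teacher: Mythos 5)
Your proof follows the paper's argument essentially verbatim: honesty plus Proposition~\ref{p:sgpoid-basis-of-products-of-regular-elements} with $\pv W=\pv{DS}$, then the replacement of each $g\pv{DS}$-regular factor $\sigma$ by the $\pv{Sd}$-regular edge $\sigma(z\sigma)^\omega$ (your $(\sigma z)^\omega\sigma$ is the same pseudopath), then appending the $\pv{DS}$-defining pseudoidentity read over a one-vertex two-loop graph. The two facts you flagged as possible gaps are exactly what the paper invokes from the literature: $g\pv{DS}$ is organizing by \cite[Proposition~4.2]{Almeida&Escada:2003}, and the locality of $\pv{DS}$ (due to Jones and Trotter) is what makes the single appended loop pseudoidentity serve as a multiregular basis for $g\pv{DS}=\ell\pv{DS}$.
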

  
  \begin{proof}
    The pseudovariety $g\pv {DS}$ is organizing
    by~\cite[Proposition 4.2]{Almeida&Escada:2003}.
    Therefore,  by Propositions~\ref{p:examples-of-honest-pseudovarieties}
    and~\ref{p:sgpoid-basis-of-products-of-regular-elements},
    we know that $g\pv V$ is multiregularly based in $g\pv {DS}$.
    For each 
    $g\pv {DS}$-multiregular pseudopath
    $\pi=\pi_1\cdots\pi_n$,
    take for each
    $g\pv {DS}$-regular pseudopath
    $\pi_i$ a pseudopath
    $z_i$
    such that
    $g\pv {DS}\models \pi_i=\pi_i(z_i\pi_i)^\omega$,
    and let $\pi'=\pi_1(z_1\pi_1)^\omega\cdots \pi_n(z_n\pi_n)^\omega$.
    Note that $\pi'$ is an $\pv {Sd}$-multiregular pseudopath.
  Therefore, if $\Sigma$ is a basis for
  $g\pv V$ formed by pseudoidentities
  between $g\pv {DS}$-multiregular pseudopaths,
  then
  \begin{equation*}
    \Sigma'=\{(\pi'=\rho')\mid (\pi=\rho)\in\Sigma\}
    \cup
    \{((xy)^\omega(yx)^\omega(xy)^\omega)^\omega=(xy)^\omega\}
  \end{equation*}
  is a basis for $g\pv V$,
  comprised solely by multiregular pseudopaths.
  \end{proof}
  


 \section{Preservation of locality inside
 the interval $[\pv {Sl},\pv {DS}\cap\pv {RS}]$}

A pseudovariety of semigroups is \emph{monoidal}
if it is of the form $\pv V_{\pv S}$, for some
pseudovariety of monoids~$\pv V$.

\begin{Thm}[\cite{Costa&Escada:2013}]\label{t:locality-of-kmv}
  Let $\pv V$ be a monoidal pseudovariety of semigroups.
  If\/ $\pv V$ is local then the pseudovarieties of semigroups
  $\pv K\malcev \pv V$,
  $\pv D\malcev \pv V$ and $\Lo I\malcev \pv V$
  are also local monoidal pseudovarieties of semigroups.
\end{Thm}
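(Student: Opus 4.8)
I would prove the three assertions in parallel: the operators $\pv K\malcev(-)$ and $\pv D\malcev(-)$ are mutually dual, and $\Lo I\malcev(-)$ amounts to running both one-sided arguments at once, so I describe the plan for $\pv K\malcev\pv V$. That $\pv K\malcev\pv V$, $\pv D\malcev\pv V$ and $\Lo I\malcev\pv V$ are again monoidal when $\pv V$ is monoidal is part of the cited statement and follows from standard manipulations with relational morphisms (a morphism into a member of $\pv V$ witnessing membership in the Mal'cev product can be arranged to be defined on $S^1$ with the adjoined identity lying in an idempotent fibre), so I concentrate on locality. The first move is to invoke Theorem~\ref{t:characterization-of-left-right-deterministic-product.}: $\pv K\malcev\pv V=\pv V$ exactly when $\pv V$ is closed under left deterministic marked products, and, using that $\pv K\malcev(-)$ is an idempotent operator, $\pv K\malcev\pv V$ is the least pseudovariety containing $\pv V$ and closed under such products; this yields a ``tower'' description entirely analogous to Remark~\ref{r:tower-of-bds}, with $\pv K\malcev\pv V$ the increasing union of the one-step left deterministic closures of $\pv V$. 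A routine induction along that tower reduces the theorem to a one-step statement: if $\pv W$ is local and $\pv W'$ is its one-step left deterministic closure, then $\ell\pv W'\subseteq g\pv W'$ (the opposite inclusion being automatic).

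\textbf{The one-step statement.} To prove it I would transplant, to the one-sided setting, the decomposition machinery of Sections~\ref{sec:good-fact-pseud}--\ref{sec:break-fact-assum} and~\ref{sec:fact-glob}. Given a pseudopath pseudoidentity holding in $\ell\pv W'$, put both sides into a normal form that exhibits the prefix (left deterministic) structure as an explicit factorization --- the one-sided counterpart of the organized factorizations of Definition~\ref{defi:factorizations-levels} --- show, using closure under left deterministic products exactly as Proposition~\ref{p:weak-organized-is-strong-organized} uses closure under bideterministic products, that these normal forms are rigid, so that an equality modulo $\pv W'$ forces an equality of corresponding factors modulo $\pv W$; each such factor is a ``pure'' pseudopath pseudoidentity valid in $\ell\pv W=g\pv W$, hence in $g\pv W$, and by the fidelity of the global projection (Proposition~\ref{p:fidelity}) the vertex bookkeeping (beginnings and ends of pseudopaths) is harmless, so the factors reassemble into a proof that the original pseudoidentity already holds in $g\pv W'$. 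The rigidity input is a one-sided analogue of the uniqueness of $\pv J$-reduced factorizations (Theorem~\ref{t:uniqueness-J-factorizations}); the relevant normal forms and their uniqueness can be read off from the solution of the ``pseudoword problem'' for left deterministic products in~\cite{Trotter&Weil:1997}.

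\textbf{Main obstacle.} The hard part is this rigidity step --- the honest one-sided analogue of Theorem~\ref{t:cut-nice-factorizations}. In the bideterministic development of the present paper that rigidity rests on products of regular elements having no good factorizations (Corollary~\ref{c:product-of-regulars-has-not-good-factorization}); in the left deterministic case the correct irreducible pieces are different, and the corresponding uniqueness statement has to be set up and proved from scratch. Everything else --- the tower reduction, the monoidality, and the reassembly of the pieces --- is then formal bookkeeping. (The full argument is carried out in~\cite{Costa&Escada:2013}; the remainder of Section~\ref{sec:fact-glob} and the following section are, in effect, doing the analogous work for the bideterministic closure operator $\pv V\mapsto\overline{\pv V}$ in place of $\pv K\malcev(-)$.)
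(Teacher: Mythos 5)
Your proposal does not follow the route of the cited paper, and it leaves a genuine hole at the step you yourself flag as ``the hard part.'' As Section~\ref{sec:interpl-with-semid} of the present paper explicitly explains, the proof in \cite{Costa&Escada:2013} has nothing to do with organized factorizations or rigidity of normal forms: it goes through Tilson's criterion ($\pv V$ is local iff $\Lo V=\pv V*\pv D$) together with the two commutation inclusions
\begin{equation*}
  \Lo\bigl(\alpha(\pv V)\bigr)\subseteq \alpha(\Lo V)\qquad\text{and}\qquad
  \alpha(\pv V\ast\pv D)\subseteq\alpha(\pv V)*\pv D,
\end{equation*}
verified for $\alpha\in\{\pv K\malcev(-),\ \pv D\malcev(-),\ \Lo I\malcev(-)\}$. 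Given these, locality of $\pv V$ yields $\Lo(\alpha(\pv V))\subseteq\alpha(\Lo V)=\alpha(\pv V*\pv D)\subseteq\alpha(\pv V)*\pv D$, and Tilson again gives locality of $\alpha(\pv V)$. Your closing remark that ``the full argument is carried out in \cite{Costa&Escada:2013}'' along the lines you sketch is therefore mistaken: that reference does not set up a one-sided factorization calculus, and the present paper's Theorem~\ref{t:locality-DS-RS} actually \emph{uses} Theorem~\ref{t:locality-of-kmv} as an input rather than reproving it.

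Two further issues with your sketch. First, the ``tower'' reduction is a solution to a problem that does not exist here: $\pv K\malcev(-)$ is idempotent (and likewise for the other two operators), so the closure of $\pv V$ under left deterministic product is achieved in a single step --- there is no analogue of the ascending chain $(\pv V_n)$ of Remark~\ref{r:tower-of-bds}, which is needed for the Pin--Th\'erien expansion precisely because one application of $(-)_{\pv{bd}}$ need not yield something closed under bideterministic product. Second, and more seriously, the one-sided analogue of Theorem~\ref{t:cut-nice-factorizations} that your plan rests on is not a known black box: the crucial mechanism in the present paper (products of regular elements have no good factorizations, Corollary~\ref{c:product-of-regulars-has-not-good-factorization}, feeding into the rigidity of organized factorizations) does not transfer formally to the left deterministic setting, since there the irreducible pieces and the uniqueness statement would be of a different nature, and \cite{Trotter&Weil:1997} does not hand you such a rigidity theorem ready-made. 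Until that lemma is formulated and proved, your argument has no engine; identifying the gap honestly, as you do, is good practice, but it means the proposal as written is a plan and not a proof.
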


In this section we show that an analog of Theorem~\ref{t:locality-of-kmv}
is valid for the operator $\pv V\mapsto \overline{\pv V}$
when restricted to the interval $[\pv {Sl},\pv {DS}\cap \pv{RS}]$.
It already follows from~Corollary~\ref{c:overline-commutes}
that $\overline{\pv V}$ is
monoidal when $\pv V$ is monoidal.
To proceed, we need the following lemma.

  \begin{Lemma}\label{l:local-up-N-malcev}
    Let $\pv V$ be a
    pseudovariety of
    semigroups such that $\pv K\malcev \pv V$
    and $\pv D\malcev \pv V$ are local pseudovarieties.
    If the pseudopaths~$\pi$ and~$\rho$
    are loops such that
    $g\pv V\models \pi=\rho=\pi^2$,
    then we have
    $
    \ell(\pv N\malcev\pv V)\models \pi^\omega=\pi^\omega\rho=\rho\pi^\omega=\rho^\omega.
    $
  \end{Lemma}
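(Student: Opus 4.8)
The plan is to peel off the semigroupoid machinery and reduce the statement to two elementary computations in finite semigroups, using the locality hypotheses only to replace the $\ell$--operators by $g$--operators, and Proposition~\ref{p:fidelity} to descend from the global to the underlying semigroup level. To begin, note that $\pv N\subseteq\pv K$ and $\pv N\subseteq\pv D$: a finite nilpotent semigroup has a unique idempotent, its zero, which is simultaneously a left and a right zero. By monotonicity of the Mal'cev product this gives $\pv N\malcev\pv V\subseteq(\pv K\malcev\pv V)\cap(\pv D\malcev\pv V)$, hence $\ell(\pv N\malcev\pv V)\subseteq\ell(\pv K\malcev\pv V)\cap\ell(\pv D\malcev\pv V)=g(\pv K\malcev\pv V)\cap g(\pv D\malcev\pv V)$ by the locality hypotheses. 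So it suffices to prove
\[
  g(\pv K\malcev\pv V)\models\pi^\omega=\pi^\omega\rho
  \qquad\text{and}\qquad
  g(\pv D\malcev\pv V)\models\pi^\omega=\rho\pi^\omega,
\]
together with the two statements obtained from these by interchanging $\pi$ and $\rho$ --- which is legitimate since $g\pv V\models\pi=\rho=\pi^2$ also forces $g\pv V\models\rho=\pi=\rho^2$.

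For the first displayed pseudoidentity, observe that $\pi^\omega$ and $\pi^\omega\rho$ are coterminal loops, so by Proposition~\ref{p:fidelity} applied to the semigroup pseudovariety $\pv K\malcev\pv V$ --- and using that the local semigroups of a semigroupoid in $g(\pv K\malcev\pv V)$ lie in $\pv K\malcev\pv V$ --- it holds in $g(\pv K\malcev\pv V)$ if and only if $(\pi')^\omega=(\pi')^\omega\rho'$ holds in $\pv K\malcev\pv V$, where $\pi',\rho'$ are the pseudowords obtained from $\pi,\rho$ by forgetting the graph structure; likewise $g\pv V\models\pi=\rho=\pi^2$ amounts to $\pv V\models\pi'=\rho'=(\pi')^2$. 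Now take $N\in\pv K\malcev\pv V$ with a witnessing homomorphism $\varphi\colon N\to U$, $U\in\pv V$, $\varphi^{-1}(e)\in\pv K$ for every idempotent $e$ of $U$; for an evaluation sending $\pi'\mapsto p$ and $\rho'\mapsto r$ in $N$ we get $\varphi(p)=\varphi(r)=\varphi(p)^2=:f$, an idempotent, so both $p^\omega$ and $p^\omega r$ lie in the subsemigroup $\varphi^{-1}(f)\in\pv K$, in which the idempotent $p^\omega$ is a left zero; hence $p^\omega r=p^\omega\cdot(p^\omega r)=p^\omega$. The second displayed pseudoidentity is the left--right dual (in $\pv D$ the idempotent $p^\omega$ is a right zero, so $rp^\omega=(rp^\omega)\cdot p^\omega=p^\omega$), and the $\pi\leftrightarrow\rho$ swaps are verified identically.

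It remains to assemble the pieces. Fix $S\in\ell(\pv N\malcev\pv V)$; since $S$ belongs to both globals, it satisfies the four relations $\pi^\omega\rho=\pi^\omega$, $\rho\pi^\omega=\pi^\omega$, $\rho^\omega\pi=\rho^\omega$ and $\pi\rho^\omega=\rho^\omega$. As $S$ is finite, iterating $\pi^\omega\rho=\pi^\omega$ on the right yields $\pi^\omega\rho^\omega=\pi^\omega$, while iterating $\pi\rho^\omega=\rho^\omega$ on the left yields $\pi^\omega\rho^\omega=\rho^\omega$; hence $\pi^\omega=\rho^\omega$, and combined with $\pi^\omega=\pi^\omega\rho$ and $\pi^\omega=\rho\pi^\omega$ this is exactly the asserted chain $\pi^\omega=\pi^\omega\rho=\rho\pi^\omega=\rho^\omega$ in $\ell(\pv N\malcev\pv V)$.

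I expect the only genuinely delicate point to be the ``global versus semigroup'' translation in the second paragraph: one must be confident that, for a semigroup pseudovariety $\pv W$, a pseudoidentity between coterminal loops is satisfied by $g\pv W$ if and only if the corresponding pseudoidentity between the underlying pseudowords is satisfied by $\pv W$, which rests on the fidelity of the canonical projection $\Omvar{A}{g\pv W}\to\Omvar{E(A)}{\pv W}$ (Proposition~\ref{p:fidelity}) together with the fact that the local semigroups of semigroupoids in $g\pv W$ lie in $\pv W$. Everything else --- the inclusions $\pv N\subseteq\pv K$ and $\pv N\subseteq\pv D$, the monotonicity of the Mal'cev product, and the manipulations with left and right zeros --- is routine.
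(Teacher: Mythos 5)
Your proof is correct and follows essentially the same route as the paper: obtain $\pi^\omega\rho=\pi^\omega$ in $g(\pv K\malcev\pv V)$ and $\rho\pi^\omega=\pi^\omega$ in $g(\pv D\malcev\pv V)$, transfer to $\ell(\pv N\malcev\pv V)$ via the locality hypotheses and the inclusion $\pv N\malcev\pv V\subseteq(\pv K\malcev\pv V)\cap(\pv D\malcev\pv V)$, then close the argument by iterating and exploiting the $\pi\leftrightarrow\rho$ symmetry. The only divergence is that the paper obtains the two base pseudoidentities by citing the easy direction of the Pin--Weil basis theorem for Mal'cev products (together with Proposition~\ref{p:fidelity}), whereas you derive them from scratch by tracing an evaluation through a witnessing homomorphism $\varphi\colon N\to U$ and using that idempotents are left (resp.~right) zeros in the $\pv K$-class (resp.~$\pv D$-class) $\varphi^{-1}(f)$ --- a self-contained but equivalent justification of the same fact.
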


  \begin{proof}
    By the easy part of the Pin-Weil basis theorem
    for Mal'cev products~\cite{Pin&Weil:1996a},
    and by Proposition~\ref{p:fidelity},  we have
      $g(\pv K\malcev \pv V)\models \pi^\omega\rho=\pi^\omega$
      and
      $g(\pv D\malcev \pv V)\models \rho\pi^\omega=\pi^\omega$.
    Since
    the intersection
    $\pv K\malcev \pv V\cap\pv D\malcev \pv V$
    contains (it is actually equal to) the pseudovariety $\pv N\malcev \pv V$,
    and since $\pv K\malcev \pv V$ and $\pv D\malcev \pv V$
    are local by hypothesis, it follows in particular
    that
      $\ell(\pv N\malcev\pv V)\models \pi^\omega\rho=\pi^\omega =\rho\pi^\omega$.
    The latter implies
    \begin{equation}\label{eq:local-up-N-malcev-3}
      \ell(\pv N\malcev\pv V)\models \pi^\omega\rho^\omega=\pi^\omega
      =\rho^\omega\pi^\omega.
    \end{equation}
    On the other hand,
    the hypothesis $g\pv V\models \pi=\rho=\pi^2$ is clearly equivalent to
    $g\pv V\models \pi=\rho=\rho^2$, and so we can
    interchange the roles of $\pi$ and $\rho$
    in~\eqref{eq:local-up-N-malcev-3},
    which altogether implies
    $\ell(\pv N\malcev\pv V)\models \pi^\omega=\rho^\omega\pi^\omega=\rho^\omega$.
  \end{proof}

  The next proposition
  is the key to obtain the result we search for.
  
    \begin{Prop}\label{p:technical-condition-on-locality}
      Let $\pv V$ be a pseudovariety of semigroups.
      If  $\pv K\malcev \pv V$
      and $\pv D\malcev \pv V$ are local,
      and if $g\pv V $
      has a basis of pseudoidentities
      between pseudopaths that are regular in
      $\ell \pv V$, then $\pv V$ is local.
  \end{Prop}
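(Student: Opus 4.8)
The plan is to prove $\ell\pv V\subseteq g\pv V$; since the reverse inclusion $g\pv V\subseteq\ell\pv V$ always holds, this gives that $\pv V$ is local. By hypothesis, $g\pv V$ has a basis $\Sigma$ of pseudoidentities $(\pi=\rho)$ in which $\pi$ and $\rho$ are regular in $\ell\pv V$. As $g\pv V=\op\Sigma\cl$, every semigroupoid of $\ell\pv V$ satisfying $\Sigma$ lies in $g\pv V$, so it suffices to show $\ell\pv V\models\pi=\rho$ for each $(\pi=\rho)\in\Sigma$. Fix such a pseudoidentity: $\pi$ and $\rho$ are coterminal edges of $\Om A{Sd}$ with $g\pv V\models\pi=\rho$, and $[\pi]_{\ell\pv V}$ and $[\rho]_{\ell\pv V}$ are regular.

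The key idea is to pass to idempotent loops and feed them to Lemma~\ref{l:local-up-N-malcev}. First I would pick, using the regularity of $[\pi]_{\ell\pv V}$ and of $[\rho]_{\ell\pv V}$, suitably coterminal $s,t\in\Om A{Sd}$ with $\ell\pv V\models\pi s\pi=\pi$ and $\ell\pv V\models\rho t\rho=\rho$; since $g\pv V\subseteq\ell\pv V$, these identities also hold modulo $g\pv V$. Now $s\pi$ and $s\rho$ are loops at the end vertex of $\pi$, with $g\pv V\models s\pi=s\rho$ (from $g\pv V\models\pi=\rho$) and $g\pv V\models(s\pi)^2=s\pi$ (from $g\pv V\models\pi s\pi=\pi$). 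Applying Lemma~\ref{l:local-up-N-malcev} to the loops $s\pi$ and $s\rho$ gives $\ell(\pv N\malcev\pv V)\models(s\pi)^\omega=(s\pi)^\omega(s\rho)$; since $\pv V\subseteq\pv N\malcev\pv V$ entails $\ell\pv V\subseteq\ell(\pv N\malcev\pv V)$, also $\ell\pv V\models(s\pi)^\omega=(s\pi)^\omega(s\rho)$. Iterating $\ell\pv V\models\pi s\pi=\pi$ and using continuity of multiplication yields $\ell\pv V\models\pi(s\pi)^\omega=\pi$, whence
\begin{equation*}
  \ell\pv V\models\pi=\pi(s\pi)^\omega=\pi(s\pi)^\omega(s\rho)=\pi(s\rho)=\pi s\rho .
\end{equation*}
Symmetrically, working with the loops $\rho t$ and $\pi t$ at the start vertex of $\rho$ (for which $g\pv V\models\rho t=\pi t=(\rho t)^2$) and with $\ell\pv V\models(\rho t)^\omega\rho=\rho$, Lemma~\ref{l:local-up-N-malcev} gives $\ell\pv V\models\rho=\pi t\rho$.

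Finally I would combine the two identities: in $\ell\pv V$,
\begin{equation*}
  \pi=\pi s\rho=\pi s(\rho t\rho)=(\pi s\rho)(t\rho)=\pi(t\rho)=\pi t\rho=\rho ,
\end{equation*}
where the second equality uses $\ell\pv V\models\rho t\rho=\rho$, the fourth uses $\ell\pv V\models\pi s\rho=\pi$, and the last uses $\ell\pv V\models\rho=\pi t\rho$. Thus $\ell\pv V\models\pi=\rho$ for every $(\pi=\rho)\in\Sigma$, so $\ell\pv V\subseteq g\pv V$ and $\pv V$ is local. I expect the only delicate point to be the bookkeeping about \emph{where} each identity is valid: the witnesses $s,t$ must be chosen so that $\pi s\pi=\pi$ and $\rho t\rho=\rho$ hold already in $\ell\pv V$ (so the final computation takes place there), while the hypotheses of Lemma~\ref{l:local-up-N-malcev} need only the weaker information modulo $g\pv V$; the rest is routine manipulation of loops, Green's relations and limits.
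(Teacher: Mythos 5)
Your proof is correct, and it is a genuine simplification of the paper's argument. Both proofs follow the same high-level strategy --- reduce to showing $\ell\pv V\models\pi=\rho$ for each basis pseudoidentity, use the regularity of $[\pi]_{\ell\pv V}$ and $[\rho]_{\ell\pv V}$ to produce idempotent loops, and feed suitable pairs of coterminal loops that coincide modulo $g\pv V$ into Lemma~\ref{l:local-up-N-malcev} to promote identities from $g\pv V$ to $\ell\pv V$. The difference is in how the lemma is applied. The paper takes von Neumann inverses $x,y$ of $\pi,\rho$, applies the lemma to the pair $(\pi x,\rho x)$, extracts the prefix relations $\ell\pv V\models\pi=\rho y\pi=\pi y\rho$, and then performs a delicate replacement of $x$ by $x'=y\rho\cdot x\cdot\rho y$ precisely so that $g\pv V\models x=y$, which allows a second application of the lemma to $(x\pi,y\rho)$ before the final chain. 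You sidestep the replacement trick entirely: by choosing a \emph{single} left witness $s$ for $\pi$ and applying the lemma to $(s\pi,s\rho)$, and a single right witness $t$ for $\rho$ and applying it to $(\rho t,\pi t)$, you obtain the symmetric identities $\ell\pv V\models\pi=\pi s\rho$ and $\ell\pv V\models\rho=\pi t\rho$, which chain together directly. This is shorter, more symmetric, and needs only the one-sided relations $\pi s\pi=\pi$ and $\rho t\rho=\rho$ rather than full von Neumann inverses. All the bookkeeping you flagged (coterminality of witnesses, where each identity holds, the inclusion $\ell\pv V\subseteq\ell(\pv N\malcev\pv V)$, and the convergence $\pi(s\pi)^\omega=\pi$ in $\ell\pv V$) checks out.
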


  \begin{proof}
    Let $(\pi=\rho )$ be a pseudoidentity
    satisfied by $g\pv V$
    such that
    $\pi$ and $\rho$
    are regular in $\ell \pv V$.
    The proof amounts to show that $\ell\pv V\models \pi=\rho$
    holds.

    We may take a pseudopath~$x$ such that
    \begin{equation}\label{eq:locality-DS-RS-1}
      \ell\pv V\models \pi=\pi x\pi
       \quad\text{and}
       \quad
      \ell\pv V\models x=x\pi x,
   \end{equation}
   and a pseudopath $y$ such that
    \begin{equation*}
      \ell\pv V\models \rho=\rho y\rho
       \quad\text{and}
       \quad
      \ell\pv V\models y=y\rho y.
    \end{equation*}
    As $g\pv V\models \pi=\rho$, we
    have $g\pv V\models \pi x=\rho x$.
    Applying Lemma~\ref{l:local-up-N-malcev} to the latter (note that $\ell\pv V\subseteq \ell(\pv N\malcev\pv V)$)
    and observing that $\pi x$ is idempotent in $\ell\pv V$,
    we obtain
    \begin{equation}\label{eq:locality-DS-RS-3}
    \ell\pv V\models \pi x=\pi x\cdot \rho x=\rho x\cdot \pi x.
  \end{equation}
  In the pseudovariety $\ell \pv V$,
  the pseudopath $\rho y$ is an idempotent
  which is a prefix of $\rho$, and so, in view
  of~\eqref{eq:locality-DS-RS-3}, a prefix of $\pi$ also.
  Therefore, we have
      \begin{equation*}
      \ell \pv V\models \pi=\rho y \pi.
    \end{equation*}
    Dually, we also have
    \begin{equation*}
      \ell \pv V\models \pi=\pi y\rho.
    \end{equation*}
    It follows from these facts that the pseudopath
    $x'=y\rho \cdot x\cdot\rho y$
    satisfies
    in $\ell \pv V$ the pseudoidentities
    $\pi=\pi x'\pi$ and $x'=x'\pi x'$.
    Moreover,
    in $\ell \pv V$ we also have $x'=y\rho\cdot x'\cdot \rho y$.
    Therefore, if necessary replacing $x$ by $x'$, we may suppose,
    as we do from hereon, that
    $x$ not only satisfies~\eqref{eq:locality-DS-RS-1}
    and~\eqref{eq:locality-DS-RS-3}, but also
    $\ell\pv V\models x=y\rho\cdot x\cdot \rho y$.
    In particular, $g\pv V\models x=y\rho x\rho y
    =y\pi x\pi y=y\pi y=y\rho y=y$ holds,
    whence $g\pv V\models x\pi=y\rho$.
    Then, as both $x\pi$ and $y\rho$ are idempotents
    in $\ell \pv V$,
    applying Lemma~\ref{l:local-up-N-malcev} we obtain
    $\ell \pv V\models x\pi=y\rho$. It follows
    that $\ell \pv V\models \rho
      =\rho y\rho
      =\rho x\pi=\rho x(\pi x\pi)=(\rho x\pi x)\pi$.
      Replacing in the last term of this chain of pseudoidentities
      $\rho x\pi x$ by $\pi x$ by means of~\eqref{eq:locality-DS-RS-3},
      we get
      $\ell \pv V\models \rho =\pi x\pi =\pi$,
    concluding the proof.
  \end{proof}


  We are now ready to deduce the main result of this section.
  
  \begin{Thm}\label{t:locality-DS-RS}
    Let $\pv V$ be a monoidal pseudovariety of semigroups
    in the interval $[\pv {Sl},\pv {DS}\cap \pv{RS}]$.
    If\/ $\pv V$ is local, then
    $\overline{\pv V}$ is local.
  \end{Thm}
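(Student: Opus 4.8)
The plan is to verify the three hypotheses of Proposition~\ref{p:technical-condition-on-locality} for the pseudovariety $\overline{\pv V}$: that $\pv K\malcev\overline{\pv V}$ and $\pv D\malcev\overline{\pv V}$ are local, and that $g\overline{\pv V}$ has a basis of pseudoidentities between pseudopaths that are regular in $\ell\overline{\pv V}$. First I would record two preliminary facts. Since $\pv{Sl}\subseteq\pv V\subseteq\overline{\pv V}$, and since $\pv{DS}\cap\pv{RS}$ is a pseudovariety closed under bideterministic product (Corollary~\ref{c:DS-RS-is-closed-for-bideterministic-product}) that contains $\pv V$ and therefore also $\overline{\pv V}$, the pseudovariety $\overline{\pv V}$ again lies in the interval $[\pv{Sl},\pv{DS}\cap\pv{RS}]$; moreover $\overline{\pv V}$ is monoidal (Corollary~\ref{c:overline-commutes}) and, by Theorem~\ref{t:semigroup-VVbd}, closed under bideterministic product.

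For the first two hypotheses, I would prove the equalities $\pv K\malcev\overline{\pv V}=\pv K\malcev\pv V$ and $\pv D\malcev\overline{\pv V}=\pv D\malcev\pv V$. Recall that $\pv K\malcev\pv V$ is closed under left deterministic product (cf.\ the introduction and Theorem~\ref{t:characterization-of-left-right-deterministic-product.}); this property implies closure under bideterministic product, since Definition~\ref{defi:V-sgp-bidet-closed} asks that $LaK$ be $\pv V$-recognizable only under the additional hypothesis that $aK$ be a suffix code. Hence $\pv K\malcev\pv V$ is closed under bideterministic product, and as $\overline{\pv V}$ is the least pseudovariety containing $\pv V$ with this closure property, $\overline{\pv V}\subseteq\pv K\malcev\pv V$. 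Applying the monotone operator $\pv K\malcev(\cdot)$ and using that $\pv K\malcev\pv V$ is a fixed point of it, i.e.\ $\pv K\malcev(\pv K\malcev\pv V)=\pv K\malcev\pv V$ (Theorem~\ref{t:characterization-of-left-right-deterministic-product.}), we obtain $\pv K\malcev\pv V\subseteq\pv K\malcev\overline{\pv V}\subseteq\pv K\malcev(\pv K\malcev\pv V)=\pv K\malcev\pv V$, whence $\pv K\malcev\overline{\pv V}=\pv K\malcev\pv V$; the equality $\pv D\malcev\overline{\pv V}=\pv D\malcev\pv V$ is obtained dually. Since $\pv V$ is a local monoidal pseudovariety of semigroups, Theorem~\ref{t:locality-of-kmv} gives that $\pv K\malcev\pv V$ and $\pv D\malcev\pv V$ are local; therefore $\pv K\malcev\overline{\pv V}$ and $\pv D\malcev\overline{\pv V}$ are local as well.

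For the third hypothesis, Corollary~\ref{c:gV-is-multiregularly-based} applies to $\overline{\pv V}$ --- which is closed under bideterministic product and contained in $[\pv{Sl},\pv{DS}\cap\pv{RS}]$ --- and furnishes a basis $\Sigma$ of $g\overline{\pv V}$ all of whose pseudoidentities $(\pi=\rho)$ have $\pi$ and $\rho$ multiregular, that is, finite products of regular edges of $\Om A{Sd}$. Since regularity is preserved by continuous onto homomorphisms, the canonical projections $[\pi]_{\ell(\pv{DS}\cap\pv{RS})}$ and $[\rho]_{\ell(\pv{DS}\cap\pv{RS})}$ are finite products of regular edges of $\Om A{\ell(\pv{DS}\cap\pv{RS})}$, hence themselves regular edges by Corollary~\ref{c:local-regular-pseudopaths}. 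From $\overline{\pv V}\subseteq\pv{DS}\cap\pv{RS}$ we get $\ell\overline{\pv V}\subseteq\ell(\pv{DS}\cap\pv{RS})$, so projecting once more shows that $[\pi]_{\ell\overline{\pv V}}$ and $[\rho]_{\ell\overline{\pv V}}$ are regular, i.e.\ $\pi$ and $\rho$ are regular in $\ell\overline{\pv V}$. Thus $\Sigma$ is a basis of $g\overline{\pv V}$ of the kind required by Proposition~\ref{p:technical-condition-on-locality}, which therefore applies to $\overline{\pv V}$ and yields that $\overline{\pv V}$ is local.

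The step I expect to be the main obstacle is the second paragraph. A direct attempt to deduce locality of $\pv K\malcev\overline{\pv V}$ and $\pv D\malcev\overline{\pv V}$ from Theorem~\ref{t:locality-of-kmv} applied to $\overline{\pv V}$ is circular, since that theorem presupposes the locality of $\overline{\pv V}$, which is precisely what is to be proved. The inclusion $\overline{\pv V}\subseteq\pv K\malcev\pv V$ (and its dual $\overline{\pv V}\subseteq\pv D\malcev\pv V$) is what breaks the circularity, collapsing $\pv K\malcev\overline{\pv V}$ and $\pv D\malcev\overline{\pv V}$ onto the already local pseudovarieties $\pv K\malcev\pv V$ and $\pv D\malcev\pv V$; once this is available, the remaining work is the routine assembly of Corollaries~\ref{c:DS-RS-is-closed-for-bideterministic-product}, \ref{c:gV-is-multiregularly-based} and~\ref{c:local-regular-pseudopaths} into Proposition~\ref{p:technical-condition-on-locality}.
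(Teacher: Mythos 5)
Your proof is correct and follows essentially the same route as the paper's: you establish $\pv K\malcev\overline{\pv V}=\pv K\malcev\pv V$ (and dually) via the inclusion $\overline{\pv V}\subseteq\pv K\malcev\pv V$ coming from closure under left (hence bideterministic) product, invoke Theorem~\ref{t:locality-of-kmv} for locality of $\pv K\malcev\pv V$ and $\pv D\malcev\pv V$, use Corollary~\ref{c:gV-is-multiregularly-based} together with Proposition~\ref{p:local-regular-pseudopaths}/Corollary~\ref{c:local-regular-pseudopaths} to produce the required basis of $\ell\overline{\pv V}$-regular pseudopaths, and conclude by Proposition~\ref{p:technical-condition-on-locality}. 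The closing remark correctly identifies the collapse $\pv K\malcev\overline{\pv V}=\pv K\malcev\pv V$ as the step that avoids circularity.
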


  \begin{proof}
    By Theorem~\ref{t:locality-of-kmv},
    both $\pv K\malcev \pv V$
    and $\pv D\malcev \pv V$ are local. By Theorem~\ref{t:characterization-of-left-right-deterministic-product.}, 
    $\pv K\malcev \pv V$ and $\pv D\malcev \pv V$ are closed under bideterministic product, so 
    $$\overline{\pv V}\subseteq \overline{\pv K\malcev \pv V}=\pv K\malcev \pv V \,\text{ and }\,
    \pv K\malcev \overline{\pv V}\subseteq \pv K\malcev (\pv K\malcev \pv V)=\pv K\malcev \pv V.$$
    Then we have  
  the equality $\pv K\malcev \pv V=\pv K\malcev \overline{\pv V}$ and, by duallity, 
    $\pv D\malcev \pv V=\pv D\malcev \overline{\pv V}$.

    By Corollary~\ref{c:gV-is-multiregularly-based}, $g\overline{\pv V}$
    has a basis of pseudoidentities
    between multiregular pseudopaths.
    On the other, note that $\overline{\pv V}\subseteq \pv {DS}\cap\pv {RS}$,
    as $\pv {DS}\cap\pv {RS}$
    is closed under bideterministic product.
    In view of Proposition~\ref{p:local-regular-pseudopaths},
    it follows that
    $g\overline{\pv V}$
    has a basis of pseudoidentities
    between pseudopaths that are regular in
    $\ell(\pv {DS}\cap \pv{RS})$,
    and thus in
    $\ell\overline{\pv V}$.
    
    Altogether, the result then follows directly from Proposition~\ref{p:technical-condition-on-locality}.
  \end{proof}

  Let $\pv G$ be the pseudovariety of finite groups
  and let $\pv {Ab}$ be the pseudovariety of
  finite Abelian groups.
  It is proved in~\cite{Kadourek:2008}
  that $\pv {DG}$ is local,
  but not $\pv {D{Ab}}$.
  In contrast, we have the following.

  \begin{Cor}\label{c:application-examples-of-local-pseudovarieties}
    For every pseudovariety $\pv H$ of finite groups,
    the pseudovariety \mbox{$\pv {DH}\cap\pv {ECom}$}
    is local.
  \end{Cor}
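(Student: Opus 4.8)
The plan is to obtain this as an instance of Theorem~\ref{t:locality-DS-RS}. I would work with the monoidal semigroup pseudovariety $\pv V=(\pv{Sl}\vee\pv H)_{\pv S}$ generated by the monoid pseudovariety $\pv{Sl}\vee\pv H$. By Corollary~\ref{c:overline-commutes} we have $\overline{\pv V}=\overline{\pv{Sl}\vee\pv H}_{\pv S}$, and by Example~\ref{eg:closure-of-Sl-and-others} the monoid pseudovariety $\overline{\pv{Sl}\vee\pv H}$ equals $\pv{DH}\cap\pv{ECom}$. Since adjoining an identity to a finite semigroup does not affect whether its idempotents commute and only adds the trivial $\J$-class $\{1\}$ (which lies in every group pseudovariety), the monoidal semigroup pseudovariety associated with $\pv{DH}\cap\pv{ECom}$ is exactly the semigroup pseudovariety $\pv{DH}\cap\pv{ECom}$. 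Hence $\overline{\pv V}=\pv{DH}\cap\pv{ECom}$, and it remains only to check that $\pv V$ satisfies the hypotheses of Theorem~\ref{t:locality-DS-RS}: that it is monoidal, that it lies in the interval $[\pv{Sl},\pv{DS}\cap\pv{RS}]$, and that it is local.

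The first two are routine. Monoidality is built in, and $\pv V\supseteq\pv{Sl}$ is immediate. For the upper bound, $\pv{Sl}$ and every group pseudovariety are contained in $\pv{DS}$, hence so is the join $\pv{Sl}\vee\pv H$, so $\pv V\subseteq\pv{DS}$. Moreover a direct product of a finite semilattice and a finite group is completely regular, as each $(e,g)$ lies in a subgroup (the $\H$-class of the idempotent $(e,1_G)$), with inverse $(e,g^{-1})$; since $\pv{CR}$ is closed under division, $\pv{Sl}\vee\pv H\subseteq\pv{CR}\subseteq\pv{RS}$, and therefore $\pv V\subseteq\pv{DS}\cap\pv{RS}$ (recall that $\pv{DS}\cap\pv{RS}$ is a semigroup pseudovariety). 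This places $\pv V$ in the required interval.

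The delicate point, and the only one not already packaged in the results of this paper, is the remaining hypothesis: that $\pv V=(\pv{Sl}\vee\pv H)_{\pv S}$ is local. This does not follow from the localities of $\pv{Sl}$ and of $\pv H$ separately, since joins of local pseudovarieties need not be local. The plan is to invoke the known fact that $\pv{Sl}\vee\pv H$ is local for every group pseudovariety $\pv H$ -- this family of pseudovarieties, and its locality in particular, having been studied in~\cite{Almeida&Weil:1994,Almeida&Weil:1994c,Auinger:2002} -- using in the semigroup setting the standard equivalence between locality of a monoidal semigroup pseudovariety containing $\pv{Sl}$ and locality of the corresponding monoid pseudovariety. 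With that in hand, Theorem~\ref{t:locality-DS-RS} applies directly and yields that $\overline{\pv V}=\pv{DH}\cap\pv{ECom}$ is local, which is the assertion.

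For completeness, the machinery inside Theorem~\ref{t:locality-DS-RS} that converts locality of $\pv V$ into locality of $\overline{\pv V}$ runs as follows, and I would keep it in mind in case the locality of $\pv{Sl}\vee\pv H$ has to be argued afresh: since $\overline{\pv V}$ is closed under bideterministic product and, by Corollary~\ref{c:DS-RS-is-closed-for-bideterministic-product}, contained in $\pv{DS}\cap\pv{RS}$, Corollary~\ref{c:gV-is-multiregularly-based} gives $g\overline{\pv V}$ a basis of pseudoidentities between multiregular pseudopaths; by Proposition~\ref{p:local-regular-pseudopaths} these are regular in $\ell(\pv{DS}\cap\pv{RS})$, hence in $\ell\overline{\pv V}$; and then Theorem~\ref{t:locality-of-kmv}, Theorem~\ref{t:characterization-of-left-right-deterministic-product.} and Proposition~\ref{p:technical-condition-on-locality} combine, exactly as in the proof of Theorem~\ref{t:locality-DS-RS}, to produce locality of $\overline{\pv V}$. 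Thus the entire corollary rests on the single external input that $\pv{Sl}\vee\pv H$ is local.
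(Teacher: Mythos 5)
Your proof is correct and follows essentially the same route as the paper: apply Theorem~\ref{t:locality-DS-RS} to $\pv V=(\pv{Sl}\vee\pv H)_{\pv S}$, using Example~\ref{eg:closure-of-Sl-and-others} to identify $\overline{\pv V}$ with $\pv{DH}\cap\pv{ECom}$ and citing the locality of $\pv{Sl}\vee\pv H$ as the one external input. The only slip is bibliographic: the locality of $\pv{Sl}\vee\pv H$ for every group pseudovariety $\pv H$ is the result of Jones and Szendrei~\cite{Jones&Szendrei:1992}, not of~\cite{Almeida&Weil:1994,Almeida&Weil:1994c,Auinger:2002}, which are cited in the introduction only as places where this family of pseudovarieties has been studied.
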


  \begin{proof}
    It is shown in~\cite{Jones&Szendrei:1992}
    that
    $\pv {Sl}\vee \pv H$
    is local.
    Applying Theorem~\ref{t:locality-DS-RS}, we deduce
    that $\pv {DH}\cap\pv {ECom}$
    is local (cf.~ Example~\ref{eg:closure-of-Sl-and-others}).
  \end{proof} 
  
  Taking
  into account that every pseudovariety of bands
  is local~\cite{Jones&Szendrei:1992},
  or that $\pv {CR}\cap \pv H$ is local for every
  pseudovariety $\pv H$ of groups~\cite{Jones:1993},
  one finds in
  Example~\ref{eg:closure-of-CR-and-others}
  other local pseudovarieties
  resulting from Theorem~\ref{t:locality-DS-RS}.



\section{Interplay with the semidirect product with $\pv D$}
\label{sec:interpl-with-semid}

Let $\alpha$ be a unary operator  on a sublattice of semigroup pseudovarieties containing $\pv {Sl}$,
mapping monoidal pseudovarieties to monoidal pseudovarieties.
A monoidal pseudovariety of semigroups $\pv V$ containing
$\pv {Sl}$ is local if and only if $\Lo V\subseteq \pv V*\pv D$, if and only if
$\Lo V=\pv V*\pv D$~\cite{Tilson:1987}.
Therefore,  $\alpha(\pv V)$ is local
if $\pv V$ is local and the following inclusions hold:
\begin{equation*}
  \Lo (\alpha(\pv V))\subseteq \alpha(\Lo V)\qquad\text{and}
  \qquad \alpha(\pv V\ast\pv D)\subseteq \alpha(\pv V)*\pv D.
\end{equation*}
Theorem~\ref{t:locality-of-kmv}
was proved in~\cite{Costa&Escada:2013}
using this strategy.
Indeed, these inclusions hold
for each pseudovariety
of semigroups $\pv V$ containing $\pv {Sl}$
when $\alpha$ is one of the operators
$\pv V\mapsto \pv {K}\malcev\pv V$
or $\pv V\mapsto \pv {D}\malcev\pv V$.
Other operators were considered in~\cite{Costa&Escada:2013}.
As observed in~\cite{Costa&Escada:2013},
for the operator $\pv V\mapsto \pv {N}\malcev\pv V$
the first inclusion holds when $\pv V\supseteq \pv {Sl}$,
while the second fails for $\pv V=\pv {Sl}$.
In this section we verify that the strategy used in~\cite{Costa&Escada:2013}
does not work for the operator $\pv V\mapsto\overline{\pv V}$.
While
$\overline{\pv V\ast\pv D}\subseteq\overline{\pv V}\ast\pv D$
holds for many subpseudovarieties
of $\pv{DS}$ (cf.~Corollary~\ref{c:V-closed-bd-VD-also}), including $\pv {Sl}$,
we have $\Lo{\overline{\pv V}}\nsubseteq\overline{\Lo V}$
when $\pv V=\pv {Sl}$.
The latter follows from the next proposition,
in view of the equality $\pv J\cap \pv{ECom}=\overline{\pv {Sl}}$
(cf.~Example~\ref{eg:closure-of-Sl-and-others}).

\begin{Prop}\label{p:comparing-overlineLSl-JECom}
  The pseudovariety $\overline{\Lo{Sl}}$
  is strictly contained in
  $\Lo{(J\cap ECom})$.
\end{Prop}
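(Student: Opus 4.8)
The plan is to prove the inclusion $\overline{\Lo{Sl}}\subseteq\Lo{(J\cap ECom)}$ and then to separate the two pseudovarieties by means of a suitable multiregular pseudoidentity.

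For the inclusion: since $\pv{Sl}\subseteq\pv J\cap\pv{ECom}$ we have $\Lo{Sl}\subseteq\Lo{(J\cap ECom)}$, so it suffices to show that $\Lo{(J\cap ECom)}$ is closed under bideterministic product. By Corollary~\ref{c:localization-bideterministic-product} this holds as soon as $\pv J\cap\pv{ECom}$ is multiregularly based. Now $\pv J\cap\pv{ECom}=\overline{\pv{Sl}}$ by Example~\ref{eg:closure-of-Sl-and-others}; being a bideterministic closure it is closed under bideterministic product, and it lies in the interval $[\pv{Sl},\pv{DS}]$, so it is multiregularly based by the equivalence of conditions in Theorem~\ref{t:equational-characterization-bidet-J-DS}. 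As $\overline{\Lo{Sl}}$ is the least pseudovariety containing $\Lo{Sl}$ that is closed under bideterministic product, the inclusion $\overline{\Lo{Sl}}\subseteq\Lo{(J\cap ECom)}$ follows.

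Next I would show that $\overline{\Lo{Sl}}$ satisfies the pseudoidentity
\begin{equation*}
  x^\omega y^\omega x^\omega y^\omega x^\omega=x^\omega y^\omega x^\omega .
\end{equation*}
Both sides are finite products of the idempotent pseudowords $x^\omega$ and $y^\omega$, hence are products of regular pseudowords over every pseudovariety, and in particular are $\overline{\Lo{Sl}}$-multiregular. Furthermore $\Lo{Sl}$ satisfies this pseudoidentity: for $S\in\Lo{Sl}$ and $s,t\in S$, the elements $e=s^\omega$ and $f=t^\omega$ are idempotents, so $efe$ belongs to the semilattice $eSe$ and is idempotent, whence $efefe=(efe)^2=efe$. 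Corollary~\ref{c:lift-pid}, applied with $\pv V=\Lo{Sl}$, then yields $\overline{\Lo{Sl}}\models x^\omega y^\omega x^\omega y^\omega x^\omega=x^\omega y^\omega x^\omega$.

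Finally I would exhibit a finite semigroup $S\in\Lo{(J\cap ECom)}$ failing that pseudoidentity. Take $S$ on the set $\{e,f,ef,fe,efe,fef,0\}$, with $0$ a zero and the product of two non-zero elements obtained by juxtaposing the corresponding reduced alternating words in $e,f$, collapsing $ee$ to $e$ and $ff$ to $f$, and declaring the outcome to be $0$ whenever the reduced word so obtained has length at least $4$. One checks that this defines an associative operation whose idempotents are exactly $e$, $f$ and $0$, that the local monoids $eSe=\{e,efe,0\}$ and $fSf=\{f,fef,0\}$ are each isomorphic to the three-element $\mathcal J$-trivial monoid $\{1,c,0\}$ with $c^2=0$ (which lies in $\pv J\cap\pv{ECom}$), and that $0S0=\{0\}$; hence every local monoid of $S$ is in $\pv J\cap\pv{ECom}$, that is, $S\in\Lo{(J\cap ECom)}$. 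On the other hand, the assignment $x\mapsto e$, $y\mapsto f$ evaluates the left-hand side of the pseudoidentity to $0$ and the right-hand side to the element $efe\neq0$, so $S\not\models x^\omega y^\omega x^\omega y^\omega x^\omega=x^\omega y^\omega x^\omega$. Together with the second step this gives $S\notin\overline{\Lo{Sl}}$, and with the first step we conclude $\overline{\Lo{Sl}}\subsetneq\Lo{(J\cap ECom)}$.

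The assembling of the cited results is routine bookkeeping; the point that needs genuine care is the construction of $S$ — verifying that the stated recipe really defines an associative multiplication (which follows from confluence of the rewriting $ee\to e$, $ff\to f$ on words, so that the reduced form of a product is independent of the bracketing, together with the observation that prepending or appending any word to a reduced word of length $\ge4$ cannot shorten it below $4$) and then computing its idempotents and its three local monoids.
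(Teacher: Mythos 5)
Your proof is correct, and it departs from the paper's on the key separating step. The inclusion argument and the choice of separating pseudoidentity are essentially the same: the paper's pseudowords $u=a^\omega b^\omega a^\omega$ and $v=a^\omega b^\omega a^\omega b^\omega a^\omega$ give exactly your pseudoidentity after relabeling variables. Two differences. First, you justify explicitly that $\pv J\cap\pv{ECom}$ is multiregularly based (via Theorem~\ref{t:equational-characterization-bidet-J-DS}), a step the paper leaves implicit, and you verify $\Lo{Sl}\models x^\omega y^\omega x^\omega y^\omega x^\omega=x^\omega y^\omega x^\omega$ by the clean local observation that $efe$ is idempotent in the semilattice $eSe$, whereas the paper appeals to a syntactic characterization of $\Om A{\Lo{Sl}}$ from \cite{Costa:1999} (same finite factors, prefixes and suffixes). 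Second, and more substantially, to refute the pseudoidentity in $\Lo{(J\cap ECom)}$ you construct an explicit $7$-element counterexample (the Rees quotient, by the ideal of reduced alternating words of length at least $4$, of the semigroup free product of two trivial semigroups); the paper instead applies the $1$-superposition homomorphism $\Phi_1$, checks that the resulting factorizations are multiregularly organized with short $(\pv J\cap\pv{ECom})$-breaks, and invokes the paper's central Theorem~\ref{t:cut-nice-factorizations} together with $\pv{(J\cap ECom)}\ast\pv D_1\subseteq\Lo{(J\cap ECom)}$. Your route is more elementary and self-contained; the paper's route requires no ad hoc construction and is in keeping with the equational machinery the paper develops. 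Both are valid. The only place deserving extra care in your write-up is the one you flagged yourself: that the set of reduced words of length $\geq 4$ really is an ideal of the free product (it is, since concatenation followed by reduction can shorten a word by at most one letter at the junction), so that the Rees quotient is well defined.
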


Before we proceed to the proof,
recall from~\cite[Section 10.6]{Almeida:1994a}
the so called \emph{\mbox{$k$-superposition} homomorphism}
$\Phi_k\colon\Om AS\to (\Om {A^{k+1}}S)^1$,
the unique continuous
extension of the mapping $A^+\to (A^{k+1})^*$
that sends words of length at most $k$ into the empty word and reads
the consecutive factors of length $k+1$
in every word over $A$ with length at least $k+1$.
In the proof of Proposition~\ref{p:comparing-overlineLSl-JECom}
we only need to consider the case $k=1$.

\begin{proof}[Proof of Proposition~\ref{p:comparing-overlineLSl-JECom}]
  The inclusion
  $\overline{\Lo{Sl}}\subseteq\Lo{(J\cap ECom})$
  follows immediately from
  the inclusion $\Lo{Sl}\subseteq \Lo{(J\cap ECom})$
  and
  by Corollary~\ref{c:localization-bideterministic-product}.

  Consider the alphabet $A=\{a,b\}$
  and the pseudowords
  $u=a^\omega b^\omega a^\omega$
  and
  $v=a^\omega b^\omega a^\omega b^\omega a^\omega$
  of $\Om AS$.
  Since $u$ and $v$ have the same finite factors,
  the same set of finite prefixes, and the same set of finite
  suffixes, we have
  $\Lo{Sl}\models u=v$~(cf.~\cite{Costa:1999}). As
  $u$ and $v$ are products of idempotents,
  it follows from Corollary~\ref{c:lift-pid}
  that $\overline{\Lo{Sl}}\models u=v$.


  Let $\alpha=aa$, $\beta=bb$,
  $\gamma=ab$ and $\delta=ba$.
  We have
  the factorizations
  \begin{align*}
    \Phi_1(u)&=\alpha^{\omega-1}\cdot\gamma\cdot\beta^{\omega-2}\cdot\delta\cdot\alpha^{\omega-1},\\
    \Phi_1(v)&=\alpha^{\omega-1}\cdot\gamma\cdot\beta^{\omega-2}
    \cdot \delta\cdot \alpha^{\omega-2}
   \cdot \gamma\cdot \beta^{\omega-2}\cdot \delta\cdot \alpha^{\omega-1}.
 \end{align*}
 Both factorizations are multiregularly organized
 with short ($\pv {J}\cap\pv {ECom}$)-breaks.
 As $\pv J\cap \pv{ECom}$
 is closed under bideterministic product,
 by Theorem~\ref{t:cut-nice-factorizations}
 we have
 $\pv J\cap \pv{ECom} \not\models \Phi_1(u)=\Phi_1(v)$.
 This in turn implies
 that $\pv{(J\cap ECom)}*\pv D_1\not\models u=v$
 by~\cite[Theorem 10.6.12]{Almeida:1994a}. Since  $\pv{(J\cap ECom)}*\pv D_1$
 is contained in $\Lo{(J\cap ECom})$,
 we conclude that $\Lo{(J\cap ECom})\not\models u=v$.
\end{proof}

Recall that the Brandt semigroup $B_2$ belongs to $\pv V$
if and only if $\pv V\nsubseteq \pv {DS}$.

\begin{Lemma}\label{l:V-multiregularly-based-implies-gV}
  Let $\pv V$ be a semigroup pseudovariety containing $B_2$.
  If\/ $\pv V$ is multiregularly based, then $g\pv V$ is multiregularly based.
\end{Lemma}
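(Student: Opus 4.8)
The plan is to lift a multiregular basis of $\pv V$ to one of $g\pv V$ by relabelling the variables as edges of graphs. Fix a basis $\Sigma$ for $\pv V$ all of whose members are pseudoidentities between multiregular pseudowords; since $B_2\in\pv V$ forces $\pv{Sl}\subseteq\pv V$, we have $c(u)=c(v)$ for every $(u=v)\in\Sigma$. Given such a $(u=v)$, over an alphabet $B$, call a pair $(A,\lambda)$ — where $A$ is a finite graph and $\lambda\colon c(u)\to E(A)$ — a \emph{realization} of $(u=v)$ when the induced continuous homomorphism $\Om{c(u)}S\to\Om{E(A)}S$ (also denoted $\lambda$) maps $u$ and $v$ into the set $\Om A{Sd}$ of valid pseudopaths (regarded inside $\Om{E(A)}S$) and the pseudopaths $\lambda(u),\lambda(v)$ are coterminal. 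Let $\Gamma$ be the set of all edge pseudoidentities $(A;\lambda(u)=\lambda(v))$ arising from a pseudoidentity of $\Sigma$ together with a realization of it. I would prove that $\Gamma$ is the desired basis.

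First, the members of $\Gamma$ are pseudoidentities between multiregular pseudopaths: writing $u=u_1\cdots u_k$ with each $u_i$ regular, one has $\lambda(u)=\lambda(u_1)\cdots\lambda(u_k)$, and each $\lambda(u_i)$ is a regular edge of $\Om A{Sd}$ — for, taking a generalised inverse $x_i$ of $u_i$, comparing contents gives $c(u_i)=c(u_ix_i)$, so $\lambda(c(u_i))$ is the content of the idempotent loop $\lambda(u_ix_i)$ of $\Om A{Sd}$ and hence a strongly connected subgraph of $A$, and combining this with the regularity of $\lambda(u_i)$ in $\Om{E(A)}S$ and the standard behaviour of $\Om A{Sd}$ inside $\Om{E(A)}S$ yields regularity of $\lambda(u_i)$ in $\Om A{Sd}$; likewise for $v$. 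Moreover $g\pv V\subseteq\op\Gamma\cl$: each $S\in\pv V$, viewed as a one-vertex semigroupoid, and each graph morphism $A\to S$ compose with $\lambda$ into a continuous homomorphism $\Om BS\to S$ under which $u$ and $v$ have equal images (as $S\models u=v$), so every $S\in\pv V$ satisfies $\Gamma$, and $\op\Gamma\cl$ is a pseudovariety of semigroupoids.

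For the reverse inclusion $\op\Gamma\cl\subseteq g\pv V$, recall that, by Proposition~\ref{p:fidelity}, the pseudoidentities valid in $g\pv V$ are exactly those $(A';\pi=\rho)$ with $\pv V\models\hat\pi=\hat\rho$, where $\hat\pi,\hat\rho\in\Om{E(A')}S$ are the images of $\pi,\rho$ under the canonical injection $\Om{A'}{Sd}\hookrightarrow\Om{E(A')}S$. So let $S$ be a finite semigroupoid with $S\models\Gamma$, let $(A';\pi=\rho)$ satisfy $\pv V\models\hat\pi=\hat\rho$, and let $\psi\colon A'\to S$ be a graph morphism; we must check $\widehat\psi(\pi)=\widehat\psi(\rho)$. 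Extend $\psi\colon E(A')\to E(S)$ to a continuous homomorphism $\Psi\colon\Om{E(A')}S\to S^{cd}$ into the consolidation $S^{cd}=E(S)\uplus\{0\}$ of $S$ (with non-composable products set to $0$); then $\Psi$ sends a valid pseudopath to its evaluation in $S$ and every non-valid pseudoword to $0$, so $\Psi(\hat\pi)=\widehat\psi(\pi)$ and $\Psi(\hat\rho)=\widehat\psi(\rho)$. Hence it suffices to show $S^{cd}\in\pv V$, i.e.\ $S^{cd}\models\Sigma$: for then $S^{cd}\models\hat\pi=\hat\rho$, whence $\widehat\psi(\pi)=\widehat\psi(\rho)$, so $S$ satisfies every pseudoidentity valid in $g\pv V$ and therefore $S\in g\pv V$.

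It remains to deduce $S^{cd}\models\Sigma$ from $S\models\Gamma$. Fix $(u=v)\in\Sigma$ over $B$ and a continuous homomorphism $\widehat\theta\colon\Om BS\to S^{cd}$, arising from $\theta\colon B\to S^{cd}$; we must show $\widehat\theta(u)=\widehat\theta(v)$. This is where $B_2\in\pv V$ enters, through the following classical consequence of $\pv V\nsubseteq\pv{DS}$: for any finite graph $A$ and any labelling of the letters by edges of $A$, whether the labelling sends a pseudoword to a valid pseudopath of $\Om A{Sd}$, and with which endpoints, depends only on the image of the pseudoword in $\Om{E(A)}V$. Applying this with $A:=S$ and $\lambda:=\theta$ restricted to $c(u)=c(v)$, and using $\pv V\models u=v$, we get $\widehat\theta(u)=0$ in $S^{cd}$ if and only if $\widehat\theta(v)=0$ (both vanish exactly when some letter of $c(u)$ maps to $0$, or $\lambda$ fails to realize $u$, equivalently fails to realize $v$). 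If both vanish, we are done; otherwise $(S,\lambda)$ is a realization of $(u=v)$, so $(S;\lambda(u)=\lambda(v))\in\Gamma$, and evaluating this pseudoidentity along the identity morphism of the graph underlying $S$ — which recovers $\widehat\theta$ on the two pseudopaths — gives $\widehat\theta(u)=\widehat\theta(v)$. The main obstacle is precisely the invoked fact that $B_2\in\pv V$ makes validity (and endpoints) of a labelled pseudoword a $\pv V$-invariant datum, so that a labelling realizes $u$ exactly when it realizes $v$ whenever $\pv V\models u=v$; a secondary technical point is the verification, in the second paragraph, that relabelling preserves finite products of regular factors, which rests on the strong connectivity of the content of a regular pseudoword.
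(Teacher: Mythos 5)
Your proof takes a genuinely different route from the paper's. The paper's proof is essentially a one-line citation: it invokes \cite[Theorem~5.9]{Almeida&Azevedo&Teixeira:1998}, which, under the hypothesis $B_2\in\pv V$ (equivalently $\pv V\nsubseteq\pv{DS}$), directly transfers a basis $\{u_i=v_i\}$ of $\pv V$ into a basis $\{(u_i=v_i;A_{u_i})\}$ of $g\pv V$ over canonical graphs $A_{u_i}$, and then observes that each regular factor $w_i$ of $u_i$, witnessed by $w_i=w_ix_iw_i$, remains regular as a pseudopath, so multiregularity is preserved. You instead rebuild a global basis from scratch, taking $\Gamma$ to be \emph{all} realizations of each $(u=v)\in\Sigma$ over all finite graphs, and prove $\op\Gamma\cl=g\pv V$ directly by passing through the consolidation semigroup $S^{cd}$. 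In effect you re-derive the cited theorem in the form you need; this is more self-contained, and it makes the role of $B_2\in\pv V$ explicit: the path-validity quotient of $S^{cd}$ is (essentially) a Brandt semigroup $B_n$, which lies in $\pv V$ because $B_n$ divides a power of $B_2$.

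Two steps, however, are asserted without a clean justification. First, when you argue that $\lambda(u_i)$ is a regular edge of $\Om A{Sd}$, you declare $\lambda(u_ix_i)$ to be an idempotent \emph{loop} of $\Om A{Sd}$ in order to get strong connectivity of $\lambda(c(u_i))$ -- but $\lambda(u_ix_i)$ is only known a priori to be an idempotent of $\Om{E(A)}S$, and asserting it is a loop of $\Om A{Sd}$ already presupposes that $\lambda(x_i)$ is a valid pseudopath with the right endpoints, which is the very thing to be established; this makes the derivation circular as written. The non-circular argument is to push the equality $\lambda(u_i)\lambda(x_i)\lambda(u_i)=\lambda(u_i)$ through the path-validity (Brandt) quotient $T$ of $\Om{E(A)}S$: if $\lambda(u_i)$ maps to $(p_i,q_i)\neq 0$ in $T$, then $\lambda(x_i)$ must map to $(q_i,p_i)$, forcing $\lambda(x_i)$ to be a valid pseudopath from $q_i$ to $p_i$, after which Proposition~\ref{p:fidelity} (faithfulness) gives the equality in $\Om A{Sd}$. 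Second, the ``classical consequence'' of $B_2\in\pv V$ that you invoke in the final paragraph is stated informally; it is in fact the core technical content behind the cited theorem, so if you intend a genuinely self-contained proof you should pin it down (say, that $B_n$ divides a power of $B_2$, hence the path-validity semigroup is in $\pv V$ and the map $\Om{E(A)}S\to T$ factors through $\Om{E(A)}V$). With those two points filled in, the argument is sound.
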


\begin{proof}
  In~\cite[Theorem 5.9]{Almeida&Azevedo&Teixeira:1998}
  it is stated that if $\pv V=\op u_i=v_i\mid i\in \mathcal I\cl$
  for a family $(u_i=v_i)_{i\in \mathcal I}$
  of pseudoidentities, then
  $g\pv V=\op (u_i=v_i;A_{u_i})\mid i\in \mathcal I\cl$,
  where $A_{u_i}$ is a certain graph
  canonically built from $u_i$, with the pseudowords $u_i$ and $v_i$
  seen as pseudopaths in $\Om {A_{u_i}}{Sd}$.
  Hence, if the pseudoword $w_i$ is a regular factor of $u_i$, say  $w_i=w_ix_iw_i$ for some
 pseudoword $x_i$,
  we have in particular
  $g\pv V\models (w_i=w_ix_iw_i;A_{u_i})$, and the result follows.
\end{proof}

Recall that, for a positive integer $k$,
the pseudovariety of semigroups $\pv D_k$ is defined by $yx_1\cdots x_k=x_1\cdots x_k$, and $\pv D=\bigcup _{k\geq 1}\pv D_k$.

\begin{Prop}\label{p:inheretance-of-multiregularly-based}
  If~$g\pv V$ is multiregularly based, then
    $\pv V\ast\pv D$ and $\pv V\ast\pv D_k$ are multiregularly based,
    for every $k\geq 1$.
  \end{Prop}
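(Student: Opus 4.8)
The plan is to convert a multiregular basis of $g\pv V$ into one of $\pv V\ast\pv D_k$ (and then of $\pv V\ast\pv D$) by means of the $k$-superposition homomorphism $\Phi_k$ and the description of $\pv V\ast\pv D_k$ that it provides, namely that satisfaction of a pseudoidentity $u=v$ by $\pv V\ast\pv D_k$ is controlled by $g\pv V\models\Phi_k(u)=\Phi_k(v)$ together with the coincidence of the length-$k$ suffixes (and, for short words, $u=v$ outright); cf.~\cite[Section~10.6]{Almeida:1994a}, in particular~\cite[Theorem~10.6.12]{Almeida:1994a}. First I would fix, using the hypothesis, a basis $\Sigma$ of $g\pv V$ all of whose pseudoidentities $(\pi=\rho;\Gamma)$ have $\pv{Sd}$-multiregular sides, say $\pi=\pi_1\cdots\pi_n$ and $\rho=\rho_1\cdots\rho_m$ with each $\pi_i,\rho_j$ a regular edge of $\Om\Gamma{Sd}$. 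Then I would set up the semigroupoid-level section of $\Phi_k$: for a finite alphabet $A$, let $A^{[k+1]}$ be the de Bruijn graph with vertex set $A^k$ and an edge $a_1\cdots a_{k+1}$ from $a_1\cdots a_k$ to $a_2\cdots a_{k+1}$, and let $\ell$ be the continuous semigroupoid homomorphism from $\Om{A^{[k+1]}}{Sd}$ into the (one-vertex) semigroupoid $\Om AS$ sending $a_1\cdots a_{k+1}$ to the letter $a_{k+1}$ (this is a homomorphism because reading last letters along a path is multiplicative on composable paths). For a graph morphism $\lambda\colon\Gamma\to A^{[k+1]}$ — finitely many up to renaming $A$ — put $\Psi_\lambda=\ell\circ\overline\lambda\colon\Om\Gamma{Sd}\to\Om AS$, where $\overline\lambda$ is induced by $\lambda$ and freeness.

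The key routine point is that $\Psi_\lambda$ preserves multiregularity: being a continuous semigroupoid homomorphism, it maps each regular edge $\pi_i$ to a regular pseudoword $\Psi_\lambda(\pi_i)$, hence $\Psi_\lambda(\pi)=\Psi_\lambda(\pi_1)\cdots\Psi_\lambda(\pi_n)$ and $\Psi_\lambda(\rho)$ are multiregular pseudowords of $\Om AS$. Moreover $\Psi_\lambda$ is a genuine section of $\Phi_k$ along $\lambda$, in the sense that $\Phi_k\bigl(p\,\Psi_\lambda(\pi)\bigr)$ recovers $\overline\lambda(\pi)$, where $p\in A^k$ labels the starting vertex of $\pi$. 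Feeding the pseudoidentities of $\Sigma$ through all such $\Psi_\lambda$ (suitably anchored by the context word $p$), together with a basis of $\pv D_k$ itself, produces a set of pseudoidentities which, by the superposition description of $\pv V\ast\pv D_k$, is a basis of $\pv V\ast\pv D_k$; the pseudovariety $\pv V\ast\pv D=\bigcup_{k\geq1}\pv V\ast\pv D_k$ is then treated with the analogous $k$-free version of that description, where the memory is the whole suffix.

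The main obstacle is that this naive candidate basis is not yet multiregular: the anchoring word $p$ is a product of $k$ single letters, which is not $\pv S$-regular, and the defining relations of $\pv D_k$ are not between multiregular pseudowords either. I would remove both defects by an idempotent-padding device. First, $\pv D_k$ is itself multiregularly based: one replaces each relation $y\,x_1\cdots x_k=x_1\cdots x_k$ by relations of the shape $(x_1\cdots x_k)^\omega(z_1\cdots z_k)^{\omega+1}=(y_1\cdots y_k)^\omega(z_1\cdots z_k)^{\omega+1}$ and its variants, in which every factor is an idempotent or an $(\omega+1)$-power, hence $\pv S$-regular, and checks that these still cut out $\pv D_k$. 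Second, in each de-superposed pseudoidentity one absorbs the prefix $p$ by multiplying on the left by a suitable idempotent loop at the vertex $p$ of $A^{[k+1]}$, whose image under $\ell$ is an idempotent (hence regular) pseudoword; for $\pv V\ast\pv D$ the padding is forced to be such an idempotent anyway, so the resulting basis is multiregular on the nose. The delicate verification — and the point where the superposition description must be invoked once more — is that this padding loses nothing: modulo the (multiregular) $\pv D_k$-relations, the padded de-superpositions must still force $\Psi_\lambda(\pi)=\Psi_\lambda(\rho)$ whenever $g\pv V\models\pi=\rho$, so that the padded set remains a basis of $\pv V\ast\pv D_k$ (resp.\ $\pv V\ast\pv D$).
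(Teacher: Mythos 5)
Your plan is genuinely different from the paper's: you try to de-superpose a multiregular basis of $g\pv V$ via sections of the $k$-superposition homomorphism $\Phi_k$, and then to repair the resulting non-multiregular bits (the anchoring prefix $p$ and the $\pv D_k$-relations) by an idempotent-padding device. There are two concrete gaps.

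First, your claim that $\pv D_k$ is itself multiregularly based is false, and the proposed candidate basis cannot be made to work. Take $S=\{a,a^2\}$ with $a^3=a^2$. Every regular element of a finite semigroup of this kind is the idempotent $a^2$, so under any interpretation a multiregular pseudoword evaluates in $S$ to $a^2$; hence $S$ satisfies every pseudoidentity between multiregular pseudowords. But $S\notin\pv D_1$, since $a\cdot a\neq a$. Therefore $\pv D_1$ (and likewise $\pv D_k$) has no multiregular basis, so this part of your plan cannot be repaired by exhibiting a cleverer multiregular presentation of $\pv D_k$ -- none exists. In fact the relations of $\pv D_k$ must not appear explicitly at all; they have to be hidden inside the basis pseudoidentities for $\pv V\ast\pv D_k$, which is exactly what the Almeida--Weil formula achieves through the choice of families $\pi\in\mathcal F_{P,\varepsilon}$.

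Second, the ``delicate verification'' that idempotent-padding the prefix $p$ loses nothing is precisely where the work lies, and you do not carry it out; as stated it is not clear it can be. The paper's proof handles the non-regular anchoring word differently and in a way that closes the gap: for $\pv V\ast\pv D$ it invokes the Almeida--Azevedo--Teixeira basis theorem directly, and the images $\varepsilon_P(u),\varepsilon_P(v)$ under a \emph{semigroupoid} homomorphism of multiregular edges are automatically multiregular, so no padding issue arises. For $\pv V\ast\pv D_k$ the paper reduces by induction (using $\pv D_k=\pv D_{k-1}\ast\pv D_1$ and Lemma~\ref{l:V-multiregularly-based-implies-gV}) to $k=1$, then applies the Almeida--Weil basis theorem to write a basis consisting of $\pi_{\alpha u}\varepsilon(u)=\pi_{\alpha u}\varepsilon(v)$, observes the family $(\pi_q)$ may be replaced by its family of last letters $(\te 1(\pi_q))$, and -- crucially -- shows that the single letter $\te 1(\pi_{\alpha u})$ is \emph{absorbed} by the leading regular factor: using a suffix $s$ of the inverse $z$ of $w$ one gets $\te 1(\pi_{\alpha u})\varepsilon(w)\mathrel{\L}\varepsilon(w)$, so the padded prefix is itself regular. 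This absorption argument is the key idea your proposal is missing, and it is what makes the single-letter-memory step (and hence the whole induction) go through without any unverified padding lemma.
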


  \begin{proof}
    Let $\Sigma$ be a basis of $g\pv V$
    such that, for each $(u=v;A)\in\Sigma$,
    both $u$ and~$v$ are multiregular
    edges of $\Om A{Sd}$.
    For each pseudoidentity $P$ in $\Sigma$,
    let $\mathcal H_P$ be
    the set of continuous semigroupoid homomorphisms
    from $\Om A{Sd}$ onto
    free profinite semigroups $\Om nS$, with
    $n$ running the positive integers.

    By~\cite[Theorem 3.2]{Almeida&Azevedo&Teixeira:1998},
    we can pick a certain $\varepsilon_P\in\mathcal{H}_P$,
    for each pseudoidentity $P=(u=v)$ in $\Sigma$,
    in such a way that
    \begin{equation*}
      \pv V\ast\pv D=\bigcap_{P=(u=v)\in\Sigma}
      \op\varepsilon_P(u)=\varepsilon_P(v)\cl.
    \end{equation*}
    Since $u$ and $v$ are multiregular edges,
    their homomorphic images
    $\varepsilon_P(u)$
    and
    $\varepsilon_P(v)$
    are multiregular pseudowords,
    establishing the proposition for $\pv V\ast\pv D$.

    Concerning
    $\pv V\ast\pv D_k$,
    we make a proof by induction
    on~$k\geq 1$.
    Beginning with case $k=1$,
    we apply to
    $\pv V\ast\pv D_1$
    the Almeida-Weil basis
    theorem~\cite[Theorem 5.3]{Almeida&Weil:1996}, as follows.
    For each $\varepsilon\in\mathcal H_P$,
    let $\mathcal F_{P,\varepsilon}$ be the set of
    families $\pi=(\pi_q)_{q\in V(A)}$
    of elements of $(\Om nS)^1$
    for which one has
    $\pv D_1\models \varepsilon(s)=\pi_{\omega s}$
    (actually, in a literal application
    of the Almeida-Weil basis theorem one has first
    $\pv D_1\models \pi_{\alpha s}\varepsilon(s)=\pi_{\omega s}$,
    clearly equivalent
    to
    $\pv D_1\models \varepsilon(s)=\pi_{\omega s}$,
    as $\varepsilon(s)$
    is a nonempty pseudoword).
    The Almeida-Weil basis theorem gives
    \begin{equation*}
      \pv V\ast\pv D_1
      =\bigcap_{P\in\Sigma}
      \bigcap_{\varepsilon\in\mathcal H_P}
      \bigcap_{\pi\in \mathcal F_{P,\varepsilon}}
      \op \pi_{\alpha u}\varepsilon(u)=\pi_{\alpha u}\varepsilon(v)\mid P=(u=v)\cl.
    \end{equation*}
    Fixed $\pi\in\mathcal F_{P,\varepsilon}$,
    consider the family $\pi'=(\te 1{(\pi_q)})_{q\in V(A)}$.
    Note that we have
    $\pv D_1\models
    \varepsilon(s)
    =\te 1(\pi_{\omega s})$,
    whence $\pi'\in \mathcal F_{P,\varepsilon}$.
    On the other hand,
    the pseudoidentity
    $\pi_{\alpha u}\varepsilon(u)=\pi_{\alpha u}\varepsilon(v)$
    is clearly a consequence of
    the pseudoidentity
    $\te 1(\pi_{\alpha u})\varepsilon(u)
    =\te 1(\pi_{\alpha u})\varepsilon(v)$.
    We conclude that we actually have
    \begin{equation*}
      \pv V\ast\pv D_1
      =\bigcap_{P\in\Sigma}
      \bigcap_{\varepsilon\in\mathcal H_P}
      \bigcap_{\pi\in \mathcal F_{P,\varepsilon}}
      \op \te 1(\pi_{\alpha u})\varepsilon(u)
      =\te 1(\pi_{\alpha u})\varepsilon(v)\mid P=(u=v)\cl.
    \end{equation*}
    Therefore, it suffices to check that, fixed $P=(u=v)\in \Sigma$
    and $\pi\in\mathcal F_{P,\varepsilon}$, both
    $\te 1(\pi_{\alpha u})\varepsilon(u)$
    and $\te 1(\pi_{\alpha u})\varepsilon(v)$
    are multiregular.
    By hypothesis, there is a factorization $u=wu'$
    such that $w$ is a regular edge of $\Om A{Sd}$
    and $u'$ is a product of regular edges of $\Om A{Sd}$.
    Let $z\in \Om A{Sd}$ be such that
    $w=wzw$. Take $s\in E(A)$ with $z=z's$,
    for some (possibly empty) pseudopath $z'$.
    Since $\pi'\in\mathcal F_{P,\varepsilon}$,
    and noting that $\omega s=\alpha w=\alpha u$,
    we
    have $\varepsilon(s)\leq_{\mathcal L}\te 1(\pi_{\alpha u})$.
    It follows that
      $\varepsilon (w)=\varepsilon (wzw)
      \leq_{\mathcal L}\varepsilon (sw)
      \leq_{\mathcal L}\te 1(\pi_{\alpha u})\varepsilon(w)
      \leq_{\mathcal L}\varepsilon(w)$,
    showing that $\te 1(\pi_{\alpha u})\varepsilon(w)$
    is $\mathcal L$-equivalent to the regular
    pseudoword~$\varepsilon(w)$.
    Therefore,
    $\te 1(\pi_{\alpha u})\varepsilon (u)
    =\te 1(\pi_{\alpha u})\varepsilon(w)\varepsilon(u')$
    is a product of
    regular pseudowords.
    Symmetrically, as
    $\alpha u=\alpha v$,
    the pseudoword $\te 1(\pi_{\alpha u})\varepsilon (v)$
    is also~a product of
    regular pseudowords. This shows the base step of the induction.

    We proceed with the inductive step.
    If $k>1$, then $\pv D_k=\pv D_{k-1}\ast\pv D_1$~holds (cf.~\cite[Lemma 4.1]{Almeida:1994a}),
    thus $\pv V\ast\pv D_k=(\pv V\ast\pv D_{k-1})\ast\pv D_1$.
    By the induction~hypothesis,
    $\pv V\ast\pv D_{k-1}$ is multiregularly based.
    Since $B_2\in \pv {Sl}*\pv D_1$~(cf.~\cite[Section 4]{Almeida&Azevedo&Teixeira:1998}),
    it then follows
    from Lemma~\ref{l:V-multiregularly-based-implies-gV}
    that
    $g(\pv V\ast\pv {D_{k-1}})$
    is multiregularly based. This reduces to the base case,
    thus proving that $\pv V\ast\pv D_{k}$ is multiregularly based.
  \end{proof}

  \begin{Cor}\label{c:V-closed-bd-VD-also}
    Suppose that $\pv V$ is a semigroup pseudovariety
    belonging to one
    of the intervals $[\pv {Sl},\pv {DS}\cap \pv{RS}]$
    or $[\pv {J},\pv {DS}]$.
    If\/ $\pv V$
    is closed under bideterministic product,
    then so is each of the pseudovarieties $\pv V\ast\pv D$ and $\pv V\ast\pv D_k$, for every $k\geq 1$.
    Therefore, if\/ $\pv V$ belongs to one of the intervals $[\pv {Sl},\pv {DS}\cap \pv{RS}]$
    or $[\pv {J},\pv {DS}]$,
    the inclusions
    $\overline{\pv V\ast\pv D}\subseteq\overline{\pv V}*\pv D$
    and
    $\overline{\pv V\ast\pv D_k}\subseteq\overline{\pv V}*\pv D_k$
    hold.
  \end{Cor}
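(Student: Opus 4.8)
The plan is to chain together three results already in hand. Suppose first that $\pv V$ is closed under bideterministic product and lies in one of the two intervals. By Corollary~\ref{c:gV-is-multiregularly-based}, the global $g\pv V$ is multiregularly based. Feeding this into Proposition~\ref{p:inheretance-of-multiregularly-based} yields that $\pv V\ast\pv D$ and each $\pv V\ast\pv D_k$ with $k\geq 1$ are multiregularly based in the sense of Definition~\ref{defi:multiregularly-based} (that is, multiregularly based in $\pv S$). Finally, applying Proposition~\ref{p:equational-sufficient-cond-for-closure} with $\pv W=\pv S$ --- which is closed under bideterministic product since \emph{every} language is $\pv S$-recognizable --- we conclude that $\pv V\ast\pv D$ and $\pv V\ast\pv D_k$ are closed under bideterministic product. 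This settles the first assertion.

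For the second assertion we drop the bideterministic hypothesis on $\pv V$, keeping only that $\pv V$ belongs to one of the intervals. First I would check that $\overline{\pv V}$ stays inside the same interval: it contains $\pv V$, hence contains $\pv{Sl}$ (respectively $\pv J$); and since $\pv{DS}\cap\pv{RS}$ is closed under bideterministic product by Corollary~\ref{c:DS-RS-is-closed-for-bideterministic-product} (respectively $\pv{DS}$ is, by Example~\ref{eg:examples-of-application-of-equational-sufficient-cond-for-closure}), minimality in the definition of $\overline{\pv V}$ gives $\overline{\pv V}\subseteq\pv{DS}\cap\pv{RS}$ (respectively $\overline{\pv V}\subseteq\pv{DS}$). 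As $\overline{\pv V}$ is by construction closed under bideterministic product, the first assertion applies to it, so $\overline{\pv V}\ast\pv D$ and $\overline{\pv V}\ast\pv D_k$ are closed under bideterministic product. Now the semidirect product is monotone in its first argument, so $\pv V\ast\pv D\subseteq\overline{\pv V}\ast\pv D$; thus $\overline{\pv V}\ast\pv D$ is a pseudovariety closed under bideterministic product containing $\pv V\ast\pv D$, and by minimality in the definition of $\overline{\pv V\ast\pv D}$ we obtain $\overline{\pv V\ast\pv D}\subseteq\overline{\pv V}\ast\pv D$; the identical argument with $\pv D_k$ in place of $\pv D$ gives $\overline{\pv V\ast\pv D_k}\subseteq\overline{\pv V}\ast\pv D_k$.

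Every step above is a direct invocation of an earlier result, so there is no substantial obstacle here; the two points that require a moment's care are (i) that the implication ``multiregularly based $\Rightarrow$ closed under bideterministic product'' must be obtained from the general Proposition~\ref{p:equational-sufficient-cond-for-closure} with $\pv W=\pv S$ rather than from Theorem~\ref{t:equational-characterization-bidet-J-DS}, because $\pv V\ast\pv D$ typically fails to lie in $[\pv{Sl},\pv{DS}]$ (e.g.\ $B_2\in\pv{Sl}\ast\pv D_1$), and (ii) the verification that the operator $\pv V\mapsto\overline{\pv V}$ does not push $\pv V$ out of its interval, which rests precisely on $\pv{DS}$ and $\pv{DS}\cap\pv{RS}$ being closed under bideterministic product. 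The genuinely delicate ingredients feeding this corollary --- Proposition~\ref{p:inheretance-of-multiregularly-based} and Corollary~\ref{c:gV-is-multiregularly-based} --- are already proved, and are used here as black boxes.
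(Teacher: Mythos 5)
Your proof is correct and follows essentially the same route as the paper: the chain Corollary~\ref{c:gV-is-multiregularly-based} $\to$ Proposition~\ref{p:inheretance-of-multiregularly-based} $\to$ Proposition~\ref{p:equational-sufficient-cond-for-closure} is exactly the paper's argument for the first assertion, and your explicit derivation of the second assertion (interval stability of $\overline{\pv V}$, applying the first part to $\overline{\pv V}$, monotonicity of $\ast$, minimality) is the unpacking of the word ``Therefore'' that the paper leaves to the reader. Your remark that Theorem~\ref{t:equational-characterization-bidet-J-DS} cannot be applied because $\pv V\ast\pv D$ typically escapes $\pv{DS}$ is a correct and worthwhile point of care.
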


  \begin{proof}
    If $\pv V$ is closed under bideterministic product,
    then $g\pv V$ is
    multiregularly based by Corollary~\ref{c:gV-is-multiregularly-based}.
    Therefore, the pseudovarieties $\pv V*\pv D$
    and $\pv V*\pv D_k$
    are multiregularly based, by Proposition~\ref{p:inheretance-of-multiregularly-based},
    which implies by
    Proposition~\ref{p:equational-sufficient-cond-for-closure}
    that they are closed under bideterministic product.
  \end{proof}

  \begin{Example}\label{eg:DG}
    The pseudovariety $\pv {DG}=\op(xy)^\omega=(yx)^\omega\cl$
    is mutiregulary based,
    and therefore
    so is each of the pseudovarieties
    $\pv {DG}*\pv {D}$ and $\pv {DG}*\pv {D_k}$.
  \end{Example}

  \begin{Remark}
    The existing proof that $\pv {DG}$ is local is a \emph{tour de force}
    that does not depend on profinite methods~\cite{Kadourek:2008}.
    Note that $\pv {DG}=\pv {IE}\malcev\pv {Sl}$,
    where $\pv {IE}=\op x^\omega=y^\omega\cl$,
    so that
    $\Lo {DG}=\pv {IE}\malcev {\Lo{Sl}}$,
    as proved in~\cite[Appendix A]{Costa&Escada:2013}.
    Since $\pv {Sl}$ is local,
    one has $\Lo{Sl}=\pv {Sl}*\pv D=\pv V(B_2)*\pv D$,
    where $\pv V(B_2)$ is the semigroup pseudovariety
    generated by $B_2$.
    Moreover, thanks to~\cite[Theorem 4.2]{Costa&Escada:2013},
    the inclusion $\pv {IE}\malcev (\pv V(B_2)*\pv D)\subseteq
    (\pv {IE}\malcev \pv V(B_2))*\pv D$ holds.
    Therefore, we may hope
    for an alternative proof
    of the locality of $\pv {DG}$
    consisting in showing
    that
    \begin{equation}\label{eq:b2-dgd1}
     \pv {IE}\malcev \pv V(B_2)\subseteq\pv {DG}*\pv {D_1}.
    \end{equation}
    This inclusion
    is indeed true, but the arguments we have for its justification
    depend on the locality of $\pv {DG}$.
    In this context, Example~\ref{eg:DG}
    is relevant,
    since it reduces the problem to
    the search of a ``profinite'' proof
    of the inclusion~\eqref{eq:b2-dgd1}
    to showing that
    $\pv {IE}\malcev \pv V(B_2)\models \pi=\rho$
    whenever
    $\pi$ and $\rho$
    are 
    multiregular pseudowords
    such that $\pv {DG}*\pv {D_1}\models \pi=\rho$.
  \end{Remark}
  
  For pseudovarieties not contained in $\pv {DS}$, we have the following proposition.
  We omit the proof,
  because it is an exercise
  using the 
  ideas
  in the proof of~\cite[Lemma 3.11]{ACosta:2007a}
  and none
  of the equational techniques that are the subject of this paper.


 \begin{Prop}\label{p:VcontainsB2-closed-bd-VD-also}
    Suppose that $\pv V$ is a semigroup pseudovariety
    such that \mbox{$B_2\in \pv V$} and
    $\pv D_1\vee\pv K_1\subseteq\pv V$.
    If\/ $\pv V$
    is closed under bideterministic product,
    then so is $\pv V\ast\pv D_k$, for every $k\geq 1$.
  \end{Prop}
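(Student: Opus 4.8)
The plan is to reduce to the case $k=1$ and then carry the bideterministic product over $A$ across the $1$-superposition homomorphism $\Phi_1$ to a bideterministic product over the alphabet $A^2$, where the hypothesis that $\pv V$ is closed under bideterministic product applies. For the reduction, recall that $\pv D_k=\pv D_{k-1}\ast\pv D_1$ (\cite[Lemma~4.1]{Almeida:1994a}), so $\pv V\ast\pv D_k=(\pv V\ast\pv D_{k-1})\ast\pv D_1$. For $k\geq 2$ the pseudovariety $\pv V\ast\pv D_{k-1}$ again satisfies the hypotheses of the proposition: it contains $B_2$ and $\pv D_1\vee\pv K_1$ because it contains $\pv V$, and it is closed under bideterministic product by induction on $k$. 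Hence it suffices to prove the statement for $k=1$: if $\pv V$ is closed under bideterministic product, $B_2\in\pv V$ and $\pv D_1\vee\pv K_1\subseteq\pv V$, then $\pv V\ast\pv D_1$ is closed under bideterministic product.

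For $k=1$, I would invoke the characterization of $\pv V\ast\pv D_1$-recognizable $+$-languages through $\Phi_1$ that, under precisely these hypotheses on $\pv V$, follows from the ideas in the proof of \cite[Lemma~3.11]{ACosta:2007a}: up to the finitely many words of length at most $1$ (which are handled directly), a $+$-language $L$ is $\pv V\ast\pv D_1$-recognizable if and only if $\Phi_1(L)$ is a $\pv V$-recognizable $+$-language of $(A^2)^+$, and conversely $\Phi_1^{-1}$ of a $\pv V$-recognizable $+$-language is $\pv V\ast\pv D_1$-recognizable. The two extra hypotheses are exactly what makes this dictionary work: $\pv D_1\vee\pv K_1\subseteq\pv V$ lets one split a language according to its first or last letter without leaving $\pv V$ (hence without leaving $\pv V\ast\pv D_1$), and $B_2\in\pv V$ makes the ``overlap consistency'' language $\Phi_1(A^{\geq 2})$ and its relatives $\pv V$-recognizable, so that passing back and forth through $\Phi_1$ loses no information.

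Now let $\mathcal V$ be the variety of $\pv V\ast\pv D_1$-recognizable $+$-languages, and take $L,K$ with $L\in\mathcal V$ or $L=\{1\}$, $K\in\mathcal V$ or $K=\{1\}$, a letter $a\in A$, $La$ a prefix code and $aK$ a suffix code. Since $La$ is a prefix code the marked product $LaK$ is unambiguous, so each $u\in LaK$ has a unique factorization $u=u_1au_2$ with $u_1\in L\cup\{1\}$ and $u_2\in K\cup\{1\}$; grouping according to $b=\te1(u_1)$ and $c=\be1(u_2)$ produces a disjoint decomposition $LaK=\bigsqcup_{b,c}L_baK_c$, each $L_baK_c$ being again a bideterministic product whose two components are either $\{1\}$ or members of $\mathcal V$. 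It therefore suffices to show $L_baK_c\in\mathcal V$. The computation at the heart of the argument is the identity, valid in $(A^2)^*$ with the convention that $\Phi_1$ annihilates words of length at most $1$,
\begin{equation*}
\Phi_1(w_1aw_2)=\Phi_1(w_1)\cdot(ba)\cdot(ac)\cdot\Phi_1(w_2)\qquad(w_1\in L_b,\ w_2\in K_c),
\end{equation*}
so $\Phi_1(L_baK_c)=\Phi_1(L_b)\cdot(ba)\cdot(ac)\cdot\Phi_1(K_c)$. Since $\Phi_1$ is injective on words of length at least $2$ and both respects and reflects the prefix and suffix orders on them, $\Phi_1(L_b)\cdot(ba)=\Phi_1(L_ba)$ is a prefix code over $A^2$ (because $L_ba\subseteq La$ is a prefix code inside $A^{\geq 2}$) and $(ac)\cdot\Phi_1(K_c)=\Phi_1(aK_c)$ is a suffix code over $A^2$, whence $(ba)\cdot(ac)\cdot\Phi_1(K_c)$ is a suffix code as well (Remark~\ref{r:products-of-several-codes}). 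Moreover $\Phi_1(L_b)$ and $\Phi_1(K_c)$ are $\pv V$-recognizable by the characterization above, and so is $(ac)\cdot\Phi_1(K_c)$, being the bideterministic product $\{1\}\cdot(ac)\cdot\Phi_1(K_c)$ and $\pv V$ being closed under bideterministic product. Applying that closure once more, $\Phi_1(L_baK_c)$ is $\pv V$-recognizable; and since the right-hand side of the displayed identity always has length at least $2$, one checks that $\Phi_1^{-1}(\Phi_1(L_baK_c))=L_baK_c$, so $L_baK_c\in\mathcal V$. Taking the finite disjoint union over $b,c$ yields $LaK\in\mathcal V$.

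The hard part is the presence of the two ``bridge'' letters $(ba)$ and $(ac)$ of $A^2$ straddling the marked letter $a$, together with the fact that $\Phi_1$ collapses words of length at most $1$: these spoil the naive correspondence between marked products over $A$ and over $A^2$, and coping with them is precisely what forces the splitting of $L$ and $K$ by extreme letters and the separate treatment of short words, and is where the hypotheses $B_2\in\pv V$ and $\pv D_1\vee\pv K_1\subseteq\pv V$ are genuinely needed --- just as in \cite[Lemma~3.11]{ACosta:2007a}.
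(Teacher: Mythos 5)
The paper gives no proof for this proposition, only the remark that it is ``an exercise using the ideas in the proof of \cite[Lemma~3.11]{ACosta:2007a} and none of the equational techniques that are the subject of this paper.'' Your proof takes precisely that route: reduce to $k=1$ by the well-founded induction driven by $\pv D_k=\pv D_{k-1}\ast\pv D_1$ (noting $\pv V\ast\pv D_{k-1}\supseteq\pv V$ so the hypotheses persist), and then transport a bideterministic marked product over $A$ through the $1$-superposition homomorphism $\Phi_1$ to a nesting of bideterministic marked products over $A^2$, to which the hypothesis that $\pv V$ is closed under bideterministic product applies. Since there is no written proof in the paper to compare against line by line, the best one can say is that this is the intended argument and the skeleton is sound: $\Phi_1$ is injective on $A^{\geq 2}$ and both respects and reflects the prefix and suffix orders there, so $\Phi_1(L_ba)$ is indeed a prefix code over $A^2$ and $\Phi_1(aK_c)$ a suffix code, and the nesting $\Phi_1(L_b)\cdot(ba)\cdot\bigl[(ac)\cdot\Phi_1(K_c)\bigr]$ legitimately reduces the two bridge letters to two successive applications of closure under bideterministic product.

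Two points deserve flagging. First, the pivot of the argument---that, up to the finitely many words of length at most $1$, a $+$-language $L\subseteq A^+$ is $\pv V\ast\pv D_1$-recognizable if and only if $\Phi_1(L)$ is $\pv V$-recognizable over $A^2$, with $B_2\in\pv V$ ensuring the chain language $\Phi_1(A^{\geq 2})$ is $\pv V$-recognizable and $\pv D_1\vee\pv K_1\subseteq\pv V$ permitting the extreme-letter splittings without leaving the pseudovariety---is invoked but not proved; you correctly defer it to \cite[Lemma~3.11]{ACosta:2007a}, and everything hinges on stating that dictionary exactly right. Second, the degenerate cases are only sketched: when $L=\{1\}$ or $K=\{1\}$ the split by extreme letters does not apply and the product $aK$ or $La$ or $\{a\}$ must be treated directly; and $\Phi_1(L_b)$ (resp.\ $\Phi_1(K_c)$) contains the empty word precisely when the single letter $b$ (resp.\ $c$) belongs to $L$ (resp.\ $K$), so one needs a further split such as $L_b=(L_b\cap\{b\})\sqcup(L_b\cap A^{\geq 2})$ before Definition~\ref{defi:V-sgp-bidet-closed} applies verbatim. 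You acknowledge both difficulties at the end as ``the hard part,'' which is honest, but a complete write-up would have to carry out these case analyses rather than gesture at them.
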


  We leave open the question of whether the condition
  $\pv D_1\vee\pv K_1\subseteq\pv V$ in
  Proposition~\ref{p:VcontainsB2-closed-bd-VD-also}
  is really necessary.
  As one example that it might not be, take
  the pseudovariety $\pv{ECom}$: on one hand,
  $B_2\in \pv {ECom}$
  and $\pv D_1\nsubseteq \pv {ECom}$,
  on the other hand $\pv {ECom}*\pv D_k$ is closed
  under bideterministic product for each~$k$,
  by the combination of
  Lemma~\ref{l:V-multiregularly-based-implies-gV},
  Proposition~\ref{p:inheretance-of-multiregularly-based}
  and Proposition~\ref{p:equational-sufficient-cond-for-closure}.

\bibliographystyle{spmpsci}


\end{document}